\definecolor{darkgreen}{rgb}{0.0, 0.5, 0.0}
\newcommand{\eps}{\varepsilon}
\newcommand{\loc}{\mathrm{loc}}
\newcommand{\B}{\mathcal{B}}
\newcommand{\A}{\mathcal{A}}
\newcommand{\N}{\mathbb N}
\newcommand{\MM}{\mathbb M}
\newcommand{\M}{M}
\newcommand{\Z}{\mathbb Z}
\newcommand{\R}{\mathbb R}
\newcommand{\E}{\mathbb E}
\renewcommand{\P}{\mathbb P}
\newcommand{\expl}{\mathcal{E}}
\newcommand{\ff}{\mathbf{f}}
\newcommand{\vv}{\mathbf{v}}
\newcommand{\ww}{\mathbf{w}}
\newcommand{\q}{\mathbf{q}}
\newcommand{\kk}{\mathbf{k}}
\newcommand{\QQ}{\mathbf{Q}}
\newcommand{\cQ}{\mathcal{Q}}
\newcommand{\dq}{\mathrm{d} \q}
\newcommand{\VV}{\mathcal{V}}
\newcommand{\nnu}{\widetilde{\nu}}
\newcommand{\FF}{\mathbf{F}}
\newcommand{\Ver}{v}
\theoremstyle{definition}
\newtheorem{thm}{Theorem}
\newtheorem{defn}{Definition}
\newtheorem{rem}[defn]{Remark}
\newtheorem{prop}[defn]{Proposition}
\newtheorem{corr}[defn]{Corollary}
\newtheorem{lem}[defn]{Lemma}
\newtheorem{conj}[defn]{Conjecture}
\newtheorem{fact}[defn]{Fact}
\tikzstyle{every node}=[circle, draw, fill=black!50, inner sep=0pt, minimum width=4pt]
\tikzstyle{white}=[circle, draw, fill=black!0, inner sep=0pt, minimum width=4pt]
\tikzstyle{bigwhite}=[circle, draw, fill=black!0, inner sep=0pt, minimum width=10pt]
\tikzstyle{dual}=[circle, draw=blue, fill=black!0, inner sep=0pt, minimum width=4pt]
\tikzstyle{fat}=[circle, draw, fill=red!50, inner sep=0pt, minimum width=8pt]
\tikzstyle{fat_bis}=[circle, draw, fill=blue!50, inner sep=0pt, minimum width=8pt]
\tikzstyle{fat_ter}=[circle, draw, fill=green!50, inner sep=0pt, minimum width=8pt]
\tikzstyle{rouge}=[circle, draw, fill=red, inner sep=0pt, minimum width=7pt]
\tikzstyle{bleu}=[circle, draw, fill=blue, inner sep=0pt, minimum width=7pt]
\tikzstyle{petitrouge}=[circle, draw, fill=red, inner sep=0pt, minimum width=4pt]
\tikzstyle{petitbleu}=[circle, draw, fill=blue, inner sep=0pt, minimum width=4pt]
\tikzstyle{texte}=[draw=none, fill=none]
\title{\bf{Local limits of bipartite maps with prescribed face degrees in high genus}}
\author{Thomas \bsc{Budzinski}\footnote{University of British Columbia, \url{budzinski@math.ubc.ca}} \, and Baptiste \bsc{Louf}\footnote{
Uppsala universitet, \url{baptiste.louf@math.uu.se}}}
\begin{document}

\maketitle

\paragraph{Abstract.}
We study the local limits of uniform high genus bipartite maps with prescribed face degrees. We prove the convergence towards a family of infinite maps of the plane, the $\q$-IBPMs, which exhibit both a spatial Markov property and a hyperbolic behaviour. Therefore, we observe a similar local behaviour for a wide class of models of random high genus maps, which can be seen as a result of universality. Our results cover all the regimes where the expected degree of the root face remains finite in the limit. This follows a work by the same authors on high genus triangulations \cite{BL19}.

\section{Introduction}

\paragraph{Planar maps.}

Maps, i.e. gluings of polygons forming an orientable surface, have been the object of extensive research in the last decades, both from the combinatorial and probabilistic viewpoints. The most popular category of maps are planar maps, i.e. maps homeomorphic to the sphere. Their combinatorial study goes back to Tutte in the 60s, e.g.~\cite{Tutte63}, who gave explicit formulas for the enumeration of various classes of planar maps using a generating functions approach. More recently, bijective approaches have been developped such as the Cori--Vauquelin--Schaeffer bijection for quadrangulations~\cite{Sch98} and its generalization, the Bouttier--di Francesco--Guitter bijection~\cite{BDG04}.
 
On the other hand, much attention has been given in the last 20 years to asymptotic properties of large random planar maps picked uniformly in certain classes. These asymptotic properties are usually understood by proving the convergence of random maps in some sense when the size goes to infinity. Two different notions of limits are commonly used: scaling and local limits. Scaling limits, which we will not study in this work, consist in renormalizing the distances in order to build continuous objects. In particular, many discrete models are known to have the Brownian map as a scaling limit~\cite{LG11, Mie11, Mar16}. The theory of scaling limits of planar maps shares deep links with other random geometry models such as Liouville Quantum Gravity~\cite{MS16b}. On the other hand, local limits, on which the present work focuses, study the neighbourhood of a typical point in a map in order to obtain an infinite but discrete object in the limit. In the context of planar maps, this was first considered by Angel and Schramm who proved the convergence of large uniform triangulations towards the Uniform Infinite Planar Triangulation (UIPT)~\cite{AS03}. The study of the UIPT using Markovian explorations called \emph{peeling} explorations was then initiated by Angel~\cite{Ang03}. More general models have followed since, such as general planar maps with Boltzmann weights on the face degrees~\cite{St18}. The bipartite case, which will be of particular interest for us, is investigated in~\cite{Bud15, BC16}, see also~\cite{C-StFlour} for a complete survey.

\paragraph{Maps of higher genus.}
It seems natural to try to extend the combinatorial and probabilistic study of planar maps to maps of higher genus. On the combinatorial side, the enumeration of maps with any genus is a very rich topic, with links to irreducible representations of the symetric group and integrable hierarchies~\cite{MJD00,Okounkov00}. In particular, double recursions on both the size and the genus are known for the counting of maps, see~\cite{GJ08} for triangulations and \cite{Lo19} for general classes of bipartite maps. However, explicit enumeration formulas are lacking. Asymptotics can be obtained when the genus is fixed and the size goes to infinity~\cite{BC86}, but are still missing when the genus goes to infinity as well.

Similarly, on the probabilistic side, higher genus versions of random surface models have been constructed such as Brownian surfaces~\cite{Bet16} or Liouville quantum gravity on complex tori \cite{DRV16}. However, their behaviour when the genus goes to infinity is still poorly understood. Finally, a regime that is much easier to handle is the regime where the genus is not constrained, and the faces are simply glued uniformly at random \cite{Gam06, CP16, BCP19}. In this case, the genus is concentrated very close to its maximal possible value.

More recently, some progress was made in the study of high genus maps, namely when the genus grows linearly in the size of the map. In this case, the Euler formula shows that maps satisfy a discrete notion of "negative average curvature", which suggests that the neighbourhood of a typical vertex should look hyperbolic. The first category of maps that was investigated in this setting were uniform unicellular maps (i.e. maps with one face). See \cite{ACCR13} for the proof of local convergence to a supercritical Galton--Watson tree, and \cite{Ray13a} for the study of more global properties such as logarithmic diameter.

Shortly after, Curien introduced a one-parameter family of random hyperbolic triangulations of the plane \cite{CurPSHIT}, following the work of Angel and Ray in the half-planar case \cite{AR13}. More precisely, random maps of this family are called Planar Stochastic Hyperbolic Triangulations (PSHT) $\left( \mathbb{T}_{\lambda} \right)_{0<\lambda \leq (12\sqrt{3})^{-1}}$ and they are the only random triangulations satisfying the following spatial Markov property: for any finite triangulation $t$ with $|t|$ vertices in total and a hole of perimeter $p$, we have
\begin{equation}
\P \left( t \subset \mathbb{T}_{\lambda} \right) = C_{p}(\lambda) \lambda^{|t|}.
\end{equation}
In particular, such a triangulation exists if and only if $\lambda \in \left( 0, \frac{1}{12\sqrt{3}} \right]$. Except for the critical case $\lambda=\frac{1}{12\sqrt{3}}$ (which corresponds to the UIPT), these objects exhibit hyperbolic properties \cite{CurPSHIT, B18}. Benjamini and Curien conjectured in~\cite{CurPSHIT} that they are the local limits of uniform high genus triangulations. 

In a recent paper \cite{BL19}, the authors of the present work proved this conjecture. Asymptotics for the enumeration of high genus triangulations up to subexponential factors were derived as a byproduct.

\paragraph{Infinite Boltzmann Planar Maps.}
The goal of the present work is to generalize the results of~\cite{BL19} to a much wider family of models, where faces do not have to be triangles. For combinatorial reasons\footnote{More precisely, the enumeration results in the planar case are simpler for bipartite maps, and the recursion of~\cite{Lo19} holds only for bipartite maps.}, we will restrict ourselves to bipartite maps, which means that the face degrees have to be even. The limiting objects appearing in the limits are the \emph{Infinite Boltzmann Bipartite Planar Maps} (IBPM) introduced in~\cite[Appendix C]{B18these} as an analog of the PSHT for bipartite maps. We also refer to~\cite[Chapter 8]{C-StFlour} for the study of basic properties of these objects.

The IBPM are characterized by a spatial Markov property similar to the one satisfied by the PSHT. If $m$ is a finite map with one hole, we write $m \subset M$ if $M$ can be obtained by filling the hole of $m$, possibly with a map with a nonsimple boundary\footnote{More precisely, we use the sense introduced by Budd in~\cite{Bud15}, i.e. the sense corresponding to the "lazy" peeling process, as opposed to the one introduced by Angel in~\cite{Ang03}. See Section~\ref{subsec_univ_defns} for precise definitions.}. Let $\q=(q_j)_{j \geq 1}$ be a sequence of nonnegative numbers. An infinite random planar map $M$ is called a $\q$-IBPM if there are numbers $\left( C_p \right)_{p \geq 1}$ such that, for any finite map $m$ with one hole of perimeter $2p$, we have
\[ \P \left( m \subset M \right) = C_p \times \prod_{f \in m} q_{\deg(f)/2}, \]
where the product is over all internal faces of $m$. In particular, they generalize the infinite \emph{critical} bipartite Boltzmann maps defined and studied by Budd in \cite{Bud15}. It was proved in~\cite[Appendix C]{B18these} that there is at most one $\q$-IBPM, that we denote by $\mathbb{M}_{\q}$. Moreover \cite[Appendix C]{B18these} provides both necessary conditions and sufficient ones on $\q$ for the existence of $\mathbb{M}_{\q}$, but no explicit characterization (these results are recalled in Section~\ref{subsec_IBPM} below). The present work improves on these results by giving an explicit parametrization of the weight families $\q$ for which $\M_{\q}$ exists. More precisely, as stated in Theorem~\ref{thm_prametrization_rootface} below, such families $\q$ can be parametrized by the law of the degree of the root face of $\MM_{\q}$, which may be any law on $\{2,4,6,\dots\}$, and an additional hyperbolicity parameter $\omega \in [1, +\infty)$. The critical maps of~\cite{Bud15} correspond to the case $\omega=1$, and are already known to be local limits of \emph{planar} maps. On the other hand, the case $\omega>1$ has a hyperbolic flavour.

\paragraph{Local limits of high genus bipartite maps.}
The main result of this work is that uniform maps with high genus and prescribed face degrees converge locally to the $\q$-IBPM when the size goes to infinity. This can be seen as a \textit{universality} result in the domain of high genus maps, in the sense that regardless of the precise model of maps, the same phenomenon is observed.

More precisely, we will use the notation $\mathbf{f}=(f_j)_{j \geq 1}$ for face degree sequences (i.e. $f_j \geq 0$ for all $j$, and $f_j=0$ eventually). For such a sequence $\mathbf{f}$, we set $|\mathbf{f}|=\sum_{j \geq 1} j f_j$, which describes the number of edges of a map with $f_j$ faces of degree $2j$ for all $j \geq 1$. For $g \geq 0$, we also write
\begin{equation}\label{defn_v_f_g}
v(\mathbf{f}, g) = 2-2g+\sum_{j \geq 1} (j-1)f_j.
\end{equation}
By the Euler formula, a bipartite map with genus $g$ and face degrees described by $\mathbf{f}$ exists if and only if $v(\mathbf{f}, g) \geq 2$, and in this case $v(\mathbf{f}, g)$ is the number of vertices of such a map. For such $\mathbf{f}$ and $g$, we denote by $M_{\ff,g}$ a uniform bipartite map with genus $g$ and $f_j$ faces of degree $2j$ for all $j \geq 1$.

\begin{thm}\label{univ_main_thm}
Let $(\mathbf{f}^{n})_{n \geq 1}$ be a sequence of face degree sequences, and let $(g_n)$ be a sequence such that $v(\mathbf{f}^{n}, g_n) \geq 2$ for all $n \geq 1$. We assume that $|\mathbf{f}^n| \to +\infty$ when $n \to +\infty$ and that $\frac{f^n_j}{|\mathbf{f}^n|} \to \alpha_j$ for all $j \geq 1$, where $\sum_{j \geq 1} j \alpha_j=1$.
We also assume $\frac{g_n}{|\mathbf{f}^n|} \to \theta$, where $0 \leq \theta < \frac{1}{2} \sum_{j \geq 1} (j-1) \alpha_j$.
Finally, assume that $\sum_{j \geq 1} j^2 \alpha_j  <+\infty$.

Then we have the convergence in distribution
\[ M_{\ff^n, g_n} \xrightarrow[n \to +\infty]{(d)} \MM_{\q}\]
for the local topology, where the weight sequence $\q$ depends only on $\theta$ and $\left( \alpha_j \right)_{j \geq 1}$, in an injective way.
\end{thm}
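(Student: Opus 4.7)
The plan is to follow the blueprint of \cite{BL19}, in which the analogous statement was proved for triangulations. Two ingredients are needed: tightness of the sequence $(M_{\ff^n,g_n})_{n \geq 1}$ for the local topology, and identification of any subsequential limit with $\MM_{\q}$ for an appropriate weight sequence $\q$. Tightness should follow from a uniform control on the degree of the root vertex (or, dually, of the root face) in $M_{\ff^n,g_n}$. The second moment assumption $\sum_j j^2 \alpha_j < \infty$ gives the required tail bound on the root face degree directly, and a standard peeling exploration then extends the control to any fixed ball around the root.

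For identification, I would start from the observation that for any finite bipartite map $m$ with one hole of perimeter $2p$, the probability $\P(m \subset M_{\ff^n,g_n})$ is, up to a combinatorial factor depending only on $m$, a ratio of the form
\[ \frac{\beta^{(2p)}\!\left(\ff^n - \ff(m),\ g_n - g(m)\right)}{\beta\!\left(\ff^n,\ g_n\right)}, \]
where $\beta(\ff,g)$ counts rooted bipartite maps of genus $g$ with face degrees $\ff$, and $\beta^{(2p)}$ is the analog with one distinguished hole of perimeter $2p$. By the spatial Markov characterization of IBPMs, the whole theorem reduces to showing that these ratios converge to $C_p \prod_j q_j^{f_j(m)}$ for some $(C_p)_{p \geq 1}$ and $\q = (q_j)_{j \geq 1}$ that can be matched with an admissible IBPM weight family.

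The extraction of these asymptotics rests on the bipartite analog of the Goulden--Jackson recursion, namely the recursion of \cite{Lo19}, which relates $\beta(\ff,g)$ at neighbouring values of its arguments. Dividing through by $\beta(\ff^n,g_n)$ turns it into an algebraic relation between the elementary ratios
\[ a_j^n = \frac{\beta(\ff^n - \mathbf{1}_j,\ g_n)}{\beta(\ff^n,\ g_n)} \qquad \text{and} \qquad b^n = \frac{\beta(\ff^n,\ g_n - 1)}{\beta(\ff^n,\ g_n)}, \]
where $\mathbf{1}_j$ is the sequence with a single $1$ in position $j$. A compactness argument yields convergence along subsequences, and iterating the one-face reduction over the internal faces of $m$ produces the desired product structure $\prod_j q_j^{f_j(m)}$; the weights $q_j$ are read off from the limits of the $a_j^n$.

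The main obstacle is to analyse the Louf recursion sharply enough to prove that (i) the ratios $a_j^n$ and $b^n$ converge along the full sequence, not merely along subsequences, and (ii) the resulting admissible family $\q$ is uniquely determined by $(\theta,(\alpha_j)_{j \geq 1})$. The parameter $\theta$ should govern the hyperbolicity index $\omega \in [1,+\infty)$ of Theorem~\ref{thm_prametrization_rootface}: the boundary case $\theta = 0$ recovers the planar critical regime $\omega = 1$ of \cite{Bud15}, while $\theta > 0$ yields genuinely hyperbolic limits with $\omega > 1$. Injectivity of the map $(\theta,(\alpha_j)) \mapsto \q$ then follows from Theorem~\ref{thm_prametrization_rootface}, since the $\alpha_j$ encode the law of the root-face degree of $\MM_{\q}$ and $\theta$ pins down the remaining parameter $\omega$.
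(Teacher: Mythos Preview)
Your proposal has a genuine gap: you are describing precisely the ``direct'' route that the paper explicitly says does \emph{not} work. You propose to extract convergence of the elementary ratios $a_j^n$ and $b^n$ from the recursion of \cite{Lo19}, and then assemble the product structure $\prod_j q_j^{f_j(m)}$. But the paper states that such asymptotics are not available, and in fact the convergence of ratios (Corollary~\ref{prop_cv_ratio}) is obtained \emph{as a corollary of} Theorem~\ref{univ_main_thm}, not as an input to it. The recursion~\eqref{rec_biparti_genre_univ} is used only to prove the crude estimates of Lemmas~\ref{lem_calcul_planar} and~\ref{lem_calcul_OE} (planarity and one-endedness of subsequential limits), not to pin down the limiting ratios.

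You also misplace the role of the hypothesis $\sum_j j^2 \alpha_j < \infty$. It is \emph{not} used for tightness: tightness, planarity, one-endedness, and the fact that every subsequential limit is a mixture $\MM_{\QQ}$ with random $\QQ$ all hold without it (this is Theorem~\ref{thm_main_more_general}). The second moment is used only in the \emph{two-holes argument} (Section~\ref{sec_arg_deux_trous}), which you omit entirely. That argument is the heart of the proof: one explores neighbourhoods of two independent uniform roots until both explored regions have the \emph{same} perimeter $2p$, swaps them, and derives a contradiction if $\QQ$ were genuinely random. Finding two regions with exactly equal perimeter requires the perimeter process not to jump over too many values, and this is where finiteness of $\sum_j j \nnu_{\q}(j)$ --- equivalent to $\sum_j j^2 \alpha_j < \infty$ --- is essential. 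Finally, even after the two-holes argument fixes $d(\QQ)$ and $(a_j(\QQ))_j$ almost surely, one still needs that these quantities determine $\q$; this is Proposition~\ref{prop_monotonicity_deg} (strict monotonicity of $\omega \mapsto d(\q^{(\omega)})$), whose proof is itself indirect and relies on the partial convergence results already established. Your appeal to Theorem~\ref{thm_prametrization_rootface} for injectivity is not enough, because that theorem parametrizes by $\omega$, and showing that $\theta$ determines $\omega$ is exactly the content of Proposition~\ref{prop_monotonicity_deg}.
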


Let us now make a few comments on the various assumptions of the main theorem.
\begin{itemize}
\item[$\bullet$]
We first note that the assumption that $\sum_{j \geq 1} j \alpha_j=1$ means that the proportion of the edges that are incident to a face with degree larger than $A$ goes to $0$ as $A \to +\infty$, uniformly in $n$. This is equivalent to saying that the root face stays almost surely finite in the limit, so this assumption is necessary to obtain a local limit with finite faces. If this assumption is waived, we expect to obtain different limit objects with infinitely many infinite faces.
\item[$\bullet$]
The assumption $\theta < \frac{1}{2} \sum_{j \geq 1} (j-1) \alpha_j$ means that the number of vertices is roughly proportional to $|\ff^n|$, so that the average degree of the vertices stays bounded. Therefore, it is also necessary in order to have a proper local limit with finite vertex degrees. Note that this assumption also implies $\alpha_1<1$, i.e. it is not possible that almost all faces are $2$-gons.
\item[$\bullet$]
The assumption $\sum_{j \geq 1} j^2 \alpha_j  <+\infty$ means that the \emph{expected} degree of the root face stays finite in the limit. We do not expect this assumption to be necessary. However, one of the steps of our proof (the "two-holes argument" of Section~\ref{sec_arg_deux_trous}) crucially requires a bound on the tail of the degrees of the faces.
\end{itemize}
Finally, the application associating the weight sequence $\q$ given by Theorem~\ref{univ_main_thm} to $\left( \theta, \left( \alpha_j \right)_{j \geq 1}\right)$ is surjective in the sense that every IBPM $\MM_{\q}$ for which the degree of the root face has finite expectation can be obtained as a local limit through Theorem~\ref{univ_main_thm}.

\paragraph{The heavy tail case.}
Although we could not remove the assumption $\sum_{j \geq 1} j^2 \alpha_j$ in Theorem~\ref{univ_main_thm}, most of the steps of the proof do not require this assumption. In particular, we can still obtain the following partial result in the general case.

\begin{thm}\label{thm_main_more_general}
Let $(\mathbf{f}^{n})_{n \geq 1}$ be a sequence of face degree sequences, and let $(g_n)$ be a sequence such that $v(\mathbf{f}^{n}, g_n) \geq 2$ for all $n \geq 1$. We assume that $|\mathbf{f}^n| \to +\infty$ when $n \to +\infty$ and that $\frac{f^n_j}{|\mathbf{f}^n|} \to \alpha_j$ for all $j \geq 1$, where $\sum_{j \geq 1} j \alpha_j=1$.
We also assume $\frac{g_n}{|\mathbf{f}^n|} \to \theta$, where $0 \leq \theta < \frac{1}{2} \sum_{j \geq 1} (j-1) \alpha_j$.

Then the sequence of random maps $\left( M_{\ff^n, g_n} \right)_{n \geq 1}$ is tight for the local topology. Moreover, all its subsequential limits are of the form $\MM_{\QQ}$, where $\QQ$ is a random Boltzmann weight sequence.
\end{thm}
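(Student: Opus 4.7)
My plan is to adapt the peeling-based strategy of \cite{BL19} from triangulations to bipartite maps with arbitrary prescribed face degrees. Run a lazy peeling exploration of $M_{\ff^n, g_n}$ and denote by $e_k$ the explored region after $k$ steps, a finite bipartite map with a single hole of perimeter $2p_k$. The spatial Markov property of the uniform measure implies that, conditionally on $e_k$, the unexplored complement is a uniform bipartite map of genus $g_n - g(e_k)$ with face-degree sequence $\ff^n - \ff_{e_k}$ and a boundary of perimeter $2p_k$; hence the one-step transition probabilities of the peeling are explicit ratios of counting numbers of such maps. For tightness in the local topology, the hypothesis $\sum_j j \alpha_j = 1$ forces the degree of the root face (and, by the Markov structure, of every face revealed during peeling) to be tight, since its law is size-biased by $j \alpha_j$, while $\theta < \tfrac{1}{2} \sum_j (j-1) \alpha_j$ ensures a positive density of vertices in the limit. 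Induction on the number of peeling steps, using that $|\ff^n - \ff_{e_k}|$ still tends to infinity when $k$ stays bounded, extends this tightness to balls of any fixed radius.

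For the characterisation of subsequential limits, extract a subsequence along which $M_{\ff^n, g_n}$ converges in distribution, jointly with all the relevant ratios of counting numbers. Define $\QQ = (Q_j)_{j \geq 1}$ as the random limit of the per-face ratios. For any finite bipartite map $m$ with a single hole of perimeter $2p$ and face-degree counts $\ff_m$, the spatial Markov property of the uniform measure gives
\[
\P\!\left(m \subset M_{\ff^n, g_n}\right) = \frac{\beta(\ff^n - \ff_m, g_n, p)}{\beta(\ff^n, g_n)},
\]
where $\beta$ denotes the appropriate counting numbers of bipartite maps with the prescribed data. Using the recursion of \cite{Lo19} to analyse how $\beta$ responds when a bounded collection of faces is removed, this ratio factorises in the limit as $C_p \prod_{f \in m} Q_{\deg(f)/2}$ conditionally on $\QQ$, which is precisely the Markov property characterising $\MM_{\QQ}$.

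The main obstacle is establishing this multiplicative factorisation in the limit: it requires precise asymptotic control of $\beta$ when one removes an arbitrary finite collection of faces, which rests on the double recursion of \cite{Lo19}. Without the second-moment bound $\sum_j j^2 \alpha_j < \infty$, these ratios may only converge along subsequences and to random limits, which is exactly why $\QQ$ must be allowed to be random in Theorem~\ref{thm_main_more_general}; the two-holes argument of Section~\ref{sec_arg_deux_trous}, which does use this moment bound, is the additional input that pins $\QQ$ down deterministically in Theorem~\ref{univ_main_thm}.
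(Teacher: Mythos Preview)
Your proposal has genuine gaps on both the tightness side and the characterisation side.

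\textbf{Tightness.} Tightness of face degrees follows from $\sum_j j\alpha_j=1$, as you say, but that only yields tightness for the \emph{dual} local distance $d^*_{\loc}$. To get tightness for $d_{\loc}$ you must show that the root \emph{vertex} degree is tight, and your ``induction on peeling steps'' does not address this. The paper handles it via a Bounded Ratio Lemma (Lemma~\ref{lem_BRL}) showing $\beta_g(\ff)/\beta_g(\ff-\mathbf{1}_{j_0})$ is bounded, which is proved by a fairly involved surgery that removes a face (or a packet of faces) while controlling the degrees of the vertices and faces touched. This lemma is then used, exactly as in \cite{AS03}, to bound from below the probability that a peeling exploration around $\rho$ terminates in bounded time.

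\textbf{Planarity and one-endedness.} You do not prove that subsequential limits are planar and one-ended, yet the $\q$-IBPM are planar one-ended objects, so this is essential. The paper obtains these from estimates (Lemmas~\ref{lem_calcul_planar} and~\ref{lem_calcul_OE}) derived from the recursion~\eqref{rec_biparti_genre_univ} of~\cite{Lo19}; that recursion is used here, not to analyse factorisation of $\beta$-ratios as you suggest.

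\textbf{Characterisation of the limit.} Your plan is to define $Q_j$ as limits of ``per-face ratios'' and then argue that $\P(m\subset M_{\ff^n,g_n})$ factorises as $C_p\prod_f Q_{\deg(f)/2}$. This multiplicative factorisation of counting-number ratios is precisely what one \emph{cannot} obtain directly without sharp asymptotics for $\beta_g(\ff)$, which are unavailable. The paper avoids this entirely: it simply observes that $\P(m\subset M)=\lim \beta^{(p)}_{g_n}(\ff^n-\vv)/\beta_{g_n}(\ff^n)$ depends on $m$ only through $(p,\vv)$, which makes $M$ \emph{weakly Markovian} (Definition~\ref{defn_weak_Markov}). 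It then proves the abstract Theorem~\ref{thm_weak_Markov_general}: any weakly Markovian one-ended bipartite planar map is a mixture $\MM_{\QQ}$. The proof of that theorem uses the peeling equations to show the discrete derivatives $\Delta^{\kk}a^p_{\vv}\ge 0$, then applies a multidimensional Hausdorff moment problem to produce the random $\QQ$. No control of $\beta$-ratios beyond their trivial dependence on $(p,\vv)$ is needed.
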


\paragraph{A parametrization of Infinite Bipartite Boltzmann Planar Maps.}
As explained briefly above, we also provide a new parametrization of the Boltzmann weight families $\q$ associated to an IBPM: instead of directly using the Boltzmann weights $q_j$, we parametrize them according to the law of the degree of the root face.

\begin{thm}\label{thm_prametrization_rootface}
Let $\boldsymbol{\alpha}$ be a probability measure on $\N^*$. Then the set of IBPM for which the half-degree of the root face has law $\boldsymbol{\alpha}$ forms a one parameter family $\left( \MM_{\q^{(\omega)}} \right)_{\omega \geq 1}$. Moreover, $\q^{(\omega)}$ is critical if and only if $\omega=1$, and the vertex degrees in $\MM_{\q^{(\omega)}}$ go to infinity when $\omega \to +\infty$.
\end{thm}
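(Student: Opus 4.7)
The plan is to exploit the existing characterization of admissible weight sequences $\q$ from \cite{B18these} (recalled in Section~\ref{subsec_IBPM}) and invert the correspondence between $\q$ and the pair $(\boldsymbol{\alpha}, \omega)$, where $\omega$ is a suitable hyperbolicity parameter. First, from the spatial Markov property one reads off the law of the root face half-degree: applying $\P(m \subset \MM_\q) = C_p \prod_{f \in m} q_{\deg(f)/2}$ to the elementary submaps obtained by glueing a single internal face of degree $2k$ onto the hole of perimeter $2$ bounded by the root edge, one gets an identity of the form $\alpha_k = \kappa_k \, q_k \, C_k$ for explicit combinatorial constants $\kappa_k$ and the Markov constants $(C_p)_{p \geq 0}$ coming from the spatial Markov property. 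In particular, $\boldsymbol{\alpha}$ is determined as soon as $\q$ and the $C_p$'s are known.

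Next, from the characterization of IBPM recalled in Section~\ref{subsec_IBPM}, the sequence $(C_p)$ satisfies a linear recursion whose space of admissible solutions is two-dimensional, so that after the normalization $C_0 = 1$ there remains exactly one degree of freedom. I would parametrize this degree of freedom by a number $\omega \geq 1$ such that $C_p = C_p(\q,\omega)$ is given by an explicit formula, arranged so that $\omega = 1$ recovers the critical regime of Budd \cite{Bud15}. Substituting this into $\alpha_k = \kappa_k\, q_k\, C_k(\q, \omega)$ and solving for $q_k$ yields candidate weights $q_k^{(\omega)} = \alpha_k / (\kappa_k\, C_k(\q^{(\omega)}, \omega))$. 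The aim is then to show that for every $\omega \geq 1$ this system admits a unique non-negative solution defining an admissible weight sequence, yielding the family $(\MM_{\q^{(\omega)}})_{\omega \geq 1}$; injectivity of $\omega \mapsto \q^{(\omega)}$ should follow from monotonicity of $C_k(\cdot, \omega)$ in $\omega$, and the critical case clause is then built into the construction.

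For the divergence of vertex degrees as $\omega \to \infty$, I would analyse the peeling exploration of $\MM_{\q^{(\omega)}}$: a new vertex is discovered precisely at the peeling steps that swallow a bubble of perimeter zero, so that the expected inverse vertex degree is controlled by the probability of such an event under the peeling transition law, itself an explicit function of $(\q, \omega)$. It then suffices to show that this probability tends to $0$ as $\omega \to \infty$, which forces vertex degrees to diverge.

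The main obstacle is the fixed-point step: the equations for $\q^{(\omega)}$ couple all the $q_k$ together through the sequence $C_k(\q,\omega)$, and verifying the existence of a non-negative admissible solution for every $\omega \geq 1$ is the technical heart of the argument. A cleaner route may be to construct $\MM_{\q^{(\omega)}}$ directly from its peeling transitions, parametrized explicitly in terms of $\boldsymbol{\alpha}$ and $\omega$ via the perimeter Markov chain, check that the resulting object is almost surely a well-defined random infinite map of the plane, and then read off $\q^{(\omega)}$ a posteriori from the spatial Markov property it satisfies by construction; this bypasses solving the fixed-point equation by hand and reduces the whole proof to a combinatorial identity between the peeling probabilities and the weight sequence.
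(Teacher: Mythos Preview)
Your outline has the right shape but contains a conceptual slip and misses the key computation.

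\textbf{The conceptual slip.} You write that the recursion for $(C_p)$ has a two-dimensional solution space, so that after normalizing $C_0=1$ one free parameter $\omega$ remains. This is not how the parameter enters. By Theorem~\ref{thm_rappel_these}, once $\q\in\cQ_h$ is fixed, the constants $C_p(\q)$ are \emph{uniquely} determined, and $\omega_\q$ is a function of $\q$ (the unique root $>1$ of $\sum_i\nu_\q(i)\omega^i=1$, or $1$ in the critical case). The one-parameter family is obtained by varying $\q$ itself: for each $\omega\ge 1$ one seeks the unique $\q$ with $\omega_\q=\omega$ and root-face law $\boldsymbol\alpha$. So the right system to solve is not ``find $C_p$ given $\q$'' but ``find $\q$ given $(\boldsymbol\alpha,\omega)$''.

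\textbf{The missing computation.} Once the setup is corrected, uniqueness is indeed easy: from $j\alpha_j=C_j(\q)q_j=(c_\q\omega_\q)^{j-1}h_j(\omega_\q)q_j$ and the defining equation for $c_\q$, one can solve backward and get the explicit formula~\eqref{eqn_qjomega_univ} for $q_j^{(\omega)}$. The actual work is \emph{existence}: checking that this candidate $\q^{(\omega)}$ lies in $\cQ_h$ with $\omega_{\q^{(\omega)}}=\omega$, i.e.\ that
\[
\sum_{i\in\Z}\nu_{\q^{(\omega)}}(i)\,\omega^i=1.
\]
Your proposal flags this fixed-point step as the obstacle but offers no mechanism to verify it; the alternative of building the map from its peeling transitions does not help, since one would still need exactly this identity to know that the transition probabilities sum to $1$. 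The paper's device is a harmonicity argument: it is known (for \emph{any} admissible $\q$) that $u(p)=4^{-p}\binom{2p}{p}$ is $\nu_\q$-harmonic on positive integers. Summing $\omega^{-j}u(j)$ over $1\le j\le p-1$ shows that $(\omega^p h_p(\omega))_{p\ge1}$ is $\nu_\q$-harmonic away from $p=1$, while the constraint $\sum_j j\alpha_j=1$ is reinterpreted as harmonicity at $p=1$. Dividing by $h_p(\omega)$ and letting $p\to\infty$ then yields the desired identity by dominated convergence. This is the step your sketch does not supply.

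\textbf{Vertex degrees.} Your heuristic is in the right direction but imprecise: it is not ``swallowing a bubble of perimeter zero'' that matters but rather any peeling step with a negative perimeter increment (these are the steps that can close off the root vertex). The paper computes directly from~\eqref{eqn_qjomega_univ} that $\widetilde\nu_{\q^{(\omega)}}(i)\to(i+1)\alpha_{i+1}$ for $i\ge 0$ as $\omega\to\infty$; since these already sum to $1$, the total mass on $i\le -1$ tends to $0$, so perimeter-decreasing steps become rare and the root degree diverges.
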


In particular, the existence of hyperbolic Boltzmann maps with arbitrarily heavy-tailed face degrees answers a question from~\cite{C-StFlour}, that was not settled by the results of \cite[Appendix C]{B18these}. Moreover, we can think of $\q^{(\omega)}$ as interpolating between a critical non-hyperbolic map, and a degenerate map with infinite vertex degrees.

\paragraph{Asymptotic enumeration.}
Like for triangulations, the most natural way to try to prove Theorem~\ref{univ_main_thm} would be to obtain precise asymptotics on the counting of maps with prescribed genus and face degrees, in order to mimic classical arguments going back to~\cite{AS03}. However, such asymptotics are not available and seem difficult to obtain. On the other hand, just like in~\cite{BL19} for triangulations, once Theorem~\ref{univ_main_thm} is proved, applying the arguments of~\cite{AS03} "backwards" allows to obtain a result about convergence of the ratio when we add one face of fixed degree. We denote by $\beta_g(\mathbf{f})$ the number of bipartite maps of genus $g$ with face degrees prescribed by $\mathbf{f}$.

\begin{corr}\label{prop_cv_ratio}
Let $\left( \mathbf{f}^{n} \right)_{n \geq 0}$ and $\left( g_n \right)_{n \geq 0}$ be such that $\frac{g_n}{n}\to\theta$ and $\frac{f^n_j}{|\mathbf{f}^n|}\to \alpha_j$ for all $j \geq 1$. We assume that $\sum j \alpha_j=1$, that $0 \leq \theta < \frac{1}{2} \sum (j-1) \alpha_j$ and that $\sum j^2 \alpha_j < +\infty$. We recall that by Theorem~\ref{univ_main_thm}, there is a weight sequence $\q$ such that $M_{\mathbf{f}^{n},g_n}$ converges locally to $\mathbb{M}_{\q}$. Then for all $j \geq 1$, we have
\begin{equation}\label{eq_cv_ratio}
\frac{\beta_{g_n}(\mathbf{f}^{n}-\mathbf{1}_j)}{\beta_{g_n}(\mathbf{f}^{n})} \xrightarrow[n \to +\infty]{} C_2(\q)q_j.
\end{equation}
\end{corr}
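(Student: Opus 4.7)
The plan is to adapt the ``backwards'' local-limit argument introduced by Angel and Schramm~\cite{AS03} and used by the authors for triangulations in~\cite{BL19}: we introduce a local event near the root whose probability, on the one hand, is read off from the spatial Markov property of the $\q$-IBPM, and on the other hand reduces combinatorially to the ratio $\beta_{g_n}(\ff^n - \mathbf{1}_j)/\beta_{g_n}(\ff^n)$.

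Concretely, for each $j \geq 1$ I would fix a finite bipartite map $m_j$ with one hole of perimeter $4$ and exactly one internal face of degree $2j$. Such an $m_j$ exists for every $j$: for $j\geq 2$ one may take a $2j$-gon with $j-2$ internal ``tree'' edges so that precisely $4$ edges lie on the boundary with the hole, while for $j = 1$ one may take a $2$-gon with a dangling tree edge inside the hole (which again gives hole perimeter $4$). Since $\{m_j \subset M\}$ is determined by a finite neighbourhood of the root, Theorem~\ref{univ_main_thm} together with the defining spatial Markov property of the $\q$-IBPM gives
\[
\P\bigl(m_j \subset M_{\ff^n, g_n}\bigr) \xrightarrow[n \to \infty]{} \P\bigl(m_j \subset \MM_{\q}\bigr) = C_2(\q)\, q_j,
\]
applied at half-perimeter $p=2$ and the unique internal face of degree $2j$.

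It then remains to identify the left-hand side combinatorially. The standard gluing bijection identifies rooted maps $M \in M_{\ff^n, g_n}$ with $m_j \subset M$ at the root and rooted bipartite maps of genus $g_n$ with face degrees $\ff^n - \mathbf{1}_j$ whose boundary of perimeter $4$ is glued into the hole of $m_j$. After tracking the rooting (the root of $M$ lies inside $m_j$, while the filler is rooted on its boundary) and the possibly non-simple boundary identifications allowed in the ``lazy'' sense of Budd~\cite{Bud15}, this gives an expression of $\P(m_j \subset M_{\ff^n, g_n})$ in terms of $\beta_{g_n}(\ff^n - \mathbf{1}_j)/\beta_{g_n}(\ff^n)$ multiplied by an explicit combinatorial prefactor; applying the same gluing argument with $m_j$ replaced by a tree pattern with no internal face (and $\ff^n - \mathbf{1}_j$ replaced by $\ff^n$ or $\ff^n - \mathbf{1}_j$) also shows how $\beta(\cdot\,;4)/\beta(\cdot)$ relates to $C_2(\q)$, which is needed to pin down the prefactor.

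The main obstacle I anticipate is precisely this last combinatorial bookkeeping: matching rooting conventions between $M$, $m_j$, and the filler, and controlling the non-simple boundary contributions cleanly enough that the resulting factor is exactly $C_2(\q)\, q_j$ with no spurious $j$-dependent correction. Once this is established, combining it with the local convergence above immediately yields~\eqref{eq_cv_ratio}.
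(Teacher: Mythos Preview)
Your strategy is the paper's: fix a pattern $m$ with a single internal $2j$-face, equate $\P(m\subset\MM_\q)$ (read off from the Markov property) with $\lim_n\P(m\subset M_{\ff^n,g_n})$ (computed combinatorially). The only difference is the choice of hole perimeter, and the paper's choice removes precisely the bookkeeping you flag as the main obstacle.

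The paper takes $m_j^0$ with a hole of half-perimeter $1$ (two boundary edges). For this minimal hole the combinatorics is a one-line bijection: given $M$ with $m_j^0\subset M$, contract $m_j^0$ to the root edge (equivalently, glue together the two boundary edges of the complementary map of the $2$-gon into a single rooted edge). This yields exactly
\[
\P\bigl(m_j^0\subset M_{\ff^n,g_n}\bigr)=\frac{\beta_{g_n}(\ff^n-\mathbf{1}_j)}{\beta_{g_n}(\ff^n)},
\]
with no prefactor to track and no auxiliary tree pattern needed.

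With your hole of half-perimeter $2$, the complement is a map of the $4$-gon, so $\P(m_j\subset M_{\ff^n,g_n})=\beta_{g_n}^{(2)}(\ff^n-\mathbf{1}_j)/\beta_{g_n}(\ff^n)$. The ``explicit combinatorial prefactor'' you hope for between $\beta_{g_n}^{(2)}(\ff')$ and $\beta_{g_n}(\ff')$ does not exist: the exact identity is $\beta_g^{(2)}(\ff')=\frac{2(f'_2+1)}{|\ff'|+2}\,\beta_g(\ff'+\mathbf{1}_2)$, which involves a \emph{shifted} face sequence rather than $\ff'$ itself. You can still close the argument by invoking Theorem~\ref{univ_main_thm} a second time (for the sequence $\ff^n-\mathbf{1}_j$, which satisfies the same hypotheses) to obtain $\beta_{g_n}^{(2)}(\ff^n-\mathbf{1}_j)/\beta_{g_n}(\ff^n-\mathbf{1}_j)\to C_2(\q)$ and then divide, but this detour is exactly what the minimal-perimeter choice avoids.
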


We also believe that the following is true:

\begin{conj}\label{thm_univ_asympto}
Let $(\mathbf{f}^{n})_{n \geq 1}$ be a sequence of face degree sequences, and let $(g_n)$ be a sequence such that $v(\mathbf{f}^{n}, g_n) \geq 2$ for all $n \geq 1$. We assume that $|\mathbf{f}^n| \to +\infty$ when $n \to +\infty$ and that $\frac{f^n_j}{|\mathbf{f}^n|} \to \alpha_j$ for all $j \geq 1$, where $\sum_{j \geq 1} j \alpha_j=1$.
We also assume $\frac{g_n}{|\mathbf{f}^n|} \to \theta$, where $0 \leq \theta \leq \frac{1}{2} \sum_{j \geq 1} (j-1) \alpha_j$.
Finally, assume that $\sum_{j \geq 1} j^2 \alpha_j  <+\infty$.
Then
\[\beta_{g_n}(\mathbf{f}^{n})= |\mathbf{f}^n|^{2g_n} \exp \left( \varphi \left( \theta,\left( \alpha_j \right)_{j \geq 1} \right) |\mathbf{f}^n| + o \left( |\mathbf{f}^n| \right) \right),\]
where $\varphi$ is some function.
\end{conj}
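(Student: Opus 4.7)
The plan is to iterate Corollary~\ref{prop_cv_ratio} along a path of face-degree sequences. Fix the target $(\theta^*, (\alpha_j^*))$. For each $n$, construct $\ff^{n,0}, \ldots, \ff^{n,K_n} = \ff^n$ where each step adds one face of some degree $2j_k$, chosen so that the empirical parameters $(\theta_k, (\alpha_j^k))$ of $\ff^{n,k}$ track a fixed smooth curve $\gamma$ in the admissible region $\{(\theta,\alpha) : \sum j\alpha_j = 1,\ 0 \leq \theta < \tfrac12 \sum (j-1)\alpha_j\}$ from a base point $(\theta_0, (\alpha_j^0))$ to $(\theta^*, (\alpha_j^*))$. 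Telescoping and applying the corollary at each step gives
\[ \log \beta_{g_n}(\ff^n) = \log \beta_{g_n}(\ff^{n,0}) - \sum_{k=0}^{K_n-1} \log\bigl( C_2(\q_k)\, q_{j_k}(\q_k) \bigr) + o(|\ff^n|), \]
with $\q_k$ the weight sequence associated to $(\theta_k,(\alpha_j^k))$ via Theorem~\ref{univ_main_thm}. If the convergence in the corollary is uniform along $\gamma$, the sum is a Riemann sum of size $K_n \sim |\ff^n|$ that converges to $|\ff^n|$ times an integral along $\gamma$, which (combined with the base contribution) should give $2 g_n \log |\ff^n| + \varphi\, |\ff^n| + o(|\ff^n|)$.

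The first and main obstacle is upgrading Corollary~\ref{prop_cv_ratio} to \emph{uniform} convergence on compact subsets of the admissible region. This requires revisiting the proofs of Theorem~\ref{univ_main_thm} and the corollary with quantitative care, so that the tightness estimates, the two-hole argument, and the identification of the limiting random weight sequence $\QQ$ with the deterministic $\q$ hold with error bounds depending only on the compact parameter set and not on the particular sequence $(\ff^n,g_n)$. The concentration inequalities involved are the most delicate part, and since the two-hole argument uses $\sum j^2\alpha_j<+\infty$ crucially, one expects to need uniform tail bounds on $(\alpha_j)$ across the compact set.

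The second obstacle is the choice of base case, which must produce the factor $|\ff^n|^{2g_n}$. A natural choice is $\ff^{n,0}$ with $|\ff^{n,0}|$ of order $g_n$ and $(\theta_0, (\alpha_j^0))$ near the boundary $\theta_0 \approx \tfrac12 \sum (j-1)\alpha_j^{(0)}$, where the underlying maps have a bounded number of vertices and can be enumerated directly, for instance through Bouttier--di Francesco--Guitter-type bijections or through classical maximal-genus enumeration formulas; such asymptotics naturally produce $2 g_n \log|\ff^{n,0}| = 2g_n \log|\ff^n| + O(|\ff^n|)$, the extra linear term being absorbed into $\varphi$. Finally, path-independence of the resulting $\varphi$ should follow from the fact that the ratio in Corollary~\ref{prop_cv_ratio} encodes a closed first-order PDE for $\varphi$ in the variables $(\theta, (\alpha_j))$, whose solution with the chosen boundary data on the base is unique; this same PDE is what gives the function $\varphi$ an intrinsic characterization independent of the construction.
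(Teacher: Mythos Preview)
The statement you are trying to prove is explicitly labelled a \emph{Conjecture} in the paper, and the paper does \emph{not} provide a proof. Immediately after stating it, the authors explain why they leave it open: in the triangulation case~\cite{BL19}, the analogous result was obtained by first iterating the ratio convergence (the analogue of Corollary~\ref{prop_cv_ratio}) down to maps with genus close to maximal, and then handling that base case by contracting a spanning tree to reduce to one-vertex triangulations, for which explicit enumeration formulas exist. The authors state that this contraction step is precisely what fails to extend to general bipartite maps with prescribed face degrees, because contracting a spanning tree alters the face-degree profile in a way that is no longer controlled.

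Your proposal is not a proof but a strategy outline, and you are candid about this: you flag the two main obstacles yourself. Your overall architecture (telescoping the ratio along a path of face-degree sequences, then anchoring at a base case near the maximal-genus boundary) is essentially the same as the strategy the paper attributes to~\cite{BL19}. The difficulties you identify are real and are exactly why the paper leaves the result open. In particular:
\begin{itemize}
\item Your ``second obstacle'' (the base case producing the factor $|\ff^n|^{2g_n}$) is the one the paper singles out as the genuine blocker. You propose using ``Bouttier--di Francesco--Guitter-type bijections or classical maximal-genus enumeration formulas'', but no such formulas are available for bipartite maps with a prescribed mixed face-degree profile near maximal genus, and the spanning-tree contraction that works for triangulations does not preserve the face degrees here. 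This is not a detail to be filled in; it is the missing idea.
\item Your ``first obstacle'' (uniformity of Corollary~\ref{prop_cv_ratio} along the path) is also nontrivial. The proof of Theorem~\ref{univ_main_thm} goes through tightness, subsequential limits, and the two-holes argument, none of which yields quantitative error bounds as written; upgrading these to uniform estimates over a compact parameter set would require substantial new work.
\end{itemize}
In short, your sketch correctly identifies the shape of the argument and its gaps, but does not close them; the paper agrees with you that those gaps are open.
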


More precisely, in~\cite{BL19}, the proof consists of first using the analog of Corollary~\ref{prop_cv_ratio} to estimate the ratio between triangulations with any genus and triangulations with a genus close to maximal (say with $\eps |\mathbf{f}|$ vertices). To count such triangulations, we contracted a spanning tree to reduce the problem to triangulations with only one vertex, for which explicit formulas are known. This "contraction" is the step that is difficult to extend to our setting here: while for triangulations we simply obtained a triangulation with less faces, here the impact on the face degrees may become much more complex. This is why we leave the question as open.

\paragraph{Sketch of the proof of Theorem~\ref{univ_main_thm}: common points and differences with the triangular case.}
The proof is a combination of combinatorial and probabilistic ideas. It follows the same global strategy as in \cite{BL19}, which shows the robustness of the approach of~\cite{BL19}. However, new difficulties arise at each of the steps, which makes the overall proof much longer. 

More precisely, the first step consists of showing the tightness of $M_{\ff^n, g_n}$. This follows from a \emph{bounded ratio lemma} (Lemma~\ref{lem_BRL}), stating that under certain assumptions the ratio $\frac{\beta_g \left(\ff+\mathbf{1}_j \right)}{\beta_g \left(\ff \right)}$ is bounded. As in~\cite{BL19}, this lemma is established using surgery operations to remove a face, but this surgery can affect a larger region than in the triangular case, which makes it more elaborated. The second step is to prove that any subsequential limit is planar and one-ended. This relies on the recurrence proved by the second author in~\cite{Lo19}, and only requires minor adaptations compared to~\cite{BL19}. We then notice that any subsequential limit enjoys a weak spatial Markov property, which implies that it must be of the form $\MM_{\mathbf{Q}}$, for some random weight sequence $\QQ$. This part is also similar to~\cite{BL19}, although additional technicalities arise. All these arguments do not use any assumption on the tail of the degrees of the faces, and prove Theorem~\ref{thm_main_more_general}.

The end of the proof consists in showing that $\mathbf{Q}$ is actually deterministic. As in~\cite{BL19}, this step relies on a surgery argument called the \emph{two holes argument}, for which we need to explore two pieces of maps with the exact same boundary length. This is where the assumption $\sum j^2 \alpha_j<+\infty$ is crucial: without it, when we explore a piece of map "face by face", the perimeter makes large positive jumps and misses too many values. Another major difference with \cite{BL19} is in the last step, where we match the average degree in finite models (computed with the Euler formula) and in infinite ones. In particular, we need to argue that a weight sequence $\q$ is determined by the law of the root face of $\mathbb{M}_{\q}$ and the average vertex degree. While for triangulations we were able to obtain an explicit formula for the average vertex degree, this is not the case here. Therefore, we need to develop new arguments making use of the local limit results obtained earlier in the paper. This is also the reason why the link between $\theta$, $\left( \alpha_j\right)$ and $\q$ in Theorem~\ref{univ_main_thm} is not explicit.

\paragraph{Weakly Markovian bipartite maps.}
Just like in~\cite{BL19}, the argument showing that a subsequential limit is a mixture of IBPM is a result of independent interest, so we give its statement here. Let $M$ be a random infinite, one-ended, bipartite planar map. We call $M$ \emph{weakly Markovian} if for any finite map $m$ with one hole, the probability $m \subset M$ only depends on the perimeter of $m$ and on the family of degrees of its internal faces. We denote by $\mathcal{Q}_h$ the set of weight sequences $\q$ for which $\mathbb{M}_{\q}$ exists, and by $\mathcal{Q}_f \subset \mathcal{Q}_h$ the set of those $\q$ for which the expected degree of the root face in $\mathbb{M}_{\q}$ is finite (this will be useful to handle the last assumption in Theorem~\ref{univ_main_thm}).

\begin{thm}\label{thm_weak_Markov_general}
Let $M$ be a weakly Markovian infinite, one-ended, bipartite random planar map. Then there is a random weight sequence $\QQ \in \mathcal{Q}_h$ such that $M$ has the same distribution as $\MM_{\QQ}$. Moreover, if the degree of the root face of $M$ has finite expectation, then $\QQ \in \mathcal{Q}_f$ almost surely.
\end{thm}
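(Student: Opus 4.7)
The plan is to follow the approach used for the analogous theorem on triangulations in \cite{BL19}, adapted to the bipartite lazy-peeling setting of \cite{Bud15, C-StFlour, B18these}. By the weak Markov assumption, the probability $\P(m \subset M)$ depends only on the half-perimeter $p$ of the hole of $m$ and on the multiset $\mathbf{f}$ of half-degrees of its internal faces; call it $a(p, \mathbf{f})$. The aim is to extract a random weight sequence $\QQ$ from this function along a peeling exploration of $M$, and then to identify $M$ with $\MM_\QQ$ conditionally on $\QQ$.

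First, I would fix any lazy peeling algorithm $\A$ and let $(\epsilon_t)_{t \geq 0}$ be the associated exploration, with filtration $(\mathcal{F}_t)$, hole half-perimeter $p_t$ and explored face-degree sequence $\mathbf{f}_t$. By weak Markov, the conditional probability given $\mathcal{F}_t$ of revealing a fresh face of degree $2j$ at the peeled edge is $a(p_t+j-1, \mathbf{f}_t+\mathbf{1}_j)/a(p_t, \mathbf{f}_t)$, with analogous formulas for the identification transitions that split the hole. I would then combine these ratios into positive bounded $(\mathcal{F}_t)$-martingales---for instance by taking carefully chosen products and quotients over different $j$ and over the two types of transitions---so that the unknown spatial factors $C_p$ cancel out. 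Martingale convergence together with the one-endedness of $M$ (which forces the peeling to exhaust $M$) yields almost-sure limits $Q_j$ for every $j \geq 1$, and a short consistency check shows that $\QQ := (Q_j)_{j \geq 1}$ does not depend on $\A$.

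Next, I would verify that conditionally on $\QQ$, the law of $M$ is that of $\MM_\QQ$: since a one-ended infinite bipartite planar map is characterized by its peeling transitions, it suffices to rewrite $a(p_t, \mathbf{f}_t)$ as a telescoping product along the exploration and identify each factor, via the definition of $Q_j$, with the corresponding IBPM transition. In particular $\QQ \in \mathcal{Q}_h$ almost surely. For the second assertion, if the root face of $M$ has finite expected degree, then conditioning on $\QQ$ and applying Fatou yields $\E[\deg(f_{\mathrm{root}})\mid\QQ] < \infty$ almost surely, which is precisely the statement that $\QQ \in \mathcal{Q}_f$ almost surely.

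The main obstacle is the construction of the martingales in the second step: the ratios $a(p_t+j-1, \mathbf{f}_t+\mathbf{1}_j)/a(p_t, \mathbf{f}_t)$ mix the sought weight $q_j$ with the unknown coefficients $C_p$, so one cannot directly read off $q_j$ from a single ratio. Overcoming this requires an algebraic combination of several peeling outcomes---already present in \cite{BL19} but more intricate in the bipartite case, since a single peeling step can swallow a boundary segment of arbitrary length---after which the $C_p$ factors cancel, leaving only the weight $q_j$. Once this is set up, the remainder of the argument is essentially the bookkeeping of \cite{BL19} carried through with the bipartite IBPM peeling transitions of \cite{Bud15, B18these} in place of the PSHT ones.
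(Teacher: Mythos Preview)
Your proposal has a genuine gap, and it also misidentifies the method of \cite{BL19}. The weakly Markovian theorem for triangulations in \cite{BL19} is \emph{not} proved by building martingales along a peeling exploration and reading off the Boltzmann weight as a limit; it is proved by showing that the discrete derivatives of the sequence $(a^p_v)$ are nonnegative and then invoking the Hausdorff moment problem to produce a measure on the parameter space. The present paper follows the same route, suitably generalized.

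The step you flag as ``the main obstacle'' is in fact the entire content of the argument, and you leave it as a pure handwave. Concretely: if $M=\MM_{\QQ}$ with $\QQ$ random, then $a(p,\mathbf{f})=\E[C_p(\QQ)\,\QQ^{\mathbf{f}}]$, so the one-step ratio $a(p+j-1,\mathbf{f}+\mathbf{1}_j)/a(p,\mathbf{f})$ is a ratio of \emph{expectations}, not $C_{p+j-1}(\QQ)Q_j/C_p(\QQ)$. There is no evident algebraic combination of such ratios in which the $C_p(\QQ)$ cancel, because each $C_p(\q)$ depends on the full sequence $\q$ (through $c_\q$ and $\omega_\q$) rather than on a single $q_j$. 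In the triangulation case there is a single parameter and the peeling transitions are indexed by a single integer, which is what makes the moment argument tractable there; it does not translate into a martingale cancellation trick, and you give no construction. Without that construction, the ``martingale convergence'' step, the claim that the limit is independent of $\A$, and the subsequent identification of the conditional law all have no content.

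What the paper actually does is the following. Write $a^p_\vv$ for $\P(m\subset M)$ when $m$ has hole half-perimeter $p$ and internal face profile $\vv$. Using the peeling equation (and, crucially, a \emph{double} peeling equation to handle the lazy-peeling feature that $a^p_{\vv+\mathbf{1}_j}$ does not appear directly on the right-hand side), one proves $(\Delta^{\kk}a^p)_\vv\ge 0$ for all $\kk\ge \mathbf{0}$. An ``incomplete'' infinite-dimensional Hausdorff moment problem (Proposition~\ref{prop_moment_Hausdorff}, needed because $a^p_\vv$ is only defined for $\vv$ satisfying $\sum_j(j-1)v_j\ge p-1$) then yields, for each $p$, a $\sigma$-finite measure $\mu_p$ on $\cQ^*$ with $a^p_\vv=\int \q^\vv\,\mu_p(\dq)$ for $\vv$ in the ``interior'' $\VV_p^*$. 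Substituting back into the peeling equation identifies $\mu_p(\dq)=C_p(\q)\,\mu_1(\dq)$, which forces $\mu_1$ to be supported on $\cQ_h$; letting $\QQ$ have law $\alpha^{-1}\mu_1$ gives $a^p_\vv=\alpha\,\E[C_p(\QQ)\QQ^\vv]$ on $\VV_p^*$. A final lemma (Lemma~\ref{lem_volumes_star}) shows that along any peeling exploration the pair $(P_t,\mathbf{V}_t)$ eventually lands in $\VV_{P_t}^*$, which upgrades the equality to all $\vv$ and forces $\alpha=1$. Your treatment of the second assertion (finite expected root-face degree implies $\QQ\in\cQ_f$ a.s.) is fine and matches the paper.
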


\paragraph{Structure of the paper.}
In Section~\ref{sec_univ_prelim}, we review basic definitions on maps, and previous combinatorial results that will be used in all the paper. We also introduce the IBPM and describe various parametrizations of the set of IBPMs, and in particular prove Theorem~\ref{thm_prametrization_rootface}. In Section~\ref{sec_univ_tight}, under the assumptions of Theorem~\ref{thm_main_more_general} (i.e. without the assumption on the tail of face degrees), we prove that the maps $M_{\ff^n, g_n}$ are tight for the local topology, and that any subsequential limit is a.s. planar and one-ended. Section~\ref{sec_univ_markov} is devoted to the proof of Theorem~\ref{thm_weak_Markov_general}, which implies that any subsequential limit of $M_{\ff^n, g_n}$ is an IBPM with random parameters. This is sufficient to prove Theorem~\ref{thm_main_more_general}. In Section~\ref{sec_univ_end}, we conclude the proof of Theorem~\ref{univ_main_thm} by showing that the parameters are deterministic and depend only on $\theta$ and $\left( \alpha_j \right)_{j \geq 1}$. In Section~\ref{sec_univ_asymp}, we deduce the combinatorial estimate of Corollary~\ref{prop_cv_ratio} from Theorem~\ref{univ_main_thm}. Finally, the Appendices contain the proofs of some technical results.

\tableofcontents

\newpage

\section*{Index of notations} \label{sec_univ_index}

\addcontentsline{toc}{section}{\protect\numberline{}Index of notations}

In general, we will we will use lower case letters such as $m$ to denote deterministic objects or quantities, upper case letters such as $M$ for random objects and $\mathtt{mathcal}$ letters such as $\mathcal{M}$ for sets of objects. We will use $\mathtt{mathbf}$ characters such as $\mathbf{q}$ for sequences, and normal characters such as $q_j$ for their terms.

\begin{itemize}
\item
$\mathbf{f}=(f_j)_{j \geq 1}$: denotes a face degree sequence ($\mathbf{F}$ will denote a random face degree sequence).
\item
$g$: will denote the genus.
\item
$\mathcal{B}_{g}(\mathbf{f})$: set of finite bipartite maps with genus $g$ and $f_j$ faces of degree $2j$ for all $j \geq 1$.
\item
$\beta_{g}(\mathbf{f})$: cardinality of $\mathcal{B}_{g}(\mathbf{f})$.
\item
$M_{\mathbf{f},g}$: uniform random map in $\mathcal{B}_{g}(\mathbf{f})$.
\item
$|\mathbf{f}|=\sum_{j \geq 1} j f_j$ (i.e. the number of edges of a map in $\mathcal{B}_{g}(\mathbf{f})$).
\item
$v(\mathbf{f}, g) = 2-2g+\sum_{j \geq 1} (j-1)f_j$ (i.e. the number of vertices of a map in $\mathcal{B}_{g}(\mathbf{f})$).
\item
$\overline{\B}$: space of finite or infinite bipartite maps with finite vertex degrees, equipped with the local distance $d_{\loc}$.
\item
$\overline{\B}^*$: space of finite or infinite bipartite maps with finite or infinite vertex degrees, equipped with the dual local distance $d_{\loc}^*$.
\item
$\theta$: limit value of $\frac{g}{|\mathbf{f}|}$ when $|\mathbf{f}| \to +\infty$.
\item
$\q=(q_j)_{j \geq 1}$: denotes a weight sequence ($\QQ$ denotes a random weight sequence).
\item
$\MM_{\q}$: the infinite bipartite Boltzmann planar map with weight sequence $\q$.
\item
$W_p(\q)$: partition function of finite Boltzmann bipartite maps of the $2p$-gon with weights $\q$.
\item
$\cQ=[0,1]^{\N^*}$.
\item
$\cQ^*=\left\{ \q=(q_j)_{j \geq 1} \in \cQ | \exists j \geq 2, q_j>0 \right\}$.
\item
$\cQ_a$: set of admissible families of Boltzmann weights $\q$, i.e. such that $W_p(\q)<+\infty$.
\item
$\cQ_h$: set of Boltzmann weights for which $\MM_{\q}$ exists. We have $\cQ_h \subset \cQ_a \cap \cQ^*$.
\item
$\cQ_f$: set of Boltzmann weights $\q \in \cQ_h$ for which the expectation of the degree of the root face of $\MM_{\q}$ is finite.
\item
$c_{\q}$: for $\q \in \cQ_a$, denotes the solution of the equation \[\sum_{j \geq 1} q_j \frac{1}{4^{j-1}} \binom{2j-1}{j-1} c_{\q}^{j-1} = 1-\frac{4}{c_{\q}}.\]
\item
$\nu_{\q}(i)= \left\{ 
\begin{array}{ll}
q_{i+1} \, c_{\mathbf{q}}^{i} & \mbox{ if $i \geq 0$,} \\
2 W_{-1-i}(\mathbf{q}) \, c_{\mathbf{q}}^i & \mbox{ if $i \leq -1$.}
\end{array} \right.$. Step distribution of the random walk associated to the perimeter process of a finite $\q$-Boltzmann planar map.
\item
$\omega_{\q} \geq 1$: for $\q \in \cQ_h$, denotes the solution (other than $1$, unless $\q$ is critical) of \[\sum_{i \in \Z} \omega^i \nu_{\q}(i)=1.\]
\item
$\widetilde{\nu}_{\q}(i)=\omega_{\q}^i \nu_{\q}(i)$. Step distribution of the random walk on $\Z$ associated to the peeling process of $\MM_{\q}$.
\item
$C_p(\q)$: for $\q \in \cQ_h$ and $p \geq 1$, constants such that \[\P \left( m \subset \MM_{\q} \right)=C_p(\q) \times \prod_{f \in m} q_{\deg(f)/2}\] for every finite map $m$ with one hole of perimeter $2p$.
\item
$h_p(\omega)=\sum_{i=0}^{p-1} (4 \omega)^{-i} \binom{2i}{i}$. We have $C_p(\q)= (c_{\q} \omega_{\q})^{p-1} h_p(\omega_{\q})$. Also, the perimeter process associated to a peeling exploration of $\MM_{\q}$ is a Doob transform of the $\widetilde{\nu}_{\q}$-random walk by the harmonic function $\left( h_p(\omega_{\q}) \right)_{p \geq 1}$.
\item
$a_j(\q)=\frac{1}{j} \P \left( \mbox{the degree of the root face of $\MM_{\q}$ is $2j$}\right)$ for all $j \geq 1$.
\item
$\alpha_j$: denotes a possible value of $a_j(\q)$, or the limit of the ratio $\frac{f_j}{|\mathbf{f}|}$ when $|\mathbf{f}| \to +\infty$. We will always have $\sum_{j \geq 1} j \alpha_j=1$ and $\alpha_1<1$.
\item
$\q^{(\omega)}$: once $\left( \alpha_j \right)_{j \geq 1}$ has been fixed, denotes the weight sequence for which $\omega_{\q}=\omega$, and the law of the degree of the root face is described by $\left( \alpha_j \right)_{j \geq 1}$.
\item
$\mathcal{A}$: denotes a peeling algorithm.
\item
$\expl_t^{\mathcal{A}}(m)$: explored map after $t$ filled-in peeling steps on the map $m$ using algorithm $\mathcal{A}$. This is a finite map with holes.
\item
$P_t, V_t$: denote respectively the perimeter (i.e. the half-length of the boundary of the hole) and the volume (i.e. the total number of edges) of the explored map after $t$ steps during a peeling exploration.
\item
$d(\q)=\E \left[ \left( \mbox{degree of the root vertex in $\MM_{\q}$} \right)^{-1} \right]$.
\item
$r_j(\mathbf{q})=\left( c_{\q} \omega_{\q} \right)^{j-1} q_j=\lim_{t \to +\infty} \frac{1}{t} \sum_{i=0}^{t-1} \mathbbm{1}_{P_{i+1}-P_i=j-1}$ for $\q \in \cQ_h$ and $j \geq 1$, where $P$ is the perimeter process associated to a peeling exploration of $\MM_{\q}$ (see Proposition~\ref{prop_q_as_limit}).
\item
$r_{\infty}(\q) = \frac{ \left(\sqrt{\omega_{\q}}-\sqrt{\omega_{\q}-1} \right)^2}{2 \sqrt{\omega_{\q}(\omega_{\q}-1)}} = \lim_{n \to +\infty} \frac{V_n-2P_n}{n}$ for $\q \in \cQ_h$ and $j \geq 1$, where $P$ and $V$ are the perimeter and volume processes associated to a peeling exploration of $\MM_{\q}$ (see Proposition~\ref{prop_q_as_limit}).
\end{itemize}

\newpage

\section{Preliminaries}\label{sec_univ_prelim}

Our purpose in this section is to recall basic definitions related to maps, local topology and peeling explorations as well as combinatorial results from previous works, and to introduce precisely the infinite objects that will appear in this paper.

\subsection{Definitions: maps and local topology}
\label{subsec_univ_defns}
\label{subsec_local_topology_univ}

\paragraph{Maps.} A (finite or infinite) \emph{map} $M$ is a way to glue a finite or countable collection of finite oriented polygons, called the \emph{faces}, along their edges in a connected way. By forgetting the faces of $M$ and looking only at its vertices and edges, we obtain a graph $G$ (if $M$ is infinite, then $G$ may have vertices with infinite degree). The maps that we consider will always be \emph{rooted}, i.e. equipped with a distinguished oriented edge called the \emph{root edge}. The face on the right of the root edge is the \emph{root face}, and the vertex at the start of the root edge is the \emph{root vertex}.

The \emph{dual map} of a map $m$ is the map $m^*$ whose vertices are the faces of $m$, and whose edges are the dual edges to those of $m$. We root $m^*$ at the oriented edge crossing the root edge of $m$ from left to right (see Figure~\ref{fig_map_dual}).
If the number of faces is finite, then $M$ is always homeomorphic to an orientable topological surface, so we can define the genus of $M$ as the genus of this surface. In particular, we call a map \emph{planar} if it has genus $0$.

A \emph{bipartite map} is a rooted map where it is possible to color every vertex in black or white without any monochromatic edge. By convention, we may assume that the root is always oriented from white to black, and each edge of the map has a natural orientation from white to black. In a bipartite map, all faces have an even degree.  In what follows, we will only deal with bipartite maps (except when dealing with dual maps). Therefore, we will not always specify that the map we consider is bipartite.

For every $\mathbf{f}=(f_j)_{j \geq 1}$ and $g\geq 0$, we will denote by $\B_g(\mathbf{f})$ the set of bipartite maps of genus $g$ with exactly $f_j$ faces of degree $2j$ for all $j\geq 1$.
A map of $\B_g(\mathbf{f})$ has $|\mathbf{f}| =\sum_{j\geq 1} j f_j$ edges, $\sum_{j\geq 1} f_j$ faces and $v(\mathbf{f}, g)=2-2g+\sum_{j\geq 1} (j-1)f_j$ vertices by the Euler formula. In particular, such a map exists if and only if $v(\mathbf{f},g) \geq 2$. We will denote by $\beta_g(\mathbf{f})$ the cardinality of $\B_g(\mathbf{f})$.

\begin{figure}
\center
\includegraphics[scale=0.5]{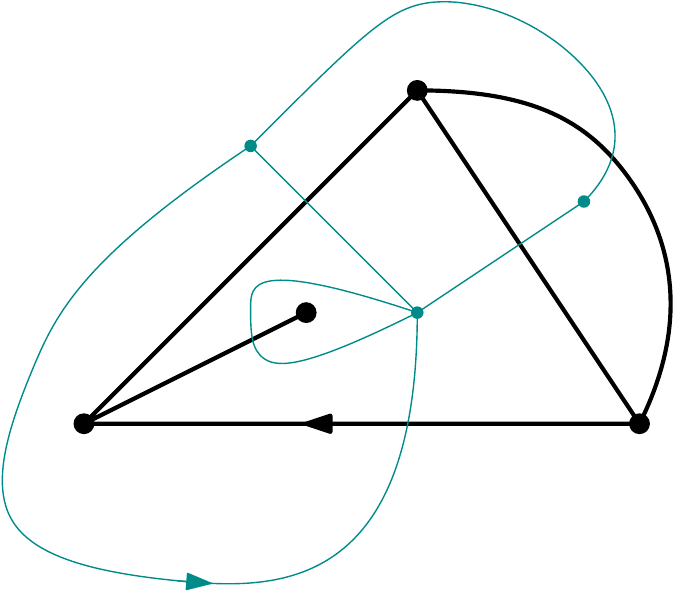}
\caption{A map (in black) and its dual (in blue). The arrows mark the roots.}\label{fig_map_dual}
\end{figure}

\paragraph{Maps with boundaries.} We will need to consider two different notions of bipartite maps with boundaries, that we call \emph{maps with holes} and \emph{maps of multi-polygons}. Roughly speaking, the first ones will be used to describe a small neighbourhood of the root in a larger map, and the second ones to describe the complement of this neighbourhood. Note that, since we will only consider bipartite maps in this work, we assume in both definitions that the maps are bipartite.

\begin{defn}
A \emph{map with holes} is a finite, bipartite map with a set of marked faces (called \emph{holes}) such that:
\begin{itemize}
\item
the boundary of each hole is a simple cycle,
\item
the boundaries of the different holes are vertex-disjoint,
\item
the adjacency graph of the \emph{internal faces} (i.e. the faces that are not holes) is connected,
\item
the root edge may be any oriented edge of the map.
\end{itemize}
By convention, the map consisting of two vertices joined by a single edge is a map with one hole and no internal face.
If $m$ is a map with holes, we denote by $\partial m$ its boundary, i.e. the union of the boundaries of its holes.
\end{defn}

\begin{defn}
Let $\ell \geq 1$ and $p_1, p_2, \dots, p_{\ell} \geq 1$. A \emph{map of the $(2p_1, \dots, 2p_{\ell})$-gon} is a finite or infinite bipartite map with $\ell$ marked oriented edges $(e_i)_{1 \leq i \leq \ell}$, such that:
\begin{itemize}
\item
$e_1$ is the root edge,
\item
the faces on the right of the $e_i$ are distinct,
\item
for all $1 \leq i \leq \ell$, the face on the right of $e_i$ has degree $2p_i$.
\end{itemize}
The faces on the right of the marked edges are called \emph{external faces}, and the other ones are called the \emph{internal faces}.
We denote by $\B^{(p_1,p_2,\dots,p_{\ell})}_g(\mathbf{f})$ the set of bipartite maps of the $(2p_1,2p_2,\dots,2p_{\ell})$-gon of genus $g$ with interior faces given by $\textbf{f}$. We also denote by $\beta^{(p_1,p_2,\dots,p_{\ell})}_g(\mathbf{f})$ its cardinal, with the convention that $\beta_g^{(0)}(\ff)$ is $1$ if $g=0$ and $\ff=\mathbf{0}$, and $0$ otherwise.
\end{defn}

Note that, in this second definition, we do not ask that the boundaries are simple or disjoint. The convention for the $0$-gon can be interpreted as saying that the only map of the $0$-gon is the map with $1$ vertex, no edge and no internal face.

\paragraph{Map inclusion.}
Given a map $m$, let $m^*$ be its dual map. Let $\mathfrak{e}$ be a finite, connected subset of edges of $m^*$ such that the root vertex of $m^*$ is incident to $\mathfrak{e}$. To $\mathfrak{e}$, we associate the map $m_{\mathfrak{e}}$ that is obtained by gluing the faces of $m$ corresponding to the vertices of $m^*$ incident to $\mathfrak{e}$ along the dual of the edges of $\mathfrak{e}$ (see Figure~\ref{fig_univ_lazy_inclusion}). Note that $m_{\mathfrak{e}}$, once rooted at the root edge of $m$, is a map with holes. We will refer to $m_{\mathfrak{e}}$ as the \emph{submap of $m$ spanned by $\mathfrak{e}$}.

If $m'$ is a map with holes and $m$ is a (finite or infinite) map, we write
\[m' \subset m\]
if $m'$ can be obtained from $m$ by the procedure described above. By convention, we also write $m' \subset m$ if $m'$ is the trivial map with two vertices and one edge, or if $m'$ consists of a simple cycle with the same perimeter as the root face of $m$ (which corresponds to the case $\mathfrak{e} = \emptyset$).

\begin{figure}[!ht]
\center
\includegraphics[scale=1]{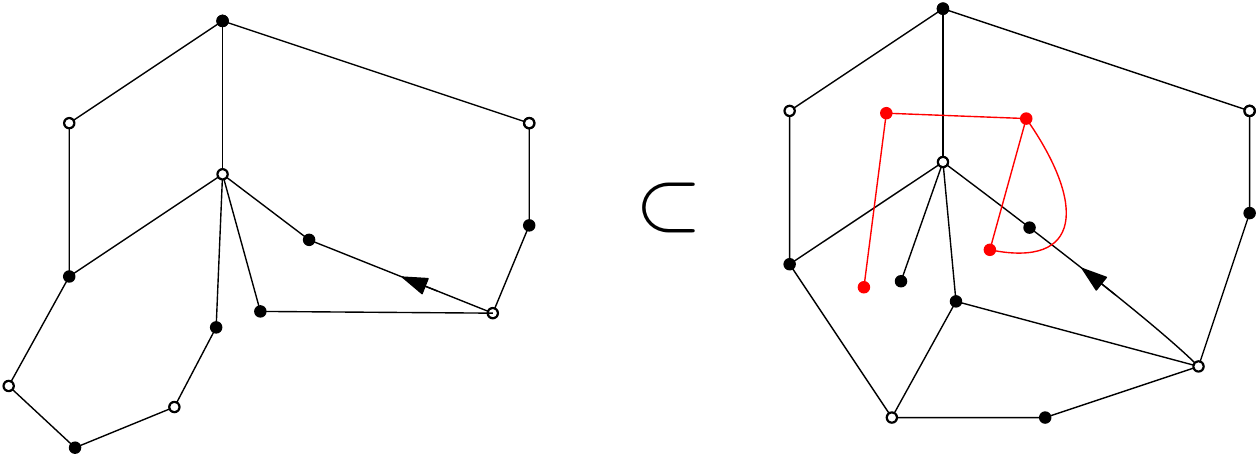}
\caption{Inclusion of bipartite maps, on an example. On the right, the map $m$ and, in red, the set of dual edges $\mathfrak{e}$. On the left, the map $m_{\mathfrak{e}}$.}\label{fig_univ_lazy_inclusion}
\end{figure}

Equivalently, we have $m' \subset m$ if $m$ can be obtained from $m'$ by gluing one or several maps of multipolygons in the holes of $m'$. We highlight that this definition of map inclusion is taken from \cite{C-StFlour} and is tailored for the \emph{lazy peeling process} of \cite{Bud15}. More precisely, maps of multipolygons may not have simple nor disjoint boundaries, so if $m' \subset m$, it is possible that two boundary edges of $m'$ actually coincide in $m$.

\paragraph{Local convergence and dual local convergence.}
The goal of this paragraph is to recall the definition of local convergence in a setting that is not restricted to planar maps. We denote by $\overline{\B}$ the set of finite or infinite bipartite maps in which all the vertices have finite degrees. A map $m$ is naturally equipped with a \emph{graph distance} $d_m$ on the set of its vertices. If $m \in \overline{\B}$, for every $r \geq 1$, we denote by $B_r(m)$ the submap of $m$ spanned by the duals of those edges of $m$ which have an endpoint at distance $d_m$ at most $r-1$ from the root vertex. The map $B_r(m)$ is then a map with holes. We also write $B_0(m)$ for the trivial bipartite map consisting of only one edge.

For any two maps $m,m' \in \overline{\B}$, we write
\[d_{\loc}(m,m')=\left( 1+\max \{r \geq 0 | B_r(m)=B_r(m')\} \right)^{-1}.\]
This is the \emph{local distance} on $\overline{\B}$. As in the planar case, the space $\overline{\B}$ is a Polish space and is the completion of the space of finite bipartite maps for $d_{\loc}$. However, this space is not compact, since $B_1(m)$ may take infinitely many values.

In our tightness argument, it will be more convenient to first work with a weaker notion of convergence which we call the \emph{dual local convergence}. We denote by $\overline{\B}^*$ the set of finite or infinite bipartite maps (regardless of whether vertex degrees are finite or not).
Let $m \in \overline{\B}^*$, and let $d_{m^*}$ be the graph distance on its dual. For $r \geq 1$, we denote by $B_r^{*}(m)$ the submap of $m$ spanned by those edges of $m^*$ which are incident to a face of $m$ lying at distance $d_{m^*}$ at most $r-1$ from the root face of $m$. By convention, let also $B_0^*(m)$ be the map consisting of a simple cycle with the same length as the boundary of the root face. Like $B_r(m)$, the "ball" $B^*_r(m)$ is a finite map with holes. For any $m,m' \in \overline{\B}^*$, we write
\[d_{\loc}^*(m,m')=\left( 1+\max \{r \geq 0 | B_r^*(m)=B_r^*(m')\} \right)^{-1}.\]
We call $d^*_{\loc}$ the \emph{dual local distance}. Then $\overline{\B}^*$ is a Polish space for $d_{\loc}^*$ and is the completion of the set of finite bipartite maps.

The reason why we introduced $d_{\loc}^*$ is that it will be very easy to obtain tightness for this distance. This will allow us to work directly on infinite objects and deduce tightness for $d_{\loc}$ later. Tightness for $d_{\loc}^*$ will be deduced from the next result.

\begin{lem}\label{lem_tight_degree_in_ball}
Let $A(\cdot)$ be a function from $(0,1)$ to $\N$ and let $r \geq 1$. There is a function $A_r(\cdot)$ from $(0,1)$ to $\N$ with the following property. Let $G$ be a stationary (for the simple random walk) random graph such that, for all $\eps>0$, we have
\[ \P \left( \deg_G(\rho)>A(\eps) \right) \leq \eps,\]
where $\rho$ is the root vertex. Then for all $\eps>0$, we have
\[ \P \left( \max_{x \in B_r(G)} \deg_G(x)>A_r(\eps) \right) \leq \eps,\]
where $B_r(G)$ is the ball of radius $r$ centered at the root vertex in $G$.
\end{lem}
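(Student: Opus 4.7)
The plan is to argue by induction on $r$, using in an essential way that SRW-stationarity gives $\deg_G(X_t) \stackrel{d}{=} \deg_G(\rho)$ for the simple random walk $(X_t)_{t \geq 0}$ started at $\rho$. The base case $r=0$ reduces to the hypothesis with $A_0 = A$. For the inductive step, fix $r$ and suppose that the induction has produced a function $A_r$. Given $\eps > 0$, split $\eps = \eps_1 + \eps_2$ (for instance $\eps_1 = \eps_2 = \eps/2$) and set $M := A_r(\eps_1)$. By the inductive hypothesis, the event $E := \{\max_{x \in B_r(G)} \deg_G(x) \leq M\}$ has probability at least $1 - \eps_1$.

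On $E$, any vertex $y \in B_{r+1}(G)$ lies at some graph distance $d \leq r+1$ from $\rho$, and along any shortest path $\rho = y_0, y_1, \ldots, y_d = y$ from $\rho$ to $y$ one has $y_0, \ldots, y_{d-1} \in B_r(G)$, so each of $\deg_G(y_0), \ldots, \deg_G(y_{d-1})$ is bounded by $M$. The SRW thus follows this particular path with (conditional) probability at least $M^{-d} \geq M^{-(r+1)}$, so
\[ \P(X_d = y \mid G) \geq M^{-(r+1)} \quad \text{on } E. \]
Multiplying by the indicator $\mathbf{1}_{\deg_G(y) > K}$ and summing over $y$ at distance $d$, we obtain
\[ M^{-(r+1)} \, \E\!\left[ \#\{y \in B_{r+1}(G) : d_G(\rho,y)=d,\ \deg_G(y) > K \} \, ; \, E \right] \leq \P(\deg_G(X_d) > K). \]
By SRW-stationarity, the right hand side equals $\P(\deg_G(\rho) > K)$.

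Applying Markov's inequality and summing over $d \in \{0,1,\dots,r+1\}$ now gives
\[ \P\!\left(E \cap \{\max_{x \in B_{r+1}(G)} \deg_G(x) > K\}\right) \leq (r+2)\, M^{r+1}\, \P(\deg_G(\rho) > K). \]
It therefore suffices to choose $K$ large enough to make the right-hand side at most $\eps_2$: by hypothesis on $G$, this is achieved by setting $K := A\!\left( \eps_2 / ((r+2) M^{r+1}) \right)$. Defining $A_{r+1}(\eps)$ to be this value of $K$ yields $\P(\max_{B_{r+1}(G)} \deg > A_{r+1}(\eps)) \leq \eps_1 + \eps_2 = \eps$, closing the induction.

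No step should present a serious obstacle; the only subtlety is to ensure that the bound on path probabilities only uses degrees of the intermediate vertices $y_0,\dots,y_{d-1}$ (not of $y$ itself), which is why the induction is set up with the event $E$ controlling $B_r$ and not $B_{r+1}$, and why the factor $M^{r+1}$ appears polynomially rather than the degree at the endpoint blowing up the estimate.
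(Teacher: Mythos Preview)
Your proof is correct and follows exactly the approach the paper points to: the paper does not give a self-contained argument but cites \cite[Lemma 4.4]{AS03} and \cite{BLS13}, noting explicitly that the result ``is proved by induction on $r$'', which is precisely what you carry out using SRW-stationarity to transfer the root-degree tail bound to $X_d$. Your write-up supplies the details the paper omits, and the key mechanism (bounding the path probability by $M^{-(r+1)}$ via the inductive control on $B_r$, then using Markov's inequality) is the standard one.
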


\begin{proof}
This result goes back to~\cite{AS03}. More precisely, although not stated explicitely as such, it is proved by induction on $r$ in the proof of tightness of uniform triangulations for the local topology~\cite[Lemma 4.4]{AS03}. See also~\cite[Theorem 3.1]{BLS13} for a general statement with minimal assumptions.
\end{proof}

From here, we easily obtain tightness for $d_{\loc}^*$ in our setting.

\begin{lem}\label{lem_easy_dual_convergence}
Let $\mathbf{f}^{n}$ be face degree sequences such that $\frac{1}{\left| \mathbf{f}^{n} \right|} \sum_{j> A} j f_j^{n}\rightarrow 0$ as $A \to +\infty$ uniformly in $n$, and let $(g_n)$ be any sequence such that $\B_{g_n}(\mathbf{f}^{n}) \ne \emptyset$ for every $n$. Recall that $M_{\ff^n, g_n}$ is a uniform map in $\B_{g_n}(\mathbf{f}^{n})$. Then $(M_{\ff^n, g_n})$ is tight for $d_{\loc}^*$.
\end{lem}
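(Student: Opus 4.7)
The plan is to establish $d^*_{\loc}$-tightness by showing that for each fixed $r\ge 1$, the random map with holes $B_r^*(M_{\ff^n,g_n})$ is tight in $n$. Since maps with holes form a countable discrete set, this reduces to bounding $|B_r^*(M_{\ff^n,g_n})|$ in probability, uniformly in $n$. By the very definition of $B_r^*$, all its faces lie at distance at most $r$ from the root face in the dual graph $M_{\ff^n,g_n}^*$. Consequently, if every face appearing in $B_r^*(m)$ has degree at most $2D$, then $B_r^*(m)$ contains at most $1+2D+2D(2D-1)+\dots+2D(2D-1)^{r-1}$ faces, and correspondingly boundedly many edges. It is therefore enough to prove that the maximum face degree inside $B_r^*(M_{\ff^n,g_n})$ is tight in $n$.

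The degree of the root face is easy to control: since $M_{\ff^n,g_n}$ is uniform in $\B_{g_n}(\mathbf{f}^n)$, the root face is size-biased, with $\P(\deg(\mbox{root face})=2j)=\frac{jf_j^n}{|\mathbf{f}^n|}$. The hypothesis $\frac{1}{|\mathbf{f}^n|}\sum_{j>A}jf_j^n\to 0$ as $A\to +\infty$, uniformly in $n$, is exactly the statement that this family of laws is tight.

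To pass from the root face alone to every face in the dual ball of radius $r$, I plan to invoke Lemma~\ref{lem_tight_degree_in_ball} applied to the random graph $G_n:=M_{\ff^n,g_n}^*$, taking the root face as the root vertex and the primal face degree as the vertex degree. The one hypothesis of Lemma~\ref{lem_tight_degree_in_ball} that needs to be verified is the stationarity of $G_n$ under the simple random walk. I expect this to follow from the classical reversibility argument for uniformly rooted maps: re-rooting $M_{\ff^n,g_n}$ at a uniformly chosen oriented edge on the boundary of its current root face leaves the uniform law on $\B_{g_n}(\mathbf{f}^n)$ invariant, and this rerooting is precisely one step of the edge-weighted simple random walk on $G_n$. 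Once stationarity is in hand, Lemma~\ref{lem_tight_degree_in_ball} automatically upgrades the tightness of the root face degree into tightness of $\max_{f\in B_r(G_n)}\deg f$, hence of $\max_{f\in B_r^*(M_{\ff^n,g_n})}\deg f$, and the first paragraph concludes. The main subtle point is the stationarity statement, which is standard but requires some care regarding the precise rooting convention and possible automorphisms; everything else is a routine black-box application.
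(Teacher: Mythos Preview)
Your proposal is correct and follows essentially the same route as the paper: compute the root-face degree distribution as $\P(\deg=2j)=jf_j^n/|\mathbf{f}^n|$ to get tightness of the root degree in the dual, invoke stationarity of the dual under simple random walk (which the paper simply asserts from invariance under uniform rerooting), and then apply Lemma~\ref{lem_tight_degree_in_ball} to upgrade to tightness of all degrees in $B_r^*$. The only difference is that you spell out the counting bound on $|B_r^*|$ in terms of the maximum degree, which the paper leaves implicit.
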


\begin{proof}
Let $M_{\ff^n, g_n}^*$ be the dual map of $(M_{\ff^n, g_n})$. Since $M_{\ff^n, g_n}$ is invariant under rerooting at a uniform edge, the probability that the root vertex of $M_{\ff^n, g_n}^*$ has degree $2j$ is equal to $\frac{j f^{n}_j}{\left| \mathbf{f}^{n} \right|}$. Therefore, it follows from the assumption of the lemma that the root degree of $M_{\ff^n, g_n}^*$ is tight. Moreover $M_{\ff^n, g_n}^*$ is invariant by rerooting along the simple random walk. Therefore, by Lemma~\ref{lem_tight_degree_in_ball}, for every $r \geq 1$, the maximal degree in the ball of radius $r$ centered at the root in $M_{\ff^n, g_n}^*$ is tight. This implies tightness for $d_{\loc}^*$.
\end{proof}

Finally, as in \cite{BL19}, tightness for $d_{\loc}$ will be deduced from tightnesss for $d^*_{\loc}$ using the following result (the proof is the same as for triangulations, and is therefore omitted).

\begin{lem}\label{lem_dual_convergence_univ}
Let $(m_n)$ be a sequence of maps of $\overline{\B}$. Assume that
\[ m_n \xrightarrow[n \to +\infty]{d_{\loc}^*} m,\]
with $m \in \overline{\B}$ (i.e. with finite vertex degrees). Then $m_n \to m$ for $d_{\loc}$ as $n \to +\infty$.
\end{lem}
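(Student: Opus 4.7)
The plan is to argue that, under the assumption that the limit $m$ has finite vertex degrees, any primal ball $B_r(m)$ is contained in a sufficiently large dual ball $B_R^*(m)$, and moreover that knowing $B_R^*(m_n)=B_R^*(m)$ suffices to determine $B_r(m_n)$. Since dual local convergence forces $B_R^*(m_n)=B_R^*(m)$ for every fixed $R$ and $n$ large, this will give primal convergence.

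More precisely, fix $r\geq 1$, and let $V_r=\{v\in m : d_m(\rho,v)\leq r\}$, where $\rho$ is the root vertex. Since $m\in\overline{\B}$ has finite vertex degrees, $V_r$ is finite; since every face has finite degree by definition, the set $F_r$ of faces of $m$ incident to at least one vertex of $V_r$ is also finite. Each face $f\in F_r$ is at some finite dual distance from the root face $f_0$ (one can walk from $f$ around a vertex to an edge of $f_0$), so we may choose a finite $R$ such that every face in $F_r$ lies at dual distance $\leq R-1$ from $f_0$. For this choice of $R$, every vertex of $V_r$ is an \emph{interior} vertex of $B_R^*(m)$ in the sense that all its incident faces appear in $B_R^*(m)$. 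By dual local convergence, for $n$ large we have $B_R^*(m_n)=B_R^*(m)$.

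The key step is to verify that primal distances from $\rho$ agree in $m$ and $m_n$ on $V_r$. The inequality $d_{m_n}(\rho,v)\leq d_m(\rho,v)$ for $v\in V_r$ is easy: a shortest path in $m$ uses only vertices of $V_r$ and edges incident to faces of $F_r$, all of which sit inside $B_R^*(m)=B_R^*(m_n)\subset m_n$. For the reverse inequality, suppose a shortest path in $m_n$ from $\rho$ to some $v\in V_r$ leaves $B_R^*(m_n)$, and let $u_j$ be the last vertex on that path before it does so. Then $u_j$ is a \emph{boundary} vertex of $B_R^*(m_n)=B_R^*(m)$, yet the initial segment of the path gives $d_m(\rho,u_j)\leq d_{B_R^*(m)}(\rho,u_j)\leq j\leq r$, so $u_j\in V_r$. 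By the choice of $R$, every vertex of $V_r$ is interior in $B_R^*(m)$, a contradiction. Hence the path stays in $B_R^*(m_n)=B_R^*(m)\subset m$, so it gives a path of the same length in $m$, proving $d_{m_n}(\rho,v)\geq d_m(\rho,v)$. The exact same boundary-avoidance argument shows that no vertex outside $V_r$ can have $m_n$-distance $\leq r-1$ from $\rho$, since otherwise the shortest such path would either stay inside $B_R^*(m_n)$ (so the vertex is in $V_r$, contradiction) or cross a boundary vertex at distance $\leq r-1$, which is excluded as above.

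Together, these facts identify the vertex set $\{v : d_{m_n}(\rho,v)\leq r-1\}$ with $V_{r-1}$ and preserve all their incidences, which live entirely inside $B_R^*(m)=B_R^*(m_n)$. Therefore $B_r(m_n)=B_r(m)$ as maps with holes for all $n$ large enough, proving the claimed convergence for $d_{\loc}$. I do not expect any serious obstacle here: the only delicate point is the case analysis ensuring that shortcuts through the exterior of $B_R^*(m_n)$ cannot shorten distances to vertices near $\rho$, which is handled by choosing $R$ so that the boundary of $B_R^*(m)$ lies beyond primal radius $r$.
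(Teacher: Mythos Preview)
Your argument is correct and is precisely the natural approach: the paper omits the proof, referring to the identical argument for triangulations in~\cite{BL19}, and what you wrote is that argument. The only phrasing I would tighten is ``let $u_j$ be the last vertex on that path before it does so'': you mean the \emph{first} index $j$ such that the edge $u_j u_{j+1}$ is not an internal edge of $B_R^*(m_n)$, so that the initial segment $\rho=u_0,\dots,u_j$ uses only internal edges and hence witnesses $d_m(\rho,u_j)\le j\le r$; this is clearly your intent, and with that reading the boundary-vertex contradiction goes through.
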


\subsection{The lazy peeling process of bipartite maps}
\label{subsec_lazy_peeling}

We now recall the definition of the \emph{lazy peeling process} of maps introduced in \cite{Bud15} (see also \cite{C-StFlour} for an extensive study). We will make heavy use of this notion in our proofs.

A \emph{peeling algorithm} is a function $\A$ that takes as input a finite bipartite map $m$ with holes, and that outputs an edge $\A(m)$ on $\partial m$ (i.e. on the boundary of one of the holes). Given an infinite, planar, one-ended bipartite map $m$ and a peeling algorithm $\mathcal{A}$, we can define an increasing sequence $\left( \expl_t^{\mathcal{A}}(m) \right)_{t \geq 0}$ of maps with one hole, such that $\expl_t^{\mathcal{A}}(m) \subset m$ for every $t$, in the following way. First, the map $\expl_0^{\mathcal{A}}(m)$ is the trivial map consisting of the root edge only. For every $t \geq 0$, we call the edge $\A \left( \expl_t^{\mathcal{A}}(m) \right)$ the \emph{peeled edge at time $t$}. Let $F_t$ be the face of $m$ on the other side of this peeled edge (i.e. the side incident to a hole in $\expl_t^{\mathcal{A}}(m)$). There are two possible cases, as summed up on Figure~\ref{fig_univ_lazy_peeling}:
\begin{itemize}
\item either $F_t$ doesn't belong to $\expl_t^{\mathcal{A}}(m)$, and then $\expl_{t+1}^{\mathcal{A}}(m)$ is the map obtained from $\expl_t^{\mathcal{A}}(m)$ by gluing a simple face of size $\deg(F_t)$ along $\A \left( \expl_t^{\mathcal{A}}(m) \right)$;
\item or $F_t$ belongs to $\expl_t^{\mathcal{A}}(m)$. In that case, by planarity, there exists an edge $e_t \in \expl_t^{\mathcal{A}}(m)$ on the same hole as $\A \left( \expl_t^{\mathcal{A}}(m) \right)$ such that $e_t$ and $\A \left( \expl_t^{\mathcal{A}}(m) \right)$ are actually identified in $m$. The map $\expl_{t+1}^{\mathcal{A}}(m)$ is then obtained from $\expl_t^{\mathcal{A}}(m)$ by gluing $\mathcal{A} \left( \expl_t^{\mathcal{A}}(m) \right)$ and $e_t$ together and, if this creates a finite hole, by filling it in the same way as in $m$.
\end{itemize}
Such an exploration is called \emph{filled-in}, because all the finite holes are filled at each step.

Let us now discuss two different ways to define peeling explorations on finite or nonplanar maps. We first note that, if we do not fill the region in the second case, then the definition of a peeling exploration still makes sense for any finite or infinite map, with the only difference that the explored part may now have several holes. This is what we will call a \emph{non-filled-in} peeling exploration, and this will only be used briefly in Section \ref{subsec_planarity}.

Finally, for a finite map $m$, we define a filled-in exploration using the following convention. Assume that the peeled edge at time $t$ is glued to another boundary edge of $\expl^{\A}_t(m)$ and forms two holes:
\begin{itemize}
\item
if these two holes are connected in $m \backslash \expl^{\A}_t(m)$ (which may occur if $m$ is not planar), we stop the exploration at time $t$;
\item
if not, we obtain $\expl^{\A}_{t+1}(m)$ by filling completely the hole which contains the smallest number of edges in $m$.
\end{itemize}
Note that with these conventions, the map $\expl^{\A}_t(m)$ always have exactly one hole. This definition will be used to compare peeling explorations of finite and infinite maps in Section \ref{sec_univ_end}. At this point, the local planarity results from Section \ref{sec_univ_tight} will allow us to assume that with high probability, the explorations are not stopped too early.

\begin{figure}
\center
\includegraphics[scale=1]{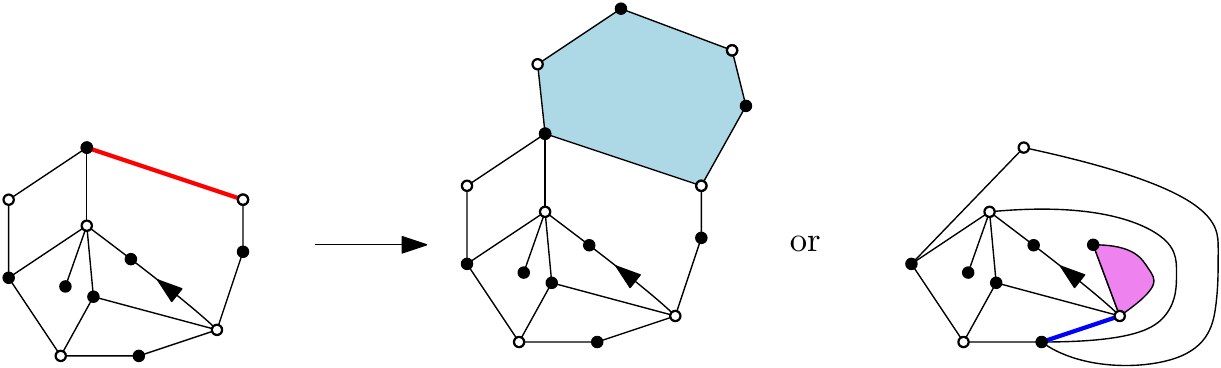}
\caption{The lazy peeling on an example. The peeled edge is in red. Either a new face is discovered (center case), or the chosen edge is glued to another boundary edge (right case, the glued edge is in blue and the filled part in pink).}\label{fig_univ_lazy_peeling}
\end{figure}

\subsection{Combinatorial enumeration}\label{subsec_prelim_combi}

\paragraph{Partition functions for Boltzmann planar maps.}
Before describing infinite Boltzmann models in detail, we recall well-known enumeration results in the finite, planar case. We write $\cQ=[0,1]^{\N^*}$. Fix a sequence $\q \in \cQ$. The partition function of bipartite, planar maps of the $2p$-gon with Boltzmann weights $\q$ is defined as
\[W_p(\q)=\sum_{m} \prod_{f \in m} q_{\deg(f)/2},\]
where the sum spans over all planar bipartite maps $m$ of the $2p$-gon, and the product is over internal faces of $m$. We also denote by $W_p^{\bullet}(\q)$ the \emph{pointed} partition function, i.e. the sum obtained by multiplying the weight of a map $m$ by its total number of vertices. Note that $W_1(\q)$ can also be interpreted as the partition function of maps of the sphere.

We recall from \cite{MM07} the classical necessary and sufficient condition for the finiteness of these partition functions. Given a weight sequence $\q \in \cQ$, let 
\begin{equation*}
f_{\mathbf{q}}(x)=\sum_{j \geq 1} q_{j}\binom{2 j-1}{j-1} x^{j-1}.
\end{equation*}
If the equation 
\begin{equation}\label{eq_univ_admissible}
f_{\mathbf{q}}(x)=1-\frac{1}{x}
\end{equation}
has a positive solution $Z_{\q}$ we call $\mathbf{q}$ \emph{admissible}, and write $c_{\q}=4Z_{\q}$. Then by results from \cite{MM07}, for all $p \geq 1$, the partition functions $W_p(\q)$ and $W_p^{\bullet}(\q)$ are finite if and only if $\q$ is admissible. Moreover, in this case, for $p \geq 0$, we have
\begin{equation}\label{eqn_exact_pointed_partition_function}
W_p^{\bullet}(\q) = c_{\q}^p \times \frac{1}{4^p} \binom{2p}{p}.
\end{equation}
It is also possible to derive simple integral formulas for $W_p(\q)$ in terms of $c_{\q}$ but this will not be needed here, see \cite{C-StFlour} for more details. We denote by $\cQ_a$ the set of admissible weight sequences.

Finally, let $\cQ^*$ be the set of those $\q=(q_j)_{j \geq 1} \in \cQ$ for which there exists $j \geq 2$ such that $q_j>0$ (which ensures $W_p(\q)>0$ for all $p \geq 1$). For $\q \in \cQ^* \cap \cQ_a$, we define the \emph{Boltzmann distribution} with weights $\q$ on finite planar bipartite maps of the $2p$-gon as
\[\P(m)=\frac{1}{W_p(\q)} \prod_{f \in m} q_{\deg(f)/2}\]
for all bipartite planar map $m$ of the $2p$-gon.

\paragraph{A general recursion for bipartite maps.}
As in \cite{BL19}, we are lacking precise asymptotics on the enumeration of maps when both the genus and the size go to infinity. The following recurrence formula, proved in \cite{Lo19}, will play the same role as the Goulden--Jackson formula for triangulations \cite{GJ08}. We set the convention $\beta_g(\mathbf{0})=0$ for all $g$. Then, for every $g\geq 0$ and every face degree sequence $\ff$, we have
\begin{equation}\label{rec_biparti_genre_univ}
\binom{|\mathbf{f}|+1}{2}\beta_g(\mathbf{f})=\hspace{-0.5cm}\sum_{\substack{\mathbf{h}^{(1)}+\mathbf{h}^{(2)}=\mathbf{f}\\g^{(1)}+g^{(2)}+g^*=g}}\hspace{-0.5cm}(1+|\mathbf{h}^{(1)}|)\binom{v \left( \mathbf{h}^{(2)},g^{(2)} \right) }{2g^*+2}\beta_{g^{(1)}}(\mathbf{h}^{(1)}) \beta_{g^{(2)}}(\mathbf{h}^{(2)})+\sum_{g^*\geq 0}\binom{v \left( \mathbf{f},g \right) +2g^*}{2g^*+2}\beta_{g-g*}(\mathbf{f}),
\end{equation}
where we recall that $|\mathbf{f}|=\sum_{j \geq 1} j f_j$ and $v(\ff,g)=2-2g+\sum_j (j-1) f_j$ (i.e. it is the number of vertices of a map with face degrees $\mathbf{f}$ and genus $g$).

\subsection{Infinite Boltzmann bipartite planar maps}
\label{subsec_IBPM}

\paragraph{Definition of the models.}
Our goal here is to recall the definition of infinite Boltzmann bipartite planar maps introduced in \cite[Appendix C]{B18these} (and earlier in \cite{Bud15} in the critical case). We also refer to \cite[Chapter 8]{C-StFlour} for some basic properties of these objects that we will state below.

Let $\q=(q_j)_{j \geq 1}$ be a sequence of nonnegative real numbers that we will call the \emph{Boltzmann weights}. A random infinite bipartite planar map $M$ is called $\q$-Boltzmann if there are constants $\left( C_p(\q) \right)_{p \geq 1}$ such that, for every finite bipartite map $m$ with one hole of perimeter $2p$, we have
\[ \P \left( m \subset M \right)=C_p(\q) \prod_{f \in m} q_{\deg(f)/2},\]
where the product is over all internal faces of $m$.

We will see that given $\q$, such a map does not always exist, but when it does, it is unique, i.e. the constants $C_p(\q)$ are determined by $\q$, which justifies the notation $C_p(\q)$. More precisely, as noted in \cite[Appendix C]{B18these}, if a $\q$-Boltzmann map exists, then the partition function of maps of a $2$-gon with Boltzmann weights $\q$ must be finite, which is equivalent to the admissibility criterion \eqref{eq_univ_admissible}. Moreover, with the notation of Section~\ref{subsec_univ_defns}, we call $\q$ \emph{critical} if $f'_{\q}(Z_{\q})=\frac{1}{Z_{\q}^2}$ and \emph{subcritical} if this is not the case.

Finally, we define a measure $\nu_{\q}$ on $\Z$ as follows:
\begin{equation}\label{eqn_defn_nu}
\nu_{\q}(i)= \left\{ 
\begin{array}{ll}
q_{i+1} \, c_{\mathbf{q}}^{i} & \mbox{ if $i \geq 0$,} \\
2 W_{-1-i}(\mathbf{q}) \, c_{\mathbf{q}}^i & \mbox{ if $i \leq -1$.}
\end{array} \right.
\end{equation}
As noted in \cite{Bud15}, this is the step distribution of the random walk on $\Z$ describing the evolution of the perimeter of a finite $\q$-Boltzmann map with a large perimeter (see also \cite[Chapter 5.1]{C-StFlour}). Then previous results about the existence of $\q$-IBPM can be summed up as follows.

\begin{thm}\label{thm_rappel_these}
\begin{enumerate}
\item
If a $\q$-IBPM exists, it is unique (in distribution), so we can denote it by $\MM_{\q}$.
\item
If $\q \notin \cQ^* \cap \cQ_a$, then $\MM_{\q}$ does not exist.
\item
If $\q \in \cQ^* \cap \cQ_a$ is critical, then $\MM_{\q}$ exists and $C_p(\q)=c_{\q}^{p-1} \times \frac{2p}{4^p} \binom{2p}{p}$.
\item
If $\q \in \cQ^* \cap \cQ_a$ is subcritical, then $\MM_{\q}$ exists if and only if the equation
\begin{equation}\label{boltzmann_equation_omega}
\sum_{i \in \Z} \nu_{\q}(i) \omega^i =1
\end{equation}
has a solution $\omega_{\q}>1$.
\item
In this case, the solution $\omega_{\q}$ is unique and, for every $p \geq 1$, we have
\begin{equation}\label{boltzmann_formula_cp}
C_p(\mathbf{q})= \left( c_{\q} \omega_{\q} \right)^{p-1} \sum_{i=0}^{p-1} (4\omega_{\q})^{-i} \binom{2i}{i}.
\end{equation}
\end{enumerate}
\end{thm}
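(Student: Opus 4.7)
The plan is to follow \cite[Appendix C]{B18these}, translating the defining equation of a $\q$-IBPM into a harmonic function equation for the perimeter process of a filled-in peeling exploration.

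Assume $\MM_{\q}$ exists. Applying its defining equation to one step of a filled-in peeling exploration starting from a hole of perimeter $2p$ yields explicit transition probabilities: revealing a new internal face of degree $2j$ has probability $q_j C_{p+j-1}(\q)/C_p(\q)$, while gluing the peeled edge to another boundary edge (and filling the resulting finite bubble of perimeter $2k$ with a $\q$-Boltzmann map) has probability proportional to $W_k(\q) C_{p-k-1}(\q)/C_p(\q)$. Forcing these probabilities to sum to $1$ gives a linear recursion on $(C_p(\q))_{p \geq 1}$ whose positive solutions form a one-dimensional cone once $C_1(\q)$ is fixed, proving item~1. Finiteness of these probabilities forces $\q \in \cQ_a$, and if $q_j = 0$ for all $j \geq 2$, no non-trivial face can ever be peeled and one checks directly that no one-ended infinite bipartite map satisfies the definition, proving item~2.

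Substituting $C_p(\q) = c_{\q}^{p-1} h_p$ rewrites the recursion as the harmonicity equation $h_p = \sum_{i \in \Z} \nu_{\q}(i) h_{p+i}$ on $\N^*$ for the random walk on $\Z$ with step distribution $\nu_{\q}$, killed at $0$. Using the algebraic coupling between the positive part $\nu_{\q}(i) = q_{i+1} c_{\q}^i$ and the negative part $\nu_{\q}(i) = 2 W_{-1-i}(\q) c_{\q}^i$ encoded in the admissibility equation~\eqref{eq_univ_admissible}, the positive solutions can be classified: every such $h_p$ must be of the form $h_p(\omega) := \sum_{i=0}^{p-1} (4\omega)^{-i}\binom{2i}{i}$ for some $\omega \geq 1$ satisfying $\sum_i \nu_{\q}(i) \omega^i = 1$. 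The Laplace transform $\psi(\omega) := \sum_i \nu_{\q}(i) \omega^i$ is convex on $(0,+\infty)$ with $\psi(1) = 1$ (a rewriting of Tutte's equation for $W_1(\q)$), and $\psi'(1)$ equals the drift of $\nu_{\q}$, which vanishes exactly in the critical case. Hence in the critical case $\omega = 1$ is the only admissible value and the identity $h_p(1) = \frac{2p}{4^p}\binom{2p}{p}$ yields item~3. In the subcritical case, convexity produces at most one other positive root $\omega_{\q}$; when $\omega_{\q} > 1$ exists, the transitions defined by $h_p(\omega_{\q})$ are consistent and Kolmogorov's extension theorem produces $\MM_{\q}$, giving items~4 and~5. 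When no root exceeds $1$, the only candidate $h_p(1)$ fails to produce a valid one-ended infinite map (the associated perimeter walk hits $0$ too quickly, so the probability of any fixed finite $m \subset \MM_{\q}$ vanishes), ruling out existence.

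The main obstacle is the harmonic function classification underlying the step above: generic killed random walks admit a rich Martin boundary of positive harmonic functions, and cutting this down to the one-parameter family $\{h_p(\omega)\}$ uses in an essential way the specific combinatorial origin of $\nu_{\q}$, together with the explicit form of $h_p(\omega)$ derived from the enumeration of planar Boltzmann maps of the $2p$-gon recalled in Section~\ref{subsec_prelim_combi}.
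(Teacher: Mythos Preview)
The paper does not prove this theorem; it is quoted from \cite{Bud15} (item~3) and \cite[Appendix~C]{B18these} (the remaining items). Your sketch follows the same architecture as those references: translate the peeling equation into $\nu_{\q}$-harmonicity for $H_p := C_p(\q)/c_{\q}^{p-1}$, classify the positive harmonic functions as $H_p=\omega^{p-1}h_p(\omega)$, and analyse the equation $\psi(\omega):=\sum_i\nu_{\q}(i)\omega^i=1$ by convexity.

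Two corrections are needed.

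First, your justification of item~1 (``positive solutions form a one-dimensional cone once $C_1(\q)$ is fixed'') is not a consequence of linearity: the peeling recursion couples $C_p$ to all $C_{p+j-1}$ with $q_j>0$ as well as to $C_1,\dots,C_{p-1}$, so it does not propagate forward from $C_1$. The one-dimensionality is precisely the harmonic-function classification you later call ``the main obstacle''; uniqueness is a corollary of that classification, not a prior easy step.

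Second, and more substantively, the equivalence ``$h_p(\omega)$ harmonic $\Leftrightarrow$ $\psi(\omega)=1$'' fails at $\omega=1$. One always has $\psi(1)=1$ since $\nu_{\q}$ is a probability measure, yet $p\mapsto h_p(1)$ is $\nu_{\q}$-harmonic on $\{1,2,\dots\}$ if and only if
\[
\sum_{j\geq 1}jq_j\binom{2j-1}{j-1}\Big(\frac{c_{\q}}{4}\Big)^{j-1}=1,
\]
which rewrites as $Z_{\q}f_{\q}'(Z_{\q})+f_{\q}(Z_{\q})=1$, i.e.\ $f_{\q}'(Z_{\q})=1/Z_{\q}^2$, i.e.\ precisely criticality. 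Hence for subcritical $\q$ the candidate $C_p=c_{\q}^{p-1}h_p(1)$ does \emph{not} solve the peeling equation --- the would-be transition weights sum to strictly less than~$1$ --- so it never competes with the $\omega_{\q}>1$ solution. This is the actual mechanism behind the non-existence half of item~4, not that ``the perimeter walk hits $0$ too quickly'': a Doob $h$-transform would keep the walk positive by construction, but here no $h$-transform is available because $h_p(1)$ is only superharmonic. With this correction the picture in items~3--5 becomes clean: criticality forces $\omega=1$; subcriticality excludes $\omega=1$; and existence then reduces to whether $\psi$ has a root strictly greater than~$1$.
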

The third point is from \cite{Bud15}, and the others are from \cite[Appendix C]{B18these}\footnote{We have fixed a small mistake from \cite[Appendix C]{B18these}, where $c_{\q}$ was omitted in the formula for $C_p(\q)$.}. When it exists, we will call the map $\MM_{\q}$ the \emph{$\q$-IBPM} (for \emph{Infinite Boltzmann Planar Map)}.
We denote by $\cQ_h \subset \cQ^* \cap \cQ_a$ the set of weight sequences $\q$ for which $\MM_{\q}$ exists. We also note that the formula for $C_p(\q)$ in the critical case is a particular case of the subcritical one where $\omega=1$. Since this function will appear many times later, for $\omega \geq 1$ and $p \geq 1$, we write:
\begin{equation}\label{eqn_defn_homega}
h_p(\omega)=\sum_{i=0}^{p-1} (4\omega)^{-i} \binom{2i}{i}.
\end{equation}
In particular, if $\omega=1$, then $h_p(\omega)=\frac{2p}{4^p} \binom{2p}{p} \sim \frac{2}{\sqrt{\pi}} \sqrt{p}$ as $p \to +\infty$. If $\omega>1$, then $h_p(\omega) \to \sqrt{\frac{\omega}{\omega-1}}$ as $p \to +\infty$.

\paragraph{The random walk $\widetilde{\nu}_{\q}$.}
To study the $\q$-IBPM, we define the measure $\widetilde{\nu}_{\q}$ on $\Z$ by $\widetilde{\nu}_{\q}(i)=\omega_{\q}^i \nu_{\q}(i)$, where $\omega_{\q}$ is given by \eqref{boltzmann_equation_omega} if $\q$ is subcritical, and $\omega=1$ if $\q$ is critical. The random walk with step distribution $\widetilde{\nu}_{\q}$ plays an important role when studying $\MM_{\q}$. We first note that, if $\q$ is not critical, then this walk has a positive drift. Indeed, denoting by $F_{\q}$ the generating function of $\nu_{\q}$, we have
\[ \sum_{i \in \Z} i \, \widetilde{\nu}_{\q}(i) = F'_{\q}(\omega_{\q})>0,\]
since $F_{\q}$ is convex and takes the value $1$ both at $1$ and at $\omega_{\q}>1$. Note also that it is possible that the drift is $+\infty$.

\paragraph{Lazy peeling explorations of the $\q$-IBPM.}
We now perform a few computations related to lazy peeling explorations of the $\q$-IBPM. For this, we fix a peeling algorithm $\A$, and consider a filled-in exploration of $\MM_{\q}$ according to $\A$. We recall that $\expl_t^{\A}(\MM_{\q})$ is the explored region after $t$ steps, and we denote by $\left( \mathcal{F}_t \right)_{t \geq 0}$ the filtration generated by this exploration. We denote by $P_t$ (resp. $V_t$) the half-perimeter (resp. total number of edges) of $\expl_t^{\A}(\MM_{\q})$. We will call $P$ and $V$ the \emph{perimeter and volume processes} associated to a peeling exploration of $\MM_{\q}$.

It follows from the definition of $\MM_{\q}$ that $(P_t, V_t)_{t \geq 0}$ is a Markov chain on $\N^2$ and that its law does not depend on the algorithm $\mathcal{A}$. More precisely $P$ is a Doob transform of the random walk with step distribution $\widetilde{\nu}_{\q}$, i.e. it has the following transitions:
\begin{equation}\label{peeling_transitions}
\P \left( P_{t+1}=P_t+i | \mathcal{F}_t \right)= \widetilde{\nu}_{\q}(i) \frac{h_{P_t+i}(\omega_{\q})}{h_{P_t}(\omega_{\q})},
\end{equation}
where $h_p(\omega)$ is given by \eqref{eqn_defn_homega}. As noted in \cite{C-StFlour}, this implies that $\left( h_p(\omega_{\q}) \right)_{p \geq 1}$ is harmonic on $\{1,2, \dots\}$ for the random walk with step distribution $\widetilde{\nu}_{\q}$, and that for $\q$ subcritical $P$ has the distribution of this random walk, conditioned to stay positive (if $\q$ is critical, the conditioning is degenerate, but this can still be made sense of).

\paragraph{IBPM with finite expected degree of the root face.}
We denote by $\mathcal{Q}_f$ the set of $\q \in \cQ_h$ such that the degree of the root face of $\MM_{\q}$ has finite expectation. Since our Theorem~\ref{univ_main_thm} only holds if the expected degree of the root face is finite in the limit, we will need to gather a few consequences of this assumption on $\q$ and the peeling process of $\MM_{\q}$. Note that, for all $\q \in \cQ_h$, the degree of the root face is determined by the first peeling step on $\MM_{\q}$. More precisely, by \eqref{peeling_transitions}, we have for all $j \geq 1$:
\begin{equation}\label{eqn_walk_to_rootface}
\P \left( \mbox{the root face of $\MM_{\q}$ has degree $2j$} \right)=\frac{h_j(\omega_{\q})}{h_1(\omega_{\q})} \, \widetilde{\nu}_{\q}(j-1) = \frac{h_j(\omega_{\q})}{h_1(\omega_{\q})} \left( c_{\q} \omega_{\q} \right)^{j-1} q_j.
\end{equation}
If $\q$ is critical, the right hand-side is equivalent to $\frac{2}{\sqrt{\pi}}\sqrt{j} c_{\q}^{j-1} q_j$ as $j \to +\infty$, so $\q \in \mathcal{Q}_f$ if and only if
\begin{equation}\label{eqn_finite_32_moment}
\sum_{j \geq 1} j^{3/2} c_{\q}^j q_j <+\infty.
\end{equation}
On the other hand, we recall (see e.g. \cite[Chapter 5.2]{C-StFlour}) that a critical weight sequence $\q$ is called \emph{of type $\frac{5}{2}$}, or \emph{critical generic}, if
\[ \sum_{j \geq 1} (j-1)(j-2) \binom{2j-1}{j-1} q_j \left( \frac{c_{\q}}{4} \right)^{j-3} <+\infty, \]
which is clearly equivalent to \eqref{eqn_finite_32_moment}.
In the subcritical case, by \eqref{eqn_walk_to_rootface}, $\q \in \cQ_f$ is equivalent to
\[\sum_{j \geq 1} j \nnu_{\q}(j) <+\infty,\] i.e. the drift of $\nnu$ is finite. To sum up:
\begin{itemize}
\item
In the critical case, $\q \in \mathcal{Q}_f$ if and only if $\q$ is critical generic, which means that the perimeter process $(P_n)$ converges to a $3/2$-stable Lévy process with no positive jump, conditioned to be positive (see \cite[Theorem 10.1]{C-StFlour}). This basically means that $\q$-Boltzmann finite maps for $\q \in \cQ_f$ lie in the domain of attraction of the Brownian map \cite{MM07}.
\item
In the subcritical case, $\q \in \mathcal{Q}_f$ if and only if the measure $\nnu$ has finite expectation. Since the perimeter process $P$ has the law of a $\nnu$-random walk conditioned on an event of positive probability, this means that $P$ has linear growth (instead of super-linear if the expectation of $\nnu$ was infinite).
\end{itemize}

\subsection{Four ways to describe Boltzmann weights}

\paragraph{Four parametrizations of $\mathcal{Q}_h$.}
In this work, we will make use of four different "coordinate systems" to navigate through the spaces $\mathcal{Q}_h$ and $\mathcal{Q}_f$, each with its own advantages. The goal of this section is to define these parametrizations and to establish some relations between them. In particular, we will prove Theorem \ref{thm_prametrization_rootface}.

Our first coordinate system, already used in the last pages, consists in using directly the Boltzmann weights $q_j$ for $j \geq 1$. It is the simplest way to define the model $\MM_{\q}$ and gives the simplest description of its law.

The second parametrization we will use is the one given by Proposition~\ref{prop_q_as_limit} below: we describe $\q$ by parameters $r_j(\q) \in [0,1)$ for $j \geq 1$ and $r_{\infty}(\q) \in (0,+\infty]$. Here $r_j(\q)$ describes the proportion of peeling steps where we discover a face of degree $2j$ during a peeling exploration of $\MM_{\q}$, and $r_{\infty}(\q)$ comes from a comparison between the volume and perimeter growths. The advantage of these parameters is that they allow to "read" $\q$ as an almost sure observable on a peeling exploration of the map $\MM_{\q}$. This will be useful in Section~\ref{sec_arg_deux_trous}.

The third parametrization consists in using on the one hand the law of the root face, and on the other hand the average degree of the vertices. More precisely, for $j \geq 1$, we write
\[a_j(\q)=\frac{1}{j} \P \left( \mbox{the root face of $\MM_{\q}$ has degree $2j$} \right).\]
We note that $\sum_{j \geq 1} j a_j(\q)=1$ and that $a_1(\q)<1$ since a map consisting only of $2$-gons would have vertices with infinite degrees.
We also write $d(\q)=\E \left[ \frac{1}{\deg_{\MM_{\q}}(\rho)} \right]$, where $\rho$ is the root vertex. The advantage of this parametrization is that the analogues of these quantities are easy to compute if we replace $\MM_{\q}$ by a finite uniform map with prescribed genus and face degrees. These parameters are our only way to link the finite and infinite models, and will therefore be useful in the end of the proof of Theorem~\ref{univ_main_thm}. However, it is not obvious at all that $\left( a_j(\q) \right)_{j \geq 1}$ and $d(\q)$ are sufficient to characterize $\q$. We will actually prove this in the end of the paper, only for $\q \in \mathcal{Q}_f$, as a consequence of local convergence arguments (Proposition~\ref{prop_monotonicity_deg}).  

Finally, the fourth coordinate system is the one from Theorem \ref{thm_prametrization_rootface}: it is a variant of the third one where we replace $d(\q)$ by $\omega_{\q}$, which makes it easier to handle. This one is useful as an intermediate step towards the third one. Moreover, contrary to the third one, we can prove rather quickly (Theorem \ref{thm_prametrization_rootface}) that it provides a nice parametrization of the whole space $\cQ_h$.

\paragraph{Recovering $\q$ from explorations of $\MM_{\q}$.}
We now describe more precisely our second parametrization of $\cQ_h$. The next result basically states that we can recover the weight sequence $\q$ by just observing the perimeter and volume processes defined above (we recall that the volume is measured by the total number of edges).

\begin{prop}\label{prop_q_as_limit}
Let $\q \in \cQ_h$, and let $P$ and $V$ be the perimeter and volume processes associated to a peeling exploration of $\MM_{\q}$. We have the following almost sure convergences:
\begin{equation}\label{limit_number_jgons}
\frac{1}{t} \sum_{i=0}^{t-1} \mathbbm{1}_{P_{i+1}-P_i=j-1} \xrightarrow[t \to +\infty]{a.s.} \left( c_{\q} \omega_{\q} \right)^{j-1} q_j =: r_j(\mathbf{q}) \in [0,1)
\end{equation}
for every $j \geq 1$, and
\begin{equation}\label{limit_volume_growth}
\frac{V_t-2P_t}{t} \xrightarrow[t \to +\infty]{a.s.} \frac{ \left(\sqrt{\omega_{\q}}-\sqrt{\omega_{\q}-1} \right)^2}{2 \sqrt{\omega_{\q}(\omega_{\q}-1)}} =: r_{\infty}(\mathbf{q}) \in (0,+\infty].
\end{equation}
Moreover, the weight sequence $\q$ is a measurable function of the numbers $r_j(\mathbf{q})$ for $j \in \N^* \cup \{\infty\}$.
\end{prop}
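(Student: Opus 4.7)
The plan is to view $(P_t, V_t)_{t \geq 0}$ as a Markov chain on $\N \times \N$ and apply a martingale strong law of large numbers to each coordinate once the transition kernel has been shown to stabilize; the measurability statement then follows by a direct inversion. First I would check that $P_t \to +\infty$ almost surely. In the subcritical case $\omega_{\q} > 1$, the harmonic function $h_p(\omega_{\q})$ converges to $\sqrt{\omega_{\q}/(\omega_{\q}-1)}$ as $p \to +\infty$, so the transitions~\eqref{peeling_transitions} are asymptotically those of the $\widetilde{\nu}_{\q}$-random walk, which has strictly positive drift by the convexity remark following Theorem~\ref{thm_rappel_these}. In the critical case, $(P_t)$ is the $\widetilde{\nu}_{\q}$-random walk conditioned to stay positive through the harmonic function $h_p(1) \sim (2/\sqrt{\pi})\sqrt{p}$, which is transient by a classical argument. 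In both cases $h_{p+i}(\omega_{\q})/h_p(\omega_{\q}) \to 1$ for every fixed $i$, so \eqref{peeling_transitions} yields $\P(P_{t+1} - P_t = i \mid \mathcal{F}_t) \to \widetilde{\nu}_{\q}(i)$ almost surely.

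From here, \eqref{limit_number_jgons} follows by a martingale strong law of large numbers. For fixed $j \geq 1$ and $X_s := \mathbbm{1}_{P_{s+1}-P_s = j-1}$, the sequence $X_s - \E[X_s \mid \mathcal{F}_s]$ is a bounded martingale difference sequence, whose Cesaro average tends to $0$ a.s. Since $\E[X_s \mid \mathcal{F}_s] \to \widetilde{\nu}_{\q}(j-1) = (c_{\q}\omega_{\q})^{j-1} q_j = r_j(\q)$ almost surely by the previous step, a further Cesaro averaging gives $\frac{1}{t} \sum_{s < t} X_s \to r_j(\q)$ a.s.

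For~\eqref{limit_volume_growth} I would decompose $\Delta(V - 2P)$ according to the type of peeling step. A positive jump $\Delta P = j - 1 \geq 0$ corresponds to the discovery of a new face of degree $2j$, hence adds $2j-1$ new edges, so $\Delta(V - 2P) = 1$. A negative jump $\Delta P = -1-k \leq -1$ corresponds to identifying the peeled edge with another boundary edge and filling a region of perimeter $2k$; by the spatial Markov property of $\MM_{\q}$, this region is $\q$-Boltzmann of the $2k$-gon, contributing $\Delta V = \mathcal{V}$ (its number of newly added edges) and $\Delta(V - 2P) = \mathcal{V} + 2 + 2k$. The same martingale-plus-Cesaro argument, combined with a truncation of $\mathcal{V}$ to handle the heavy-tail regime, then yields a.s.\ convergence of $(V_t - 2P_t)/t$ to
\[ \sum_{j \geq 1} \widetilde{\nu}_{\q}(j-1) \;+\; \sum_{k \geq 0} \widetilde{\nu}_{\q}(-1-k) \bigl( \E_k[\mathcal{V}] + 2 + 2k \bigr), \]
where $\E_k$ denotes expectation under the $\q$-Boltzmann law of the $2k$-gon. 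A direct computation using~\eqref{eqn_defn_nu} and the pointed partition formula~\eqref{eqn_exact_pointed_partition_function} reduces this to the closed form announced for $r_{\infty}(\q)$; in the critical case both sides yield $+\infty$, matching $r_{\infty}(\q) = +\infty$ when $\omega_{\q} = 1$. The main obstacle I anticipate lies precisely here: controlling heavy tails of $\mathcal{V}$, and executing the generating-function manipulation that produces the exact closed form.

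Finally, for the measurability, the function $\omega \mapsto (\sqrt{\omega}-\sqrt{\omega-1})^2 / (2\sqrt{\omega(\omega-1)})$ is strictly decreasing on $[1,+\infty)$ (a short derivative computation), so $\omega_{\q}$ is recovered from $r_{\infty}(\q)$. Given $\omega_{\q}$, one reads off $\nu_{\q}(j-1) = r_j(\q)/\omega_{\q}^{j-1}$ for every $j \geq 1$, and substituting $q_j = \nu_{\q}(j-1)/c_{\q}^{j-1}$ into the admissibility equation~\eqref{eq_univ_admissible} yields
\[ \sum_{j \geq 1} \nu_{\q}(j-1) \binom{2j-1}{j-1} 4^{-(j-1)} \;=\; 1 - \frac{4}{c_{\q}}, \]
whose left-hand side depends only on the $r_j$'s, while the right-hand side is strictly increasing in $c_{\q}$; this determines $c_{\q}$, and hence $q_j = \nu_{\q}(j-1)/c_{\q}^{j-1}$ for every $j \geq 1$, as a measurable function of $(r_j(\q))_{j \in \N^* \cup \{\infty\}}$.
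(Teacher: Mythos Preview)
Your proposal is correct and follows essentially the same route as the paper. The paper's proof is shorter only because it outsources both the fact that $P_t \to +\infty$ a.s.\ and the volume-growth convergence~\eqref{limit_volume_growth} to \cite[Proposition 10.12 and Lemma 10.9]{C-StFlour}, whereas you sketch these directly via the peeling-step decomposition and a martingale law of large numbers; the argument for~\eqref{limit_number_jgons} and the measurability inversion are identical to the paper's. One small bookkeeping point: when two boundary edges are identified, the total edge count drops by $1$ before any filling occurs, so your $\mathcal{V}$ is really $\Delta V$ rather than the edge count of the Boltzmann filling alone; this does not affect the formula $\Delta(V-2P)=\Delta V+2+2k$ nor the rest of the argument.
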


\begin{proof}
In the subcritical case, the second convergence is Proposition 10.12 of \cite{C-StFlour}. In the critical case, we have $\omega_{\q}=1$ so the right-hand side of \eqref{limit_volume_growth} is infinite, and the result follows from Lemma 10.9 of \cite{C-StFlour}.

Let us now prove the first convergence. For this, we first note that we have $P_t \to +\infty$ almost surely as $t \to +\infty$. Indeed, this again follows from \cite[Proposition 10.12]{C-StFlour} in the subcritical case and from \cite[Lemma 10.9]{C-StFlour} in the critical case. On the other hand, given the asymptotics for $h_p(\omega)$ right after \eqref{eqn_defn_homega}, for any fixed $j \geq 1$, we have $\frac{h_{p+j-1}(\omega_{\q})}{h_p(\omega_{\q})} \to 1$ as $p \to +\infty$. It follows that
\[ \P \left( P_{t+1}-P_t=j-1 | \mathcal{F}_t \right) \xrightarrow[t \to +\infty]{a.s.} \widetilde{\nu}_{\q}(j-1) = \left( c_{\q} \omega_{\q} \right)^{j-1} q_j, \]
and the first convergence follows by the law of large numbers.

Finally, the function $\omega \to \frac{ \left(\sqrt{\omega}-\sqrt{\omega-1} \right)^2}{2 \sqrt{\omega(\omega-1)}}$ is a decreasing homeomorphism from $[1,+\infty)$ to $(0,+\infty]$, so $\omega_{\q}$ is a measurable function of $r_{\infty}(\mathbf{q})$. Moreover, by the
definition of $c_{\q}$~\eqref{eq_univ_admissible}, we have
\[ 1-\frac{4}{c_{\q}} = \sum_{j \geq 1} \binom{2j-1}{j-1} q_j \left( \frac{c_{\q}}{4} \right)^{j-1} = \sum_{j \geq 1} \frac{1}{4^j} \binom{2j-1}{j-1} \frac{r_j(\mathbf{q})}{\omega_{\q}^{j-1}},\]
which implies that $c_{\q}$ is a measurable function of $\omega_{\q}$ and the numbers $r_j(\mathbf{q})$ for $j \in \N^*$. Finally, given $c_{\q}$ and the $r_j(\q)$, we easily recover the $q_j$ from~\eqref{limit_number_jgons}.
\end{proof}

\paragraph{Weight sequences corresponding to a given distribution of the root face.}
We now prove Theorem~\ref{thm_prametrization_rootface} by showing that our fourth parametrization is indeed bijective. We first state the precise version of Theorem~\ref{thm_prametrization_rootface}.  We recall that for $\q \in \mathcal{Q}_h$, the numbers $a_j(\q)$ satisfy $\sum_{j \geq 1} j a_j(\q)=1$ and $\alpha_1<1$, and we have $\omega_{\q} \geq 1$.

\begin{prop}\label{prop_third_parametrization}
Let $(\alpha_j)_{j \geq 1}$ be such that $\sum_{j \geq 1} j \alpha_j=1$ and $\alpha_1 < 1$, and let $\omega \geq 1$. Then there is a unique $\q \in \mathcal{Q}_h$ such that
\[ \omega_{\q}=\omega \mbox{ and } \forall j \geq 1, \, a_j(\q)=\alpha_j.\]
Moreover, this weight sequence $\q$ is given by
\begin{equation}\label{eqn_qjomega_univ}
q_j=\frac{j \alpha_j}{\omega^{j-1}h_{\omega}(j)} \left( \frac{1-\sum_{i \geq 1} \frac{1}{4^{i-1}} \binom{2i-1}{i-1} \frac{i \alpha_i}{\omega^{i-1} h_{\omega}(i)} }{4} \right)^{j-1}.
\end{equation}
\end{prop}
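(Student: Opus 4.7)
The proof naturally splits into uniqueness and existence, both anchored on \eqref{eqn_walk_to_rootface} and the admissibility criterion \eqref{eq_univ_admissible}. For uniqueness, I would suppose $\q \in \cQ_h$ realizes $\omega_{\q}=\omega$ and $a_j(\q)=\alpha_j$ for every $j \geq 1$. Since $h_1(\omega)=1$, equation \eqref{eqn_walk_to_rootface} forces $j\alpha_j = h_j(\omega)(c_{\q}\omega)^{j-1} q_j$, so
\[ q_j = \frac{j\alpha_j}{h_j(\omega)(c_{\q}\omega)^{j-1}}. \]
Plugging this into $\sum_{j \geq 1} q_j \binom{2j-1}{j-1}(c_{\q}/4)^{j-1} = 1-4/c_{\q}$ produces a single scalar equation for $c_{\q}$, namely $1 - 4/c_{\q} = S$, where $S := \sum_{j \geq 1} \frac{j\alpha_j \binom{2j-1}{j-1}}{(4\omega)^{j-1} h_j(\omega)}$. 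This fixes $c_{\q} = 4/(1-S)$ and hence determines $\q$, matching the formula~\eqref{eqn_qjomega_univ}.

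For existence, I would define $\q$ by the explicit formula with $c := 4/(1-S)$ and verify the properties required for $\q \in \cQ_h$ with the prescribed parameters. Nonnegativity: using $h_j(\omega) \geq 1$ (strict for $j \geq 2$), the Catalan bound $\binom{2j-1}{j-1}/4^{j-1} \leq 1$ (strict for $j \geq 2$), and the fact that $\alpha_1 < 1$ combined with $\sum j\alpha_j = 1$ forces some $\alpha_j > 0$ with $j \geq 2$, one checks $S < 1$, so $c > 0$ and $q_j \geq 0$. Admissibility at $x = c/4$ is then immediate by construction, so $\q \in \cQ_a$ with $c_{\q} = c$; and $\q \in \cQ^*$ since $q_j > 0$ for the same $j \geq 2$. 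It remains to show $\omega_{\q} = \omega$, which by Theorem~\ref{thm_rappel_these} reduces to the identity $F_{\q}(\omega) := \sum_i \omega^i \nu_{\q}(i) = 1$. Since our choice of $q_j$ makes the nonnegative-$i$ part of $F_{\q}(\omega)$ equal to $\sum_{j \geq 1} j\alpha_j/h_j(\omega)$, this in turn reduces to
\begin{equation*}
2\sum_{k \geq 1} W_{k-1}(\q)(c\omega)^{-k} \;=\; 1 - \sum_{j \geq 1} \frac{j\alpha_j}{h_j(\omega)}.
\end{equation*}

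This last identity is where I expect the main difficulty to lie. In the critical case $\omega = 1$ it is equivalent to criticality of $\q$, and using $h_j(1) = (2j/4^j)\binom{2j}{j}$ together with $\binom{2j}{j} = 2\binom{2j-1}{j-1}$ a short computation collapses the criticality condition $f'_{\q}(c/4) = 16/c^2$ to $\sum_{j \geq 2}(j-1)\alpha_j = 4/c$, which is precisely admissibility at $\omega = 1$. For $\omega > 1$, subcriticality of $\q$ follows from a similar monotonicity comparison with the critical baseline (both $S$ and its analogous ``derivative'' quantity decrease as $\omega$ grows). The identity itself should then be established by combining the harmonicity of $(h_p(\omega))_{p \geq 1}$ for the $\widetilde{\nu}_{\q}$-walk (cf.~\cite[Chapter 5]{C-StFlour}) with the explicit pointed generating function $\sum_p W_p^{\bullet}(\q) z^p = (1-cz)^{-1/2}$ deduced from~\eqref{eqn_exact_pointed_partition_function}, together with the standard combinatorial relations between $W_p$ and $W_p^{\bullet}$ for Boltzmann bipartite maps of the $2p$-gon. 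Once this identity is in hand, Theorem~\ref{thm_rappel_these} yields $\q \in \cQ_h$ with $\omega_{\q} = \omega$, and substituting back into~\eqref{eqn_walk_to_rootface} immediately confirms $a_j(\q) = \alpha_j$, completing the proof.
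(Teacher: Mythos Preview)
Your uniqueness argument and the setup for existence (positivity of $c$, admissibility with $c_{\q}=c$, membership in $\cQ^*$, the identification of the nonnegative part of $F_{\q}(\omega)$, and even the $\omega=1$ criticality check) are correct and match the paper's approach almost verbatim. The gap is in the last paragraph, where you must establish $\sum_{i\in\Z}\omega^i\nu_{\q}(i)=1$ for the given $\omega>1$.

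Your sketch there is not a proof, and it is potentially circular. You invoke ``the harmonicity of $(h_p(\omega))_{p\geq 1}$ for the $\widetilde{\nu}_{\q}$-walk'' via \cite[Chapter 5]{C-StFlour}, but $\widetilde{\nu}_{\q}$ is defined using $\omega_{\q}$, and the statement that $(h_p(\omega_{\q}))$ is $\widetilde{\nu}_{\q}$-harmonic is recorded in the paper only \emph{after} one knows $\q\in\cQ_h$. At this stage you do not know $\omega_{\q}=\omega$, so you cannot cite that fact; you would have to prove harmonicity of $(h_p(\omega))$ for the (a priori non-probability) measure $i\mapsto\omega^i\nu_{\q}(i)$ directly. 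The additional ingredients you list --- the pointed generating function $\sum_p W_p^{\bullet}(\q)z^p$ and the $W_p/W_p^{\bullet}$ relations --- do not obviously lead there and are not used in the paper.

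The paper's device is more elementary and avoids the circularity. What \emph{is} known for any admissible $\q$ (this is \cite[Lemma 5.2]{C-StFlour}) is that $u(j):=4^{-j}\binom{2j}{j}$ (extended by $0$ for $j<0$) is $\nu_{\q}$-harmonic on $\{1,2,\dots\}$: $u(j)=\sum_{i}\nu_{\q}(i)u(i+j)$. Since $h_p(\omega)=\sum_{j=0}^{p-1}\omega^{-j}u(j)$, multiplying by $\omega^{-j}$ and summing over $1\leq j\leq p-1$ gives
\[
h_p(\omega)-h_1(\omega)=\sum_{i\in\Z}\omega^i\nu_{\q}(i)\big(h_{p+i}(\omega)-h_{i+1}(\omega)\big).
\]
The ``boundary'' relation at $p=1$ comes precisely from your hypothesis $\sum_j j\alpha_j=1$: a one-line computation (using only the nonnegative $i$, i.e.\ your formula for $q_j$) gives $\sum_{i}\omega^i\nu_{\q}(i)h_{i+1}(\omega)=1=h_1(\omega)$. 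Adding and dividing by $h_p(\omega)$ yields $\sum_i\omega^i\nu_{\q}(i)\,h_{p+i}(\omega)/h_p(\omega)=1$ for every $p\geq 1$; letting $p\to\infty$ (dominated convergence, using $h_p(\omega)\to\sqrt{\omega/(\omega-1)}$) gives $\sum_i\omega^i\nu_{\q}(i)=1$. Note that once this is established, no separate subcriticality argument is needed: for $\omega>1$ Theorem~\ref{thm_rappel_these} then yields $\q\in\cQ_h$ with $\omega_{\q}=\omega$ directly.
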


\begin{proof}[Proof of Proposition~\ref{prop_third_parametrization}]
We start with uniqueness. We note that
\begin{equation}\label{eqn_alpha_from_q}
a_j(\q)=\frac{1}{j} C_j(\q) q_j = \frac{1}{j} \left( c_{\q} \omega_{\q}\right)^{j-1} h_{j}(\omega_{\q}) q_j,
\end{equation}
so $q_j$ can be obtained as a function of $a_j(\q)=\alpha_j$, $\omega_{\q}$ and $c_{\q}$. Moreover, by the definition~\eqref{eq_univ_admissible} of $c_{\q}$, we have
\begin{equation}\label{eqn_g_function_of_aj}
1-\frac{4}{c_{\q}}=\sum_{i \geq 1} \frac{1}{4^{i-1}} \binom{2i-1}{i-1} q_i c_{\q}^{i-1} = \sum_{i \geq 1} \frac{1}{4^{i-1}} \binom{2i-1}{i-1} \frac{i a_i(\q)}{\omega_{\q}^{i-1} h_{i}(\omega_{\q})},
\end{equation}
so $c_{\q}$, and therefore $q_j$ for all $j \geq 1$, can be deduced from $\omega_{\q}$ and $\left( a_j(\q) \right)_{j \geq 1}$. More precisely, we obtain the formula~\eqref{eqn_qjomega_univ}, which in particular proves the uniqueness.

To prove the existence, it is enough to check that, for all $\omega \geq 1$ and $(\alpha_j)_{j \geq 1}$ with $\sum j \alpha_j=1$ and $\alpha_1<1$, the sequence $\q$ given by \eqref{eqn_qjomega_univ} is indeed in $\mathcal{Q}_h$, with $\omega_{\q}=\omega$ and $a_j(\q)=\alpha_j$ for all $j$. 
Following \eqref{eqn_g_function_of_aj}, we first write
\begin{equation}\label{eqn_c_of_alpha_omega}
c=\frac{4}{1-\sum_{i \geq 1} \frac{1}{4^{i-1}} \binom{2i-1}{i-1} \frac{i \alpha_i}{\omega^{i-1} h_{i}(\omega) }},
\end{equation}
and check that $\q$ is admissible with $c_{\q}=c$. First $\omega^{i-1} h_{i}(\omega)$ is a polynomial in $\omega$ with nonnegative coefficients so $\omega^{i-1} h_{i}(\omega) \geq h_i(1)=\frac{2i}{4^i} \binom{2i}{i}$. From here, we get
\[ \sum_{i \geq 1} \frac{1}{4^{i-1}} \binom{2i-1}{i-1} \frac{i \alpha_i}{\omega^{i-1} h_{i}(\omega) } \leq \sum_{i \geq 1} \alpha_i < \sum_{i \geq 1} i \alpha_i = 1 \]
because $\alpha_1 <1$. Therefore, the numbers $q_j$ are nonnegative and $c>0$, and we can rewrite \eqref{eqn_qjomega_univ} as
\[q_j=\frac{j \alpha_j}{(\omega c)^{j-1} h_{j}(\omega)}. \]
From here, we get
\[ \sum_{i \geq 1} \frac{1}{4^{i-1}} \binom{2i-1}{i-1} q_i c^{i-1} = 1-\frac{4}{c}\]
immediately by the definition of $c$, which proves $\q \in \mathcal{Q}_a$ and $c_{\q}=c$.
Also, we know that $\alpha_1<1$ so there is $j \geq 2$ with $\alpha_j>0$, which implies $q_j>0$, so $\q \in \mathcal{Q}^*$.

We now prove $\q \in \mathcal{Q}_h$ with $\omega_{\q}=\omega$, which is equivalent to proving
\[ \sum_{i \in \Z} \nu_{\q}(i) \omega^i = 1,\]
where we recall that $\nu_{\q}$ is defined by \eqref{eqn_defn_nu}. For this,
inspired by similar arguments in the critical case (see e.g. \cite[Lemma 5.2]{C-StFlour}), the basic idea will be to show that $\left( \omega^i h_{i}(\omega) \right)_{i \geq 1}$ is harmonic for $\nu_{\q}$. More precisely, the equality $\sum_{i \geq 1} i \alpha_i=1$ can be interpreted as a harmonicity relation at $1$: setting $h_i(\omega)=0$ for $i \leq -1$, we have
\begin{equation}\label{eqn_harmo_at_one}
\sum_{i \in \mathbb{Z}} h_{i+1}(\omega) \omega^i \nu_{\q}(i) = \sum_{j \geq 1} \omega^{j-1} h_{j}(\omega) c^{j-1} q_j = \sum_{j \geq 1} j \alpha_j = 1 = h_{1}(\omega),
\end{equation}
where in the beginning we do the change of variables $j=i+1$. On the other hand, we know that $h_{p}(\omega)=\sum_{i=0}^{p-1} \omega^{-i} u(i)$, where $u(i)=\frac{1}{4^i} \binom{2i}{i}$ for $i \geq 0$ (and we set the convention $u(i)=0$ for $i \leq -1$). But the same function $u$ plays an important role in the description of the law of the peeling process of finite Boltzmann maps. In particular, we know that $u$ is $\nu_{\q}$-harmonic on positive integers for any admissible weight sequence $\q$ (this can be found in the proof of Lemma 5.2 in \cite{C-StFlour}). That is, for all $j \geq 1$, we have
\[ u(j)=\sum_{i \in \mathbb{Z}} \nu_{\q}(i) u(i+j).\]
Multiplying by $\omega^{-j}$ and summing over $1 \leq j \leq p-1$, we get, for all $p \geq 1$:
\[ h_{p}(\omega)-h_{1}(\omega) = \sum_{i \in \Z} \omega^i \nu_{\q}(i) \left( h_{p+i}(\omega) - h_{i+1}(\omega) \right). \]
Summing this with \eqref{eqn_harmo_at_one} and dividing by $h_{p}(\omega)$, we obtain
\[ \sum_{i \in \Z} \omega^i \nu_{\q}(i) \frac{h_{p+i}(\omega)}{h_{p}(\omega)} =1. \]
for all $p \geq 1$. When $p \to +\infty$, we have that
$h_{p}(\omega)$ has a positive limit if $\omega>1$ and is equivalent to $\frac{2}{\sqrt{\pi}}\sqrt{p}$ if $\omega=1$, so $\frac{h_{p+i}(\omega)}{h_{p}(\omega)} \to 1$ in every case. Therefore, by dominated convergence, we get
\[ \sum_{i \in \Z} \nu_{\q}(i) \omega^i=1,\]
where the domination $\sum_{i \in \Z} \nu_{\q}(i) \omega^i <+\infty$ is immediate for negative values of $i$ since $\omega \geq 1$, and comes from the convergence of the sum \eqref{eqn_harmo_at_one} for positive values of $i$. This proves $\q \in \mathcal{Q}_h$ with $\omega_{\q}=\omega$, and from here $a_j(\q)=\alpha_j$ is immediate using \eqref{eqn_alpha_from_q}.
\end{proof}

\begin{proof}[Proof of Theorem \ref{thm_prametrization_rootface}]
It is clear from Proposition \ref{prop_third_parametrization} that weight sequences with a given root face distribution are parametrized by $\omega \in [1,+\infty)$. We denote by $\q^{(\omega)}$ the unique weight sequence for which the law of the root face is given by $(\alpha_j)_{j \geq 1}$ and for which $\omega_{\q^{(\omega)}}=\omega$. Then $q^{(1)}$ is critical by definition. Moreover, using \eqref{eqn_qjomega_univ} and \eqref{eqn_c_of_alpha_omega}, we get for $i \geq 0$:
\begin{equation}\label{eqn_omega_infinite}
\nnu_{\q^{(\omega)}}(i)=q^{(\omega)}_{i+1} \omega^i c_{\q^{(\omega)}}^i \xrightarrow[\omega \to +\infty]{} (i+1) \alpha_{i+1}.
\end{equation}
The sum over $i \geq 0$ of the right-hand side is equal to $1$, so $\nnu_{\q^{(\omega)}} \left( (-\infty,-1] \right) \to 0$ as $\omega \to +\infty$. By \eqref{peeling_transitions}, this means that the probability of peeling cases decreasing the perimeter goes to $0$. Since these are the cases creating cycles in the dual, the dual of $\MM_{\q^{(\omega)}}$ becomes close to a tree when $\omega \to +\infty$, and the vertex degrees in $\MM_{\q^{(\omega)}}$ go to infinity.
\end{proof}

\paragraph{Two technical results on the dependance in $\omega$.}
We conclude this section with two technical results that we will need in the end of the proof (Section~\ref{subsec_last_step}). Both deal with the way that some quantities depend on the parameter $\omega$. We fix $(\alpha_j)_{j \geq 1}$ such that $\sum_{j \geq 1} j \alpha_j = 1$ and $\alpha_1<1$. By Proposition~\ref{prop_third_parametrization}, we can denote by $\q^{(\omega)}$ the unique weight sequence for which the law of the root face is given by $(\alpha_j)_{j \geq 1}$ and $\omega_{\q^{(\omega)}}=\omega$.

The first technical lemma states that we can recover $\q$ from the law of the root face $(\alpha_j)_{j \geq 1}$ and a single weight $q_j$, provided $j \geq 2$.
\begin{lem}\label{lem_qj_is_monotone}
For every $j \geq 1$, the function $\omega \to q_j^{(\omega)}$ is nonincreasing. Moreover, if $j \geq 2$ and $\alpha_j>0$, this function is decreasing.
\end{lem}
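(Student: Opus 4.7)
The plan is to analyze the explicit formula \eqref{eqn_qjomega_univ} for $q_j^{(\omega)}$ by taking a logarithmic derivative in $\omega$, then reducing to a polynomial inequality which I verify via classical Catalan-type identities. First I dispose of trivial cases: when $j=1$, formula \eqref{eqn_qjomega_univ} gives $q_1^{(\omega)} = \alpha_1$ independently of $\omega$ (since $h_1(\omega)=1$ and the exponent $j-1$ vanishes), so $q_1^{(\omega)}$ is constant and hence non-increasing; when $j \geq 2$ and $\alpha_j = 0$, we have $q_j^{(\omega)}=0$ identically, so the claim is immediate.

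For the main case $j \geq 2$ with $\alpha_j > 0$, set $\bar h_k(\omega) := \omega^{k-1} h_k(\omega)$, $u_k := \binom{2k}{k}/4^k$, and $\xi_k(\omega) := \bar h_k'(\omega)/\bar h_k(\omega)$, and recall from Proposition~\ref{prop_third_parametrization} that $c_{\q^{(\omega)}} = 4/(1-\Phi(\omega))$ with $\Phi(\omega) = \sum_{i \geq 1} 2i\alpha_i u_i/\bar h_i(\omega)$ (using $\binom{2i-1}{i-1}/4^{i-1}=2u_i$). Formula \eqref{eqn_qjomega_univ} then rewrites as $q_j^{(\omega)} = j\alpha_j/\bigl[c_{\q^{(\omega)}}^{\,j-1}\bar h_j(\omega)\bigr]$, so proving strict decrease amounts to showing $\omega \mapsto c_{\q^{(\omega)}}^{\,j-1}\bar h_j(\omega)$ is strictly increasing on $[1,\infty)$. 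Computing $\frac{d}{d\omega}\log c_{\q^{(\omega)}} = \Phi'/(1-\Phi)$, expanding $\Phi'$, and using $\sum_i i\alpha_i = 1$, this reduces to the inequality
\[
\sum_{i \geq 2} \frac{i\alpha_i}{\bar h_i(\omega)}\,\Bigl[\bigl(\bar h_i(\omega)-2u_i\bigr)\xi_j(\omega) - 2(j-1)u_i\,\xi_i(\omega)\Bigr] \geq 0, \qquad \omega \geq 1,
\]
with strict inequality for $\omega > 1$.

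I would then prove that each bracket in the sum is non-negative on $[1,\infty)$. The classical identities $\sum_{k=0}^{p-1}u_k = 2p u_p$ and $\sum_{k=1}^{p-1}ku_k = \tfrac{2}{3}p(p-1)u_p$ (both provable by induction from $u_{k+1}=u_k(2k+1)/(2k+2)$) yield $\bar h_p(1) = 2pu_p$ and $\xi_p(1)=2(p-1)/3$, so both sides of each bracket evaluate to $\tfrac{4}{3}(i-1)(j-1)u_i$ at $\omega=1$: the inequality is saturated there. Cross-multiplying by $\bar h_i(\omega)\bar h_j(\omega)>0$ turns the bracket into a polynomial in $\omega$ vanishing at $\omega=1$; factoring out $(\omega-1)$ and checking that the quotient has non-negative coefficients (the key input being the recurrence $\bar h_{k+1}=\omega\bar h_k + u_k$ used to control the polynomial expansion around $\omega=1$) gives the desired non-negativity.

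For strict decrease when $\alpha_j > 0$, it suffices to note that the $i = j$ summand above equals $(j\alpha_j/\bar h_j)\xi_j(\omega)\bigl(\bar h_j(\omega)-2j u_j\bigr)$, which is strictly positive for $\omega > 1$ since $\xi_j > 0$ and $\bar h_j(\omega) > 2j u_j = \bar h_j(1)$ (because $\bar h_j$ is a polynomial in $\omega$ with positive coefficients). Combined with the non-negativity of the remaining summands, the full sum is strictly positive on $(1,\infty)$, so $\frac{d}{d\omega}\log q_j^{(\omega)} < 0$ there and $q_j^{(\omega)}$ is strictly decreasing on $[1,\infty)$. The main technical obstacle is establishing the polynomial non-negativity of the bracket for general $i \neq j$: since equality holds at $\omega = 1$, a first-order argument is insufficient and one must compare the polynomial expansions of $\bar h_i$ and $\xi_i$ near $\omega = 1$, for instance by extracting the factor $(\omega-1)$ explicitly and verifying positivity of the resulting quotient.
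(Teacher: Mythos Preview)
Your reduction is correct and coincides with the paper's: after taking a logarithmic derivative and using $\sum_i i\alpha_i=1$, the question becomes the term-by-term polynomial inequality
\[
\bar h_i^2\,\bar h_j' - 2u_i\bigl(\bar h_i\,\bar h_j' + (j-1)\,\bar h_i'\,\bar h_j\bigr) \;\geq\; 0
\quad\text{for all }i,j\geq 2\text{ and }\omega\geq 1,
\]
which is (up to a factor $4^{2(i-1)+j-1}$) exactly the paper's quantity $F_{i,j}(\omega)$, and you correctly identify that it vanishes at $\omega=1$.

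The genuine gap is that you do not prove this inequality. You write that ``factoring out $(\omega-1)$ and checking that the quotient has non-negative coefficients \dots\ gives the desired non-negativity'', and then in your final paragraph you concede that this is ``the main technical obstacle'' which ``one must'' carry out. This is not a proof but a statement of what remains to be done. The coefficient-positivity claim is not obvious and you provide no argument for it; the recurrence $\bar h_{k+1}=\omega\bar h_k+u_k$ alone does not immediately yield it. The paper's argument for $F_{i,j}\geq 0$ is substantial: it introduces the operators $\Delta F_{i,j}:=F_{i,j+1}-4\omega F_{i,j}$ and $\Delta^2 F_{i,j}$, uses the recurrence to show $\Delta^2 F_{i,j}\geq 4\binom{2j}{j}G_i(\omega)$ where $G_i:=P_i^2-\binom{2i-1}{i-1}(P_i+\tfrac32 P_i')$, and then proves $G_i(\omega)\geq 0$ for $\omega\geq 1$ by showing that the coefficients of $G_i$ change sign exactly once (from the top) and that $G_i(1)=0$. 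Combined with the boundary values $F_{i,0}=\Delta F_{i,0}=0$, this gives $F_{i,j}\geq 0$ by a double ``integration''. Your proposal contains no analogue of this step.

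Your treatment of the $i=j$ summand for strictness is fine, but it rests on the non-negativity of the other summands, which you have not established.
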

Since the proof is not particularly enlightening, we postpone it to Appendix \ref{appendix_qj_monotone}.

Our second technical lemma is a reinforcement of a part of Proposition~\ref{prop_q_as_limit} above. It states that the second convergence result~\eqref{limit_volume_growth} is uniform in $\omega$ as long as $\omega$ is bounded away from $1$ and $+\infty$.

\begin{lem}\label{lem_unif_volume}
Let $\left( P_t^{(\omega)} \right)_{t \geq 0}$ and $\left( V_t^{(\omega)}  \right)_{t \geq 0}$ denote respectively the perimeter and volume processes associated to a peeling exploration of $\MM_{\q^{(\omega)}}$.
The convergence in probability
\[ \frac{V_t^{(\omega)}-2P_t^{(\omega)}}{t} \xrightarrow[t \to +\infty]{P)} \frac{ \left(\sqrt{\omega}-\sqrt{\omega-1} \right)^2}{2 \sqrt{\omega(\omega-1)}} \]
is uniform in $\omega$ over any compact subset $K$ of $(1,+\infty)$ in the sense that for all $\eps>0$, there is $t_0>0$ such that, for all $t \geq t_0$ and $\omega \in K$:
\[ \P \left( \left| \frac{V_t^{(\omega)}-2P_t^{(\omega)}}{t} - \frac{ \left(\sqrt{\omega}-\sqrt{\omega-1} \right)^2}{2 \sqrt{\omega(\omega-1)}} \right| >\eps \right) < \eps. \]
\end{lem}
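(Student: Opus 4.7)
The plan is to upgrade the pointwise almost sure convergence from Proposition~\ref{prop_q_as_limit} to uniform convergence in probability on $K$ by a Doob decomposition combined with a Chebyshev-type estimate. Fix a compact $K \subset (1,+\infty)$, set $f(\omega) := \frac{(\sqrt{\omega}-\sqrt{\omega-1})^2}{2\sqrt{\omega(\omega-1)}}$, and decompose $V_t^{(\omega)}-2P_t^{(\omega)} = A_t^{(\omega)} + M_t^{(\omega)}$ where $A_t^{(\omega)} = \sum_{s<t} g_{P_s^{(\omega)}}(\omega)$ is the compensator, with $g_p(\omega) := \E[\Delta_s^{(\omega)} \mid P_s^{(\omega)} = p]$ and $\Delta_s^{(\omega)}$ the peeling-step increment of $V - 2P$. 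It suffices to show (a) $A_t^{(\omega)}/t \to f(\omega)$ in probability, and (b) $M_t^{(\omega)}/t \to 0$ in probability, both uniformly in $\omega \in K$. The key asset throughout is that $\omega \mapsto \q^{(\omega)}$ is continuous on $(1,+\infty)$ via~\eqref{eqn_qjomega_univ}, so all relevant quantities (the step distribution $\widetilde{\nu}_{\q^{(\omega)}}$, its drift, $c_{\q^{(\omega)}}$, etc.) vary continuously with $\omega$; compactness of $K$ promotes pointwise estimates to uniform ones.

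For (a), the lazy peeling rules give $\Delta_s^{(\omega)} = 1$ on a ``discovery'' step, and an explicit expression involving the size of the $\q^{(\omega)}$-Boltzmann filling on an ``identification'' step. The function $g_p(\omega)$ has a closed form from~\eqref{peeling_transitions}, and since $h_p(\omega) \to \sqrt{\omega/(\omega-1)}$ exponentially fast with rate uniformly bounded below on $K$, the ratios $h_{p+i}(\omega)/h_p(\omega)$ tend to $1$ uniformly in $\omega \in K$; dominated convergence then yields $g_p(\omega) \to f(\omega)$ uniformly in $\omega \in K$. Combined with a linear lower bound $P_s^{(\omega)} \geq cs$ holding with high probability uniformly in $\omega \in K$ (using that the drift of $\widetilde{\nu}_{\q^{(\omega)}}$ is continuous and strictly positive on $K$), a Ces\`aro-type averaging gives (a).

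For (b), the predictable quadratic variation of $M_t^{(\omega)}$ is bounded by $\sum_{s<t} \E[(\Delta_s^{(\omega)})^2 \mid \mathcal{F}_s]$, so a uniform bound $\E[(\Delta_s^{(\omega)})^2 \mid P_s^{(\omega)}=p] \leq C(K)$ yields $\Var(M_t^{(\omega)}) = O(t)$ uniformly, and Chebyshev's inequality gives (b). Combining (a) and (b) yields the lemma.

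The main obstacle is precisely this uniform second-moment bound on $\Delta_s^{(\omega)}$: the delicate contribution is the squared number of edges of a $\q^{(\omega)}$-Boltzmann map of the $2k$-gon used as a filling. One needs its second moment bounded uniformly in $k \geq 0$ and $\omega \in K$, which reduces to exponential size tails for these finite Boltzmann maps with exponent bounded away from criticality uniformly on $K$. Uniform subcriticality $\omega \geq \min K > 1$ ensures positive drift of $\widetilde{\nu}_{\q^{(\omega)}}$ bounded away from $0$; together with the continuous dependence of $c_{\q^{(\omega)}}$ and of the partition functions $W_k(\q^{(\omega)})$ on $\omega$ through~\eqref{eq_univ_admissible}, this should transfer the fixed-$\omega$ exponential bounds into uniform ones over the compact set $K$.
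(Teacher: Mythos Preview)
Your approach is genuinely different from the paper's, and while the strategy is reasonable, it carries real gaps and is considerably more laborious than necessary.

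The paper's argument bypasses the Doob decomposition entirely. The key observation you are missing is that the conditioning defining $(P^{(\omega)},V^{(\omega)})$ as an $h$-transform of the free walk $(\widetilde P^{(\omega)},\widetilde V^{(\omega)})$ is \emph{non-degenerate}: the event $\{\widetilde P^{(\omega)}_t\geq 1\ \forall t\}$ has probability $\sqrt{(\omega-1)/\omega}$, which is bounded away from $0$ on $K$. Hence it suffices to prove the uniform convergence for the \emph{unconditioned} walk. Since $\widetilde V^{(\omega)}_t-2\widetilde P^{(\omega)}_t$ is a genuine i.i.d.\ sum, a uniform weak law of large numbers reduces the problem to showing that the family of one-step increments $\bigl(\widetilde V^{(\omega)}_1-2\widetilde P^{(\omega)}_1\bigr)_{\omega\in K}$ is uniformly integrable. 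This UI is then obtained by a soft compactness argument: if it failed along $\omega_i\to\omega_\infty\in K$, one has convergence in law (by continuity of each $q_j^{(\omega)}$) together with convergence of expectations (since the mean equals the explicit continuous function $f(\omega)$), and Scheff\'e's lemma forces $L^1$ convergence, contradicting the failure of UI. No second moments, no tail estimates on Boltzmann fillings, and no analysis of the conditioned perimeter are needed.

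By contrast, your route requires two nontrivial ingredients you have not established. First, the uniform bound $\E[(\Delta_s^{(\omega)})^2\mid P_s^{(\omega)}=p]\leq C(K)$: your sketch ``reduces'' this to uniform exponential size tails for subcritical Boltzmann maps of the $2k$-gon, but proving those tails uniformly over $k$ and $\omega\in K$ is itself a substantial estimate (comparable in difficulty to the exponential-moment computations in the analyticity appendix), not a consequence of continuity alone. Second, your part~(a) relies on ``the drift of $\widetilde\nu_{\q^{(\omega)}}$ is continuous and strictly positive on $K$'' to get a uniform linear lower bound on $P_s^{(\omega)}$; but the lemma is stated without the hypothesis $\sum_j j^2\alpha_j<\infty$, so this drift may well be $+\infty$, and in any case transferring a drift statement for the free walk into a uniform-in-$\omega$ escape estimate for the $h$-transformed walk needs its own argument. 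Both gaps are plausibly fillable, but the paper's reduction to the unconditioned i.i.d.\ walk makes them entirely unnecessary.
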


The proof of Lemma~\ref{lem_unif_volume} is an adaptation of the proof of~\eqref{limit_volume_growth} in~\cite{C-StFlour}, but using a uniform weak law of large numbers. It is delayed to Appendix~\ref{subsec_unif_volume}.

\section{Tightness, planarity and one-endedness}\label{sec_univ_tight}

In all this section, we will work in the general setting of Theorem \ref{thm_main_more_general}, i.e. we do not assume $\sum_{j \geq 1} j^2 \alpha_j<+\infty$.

\begin{prop}\label{prop_tightness_dloc_univ}
Let $(\ff^n, g_n)_{n \geq 1}$ be as in Theorem \ref{thm_main_more_general}. Then the sequence $\left( M_{\ff^n, g_n} \right)_{n \geq 1}$ is tight for $d_{\loc}$, and every subsequential limit is a.s. planar and one-ended.
\end{prop}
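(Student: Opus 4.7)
The plan is to proceed in three stages: first tightness for the dual local distance $d_{\loc}^*$, then upgrading to $d_{\loc}$ via control of vertex degrees, and finally the two structural properties of subsequential limits.

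\textbf{Tightness for $d_{\loc}^*$.} Since $\sum_{j\geq 1} j\alpha_j = 1$ and $f_j^n/|\ff^n|\to \alpha_j$ for every $j$, a standard uniform integrability argument gives
\[
\frac{1}{|\ff^n|}\sum_{j>A} j f_j^n \xrightarrow[A\to \infty]{} 0
\]
uniformly in $n$. Lemma~\ref{lem_easy_dual_convergence} then directly yields tightness of $(M_{\ff^n,g_n})$ for $d_{\loc}^*$, and hence existence of subsequential limits in $\overline{\B}^*$.

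\textbf{Upgrading to $d_{\loc}$.} By Lemma~\ref{lem_dual_convergence_univ}, it suffices to check that every $d_{\loc}^*$-subsequential limit $M_\infty$ a.s. lies in $\overline{\B}$, i.e. has finite vertex degrees. The map $M_{\ff^n,g_n}$ is invariant under re-rooting at a uniform edge, so it is stationary for the simple random walk on its graph; Lemma~\ref{lem_tight_degree_in_ball} then reduces the claim to uniform-in-$n$ tightness of the root-vertex degree:
\[
\sup_n \P\bigl(\deg_{M_{\ff^n,g_n}}(\rho)\geq k\bigr)\xrightarrow[k\to\infty]{} 0.
\]
This is where I would invoke the bounded ratio lemma (Lemma~\ref{lem_BRL}): a surgery that cuts open the star around a high-degree vertex, re-labels the incident corners and re-glues faces, turns a bound on $\beta_g(\ff+\mathbf{1}_j)/\beta_g(\ff)$ into the desired uniform tail estimate on $\deg(\rho)$.

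\textbf{Planarity and one-endedness.} For a subsequential $d_{\loc}$-limit $M_\infty$, I would run a filled-in peeling exploration of $M_{\ff^n,g_n}$ with a fixed algorithm $\A$ for a bounded number of steps, and compute, using the recursion~\eqref{rec_biparti_genre_univ}, the conditional probability of each peeling event. The first double sum in~\eqref{rec_biparti_genre_univ} enumerates the planar possibilities (discovering a new face, or gluing two boundary edges without increasing the genus), while the term $\sum_{g^*\geq 1}\binom{v(\ff,g)+2g^*}{2g^*+2}\beta_{g-g^*}(\ff)$ collects the events that locally create a handle. Under the assumption $\theta<\tfrac12\sum_{j\geq 1}(j-1)\alpha_j$, the vertex count $v(\ff^n,g_n)$ is of order $|\ff^n|$, and a short computation combined with the BRL shows that the ratio of the handle-creating contribution to the full LHS $\binom{|\ff^n|+1}{2}\beta_{g_n}(\ff^n)$ is $O(1/|\ff^n|)$. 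A union bound over a bounded number of peeling steps then yields a.s. planarity of $M_\infty$. One-endedness is handled in the same framework: one checks via~\eqref{rec_biparti_genre_univ} that the probability that a gluing-type peeling step produces two macroscopic regions tends to $0$, so the filled-in side is asymptotically negligible and, in the limit, $M_\infty \setminus B_r(M_\infty)$ has a single infinite component for every $r$.

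\textbf{Main obstacle.} The hardest ingredient is the bounded ratio lemma underlying Step 2. In the bipartite setting with arbitrary face degrees, the operation of removing or adding a single face may affect an unbounded region of the map, so the "face removal" surgery from~\cite{BL19} has to be substantially refined; the resulting combinatorial bookkeeping is the heaviest part of this section. Steps 1 and 3, by contrast, are close to their triangular counterparts once~\eqref{rec_biparti_genre_univ} is in hand.
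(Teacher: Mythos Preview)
Your overall architecture matches the paper's, but there is a genuine gap in Step~2, and a related ordering problem between Steps~2 and~3.

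\textbf{The mechanism in Step 2 is not right.} The Bounded Ratio Lemma controls $\beta_g(\ff)/\beta_g(\ff-\mathbf{1}_{j_0})$ only when faces of degree $2j_0$ represent a positive proportion of the edges, i.e. $j_0 f_{j_0}>\delta|\ff|$. Your proposed ``cut open the star of a high-degree vertex'' surgery would produce a large boundary face of half-degree $k=\deg(\rho)/2$, and to turn this into a tail bound you would need control of $\beta_g(\ff+\mathbf{1}_k)/\beta_g(\ff)$ for \emph{large} $k$; the BRL gives nothing there. The paper (and \cite{AS03,BL19}) instead runs a filled-in peeling exploration that always peels the boundary edge to the left of $\rho$, and shows that at every step, conditionally on the explored part, the probability of swallowing $\rho$ in a bounded number of further steps is bounded below: one fixes a \emph{single} $j^*\geq 2$ with $\alpha_{j^*}>0$, glues a $2j^*$-gon and then folds edges, and the BRL (in the boundary form of Corollary~\ref{lem_BRL_boundaries}) bounds $\beta_{g_n}^{(p)}(\ff^n-\mathbf{h}-\mathbf{1}_{j^*})/\beta_{g_n}^{(p)}(\ff^n-\mathbf{h})$ from below uniformly in $p$ and $\mathbf{h}$. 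This yields $\tau<\infty$ a.s., hence finiteness of $\deg(\rho)$.

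\textbf{The order of Steps 2 and 3 must be swapped.} The peeling argument above needs the filled-in exploration of the limit to always have exactly one hole; this is only guaranteed once you know the subsequential limit is planar and one-ended. The paper therefore proves planarity and one-endedness \emph{for $d_{\loc}^*$-limits} first (Corollaries~\ref{cor_planar} and~\ref{cor_OE}), then the root-degree finiteness (Lemma~\ref{lem_root_degree_is_finite_univ}), and only then upgrades to $d_{\loc}$ via Lemma~\ref{lem_dual_convergence_univ}. Your Step~3 is in the right spirit, but note that the paper does not peel to prove planarity: it directly bounds $\P(m\subset M_{\ff^n,g_n})$ for a fixed genus-$1$ map $m$ with holes by counting the fillings of its holes, which leads to the sums in Lemmas~\ref{lem_calcul_planar} and~\ref{lem_calcul_OE}; these are then controlled using the recursion~\eqref{rec_biparti_genre_univ} together with the BRL.
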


Our strategy to prove Proposition~\ref{prop_tightness_dloc_univ} will be similar to \cite{BL19}, and in particular relies on a Bounded ratio Lemma (Lemma \ref{lem_BRL}). Sections \ref{subsec_BRL}, \ref{subsec_good_sets} and \ref{subsec_proof_BRL} are devoted to the proof of the Bounded ratio Lemma, which is significantly more complicated than in \cite{BL19}. In Section \ref{subsec_planarity}, we prove that any subsequential limit of $\left( M_{\ff^n, g_n} \right)_{n \geq 1}$ for $d^*_{\loc}$ (which exist by Lemma~\ref{lem_easy_dual_convergence}) is planar and one-ended. Finally, in Section \ref{subsec_finite_degrees}, we finish the proof of Proposition \ref{prop_tightness_dloc_univ} using Lemma \ref{lem_dual_convergence_univ} and the Bounded ratio Lemma.

\subsection{The Bounded ratio Lemma}
\label{subsec_BRL}

The Bounded ratio Lemma below means that, as long as the faces are not too large and the number of vertices remains proportional to the number of edges (i.e. basically under the assumptions of Theorem~\ref{thm_main_more_general}), removing a face of degree $2j_0$ changes the number of maps by at most a constant factor, provided the faces of degree $2j_0$ represent a positive proportion of the faces. We recall from \eqref{defn_v_f_g} that $|\ff|$ and $v(\ff,g)$ are respectively the number of edges and of vertices of a map with genus $g$ and face degrees given by $\ff$. For $j \geq 1$, we denote by $\mathbf{1}_j$ the face degree sequence consisting of a single face of degree $2j$, i.e. $\left( \mathbf{1}_j \right)_i$ is $1$ if $i=j$ and $0$ otherwise.

\begin{lem}[Bounded ratio Lemma]\label{lem_BRL}
We fix $\kappa, \delta>0$ and a function $A:(0,1]\rightarrow \N$. Let $\mathbf{f}$ be a face degree sequence, and let $g \geq 0$. We assume that
\begin{equation}\label{eq_petites_faces}
v(\ff, g) > \kappa |\ff| \quad \mbox{ and } \quad \forall \eps>0, \, \sum_{i>A(\eps)} i f_i< \eps |\ff|.
\end{equation}
Let also $j_0 \geq 1$ be such that $j_0 f_{j_0}>\delta |\ff|$. Then the ratio
\[\frac{\beta_g(\mathbf{f})}{\beta_g(\mathbf{f}-\mathbf{1}_{j_0})}\]
is bounded by a constant depending only on $\delta, \kappa$ and the function $A$.
\end{lem}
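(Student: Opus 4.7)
The natural approach, following \cite{BL19}, is to exhibit an injection $\phi$ from $\B_g(\ff)$ (canonically marked with one face of degree $2j_0$) into $\B_g(\ff-\mathbf{1}_{j_0})$, together with a decoration taking at most $K=K(\delta,\kappa,A)$ values, from which the original map can be reconstructed. The canonical marking is possible because $j_0 f_{j_0}>\delta|\ff|$ in particular forces $f_{j_0}\geq 1$; one may take, for instance, the face of degree $2j_0$ first visited by a breadth-first search of the dual map starting from the root. Once such a $\phi$ exists, one immediately obtains
\[
\beta_g(\ff) \leq K \cdot \beta_g(\ff-\mathbf{1}_{j_0}),
\]
which is the claim.

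The surgery defining $\phi$ should act locally around the marked face $f_0$. Here the hypotheses provide a favourable geometry: the tail bound $\sum_{i>A(\eps)} i f_i <\eps |\ff|$ (for an $\eps$ chosen small enough, depending on $\delta$ and $\kappa$) forces a positive proportion of faces touching $f_0$, or at least lying near it, to have degree at most $A(\eps)$, while $v(\ff,g)>\kappa|\ff|$ bounds the average vertex degree and rules out pathological local configurations. Using these, I would apply a repeated \textit{merge-and-split} move: delete an edge shared by $f_0$ and a neighbour $f_1$ of bounded degree, then split the resulting face along a new chord to produce a face of degree $\deg f_1$ together with a face of degree $2(j_0-1)$. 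Iterating this step $j_0-1$ times destroys $f_0$ and leaves every other face degree unchanged, producing a map in $\B_g(\ff-\mathbf{1}_{j_0})$ while preserving bipartiteness and genus.

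The main obstacle is that $j_0$ is unbounded, so a naive encoding of the $j_0-1$ successive choices would yield a decoration space of size exponential in $j_0$, far too large. I expect the correct resolution is to make the reverse surgery algorithmic and essentially deterministic: starting from a canonical edge of $\phi(m)$, one runs the inverse of the merge-and-split procedure, at each step choosing the unique elementary move consistent with the local structure, so that the entire reconstruction is determined up to only $O(1)$ global choices. Making this precise, namely showing that such a reverse algorithm never stalls, recovers $m$ uniquely up to a bounded number of branches, and has constants uniform in $j_0$, will be the crux of the argument, and it is here that all three parameters $\delta$, $\kappa$ and $A$ enter in a coupled way. Once this is set up, the routine bookkeeping (planarity, bipartiteness and genus preservation at each elementary step) is straightforward.
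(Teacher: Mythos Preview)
Your merge-and-split surgery does not land in $\B_g(\ff-\mathbf{1}_{j_0})$. Each elementary step deletes one edge and adds one chord, so it preserves the total numbers of edges, faces and vertices; after $j_0-1$ iterations the marked face has shrunk to a digon and the output has face degrees $\ff-\mathbf{1}_{j_0}+\mathbf{1}_1$ and still $|\ff|$ edges. But a map in $\B_g(\ff-\mathbf{1}_{j_0})$ has $|\ff|-j_0$ edges and, by Euler, $j_0-1$ fewer vertices; contracting the leftover digon recovers only one edge. Removing a $2j_0$-gon at fixed genus \emph{forces} the destruction of $j_0-1$ vertices, and this cannot be achieved by purely face-local moves: deleting that many vertices may disconnect the map or drop the genus, and those topological side effects have to be repaired explicitly. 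Your hope of an ``algorithmic and essentially deterministic'' reverse procedure is also not the real issue --- the surgery itself is aimed at the wrong target space.

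This is exactly why the paper takes a different route and does not try to hit $\B_g(\ff-\mathbf{1}_{j_0})$ directly. It first shows that any map satisfying~\eqref{eq_petites_faces} contains linearly many \emph{good sets}: clusters of $A_1=A_1(\kappa,\delta,A)$ edges that are mutually close in the dual, incident to distinct white vertices of degree at most $4/\kappa$, and such that all faces within bounded dual distance have degree at most some $A_2(\kappa,\delta,A)$. The injection then takes a map with a marked good set and (i) carves a single bounded megaface along a dual tree connecting the set, (ii) \emph{deletes} a chosen number $d$ of low-degree white vertices on the megaface (this is where the vertex count drops, possibly disconnecting the map or lowering the genus), (iii) reattaches the pieces through a new vertex and restores the genus by adding boundedly many handle-edges, and (iv) retessellates the megaface. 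The output lies in some $\B_g(\tilde\ff)$ with either $\tilde\ff=\ff-\mathbf{1}_p$ for some $j_0\le p<A_1$, or $\tilde\ff=\ff-\sum_{i=1}^k\mathbf{1}_{d_i}$ with $1<d_i<j_0$ and $\sum(d_i-1)\ge j_0-1$; two short transfer lemmas (glue a path, respectively tessellate a big face) then give $\beta_g(\tilde\ff)\le\delta^{-1}\beta_g(\ff-\mathbf{1}_{j_0})$. Because every choice in (i)--(iv) happens inside a region whose size is bounded purely in terms of $\kappa,\delta,A$, the decoration is automatically finite uniformly in $j_0$, with no deterministic reverse algorithm required.
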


We will not try to obtain an explicit constant. As in \cite{BL19}, we will use the Bounded ratio Lemma to estimate the probability of certain events during peeling explorations, so we will need versions with a boundary. Here are the precise versions that we will need later in the paper.

\begin{corr}\label{lem_BRL_boundaries}
Let $\kappa, \delta>0$ and $A(\cdot)$ be as in Lemma \ref{lem_BRL}. Then there is a constant $C$ such that the following holds.
\begin{enumerate}
\item
Let $p,p',j\geq 1$. Then there is $N$ such that, for all $\ff$ and $g$ satisfying \eqref{eq_petites_faces} and $j f_j > \delta |\ff|$ and $|\ff|>N$, we have
\[\frac{\beta_g^{(p,p')}(\mathbf{f})}{\beta_g^{(p,p')}(\mathbf{f}-\mathbf{1}_j)}< C\]
and in particular
\begin{equation}\label{eqn_BRL_one_boundary}
\frac{\beta_g^{(p)}(\mathbf{f})}{\beta_g^{(p)}(\mathbf{f}-\mathbf{1}_j)}< 2C.
\end{equation}
\item
Let $p_1, p_2 \geq 1$ and $i_1, i_2 \geq 0$. Then there is $N$ such that, for all $\ff$ and $g$ satisfying \eqref{eq_petites_faces} and $|\ff|>N$, we have
\[ \frac{\beta_g^{(p_1+i_1,p_2+i_2)}(\mathbf{f})}{\beta_g^{(p_1,p_2)}(\mathbf{f})} < C^{i_1+i_2}. \]
\end{enumerate}
\end{corr}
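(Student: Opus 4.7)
For Part 1, I would start from the identification of $\B_g^{(p_1,p_2)}(\ff)$ with a subset of rooted closed bipartite maps. A map in $\B_g^{(p_1,p_2)}(\ff)$ is the same datum as a rooted bipartite map of genus $g$ and face sequence $\ff+\mathbf{1}_{p_1}+\mathbf{1}_{p_2}$, with root face of degree $2p_1$ and a second marked oriented edge on a distinct face of degree $2p_2$. Counting these markings yields, for $p_1\neq p_2$,
\[
\beta_g^{(p_1,p_2)}(\ff) \;=\; \frac{2p_1 p_2 (f_{p_1}+1)(f_{p_2}+1)}{|\ff|+p_1+p_2}\,\beta_g(\ff+\mathbf{1}_{p_1}+\mathbf{1}_{p_2}),
\]
with the obvious adjustment when $p_1=p_2$. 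In the ratio $\beta_g^{(p_1,p_2)}(\ff)/\beta_g^{(p_1,p_2)}(\ff-\mathbf{1}_j)$ the combinatorial prefactor is bounded by an absolute constant for $|\ff|$ large (only $p_1,p_2,j$ are fixed, and $f_{p_1},f_{p_2}$ change by at most one between the two sequences), so the question reduces to the closed-map ratio, on which Lemma~\ref{lem_BRL} applies once one checks that its hypotheses transfer to $\ff+\mathbf{1}_{p_1}+\mathbf{1}_{p_2}$: for $|\ff|$ large, $v(\ff,g)>\kappa|\ff|$ gives $v(\ff+\mathbf{1}_{p_1}+\mathbf{1}_{p_2},g)>(\kappa/2)|\ff+\mathbf{1}_{p_1}+\mathbf{1}_{p_2}|$, the tail condition is stable under adding two fixed-degree faces, and $jf_j>\delta|\ff|$ still gives positive proportion in the enlarged sequence.

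For Part 2, I would use an iterative surgical injection trading boundary length for internal faces of a ``bulk'' degree. The tail condition combined with $v(\ff,g)>\kappa|\ff|$ forces $\sum_{j\geq 2}(j-1)f_j$ to be at least a positive fraction of $|\ff|$, from which one extracts a bulk degree $j_0\geq 2$, bounded in terms of $\kappa$ and $A$, with $j_0 f_{j_0}>\delta_*|\ff|$ for some $\delta_*>0$ depending only on $\kappa,A$. Given $m\in\B_g^{(q_1,q_2)}(\ff')$ with $q_1>p_1$, I would define $\phi(m)$ by inserting a single edge in the first external face from the root vertex to the vertex reached after $2(q_1-j_0+1)-1$ boundary steps; this is bipartite-admissible (the two endpoints have opposite colors) and splits the external face into a $2(q_1-j_0+1)$-gon containing the root and an internal $2j_0$-gon, yielding an element of $\B_g^{(q_1-j_0+1,q_2)}(\ff'+\mathbf{1}_{j_0})$. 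The map $\phi$ is injective since in $\phi(m)$ the added edge is canonically recovered as the boundary edge of the first external face lying immediately before the root edge in the cyclic order.

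Iterating $\phi$ on both boundaries reduces $(p_1+i_1,p_2+i_2)$ to $(p_1,p_2)$ in $k+k'\leq (i_1+i_2)/(j_0-1)$ full steps, each adding one $\mathbf{1}_{j_0}$, followed by up to two partial final steps producing remainder faces $\mathbf{1}_{r_1+1},\mathbf{1}_{r_2+1}$ of degrees bounded by $2(j_0-1)$. Composing the injections,
\[
\beta_g^{(p_1+i_1,p_2+i_2)}(\ff) \;\leq\; \beta_g^{(p_1,p_2)}\bigl(\ff+(k+k')\mathbf{1}_{j_0}+\mathbf{1}_{r_1+1}+\mathbf{1}_{r_2+1}\bigr).
\]
Iterating Part 1 removes the $k+k'$ bulk faces at cost $C^{k+k'}$. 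To dispose of the two remainders I would use a tree-attachment injection: passing to closed maps via the formula of Part 1, attach a canonical path of $j_0-r-1$ edges at a canonically chosen boundary vertex of a canonically chosen $2(r+1)$-gon, growing each remainder face into a $2j_0$-gon; two more applications of Lemma~\ref{lem_BRL} then remove those bulk faces. Combining gives $\beta_g^{(p_1+i_1,p_2+i_2)}(\ff) \leq C^{(i_1+i_2)/(j_0-1)}\cdot D\cdot\beta_g^{(p_1,p_2)}(\ff)$ for some $D$ depending only on $\kappa,\delta,A$, and enlarging $C$ yields the desired form $C^{i_1+i_2}$.

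The main obstacle is specifying the canonical data for the tree-attachment step precisely enough for that injection to be verifiable; I would handle this by fixing, once and for all, a canonical peeling algorithm from the root, and using the first-visit order it induces on faces and vertices to break all ties. Apart from this, one has to check carefully that the hypotheses of Lemma~\ref{lem_BRL} remain valid as the face sequence evolves during the iteration, but since the modifications are always bounded in terms of $p_1,p_2,i_1,i_2,j_0$ this is routine for $|\ff|$ sufficiently large (depending on these parameters), which is exactly why $N$ appears in the statement of the corollary.
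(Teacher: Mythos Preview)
Your argument for Part~1 matches the paper's almost exactly: the same identity relating $\beta_g^{(p_1,p_2)}(\ff)$ to $\beta_g(\ff+\mathbf{1}_{p_1}+\mathbf{1}_{p_2})$, followed by transferring the hypotheses of Lemma~\ref{lem_BRL} to the enlarged sequence with slightly weakened constants. One small addition: for the one-boundary inequality~\eqref{eqn_BRL_one_boundary}, the paper specializes $p'=1$ and uses $\beta_g^{(p,1)}(\ff)=|\ff|\,\beta_g^{(p)}(\ff)$; you should make this explicit rather than leaving it implicit.

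For Part~2 your approach is correct but substantially more involved than the paper's. The paper observes that since the constant $C$ in Part~1 does not depend on $p,p',j$, a straightforward induction on $i_1+i_2$ suffices, so it is enough to treat the case $\{i_1,i_2\}=\{0,1\}$. For that single step, the paper glues a ``folded'' $2j$-gon (for the same bulk $j\ge 2$ you identify) along two consecutive boundary edges; this reduces the half-perimeter by exactly~$1$ regardless of~$j$ and gives $\beta_g^{(p_1+1,p_2)}(\ff)\le\beta_g^{(p_1,p_2)}(\ff+\mathbf{1}_j)$, after which one application of Part~1 finishes. Your chord operation instead reduces the half-perimeter by $j_0-1$ at a time, which forces you into remainder handling and a second surgical layer (the tree-attachment) with its attendant canonical-choice bookkeeping. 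All of that machinery is avoidable: the point you are missing is that one can always achieve a drop of exactly~$1$ per step by attaching a suitably collapsed $2j$-gon rather than a bare chord. The ``main obstacle'' you flag thus disappears entirely in the paper's route.
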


Since this will be very important later, we highlight that the constant $C$ does not depend on $p, p', j$ but that $N$ does. The inequality~\eqref{eqn_BRL_one_boundary} will be used in the tightness argument just like in \cite{BL19}, whereas the statements with two boundaries will be needed in the two hole argument (Section \ref{subsec_same_perimeter}).

\begin{proof}
We first claim that we have the identity
\begin{equation}\label{eqn_add_boundary}
\beta_g^{(p,p')}(\mathbf{f})=\frac{2p(f_{p}+1)p'(f_{p'}+\mathbbm{1}_{p \ne p'})}{2(|\ff|+p+p')}\beta_g(\mathbf{f}+\mathbf{1}_{p}+\mathbf{1}_{p'}).
\end{equation}
Indeed, the factor $p(f_p+1)$ corresponds to the number of ways to add a second root to a map of $\mathcal{B}_g(\mathbf{f}+\mathbf{1}_p)$ such that this second root has a face of degree $2p$ on its right (with respect to the canonical white to black orientation of edges). The factor $p'(f_{p'}+\mathbbm{1}_{p \ne p'})$ corresponds to the number of ways of adding a third root next to a face of degree $2p'$ so that the two root faces are distinct. The $(|\ff|+p+p')$ in the denominator corresponds to forgetting the original root. Moreover, if $(\ff,g)$ satisfy the assumptions of Lemma \ref{lem_BRL} for $\delta, \kappa, A(\cdot)$ and $|\ff|$ is large enough, then $(\ff+\mathbf{1}_p+\mathbf{1}_{p'},g)$ also satisfies the assumptions of Lemma \ref{lem_BRL} for $\frac{\delta}{2}, \frac{\kappa}{2}, A \left( \frac{\cdot}{2}\right)$. Therefore, the first point of the corollary follows from Lemma \ref{lem_BRL} and \eqref{eqn_add_boundary}. To deduce \eqref{eqn_BRL_one_boundary}, just take $p'=1$ and use the identity $\beta_g^{(p,1)}(\mathbf{f})=|\ff|\beta_g^{(p)}(\mathbf{f})$ (adding a $2$-gon is equivalent to marking an edge) and the fact that $|\ff|$ is large enough.

For the second point, we first note that it is sufficient to prove it for $\{i_1, i_2\}=\{0,1\}$. Since $C$ does not depend on $(p_1, p_2)$, the general case easily follows by induction on $i_1+i_2$. Without loss of generality, we assume $i_1=1, i_2=0$.

We now note that there is $\delta, j_1>0$ depending only on $A(\cdot)$ such that, if \eqref{eq_petites_faces} is satisfied, then there is $2 \leq j \leq j_1$ such that $jf_j > \delta |\ff|$ (we can assume $j \geq 2$ because if there are too many $2$-gons, then the number of vertices cannot be macroscopic). We fix such a $j$.
Then, by the injection that consists in gluing a $2j$-gon on the first boundary as on Figure~\ref{fig_reducing_boundary}, we have 
\[\beta_g^{(p_1+1,p_2)}(\mathbf{f}) \leq \beta_g^{(p_1,p_2)}(\mathbf{f}+\mathbf{1}_j) \leq C \beta_g^{(p_1,p_2)}(\mathbf{f}),\]
where the last inequality uses the first item of the Corollary. This proves the second point.
\begin{figure}
\center
\includegraphics[scale=0.8]{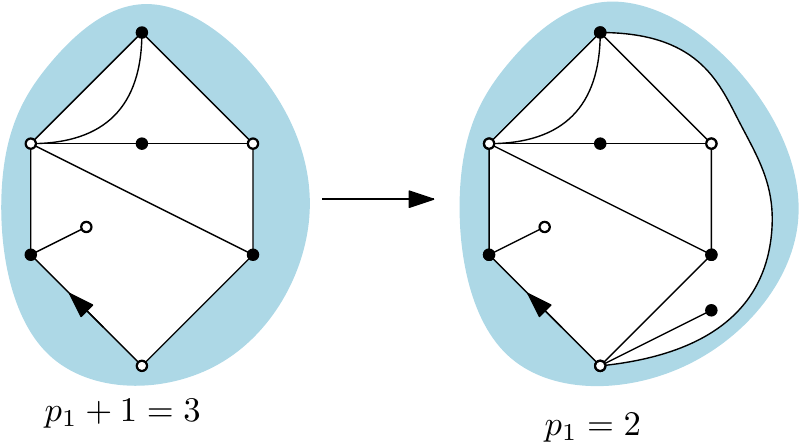}
\caption{Reducing the size of a boundary by $2$ by adding a $2j$-gon (here, the boundary is in blue, and $j=3$).}\label{fig_reducing_boundary}
\end{figure}
\end{proof}

\paragraph{Outline of the proof of Lemma \ref{lem_BRL}.}
The general idea is the same as in \cite{BL19}, namely building an injection that removes a small piece of a map (here, we would like to remove a face of degree $2j_0$). Just like in \cite{BL19}, this implies to merge vertices, so we will try to bound the degrees of the vertices involved, so that the number of ways to do the surgery backwards is not too high. However, since we work in a more general setting, several new constraints appear. First, the degrees of the faces are not bounded, so we must make sure that our surgery operations do not involve faces of huge degrees. This is the purpose of finding "very nice edges" in Section \ref{subsec_good_sets} below. Also, we will not always be able to remove a face of degree exactly $2j_0$. We will therefore remove either a face with degree higher than $2j_0$, or several faces which combined are "larger" than a face of degree $2j_0$. We will then use the two (easy) Lemmas~\ref{lem_grande_face} and~\ref{lem_petites_faces} to conclude.

\begin{lem}\label{lem_grande_face}
If $p\geq j_0 \geq 1$, then
\[ |\ff| \beta_g \left( \mathbf{f}-\mathbf{1}_{j_0} \right) \geq j_0 f_{j_0} \, \beta_g \left( \mathbf{f}-\mathbf{1}_p \right).\]
In particular, if $j_0 f_{j_0} \geq \delta |\ff|$, then
\begin{equation}\label{eq_transfer_grande_face}
\beta_g(\mathbf{f}-\mathbf{1}_{j_0}) \geq \delta \beta_g(\mathbf{f}-\mathbf{1}_p)
\end{equation}
\end{lem}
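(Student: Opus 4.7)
The plan is to construct a surgery that enlarges a $2j_0$-face of a map $m\in\B_g(\ff-\mathbf{1}_p)$ into a $2p$-face, producing a map in $\B_g(\ff-\mathbf{1}_{j_0})$. Since $|\ff-\mathbf{1}_{j_0}|-|\ff-\mathbf{1}_p|=p-j_0$, such a surgery must create $p-j_0$ new edges, and the natural way to do this without touching the rest of the map is to plant a rooted plane tree $T$ with $p-j_0$ edges inside the chosen $2j_0$-face $F$ at one of its corners $c$. Walking around the enlarged face, the depth-first traversal of $T$ contributes $2(p-j_0)$ extra half-edges, so that the new face has degree $2j_0+2(p-j_0)=2p$; Euler's formula confirms that the genus and the rest of the face-degree sequence are unchanged.

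I would then phrase this surgery as a bijection between quadruples $(m,F,c,T)$, with $m\in\B_g(\ff-\mathbf{1}_p)$, $F$ a face of degree $2j_0$, $c$ a corner of $F$ and $T$ a rooted plane tree with $p-j_0$ edges, and triples $(m',F',c')$, with $m'\in\B_g(\ff-\mathbf{1}_{j_0})$, $F'$ a face of degree $2p$ and $c'$ a corner of $F'$ such that the next $2(p-j_0)$ boundary half-edges of $F'$ from $c'$ form a subtree of $m'$; the inverse operation simply removes this subtree. The first set has cardinality $2j_0 f_{j_0}\,\mathrm{Cat}_{p-j_0}\,\beta_g(\ff-\mathbf{1}_p)$, where $\mathrm{Cat}_n=\frac{1}{n+1}\binom{2n}{n}$ is the Catalan number, while the second set has cardinality at most $2p f_p\,\beta_g(\ff-\mathbf{1}_{j_0})$, since for each of the $f_p$ faces of degree $2p$ in $m'$ and each of its $2p$ corners, at most one valid subtree can start there.

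Combining these two counts yields $j_0 f_{j_0}\,\mathrm{Cat}_{p-j_0}\,\beta_g(\ff-\mathbf{1}_p)\le p f_p\,\beta_g(\ff-\mathbf{1}_{j_0})$, and since $\mathrm{Cat}_{p-j_0}\ge 1$ and $p f_p\le|\ff|$ (the latter being one term in the defining sum $|\ff|=\sum_j j f_j$), we obtain the desired inequality $j_0 f_{j_0}\,\beta_g(\ff-\mathbf{1}_p)\le|\ff|\,\beta_g(\ff-\mathbf{1}_{j_0})$. The only mildly delicate step is verifying the claimed bijection, which reduces to the standard correspondence between rooted plane trees with $n$ edges and non-crossing perfect matchings of a cyclic sequence of $2n$ elements. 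Edge cases such as $p=j_0$ (where $\mathrm{Cat}_0=1$ and the surgery is trivial) or $f_p=0$ or $f_{j_0}=0$ (where one side of the inequality vanishes) are immediate.
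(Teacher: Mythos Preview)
Your proof is correct and rests on the same surgical idea as the paper's: enlarge a $2j_0$-face of a map in $\B_g(\ff-\mathbf 1_p)$ into a $2p$-face by planting $p-j_0$ new edges inside it. The paper simply plants a \emph{path} (rather than an arbitrary rooted plane tree) at the corner to the right of a marked edge whose right face has degree $2j_0$; this gives an injection from pairs (map in $\B_g(\ff-\mathbf 1_p)$, marked edge with a $2j_0$-face on its right), counted by $j_0 f_{j_0}\,\beta_g(\ff-\mathbf 1_p)$, into pairs (map in $\B_g(\ff-\mathbf 1_{j_0})$, marked edge), counted by $(|\ff|-j_0)\,\beta_g(\ff-\mathbf 1_{j_0})\le|\ff|\,\beta_g(\ff-\mathbf 1_{j_0})$. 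Your variant with arbitrary trees yields the sharper intermediate bound $j_0 f_{j_0}\,\mathrm{Cat}_{p-j_0}\,\beta_g(\ff-\mathbf 1_p)\le p f_p\,\beta_g(\ff-\mathbf 1_{j_0})$, but both approaches reach the lemma after discarding the extra factors. The paper's path version is a bit leaner (no Catalan numbers, and edge-marking replaces your face/corner pair), while yours makes the bijection and the combinatorial slack fully explicit.
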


\begin{proof}
The second point is immediate from the first. For the first point, the right-hand side counts maps in $\mathcal{B}_g \left( \mathbf{f}-\mathbf{1}_p \right)$ with a marked edge such that the face on its right has degree $2j_0$. The left-hand side counts maps in $\mathcal{B}_g \left( \mathbf{f}-\mathbf{1}_{j_0} \right)$ with a marked edge, so it is enough to build an injection from the first set to the second. Take a map $m$ in $\mathcal{B}_g \left( \mathbf{f}-\mathbf{1}_p \right)$ and mark an edge $e$ of $m$ with a face of degree $2j_0$ on its right. We glue a path of $p-j_0$ edges to the starting point of $e$, just on the right of $e$ as on Figure~\ref{fig_plus_grande_face}. One obtains a map of $\mathcal{B}_g \left( \mathbf{f}-\mathbf{1}_{j_0} \right)$ with a marked edge, and going backwards is straightforward.

\begin{figure}[!ht]
\center
\includegraphics[scale=0.7]{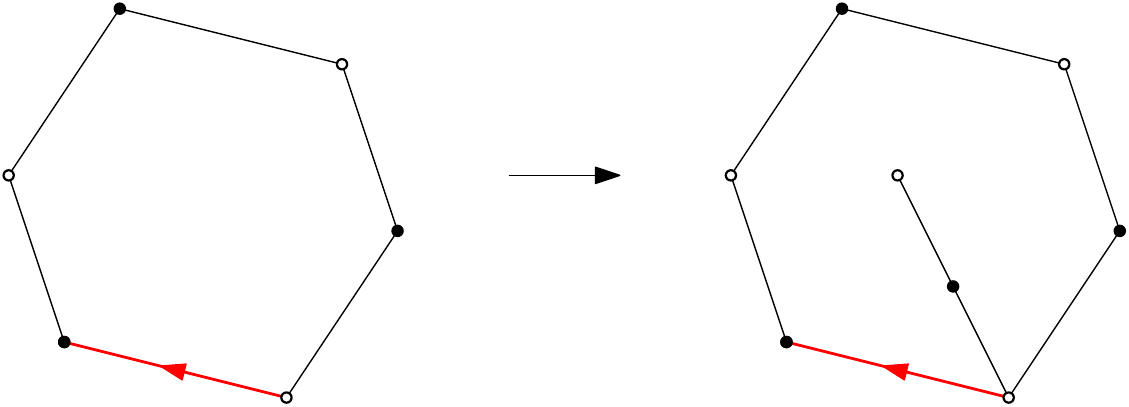}
\caption{The injection of Lemma~\ref{lem_grande_face} (here with $j_0=3$ and $p=5$). The marked edge is in red.}\label{fig_plus_grande_face}
\end{figure}
\end{proof}

\begin{lem}\label{lem_petites_faces}
If $1<d_1, d_2, \ldots,d_k<j_0$ and ${\sum_{i=1}^k (d_i-1) \geq j_0-1}$, then 
\[2|\ff| \beta_g \left( \mathbf{f}-\mathbf{1}_{j_0} \right) \geq 2j_0 f_{j_0} \beta_g \left( \mathbf{f}-\sum_{i=1}^k \mathbf{1}_{d_i} \right).\]
In particular, if $j_0 f_{j_0} \geq \delta |\ff|$, then
\begin{equation}\label{eq_transfer_petites_faces}
\beta_g  \left( \mathbf{f}-\mathbf{1}_{j_0} \right) \geq \delta \beta_g \left( \mathbf{f}-\sum_{i=1}^k\mathbf{1}_{d_i} \right).
\end{equation}
\end{lem}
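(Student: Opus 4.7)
The plan is to construct an injection analogous to the one in Lemma~\ref{lem_grande_face}, but now subdividing a marked face rather than extending the boundary. The right-hand side $2j_0 f_{j_0}\beta_g(\ff - \sum_{i=1}^k \mathbf{1}_{d_i})$ counts pairs $(m,e)$ with $m \in \B_g(\ff - \sum_{i=1}^k\mathbf{1}_{d_i})$ and $e$ an oriented edge of $m$ whose right face $F$ has degree $2j_0$; such faces exist because $d_i < j_0$ for each $i$, so $\ff - \sum_{i=1}^k\mathbf{1}_{d_i}$ still contains $f_{j_0}$ faces of degree $2j_0$. On the other hand, $2|\ff|\beta_g(\ff - \mathbf{1}_{j_0})$ upper bounds the number of pairs $(m',e')$ with $m' \in \B_g(\ff - \mathbf{1}_{j_0})$ and $e'$ an oriented edge of $m'$, since $|\ff - \mathbf{1}_{j_0}| = |\ff| - j_0 \leq |\ff|$. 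So it suffices to inject the first set of pairs into the second.

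Given $(m,e)$, I would canonically subdivide the marked $2j_0$-gon $F$ into $k$ internal faces of degrees $2d_1,\ldots,2d_k$, keeping $e$ as the marked oriented edge $e'$ of the resulting map $m' \in \B_g(\ff - \mathbf{1}_{j_0})$. An Euler characteristic computation on the disk $F$ shows that such a subdivision requires exactly $\sum d_i - j_0$ new internal edges and $D := \sum(d_i-1) - (j_0-1)$ new internal vertices; the hypothesis $\sum(d_i-1)\geq j_0-1$ guarantees $D \geq 0$, so the subdivision is combinatorially feasible. A concrete construction reads the boundary of $F$ counterclockwise starting at $e$ and successively cuts off faces $F_1, \ldots, F_{k-1}$ of degrees $2d_1, \ldots, 2d_{k-1}$ by chords (possibly replaced by odd-length paths to accommodate the $D$ extra internal vertices, preserving bipartiteness), leaving $F_k$ as the last piece of degree $2d_k$.

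For the injection to be reversible, the canonical subdivision must be rigid: I would arrange the faces $F_1,\ldots,F_k$ as a prescribed chain rooted at $e'$, so that from $(m',e')$ one detects this chain (the unique configuration of $k$ adjacent faces of the prescribed degrees and shape, attached to $e'$) and collapses it back to recover $F$ and $(m,e)$ unambiguously. This yields the desired injection, hence the first inequality. The second assertion \eqref{eq_transfer_petites_faces} then follows immediately by dividing by $|\ff|$ and using the hypothesis $j_0 f_{j_0} \geq \delta |\ff|$.

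The main obstacle is designing the canonical subdivision in a way that is simultaneously (i) bipartite, with all prescribed face degrees even, (ii) compatible with the correct parity of $D$ (an odd $D$ may force a short pendant branch inside one specific face, which must itself be canonically identifiable), and (iii) rigid enough that the output $(m',e')$ uniquely determines the subdivided region. The hypothesis $d_i > 1$ keeps all new faces of degree at least $4$, which avoids pathologies involving degenerate $2$-gons and simplifies the detection step in the inverse.
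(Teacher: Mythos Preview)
Your high-level strategy is correct and matches the paper's: both sides count edge-marked maps, and the injection tessellates the marked $2j_0$-face into faces of degrees $2d_1,\dots,2d_k$. The difference is in how the tessellation is actually built, and the paper's construction sidesteps all three obstacles you flag at the end.

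Instead of subdividing the $2j_0$-gon directly---which, as you compute, forces $D=\sum_i(d_i-1)-(j_0-1)$ internal vertices and raises the parity and rigidity issues you mention---the paper first \emph{enlarges} the marked face to degree $2d$ with $d=1+\sum_i(d_i-1)\geq j_0$, by attaching a pendant path of $d-j_0$ edges at the starting corner of $e$, exactly as in Lemma~\ref{lem_grande_face}. Once the face has degree $2d$, the identity $d-1=\sum_i(d_i-1)$ means the tessellation into pieces of degrees $2d_1,\dots,2d_k$ can be done with $k-1$ plain chords and \emph{no} internal vertices: cut off a $2d_1$-gon, then a $2d_2$-gon, and so on, walking along the boundary from $e$. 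Bipartiteness is automatic (each chord joins boundary vertices at odd distance $2d_i-1$), and reversibility is immediate since the pendant path and the canonical fan of chords are attached at $e'$ and can simply be erased.

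So the shortcut you missed is to separate the two tasks: absorb the $D$ extra vertices via a pendant path \emph{first}, and only then tessellate. This dissolves your obstacles (i)--(iii) simultaneously, with no case analysis on the parity of $D$ and no need for a detection argument in the inverse.
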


\begin{proof}
The proof is very similar to Lemma~\ref{lem_grande_face}. This time, consider a map $m$ in $\mathcal{B}_g \left( \mathbf{f}-\sum_{i=1}^k\mathbf{1}_{d_i} \right)$ with a marked oriented edge $e$ that has a face of degree $2j_0$ on its right. Let $d=1+\sum_{i=1}^k {(d_i-1)}$, and note that $d \geq j_0$. If $d>j_0$, transform the face of degree $2j_0$ into a face of degree $2d$ by adding a path of $d-j_0$ edges like in the previous proof. Then tessellate this face of degree $2d$ as on Figure~\ref{fig_plus_petites_faces}. We obtain a map of $\mathcal{B}_g \left( \mathbf{f}-\mathbf{1}_{j_0} \right)$ with a marked edge, and this operation is also injective.

\begin{figure}[!ht]
\center
\includegraphics[scale=0.8]{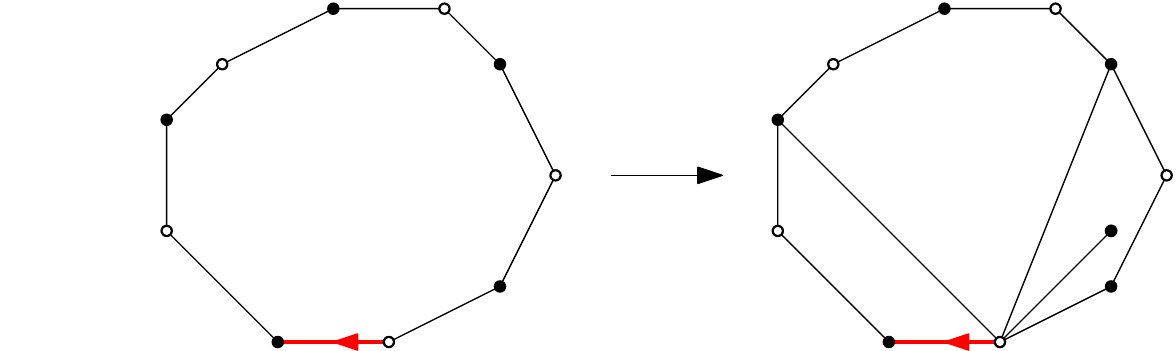}
\caption{The injection of Lemma~\ref{lem_petites_faces} (here with $j_0=5$ and $(d_1,d_2,d_3)=(2,3,3)$).}\label{fig_plus_petites_faces}
\end{figure}
\end{proof}

\subsection{Good sets of edges}
\label{subsec_good_sets}

The injection we will build to prove the Bounded ratio Lemma takes as input a pair $(m, E)$, where $m$ is a map and $E$ is a set of edges of $m$ satisfying the properties we will need to perform some surgery around $E$. We will call such a set a \emph{good set}. Our goal in this subsection is to define a good set and to prove that any map contains a linear number of good sets (Proposition~\ref{prop_good_sets}). We recall that we consider that the edges are oriented from white to black, and therefore it makes sense to define the left or right side of an edge.

Throughout this section, we work under the assumptions of Lemma~\ref{lem_BRL}. Let $m \in \mathcal{B}_g(\mathbf{f})$. Let $A_1:=2A \left( \min \left( \frac{\kappa}{32},\delta \right) \right)$.

\begin{rem}\label{rem_causal_graph}
We will have several different constants (depending on $A(\cdot)$, $\delta$ and $\kappa$) defined in terms of each other in this subsection. To help convince the reader there is no circular dependency between them, we provide a "causal graph" of all the involved constants.
\begin{figure}[!ht]
\center
\includegraphics[scale=0.8]{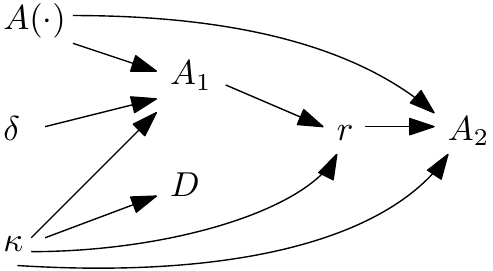}
\end{figure}
\end{rem}

We say that an edge $e$ of $m$ is \emph{nice} if it is not incident to a face of degree larger than $2A_1$.
\begin{fact} At least $\left( 1-\frac{\kappa}{16} \right) |\mathbf{f}|$ of the edges in $m$ are nice.
\end{fact}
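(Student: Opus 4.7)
The plan is a simple double-counting argument on edge-face incidences. Every edge of $m$ is incident to exactly two face-corners (counted with multiplicity, since a face may sit on both sides of an edge), so summing over faces gives $\sum_{j\geq 1} 2j f_j = 2|\mathbf{f}|$. An edge fails to be nice precisely when at least one of its two incident faces has degree $>2A_1$, so the number of non-nice edges is at most
\[
\sum_{j > A_1} 2 j f_j.
\]

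Next I will apply the hypothesis \eqref{eq_petites_faces} to bound this sum. Set $\eps := \min(\kappa/32,\delta)$. By definition $A_1 = 2A(\eps) \geq A(\eps)$, so the hypothesis yields
\[
\sum_{j > A_1} j f_j \;\leq\; \sum_{j > A(\eps)} j f_j \;<\; \eps\, |\mathbf{f}| \;\leq\; \frac{\kappa}{32}\,|\mathbf{f}|.
\]
Consequently the number of non-nice edges is bounded by $2\cdot\tfrac{\kappa}{32}|\mathbf{f}| = \tfrac{\kappa}{16}|\mathbf{f}|$, and at least $(1-\tfrac{\kappa}{16})|\mathbf{f}|$ edges of $m$ are nice.

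There is essentially no obstacle here: the only subtlety is to notice that one must use the factor $2$ in $A_1 = 2A(\eps)$ (which will matter elsewhere in the section, e.g.\ to handle both endpoints or both sides of an edge later on) and the factor $2$ coming from the two face-incidences of each edge. No induction, surgery, or combinatorial identity is required; this fact is purely a counting estimate and is the entry point for the construction of good sets that will occupy the remainder of the subsection.
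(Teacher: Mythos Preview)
Your proof is correct and is essentially the same argument as the paper's, just phrased deterministically: the paper picks a uniform random edge and bounds the probability that either adjacent face is large by $2\cdot\tfrac{\kappa}{32}$, which is exactly your double-counting of edge--face incidences rewritten in probabilistic language.
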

\begin{proof}
Draw an edge $e$ of $m$ uniformly at random. The face $f$ sitting to the right of $e$ is drawn at random with a probability proportional to its degree. By the second assumption of \eqref{eq_petites_faces}, the probability that $f$ has degree larger than $2A_1$ is less than $\frac{\kappa}{32}$. The same is true for the face sitting to the left of $e$.
\end{proof}

We will need to bound the degrees not only of the faces incident to an edge, but also of the faces close to this edge for the dual distance. More precisely, we will define the \emph{dual distance between two edges $e_1, e_2$ of $m$} as the dual distance between the face on the right of $e_1$ and the face on the right of $e_2$. We fix a value $r$ (depending on $A_1$ and $\kappa$) that we will specify later. Let $A_r$ be the function given by Lemma~\ref{lem_tight_degree_in_ball} for $A(\cdot)$ and $r$, and let $A_2=A_r \left( \frac{\kappa}{16} \right)$. We will call an edge $e$ of $m$ \emph{very nice} if it is nice and no edge at dual distance $r$ or less from $e$  is incident to a face of degree larger than $2A_2$. By applying Lemma~\ref{lem_tight_degree_in_ball} to the stationary random graph obtained by rooting the dual map $m^*$ at a uniform edge, the proportion of edges of $m$ at dual distance $r$ or less from a face larger than $2A_2$ is at most $\frac{\kappa}{16}$. Hence, we get the following observation.

\begin{fact}At least $\left( 1-\frac{\kappa}{8} \right) |\mathbf{f}|$ of the edges of $m$ are very nice.
\end{fact}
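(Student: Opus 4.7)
The plan is to apply Lemma~\ref{lem_tight_degree_in_ball} to the dual map $m^*$, rooted at a uniformly chosen oriented edge. Rooting a finite multigraph at a uniformly chosen oriented edge gives a root vertex whose distribution is proportional to the degree, which is precisely the reversible measure of the simple random walk, so the resulting random rooted graph is stationary, as Lemma~\ref{lem_tight_degree_in_ball} requires. The root vertex $\rho$ of $m^*$ is the face of $m$ sitting to the right of the chosen oriented edge of $m$, and its degree in $m^*$ equals the degree of that face in $m$. The tail assumption in~\eqref{eq_petites_faces} therefore yields
\[ \P \bigl( \deg_{m^*}(\rho) > 2 A(\eps) \bigr) \leq \eps \qquad \text{for every } \eps \in (0,1]. \]

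Next, I would apply Lemma~\ref{lem_tight_degree_in_ball} to $m^*$ with the function $\eps \mapsto 2A(\eps)$ and with radius $r+1$ (rather than $r$). Choosing $A_2$ to be the resulting bound for $\eps = \kappa/16$, one obtains that with probability at least $1 - \kappa/16$ every vertex of $m^*$ lying in the ball $B_{r+1}(\rho, m^*)$ has degree at most $2A_2$. Translating back to $m$, on this event both faces incident to any edge $e'$ at dual distance $\leq r$ from the chosen edge belong to $B_{r+1}(\rho, m^*)$, so both have degree at most $2A_2$. Consequently the proportion of edges of $m$ that lie at dual distance $\leq r$ from some face of degree greater than $2A_2$ is at most $\kappa/16$.

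Combining this with the previous fact (at most $(\kappa/16) |\mathbf{f}|$ edges fail to be nice), a union bound shows that at least $(1 - \kappa/8) |\mathbf{f}|$ edges are both nice and at dual distance $>r$ from any face of degree $>2A_2$, that is, are very nice.

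The only points requiring care are the small bookkeeping around the ball radius ($r+1$ rather than $r$, to cover both faces incident to an edge at dual distance $\leq r$ from the root) and the factor $2$ between the degree of a face of $m$ and that of the corresponding vertex of $m^*$; both are painlessly absorbed into the definition of the constant $A_2$. Everything else is a direct application of Lemma~\ref{lem_tight_degree_in_ball} and of a union bound.
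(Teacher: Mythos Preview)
Your proof is correct and follows essentially the same approach as the paper: apply Lemma~\ref{lem_tight_degree_in_ball} to the dual $m^*$ rooted at a uniform edge (which is stationary), use the second half of~\eqref{eq_petites_faces} to control the root degree, and combine with the previous Fact via a union bound. You are actually slightly more careful than the paper in tracking the radius $r+1$ versus $r$ (to cover both faces incident to an edge at dual distance $\leq r$) and the factor~$2$ between face degrees and dual vertex degrees; as you say, both are absorbed into the definition of $A_2$.
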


Let $D=\frac{4}{\kappa}$. By the first assumption of \eqref{eq_petites_faces}, we know that $D$ is larger than twice the average vertex degree in $m$. Since at most half of the vertices have degree at least twice the average degree, and since there are more than $\kappa |\mathbf{f}|$ vertices, we have the following.

\begin{fact}
There are at least $\frac{\kappa}{4}|\mathbf{f}|$ vertices of the same colour with degree less than $D$ in $m$.
\end{fact}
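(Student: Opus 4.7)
The plan is to unwind the statement directly from the two ingredients mentioned just before it: the lower bound $v(\ff,g) > \kappa|\ff|$ on the number of vertices, and the bipartiteness of $m$. The intermediate goal is first to produce a linear number of low-degree vertices of \emph{any} colour, and then to halve this count by a pigeonhole on the colour.

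First I would compute the average vertex degree. Since each edge contributes $2$ to the sum of degrees, the total degree of the vertices of $m$ is $2|\ff|$, so the average vertex degree equals
\[
\frac{2|\ff|}{v(\ff,g)} < \frac{2|\ff|}{\kappa |\ff|} = \frac{2}{\kappa} = \frac{D}{2}.
\]
In particular $D = 4/\kappa$ is strictly greater than twice the average vertex degree, which is the content of the sentence preceding the fact.

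Next I would apply Markov's inequality: since the average degree is less than $D/2$, the proportion of vertices with degree at least $D$ is less than $1/2$. Therefore the number of vertices of $m$ with degree less than $D$ is at least $v(\ff,g)/2 > \kappa |\ff|/2$.

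Finally I would use that $m$ is bipartite: its vertex set splits into a black class and a white class, and every vertex receives one of the two colours. By the pigeonhole principle applied to the low-degree vertices, at least one of the two colour classes contains at least half of them, hence at least $\kappa|\ff|/4$ vertices of the same colour have degree less than $D$, which is exactly the claim. No step looks to be a real obstacle; the only thing to be careful about is that the fact is stated for a \emph{single} colour class, so one must not forget the factor $2$ loss from the colour pigeonhole on top of the factor $2$ loss from Markov.
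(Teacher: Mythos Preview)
Your proof is correct and follows exactly the same approach as the paper: the paper also observes that $D$ exceeds twice the average vertex degree, applies Markov's inequality to get that at least half of the more than $\kappa|\ff|$ vertices have degree less than $D$, and then uses the bipartite pigeonhole to gain the extra factor $2$.
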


Without loss of generality, assume that this colour is white (we recall that the vertices are coloured black and white so that each edge joins two vertices of different colours and the root is oriented from white to black).  We say that a white vertex is \emph{fine} if it has degree at most $D$, and that an edge is \emph{fine} if it is incident to a fine white vertex and the face on its right is not of degree $2$. By the previous fact, and since every vertex is incident to at least a face of degree $>2$, there are at least $\frac{\kappa}{4} |\mathbf{f}|$ fine edges in $m$, incident to $\frac{\kappa}{4}|\mathbf{f}|$ distinct white vertices.
An edge is said to be \textit{good} if it is both very nice and fine. Summing up the last results, we have the following.

\begin{lem}\label{lem_good_edges}
There are at least $\frac{\kappa }{8} |\mathbf{f}|$ good edges in $m$, incident to $\frac{\kappa}{8} |\mathbf{f}|$ distinct white vertices.
\end{lem}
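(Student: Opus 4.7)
The plan is to combine the three \emph{Fact} statements established just above the lemma, with no additional machinery. From the third Fact, I would first fix a set $E_{\text{fine}}$ of at least $\frac{\kappa}{4}|\mathbf{f}|$ fine edges whose fine white endpoints are pairwise distinct: concretely, for each of the $\frac{\kappa}{4}|\mathbf{f}|$ fine white vertices, select one incident edge whose right face has degree $>2$ (such an edge exists because every vertex is incident to at least one face of degree $>2$, which is used in the text just before the statement of the third Fact). This guarantees simultaneously $|E_{\text{fine}}|\geq \frac{\kappa}{4}|\mathbf{f}|$ and that the white endpoints of edges in $E_{\text{fine}}$ are distinct.

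Next I would use the second Fact to control how many of these edges can fail to be very nice: at most
\[|\mathbf{f}|-\Bigl(1-\tfrac{\kappa}{8}\Bigr)|\mathbf{f}|=\tfrac{\kappa}{8}|\mathbf{f}|\]
edges in $m$ are not very nice. Removing those from $E_{\text{fine}}$ leaves a subset $E_{\text{good}}$ of size at least
\[\tfrac{\kappa}{4}|\mathbf{f}|-\tfrac{\kappa}{8}|\mathbf{f}|=\tfrac{\kappa}{8}|\mathbf{f}|,\]
and every surviving edge is by construction both fine and very nice, hence good. Since $E_{\text{fine}}$ was chosen with distinct incident white vertices, the same holds a fortiori for $E_{\text{good}}$, which yields the required lower bound on the number of distinct white vertices incident to good edges.

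I do not expect any real obstacle here: the lemma is a pigeon-hole type assembly of the three preceding counts, and the only subtlety is to make the selection "one edge per fine white vertex" before discarding the edges that fail to be very nice, so that the lower bound on the number of white vertices is preserved (rather than just the bound on the number of edges).
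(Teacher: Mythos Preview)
Your argument is correct and is exactly the pigeonhole combination of the preceding Facts that the paper intends: the paper does not spell out a proof and simply writes ``Summing up the last results'', and you have made explicit precisely the intended subtraction of the at most $\frac{\kappa}{8}|\mathbf f|$ non--very-nice edges from a set of $\frac{\kappa}{4}|\mathbf f|$ fine edges with distinct white endpoints. One tiny inaccuracy: the sentence ``every vertex is incident to at least one face of degree $>2$'' appears in the text \emph{after} the third Fact, not before it, but this does not affect the mathematics.
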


We now fix the value of $r$ at $r=\frac{16A_1}{\kappa}+1$ (which is possible since $A_1$ does not depend on $r$, see Remark~\ref{rem_causal_graph} above). We call a set $S$ of $A_1$ edges of $m$ a \textit{good set} if all the edges of $S$ are good, they are incident to distinct white vertices and they are all at dual distance less than $2r$ from each other. Our next goal is to find a large number of good sets in the map $m$. Note that these good sets do not need to be disjoint.

\begin{prop}\label{prop_good_sets}
There are at least $\frac{\kappa}{16} |\mathbf{f}|$ good sets of edges in $m$.
\end{prop}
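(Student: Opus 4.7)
The plan is to show that for a positive fraction of good edges $e$, the dual ball $B^*_r(e)$ already contains at least $A_1$ good edges. Given such a ``center'' $e$, any $A_1$-element subset of $G \cap B^*_r(e)$ containing $e$ is a good set: its edges are good by construction, pairwise at dual distance at most $2r$ by the triangle inequality, and incident to distinct white vertices by Lemma~\ref{lem_good_edges}. Since a good set has exactly $A_1$ elements, it can be produced in this way by at most $A_1$ distinct centers, so it suffices to exhibit $\frac{A_1 \kappa}{16}|\mathbf{f}|$ centers to obtain $\frac{\kappa}{16}|\mathbf{f}|$ distinct good sets.

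The two quantitative inputs are $|G| \geq \frac{\kappa}{8}|\mathbf{f}|$ (from Lemma~\ref{lem_good_edges}) and, for every $e \in G$, a two-sided estimate $2(r+1) \leq |B^*_r(e)| \leq K$, where $K = K(A_2, r)$ depends only on $A_2$ and $r$. The upper bound comes from the very niceness of $e$: the dual ball $B^*_r(e)$ is confined to a region of the dual graph where vertex degrees are at most $2A_2$, hence has at most $(2A_2)^{r+1}$ edges. The lower bound comes from the connectedness of the dual graph and the fact that every face has degree at least $2$, so the dual ball of radius $r$ around $\mathrm{right}(e)$ contains at least $r+1$ faces, each contributing at least $2$ edges to $B^*_r(e)$. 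The choice $r = \frac{16 A_1}{\kappa} + 1$ then guarantees $|B^*_r(e)| \geq 32 A_1/\kappa$, so that a \emph{local} density of good edges of order $\kappa/32$ inside the ball is enough to produce at least $A_1$ of them.

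To promote the global density $|G|/|\mathbf{f}| \geq \kappa/8$ into such a local density in most dual balls $B^*_r(e)$ for $e \in G$, I would use a double counting argument on the sum $S := \sum_{e \in G} |G \cap B^*_r(e)|$. By the symmetry of $d^*$, $S$ equals $\sum_{e' \in G} |\{e \in G : e' \in B^*_r(e)\}|$, and the upper bound $|B^*_r(e)| \leq K$ limits the multiplicity with which any given edge can appear in balls centered at good edges. Combined with the lower bound on $|B^*_r(e)|$, this forces $|G \cap B^*_r(e)| \geq A_1$ for at least $\frac{A_1 \kappa}{16}|\mathbf{f}|$ values of $e$. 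The main obstacle is precisely this density preservation step: the global density of good edges does not a priori transfer to any specific dual ball, and one has to rule out configurations where good edges cluster far away from a given center. The bounded ball size $K$, together with the choice of $r$ linear in $A_1/\kappa$, is what makes the argument close up. As a safer backup, I would also have in mind a greedy construction: repeatedly locate a center, extract a good set, remove its $A_1$ edges from $G$, and iterate; each iteration removes at most $A_1$ elements from $G$, so the required count of good sets follows once one can maintain the invariant that a linear fraction of the remaining good edges are centers.
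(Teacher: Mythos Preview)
Your main double-counting argument does not close. You want to lower-bound the number of $e\in G$ with $|G\cap B^*_r(e)|\ge A_1$, and you propose to study $S=\sum_{e\in G}|G\cap B^*_r(e)|$. But the symmetry of $d^*$ only rewrites $S$ as itself (both indices range over $G$), and the ball-size upper bound $|B^*_r(e)|\le K$ yields $S\le K|G|$, an \emph{upper} bound. Nothing you wrote produces a lower bound on $S$, and without one you cannot force many terms to exceed $A_1$. You correctly flag ``density preservation'' as the obstacle, but the sentence ``the bounded ball size $K$, together with the choice of $r$\dots, is what makes the argument close up'' is an assertion, not an argument.

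The paper's proof differs in two essential ways. First, it sums $|B^*_r(e)|$ over $e$ in the current set $G_i$ of good edges, counting \emph{all} edges of $m$ in each ball, not only good ones: the crude lower bound $|B^*_r(e)|>r$ (from connectedness of $m^*$) together with $r=\tfrac{16A_1}{\kappa}+1$ gives $\sum_{e\in G_i}|B^*_r(e)|>|G_i|\,r>A_1|\mathbf f|$ whenever $|G_i|>\tfrac{\kappa}{16}|\mathbf f|$. Pigeonholing this sum over the $|\mathbf f|$ edges of $m$ produces some edge covered by at least $A_1$ of the balls; the $A_1$ \emph{centres} of those balls are good edges pairwise at dual distance at most $2r$, hence a good set. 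The upper bound $K$ is never used. Second, the paper removes only \emph{one} element of the newly found good set from $G_i$ at each iteration, not all $A_1$; this keeps the successive sets distinct while allowing $|G|-\tfrac{\kappa}{16}|\mathbf f|\ge\tfrac{\kappa}{16}|\mathbf f|$ iterations. Your greedy backup, deleting the full good set each time, would yield only $\tfrac{\kappa}{16A_1}|\mathbf f|$ sets, and in any case the invariant you propose (``a linear fraction of the remaining good edges are centers'') is exactly what the pigeonhole on $\sum_{e}|B^*_r(e)|$ replaces. A minor side remark: your lower bound $|B^*_r(e)|\ge 2(r+1)$ double-counts edges shared by adjacent faces; the paper uses only $|B^*_r(e)|>r$.
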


\begin{proof}
The proof follows the argument from~\cite{BL19}.
Let $G$ be a set of $\frac{\kappa }{8} |\mathbf{f}|$ good edges incident to distinct white vertices given by Lemma~\ref{lem_good_edges}. In this proof, the balls $B_r^*(e)$ that we will consider will be for the dual distance. We can assume that for every $e\in G$, the ball $B_r^*(e)$ does not contain all the edges of $m$, since otherwise the proposition is obviously true.

In that case, for all $e\in G$, since $m^*$ is connected we must have $|B^*_r(e)|>r$.
We are going to find a collection of distinct good sets $(S_i)$. For this, we build by induction a decreasing sequence of sets of good edges $(G_i)$, such that for each $i$, the set $G_{i+1}$ is obtained from $G_i$ by removing one element. We set $G_0=G$. Let $0 \leq i<\frac{\kappa }{16} |\mathbf{f}|$, and assume that we have built $G_0, G_1, \dots, G_i$. Then $|G_i|=|G|-i$, so
\begin{align*}
\sum_{e\in G_i} |B_r^*(e)|> \left( |G|-i \right)r >\frac{\kappa }{16} |\mathbf{f}| r> A_1 |\mathbf{f}|
\end{align*}
by our choice of $r$. Therefore, there must be an "$A_1$-overlap", i.e. there exist $A_1$ edges whose balls of radius $r$ have a nonempty intersection. Thus they are all at distance at most $2r$ of each other, and we just found a good set $S_{i+1}$. Choose $e_{i+1} \in S_{i+1}$ arbitrarily, and let $G_{i+1}=G_{i}\setminus \{e_{i+1}\}$. This way we can build $G_i$ and $S_i$ for $1 \leq i < \frac{\kappa}{16} |\mathbf{f}|$, which proves the lemma.
\end{proof}

\subsection{Proof of the Bounded ratio Lemma: the injection}
\label{subsec_proof_BRL}

We now prove the Bounded ratio Lemma (Lemma~\ref{lem_BRL}). We start with the easy case $j_0=1$: a marked digon can be contracted into a marked edge (see Figure~\ref{fig_digon}), and if $f_1>\delta |\mathbf{f}|$, we have
\[|\mathbf{f}|\beta_g(\mathbf{f}-\mathbf{1}_1)>\delta |\mathbf{f}|  \beta_g(\mathbf{f})\]
which yields the result. 

\begin{figure}[!ht]
\centering
\includegraphics[scale=0.5]{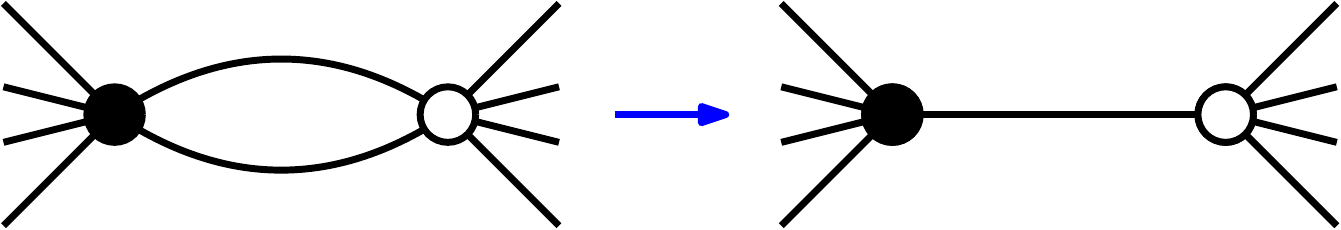}
\caption{Contraction of a digon.}\label{fig_digon}
\end{figure}

We can now assume that $j_0>1$ and $j_0 f_{j_0} > \delta |\mathbf{f}|$. The injection we will build takes as input a map of $\mathcal{B}_g(\mathbf{f})$ with a marked good set of edges, and outputs a map of $\mathcal{B}_g(\mathbf{\tilde{f}})$ with a marked edge and some finite information (i.e. with values in a finite set whose size depends only on $\delta, \kappa$ and the function $A$), with $\mathbf{\tilde{f}}$ of the form:
\begin{enumerate}
\item[(i)]
either
\begin{equation}\label{brl_ftilde_first_condition}
\mathbf{\tilde{f}}=\mathbf{f}-\mathbf{1}_p \mbox{ where } j_0 \leq p < A_1,
\end{equation}
\item[(ii)]
or
\begin{equation}\label{brl_ftilde_second_condition}
\mathbf{\tilde{f}}=\mathbf{f}-\sum_{i=1}^k\mathbf{1}_{d_i} \mbox{ where } k \leq A_1 \mbox{ and } 1<d_i<j_0 \mbox{ for all } i \mbox{ but } \sum_{i=1}^k d_i\geq j_0+k-1.
\end{equation}
\end{enumerate}
Since the number of possibilities for $\mathbf{\tilde{f}}$ is bounded in terms of $A_1$, by \eqref{eq_transfer_grande_face} and~\eqref{eq_transfer_petites_faces}, such an injection will prove Lemma~\ref{lem_BRL}.

The surgery operation is quite complicated and, contrary to \cite{BL19}, some intermediate steps affect the topology of the map (although the genus remains unchanged in the end). It is broken down into four steps for better understanding.

Before describing this operation in details, we first need to give a few definitions.
Consider a map $m$ with a good set $S$ of edges, and a distinguished edge $e^*\in S$ called the \textit{anchor} (the way to choose $e^*$ will be specified later). For $e\in S\setminus\{e^*\}$, let $p_{e}$ be the leftmost shortest path\footnote{the leftmost shortest path is constructed by, at each step, taking the leftmost face that gets strictly closer to the target.} in the dual $m^*$ from the face on the right of $e^*$ to the face on the right of $e$. We denote by $P(S)$ the union of all the paths $p_e$ for $e\in S\setminus\{e^*\}$ (see the left of Figure~\ref{fig_path_carving}). With this definition $P(S)$ forms a tree. Since the edges of $S$ are at dual distance at most $2r$ from each other, we also know that the number of dual edges in $P(S)$ satisfies $|P(S)|<2A_1r$.

We will now describe four injective operations (four "steps"). To make things less cumbersome, we will use the term \emph{finite} in lieu of "bounded by a constant that depends only on $\delta,\kappa$ and the function $A$" (without trying to make the bounds explicit). These steps will involve marked faces of possibly high but finite degree that we will call \emph{megafaces}. Since one of the steps might temporarily disconnect the map, we also precise that by a \emph{possibly non-connected map}, we mean a finite gluing of polygons which is not necessarily connected, and where only one of the connected components bears a root edge.

After having defined the four steps, we will explain how to apply them to a map of $\B_g(\mathbf{f})$ with a marked good set to obtain the desired injection. In particular, some of the steps depend on additional parameters (such as the anchor edge in the good set $S$ for step $1$), and the way to choose the parameters will be specified in the end.

\paragraph{Step 1: Carving.} 
\textit{This step inputs a map $m\in \B_g(\mathbf{f})$, with a distinguished good set $S$, anchored at some edge $e^*$, and outputs a map $m_1$ of genus $g$ with a marked face of finite degree, $A_1$ distinguished fine vertices incident to this face, and some finite information.}

Start from a map $m\in \B_g(\mathbf{f})$, with a distinguished good set $S$, anchored at some edge $e^*$. Draw $P(S)$ on $m^*$ and cut each edge of $m$ that is crossed by $P(S)$. For every such edge, mark the two corners corresponding to the ends of this edge (see Figure~\ref{fig_path_carving}).
The tree structure of $P(S)$ ensures that the map $m_1$ that we obtain is still connected and has genus $g$. Moreover $m_1$ has a distinguished face called the \emph{megaface}, resulting from merging several faces of $m$. The $A_1$ white vertices incident to the edges of $S$ will be the $A_1$ distinguished vertices output by Step 1. These vertices are fine (i.e. have degree less than $\frac{4}{\kappa}$) by definition of a good set.

Moreover, we claim that there are at most $2A_1 r$ marked corners of each colour in the megaface, and that the megaface is of degree less than $4A_1A_2r$.
Indeed, each time an edge is deleted, two faces merge, and the megaface is the consequence of the mergings of the faces created by cutting edges. The number of marked corners is twice the number of edges cut, which is less than $|P(S)|$. Let $2j_1,2j_2,\ldots,2j_{\ell}$ be the degrees of the faces of $m$ that were merged in the carving process. Then the megaface of $m_1$ has degree $2F=2(j_1+j_2+\ldots+j_{\ell}-\ell+1)$. By definition of a good set, $\ell\leq |P(S)|+1\leq 2A_1r$, and also for all $i$, we have $j_i<A_2$, which implies $2F<4A_1A_2r$. Therefore, given the map $m_1$ and its megaface, the number of ways to choose the marked corners is bounded by $\binom{4A_1 A_2 r}{2 A_1 r}$, so the marked corners are finite information, which will allow us to forget about them in the next steps.

Finally, note that step 1 is injective up to finite information: given the map $m_1$, its megaface and the marked corners, there is a finite number of ways to pair the marked corners in a planar way inside the megaface.

\begin{figure}[!ht]
\center
\includegraphics[scale=0.7]{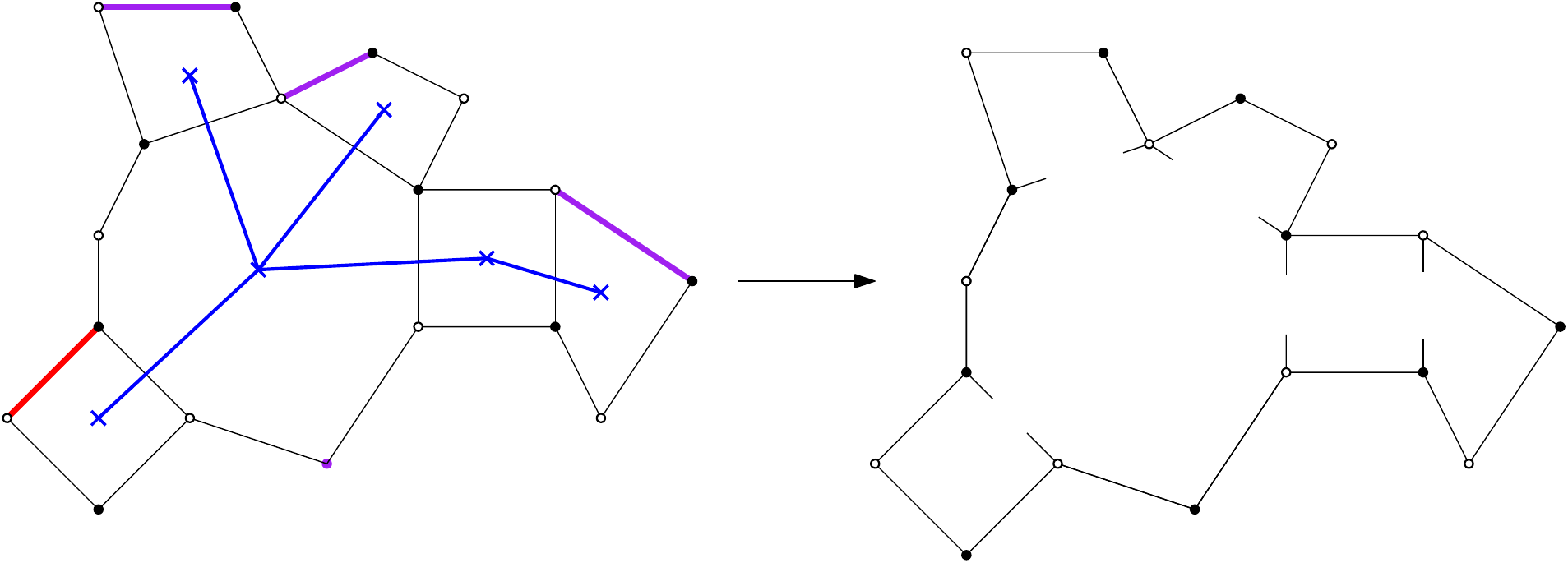}
\caption{The carving operation, from $m$ (on the left) to $m_1$ (on the right). The good set $S$ of edges is in purple with the anchor in red), and the paths $P(S)$ are in blue. Marked corners in $m_1$ are represented as dangling half-edges.}\label{fig_path_carving}
\end{figure}

\paragraph{Step 2: Vertex deletion.}
\textit{This step inputs a map $m_1$ of genus $g$, with a marked face of finite degree and $A_1$ distinguished fine vertices incident to this face, as well as a number $d<A_1$. It outputs (together with some finite information) a possibly disconnected map $m_2$ with $K$ connected components, of total genus $g' \leq g$ with $g-g'$ finite, with $v(\mathbf{f},g)-d$ vertices in total and with $M$ marked faces of finite total degree, such that there is at least one marked face in each connected component, and we have
\begin{equation}\label{eq_degenerescence}
M \leq (g-g')+K.
\end{equation}
}

Start with a map $m_1$ of genus $g$, with a marked megaface to which $A_1$ marked fine vertices are incident, as well as a number\footnote{the number $d$ is a parameter that will be fixed when defining the injection.} $d<A_1$. Pick $d$ of the marked good vertices arbitrarily, and delete them as well as all their incident edges (see Figure~\ref{fig_vertex_deletion}). We obtain a map $m_2$ that might be disconnected (see right before Step 1 for a definition). 

\begin{figure}[!ht]
\center
\includegraphics[scale=0.8]{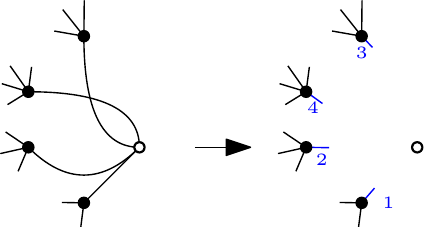}
\caption{Deleting a vertex.}\label{fig_vertex_deletion}
\end{figure}

We now precise which are the marked faces. Everytime an edge is deleted in the process, three cases may happen:
\begin{itemize}
\item two faces are merged,
\item or a face is split in two and the genus decreases by one,
\item or a face is split in two and the number of connected components increases by one.
\end{itemize}

The marked faces (or megafaces) of $m_2$ are then the result of the consecutive splittings and mergings. In particular, the last two cases explain \eqref{eq_degenerescence}. Moreover, in the third case, if a connected component is split in two, the marked face gets split between both components, so in the end there is at least one marked face on each connected component.

We now bound the total degree of the megafaces. The edges incident to the megafaces of $m_2$ come from the megaface of $m_1$, plus the "small" faces we merged with it in the process. By definition of a good set, all these small faces were of degree $A_2$ or less, and the number of mergings is bounded by the total number of edges deleted in the operation, which itself is at most $DA_1$ ($d<A_1$ vertices of degree bounded by $D$ were deleted). Hence the total degree of the megafaces is bounded by $DA_1 A_2 + 2F$, where $2F$ is the degree of the megaface of $m_1$. In particular, this total degree is finite. On the other hand, the genus variation $g-g'$ is bounded by the number of removed edges, so it is finite as well.

Finally, we argue that this step is injective up to finite information. To go back to $m_1$, one only needs to recreate the white vertices and reattach them to the right black corners. The number of edges to add back is finite (bounded by $D A_1$), and the black corners to which we may attach them are on the megafaces, so their number is also finite. Therefore, up to adding finite information in the output, Step 2 is injective.

\paragraph{Step 3: Reconstruction.} 
\textit{This step inputs a possibly disconnected map $m_2$ with $v(\mathbf{f},g)-d$ vertices in total for some $d$, of genus $g' \leq g$ with $g-g'$ finite, with $K$ connected components and $M$ marked faces of finite total degree, satisfying~\eqref{eq_degenerescence} and such that there is at least one marked face in each connected component. It outputs a connected map $m_3$ with $v(\mathbf{f},g)-d+1$ vertices, of genus $g$, with one marked face of finite degree and a marked edge lying in that face, plus some finite information.}

We start with a map $m_2$ as above, and denote by $2F$ the total degree of its megafaces. We create one new white vertex $v$, select arbitrarily a black corner in each megaface $f$ of $m_2$, and draw an edge between $v$ and these black corners (see the left part of Figure~\ref{fig_reconstruction}). We keep track of one of these newly created edges and call it the \emph{special edge}. We obtain a map $m'_2$ that is now connected and has a unique megaface of degree at most $2F+2M$ (because we added $M$ edges). This megaface is incident to the vertex $v$ and to the special edge. Let $g''$ be the genus of $m'_2$. We note that when we add one by one the edges joining $v$ to the megafaces of $m_2$, the genus increases by one everytime we connect a megaface which is not the first one in its connected component of $m_2$. Hence $g''-g'=M-K$, so~\eqref{eq_degenerescence} implies $g''\leq g$. If $g''=g$, we set $m_3=m'_2$. If $g''<g$, we attach a pair of edges to the right of the special edge as on the right of Figure~\ref{fig_reconstruction}. Now the resulting map has still only one megaface and is of genus $g''+1$. Repeat if necessary to obtain a map $m_3$ of genus $g$ with one megaface of degree bounded by $2F+2K+4(g-g')$ (because we added at most $2(g-g')$ edges to recover the genus). Using~\eqref{eq_degenerescence} again, the degree of the megaface is bounded by $2F+4M \leq 6F$ so it is finite. Moreover, the only vertex we have added is $v$, so $m_3$ has $v(\mathbf{f},g)-d+1$ vertices.

\begin{figure}[!ht]
\center
\includegraphics[scale=0.4]{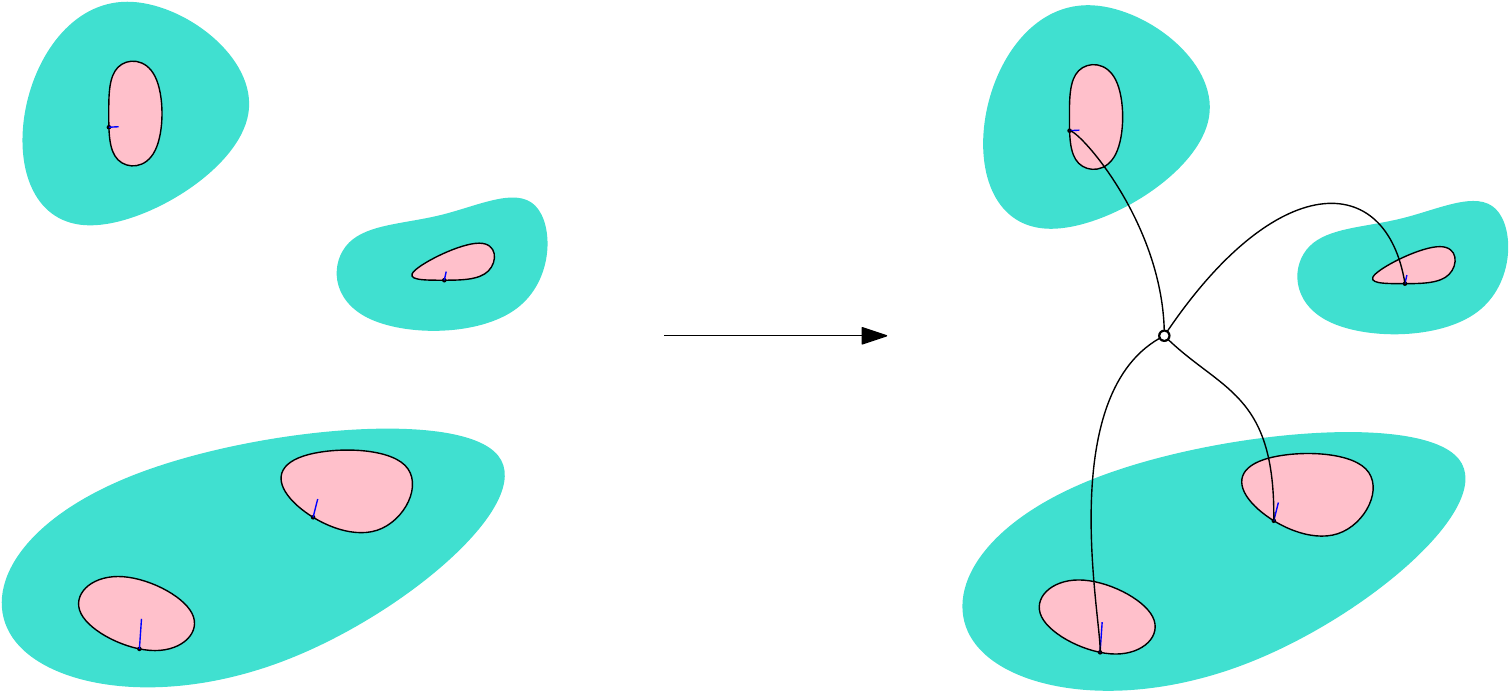}
\hspace{1cm}
\includegraphics[scale=0.4]{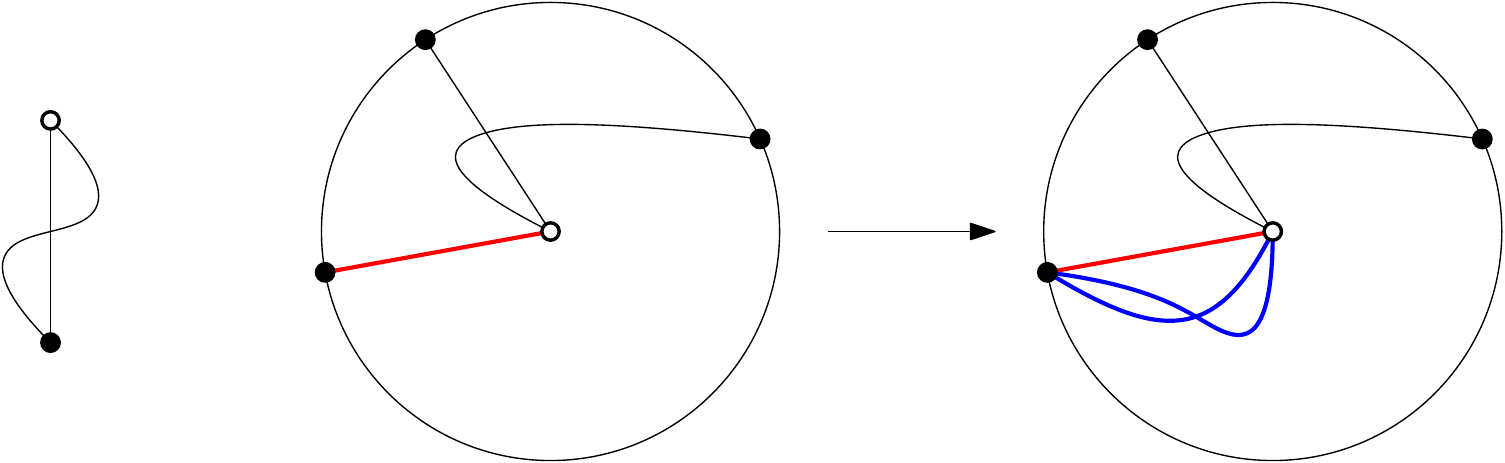}
\caption{The reconstruction. Left: reconnecting the map. The megafaces are in pink, and the rest of the map is in turquoise. Marked corners are represented as dangling half-edges. Right: recovering the genus. The special edge is in red, and the added pair of edges is in blue.}\label{fig_reconstruction}
\end{figure}

Finally, to recover $m_2$ from $m_3$, one only has to remember the value of $g-g''$. This is finite information since $g-g'' \leq g-g'$ is finite. We can then simply remove the $2(g-g'')$ edges on the right of the special edge to recover $m'_2$, and then delete the vertex $v$ (i.e. the white vertex incident to the marked edge of $m_3$), together with all its incident edges, to recover $m_2$. Therefore, Step 3 is injective up to finite information.

\paragraph{Step 4: Filling of the megaface.} 
\textit{This step inputs a map $m_3$ in $\mathcal{B}_g(\mathbf{f'})$ for some $\mathbf{f'}$ with a marked face of finite degree $2F$ and a marked edge incident to that face. We also assume that $\mathbf{f'}$ satisfies the following assumptions:
\begin{enumerate}
\item
we have $\mathbf{f'}=\mathbf{f}+\mathbf{1}_F-\mathbf{1}_{j_1}-\mathbf{1}_{j_2}-\ldots-\mathbf{1}_{j_{\ell}}$, where $\sum_{i=1}^{\ell} j_i$ is finite;
\item
$v(\mathbf{f'},g) = v(\mathbf{f},d)-d+1$ for some finite $d \geq 1$;
\item
there is an index $k< \ell$ such that 
\begin{equation}\label{eq_condition_BRL}
(j_1-1)+(j_2-1)+\ldots+(j_k-1)=d-1.
\end{equation}
\end{enumerate}
It outputs a map $m_4\in \B_g (\mathbf{\tilde{f}} )$ with a marked edge, where $\mathbf{\tilde{f}} \leq \mathbf{f}$ and $\left| \mathbf{f}-\mathbf{\tilde{f}} \right|$ is finite.}

We start with a map $m_3 \in \mathcal{B}_g(\mathbf{f'})$ as above, and let $d$ and $j_1, \dots, j_{\ell}$ be as in the assumptions on $\mathbf{f'}$. By the Euler formula on $\mathbf{f}$ and $\mathbf{f'}$, we must have 
\begin{equation}\label{eqn_megaface_decomposition}
F-1=(j_1-1)+(j_2-1)+\ldots+(j_{\ell}-1)-(d-1).
\end{equation}
Combining this with the assumption~\eqref{eq_condition_BRL}, we obtain
\[F-1=(i_{k+1}-1)+(i_{k+2}-1)+\ldots+(i_{\ell}-1).\]
Hence, we can tessellate the megaface of degree $2F$ into faces of degrees $2j_{k+1}, \dots, 2j_{\ell}$ as in the proof of \eqref{lem_grande_face}. We obtain a map $m_4$ (keeping the same marked edge) with face degrees given by $\mathbf{\tilde{f}}=\mathbf{f}-\sum_{i=1}^k \mathbf{1}_{j_i}$, in an injective way.

\vspace{1cm}
Now we can describe our injection in terms of the different operations mentioned above. Roughly speaking, we will successively apply our four steps. We will specify the choice of some of the parameters in the first three steps in such a way that the face degree distribution $\mathbf{f'}$ after Step $3$ satisfies all the assumptions required by Step $4$, and the face degree distribution $\mathbf{\tilde{f}}$ obtained after Step $4$ is of one of the two forms given by~\eqref{brl_ftilde_first_condition} and~\eqref{brl_ftilde_second_condition}.

More precisely, we start with a map $m \in \B_g(\mathbf{f})$ with a marked good set $S$. For all $e\in S$, let $d_e$ be the half-degree of the face sitting on the right of $e$. We treat two cases separately (which will in the end correspond to~\eqref{brl_ftilde_first_condition} and to~\eqref{brl_ftilde_second_condition}).

\paragraph{Case 1:} Assume there exists $e^*$ in $S$ such that $d_{e^*}\geq j_0$. Take $e^*$ as the anchor, and apply Step 1. Note that the face of degree $2d_{e^*}$ sitting to the right of $e^*$ is one of the faces that have been destroyed. We then apply Step 2 with $d=d_{e^*}$, i.e. we remove $d$ good vertices, including the one that was incident to $e^*$. We then apply Step $3$ to obtain a map $m_3$, and denote by $\mathbf{f'}$ the face degree sequence of $m_3$. Note that $m_3$ has $d-1$ vertices less than $m$, which ensures that the $d$ as defined in the assumptions of Step $4$ is the same as the one used in Step 2 and 3. Moreover, it follows from Steps 1, 2 and 3 that $\mathbf{f'}$ is of the form $\mathbf{f}+\mathbf{1}_F-\mathbf{1}_{j_1}-\mathbf{1}_{j_2}-\ldots-\mathbf{1}_{j_{\ell}}$, where $2F$ is the degree of the megaface, and $2j_1, \dots, 2j_{\ell}$ are the degrees of the faces destroyed in Step 1. In particular $\sum_{i=1}^{\ell} j_i$ is finite and, up to reordering the $j_i$'s, we may assume $j_1=d=d_{e^*}$. This implies that~\eqref{eq_condition_BRL} is satisfied for $k=1$, so we can apply Step 4 to obtain a map $m_4$. Finally, the face degree sequence of $m_4$ is $\mathbf{\tilde{f}}=\mathbf{f}-\mathbf{1}_d$, where $d=d_{e^*} \geq j_0$, and $d<A_1$ by definition of a nice edge. Hence $\mathbf{\tilde{f}}$ is of the form given by~\eqref{brl_ftilde_first_condition}.

\paragraph{Case 2:} We now assume $d_e<j_0$ for all $e \in S$. Take any arbitrary ordering $e_1,e_2, \ldots, e_{A_1}$ of the edges of $S$, and for all $i$ let $d_i=d_{e_i}$ be the half-degree of the face on the right of $e_i$. We first claim that there is an index $k$ such that
\begin{equation}\label{eqn_brl_case_2}
j_0-1<\sum_{i=1}^k (d_i-1)<A_1-1
\end{equation}
Indeed, by definition of a fine edge, we have $d_i \geq 2$ for all $i$ so $\sum_{j=1}^{A_1}(d_j-1)\geq A_1>j_0-1$. Therefore, we can consider the first index $k$ such that $\sum_{i=1}^k (d_i-1)>j_0-1$. Then we have $d_k < j_0$ by assumption, so $\sum_{i=1}^k (d_i-1) < 2j_0-1$. Finally, by definition of $A_1$, we have $A_1 \geq 2A(\delta) \geq 2j_0$ (the second inequality follows from the assumption $j_0 f_{j_0} > \delta|\mathbf{f}|$), so~\eqref{eqn_brl_case_2} is indeed satisfied for this $k$.

We now choose the anchor of $S$ arbitrarily, and apply Step 1. We then apply Step 2 with $d=1+\sum_{i=1}^k (d_i-1)$, i.e. we delete $d$ good vertices, including the ones incident to $e_1, \ldots, e_k$. Note that at this point, all the faces that were incident to the edges $e_1, \dots, e_k$ have been destroyed. We then apply Step 3 and obtain a map $m_3$ with face degree sequence $\mathbf{f'}$. The first two assumptions on $\mathbf{f'}$ in Step 4 are satisfied for the same reason as in Case 1, with $\mathbf{f'}$ of the form $\mathbf{f}+\mathbf{1}_F-\mathbf{1}_{j_1}-\ldots-\mathbf{1}_{j_{\ell}}$. Moreover, since the faces incident to $e_1, \dots, e_k$ have been destroyed previously, up to reordering the $j_i$'s, we may assume $j_i=d_i$ for $1 \leq i \leq k$. Therefore, by our choice of $d$, the third assumption~\eqref{eq_condition_BRL} of Step 4 is also satisfied. After applying Step 4, we obtain a map $m_4$ with face degree sequence
\[\mathbf{\tilde{f}}=\mathbf{f}-\sum_{i=1}^{k} d_i.\]
Finally, we have $1<d_i<j_0$ for all $i$ by the assumption of Case 2, and $\sum_{i=1}^k d_i \geq j_0+k-1$ by~\eqref{eqn_brl_case_2}, so $\mathbf{\tilde{f}}$ is of the form given by~\eqref{brl_ftilde_second_condition}.

\begin{proof}[Conclusion of the proof of Lemma~\ref{lem_BRL}]
Consider the injection that we have just built. By Proposition~\ref{prop_good_sets}, the number of inputs $m$ is at least $\frac{\kappa}{16} |\mathbf{{f}}|\beta_g(\mathbf{f})$, whereas the number of outputs $m_4$ is at most $\sum_{\mathbf{\tilde{f}}} |\mathbf{\tilde{f}}| \beta_g(\mathbf{\tilde{f}})$, where the sum is over face degree sequences $\mathbf{\tilde{f}}$ of the form either~\eqref{brl_ftilde_first_condition} of~\eqref{brl_ftilde_second_condition}.

When we apply successively Steps 1, 2, 3 and 4, we note that each step is injective up to finite information. This means that there is a constant $c$ depending only on $\delta$, $\kappa$ and $A(\cdot)$ such that
\[ \frac{\kappa}{16}|\mathbf{{f}}| \beta_g(\mathbf{f}) \leq c \sum_{\mathbf{\tilde{f}}} |\mathbf{\tilde{f}}| \beta_g(\mathbf{\tilde{f}}) \leq c |\mathbf{f}| \sum_{\mathbf{\tilde{f}}} \beta_g(\mathbf{\tilde{f}}), \]
where the second inequality uses $\mathbf{\tilde{f}} \leq \mathbf{f}$.
Moreover, the number of possible sequences $\mathbf{\tilde{f}}$ is bounded in terms of $\delta$, $\kappa$ and $A(\cdot)$. Hence, there is a sequence $\mathbf{\tilde{f}}$ of the form~\eqref{brl_ftilde_first_condition} or~\eqref{brl_ftilde_second_condition} such that
\[ \beta_g(\mathbf{f}) \leq c' \beta_g(\mathbf{\tilde{f}}), \]
where $c'$ depends only on $\delta$, $\kappa$ and $A(\cdot)$. We can finally conclude using Lemma~\ref{lem_grande_face} if $\mathbf{\tilde{f}}$ is of the form~\eqref{brl_ftilde_first_condition}, or Lemma~\ref{lem_petites_faces} if $\mathbf{\tilde{f}}$ is of the form~\eqref{brl_ftilde_second_condition}.
\end{proof}

\subsection{Planarity and One-Endedness}
\label{subsec_planarity}

We fix $(\mathbf{f}^{n})_{n \geq 1}$ and $(g_n)_{n \geq 1}$ satisfying the assumptions of Theorem~\ref{thm_main_more_general}. We recall that by Lemma~\ref{lem_easy_dual_convergence} $(\M_{\ff^{n}, g_n})$ is tight for $d_{\loc}^*$. In all this subsection, we will denote by $M$ a subsequential limit in distribution. Since $|\ff^{n}| \to +\infty$, it must be an infinite map. To prove Proposition~\ref{prop_tightness_dloc_univ}, we will first prove that $M$ is a.s. planar and one-ended (Corolaries~\ref{cor_planar} and \ref{cor_OE}). Like in \cite{BL19}, the proofs will rely heavily on a recursion counting maps. More precisely, the formula~\eqref{rec_biparti_genre_univ} proved in \cite{Lo19} will play the role of the Goulden--Jackson formula in \cite{BL19}. This step of the proof only differs from \cite{BL19} by minor adaptations, so we will not give the full details. The proofs of Lemmas \ref{lem_calcul_planar} and \ref{lem_calcul_OE} using~\eqref{rec_biparti_genre_univ} can be found in the Appendix~\ref{sec_univ_estim_lemmas}.

As in \cite{BL19}, to establish planarity, we want to bound, for any non-planar finite map $m$, the probability $\P \left( m \subset M_{\ff^{n},g_n} \right)$ for $n$ large. For this, we will need the following combinatorial estimate.

\begin{lem}\label{lem_calcul_planar}
We fix $(\mathbf{f}^{n})_{n \geq 1}$ and $(g_n)_{n \geq 1}$ which satisfy the assumptions of Theorem~\ref{thm_main_more_general}. We also fix $\mathbf{h}^{(0)}$ a face degree sequence such that $\mathbf{h}^{(0)} \leq\mathbf{f}^{n}$ for $n$ large enough.  Let $k\geq 2$, numbers $\ell_1, \ell_2,\ldots,\ell_k$ and perimeters $p_i^j$ for $1\leq j\leq k$ and $1\leq i \leq \ell_j$. Then
\[\sum_{\substack{\mathbf{h}^{(1)}+\mathbf{h}^{(2)}+\ldots+\mathbf{h}^{(k)}=\ff^{n}-\mathbf{h}^{(0)} \\ g^{(1)}+g^{(2)}+\ldots+g^{(k)}=g_n-1-\sum_j(\ell_j-1)}}\prod_{j=1}^k \beta_{g^{(j)}}^{(p^j_1,p^j_2,\ldots,p^j_{\ell_j})}(\mathbf{h}^{(j)})=o\left(\beta_{g_n}(\mathbf{f}^{n})\right)\]
as $n\rightarrow \infty$.
\end{lem}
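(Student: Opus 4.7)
The plan is to follow the strategy of the analogous lemma for triangulations in \cite{BL19}, using the recursion \eqref{rec_biparti_genre_univ} of \cite{Lo19} in place of the Goulden--Jackson formula. The proof proceeds in three steps: removing the boundaries from each factor, reassembling the pieces via the recursion, and tracking the polynomial factors in $|\ff^n|$.

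In the first step, I would convert each factor $\beta^{(p^j_1,\ldots,p^j_{\ell_j})}_{g^{(j)}}(\mathbf{h}^{(j)})$ (which counts maps with $\ell_j$ distinguished boundary edges) to a factor $\beta_{g^{(j)}}(\mathbf{h}^{(j)})$ (no boundary), by iteratively applying \eqref{eqn_add_boundary} to absorb each boundary into an added face of the corresponding perimeter, and then invoking the Bounded Ratio Lemma (Lemma~\ref{lem_BRL}, combined with Lemmas~\ref{lem_grande_face} and~\ref{lem_petites_faces} to swap the added faces with faces of ``dominant'' degree) to remove the added faces at the cost of an $n$-independent constant. A direct computation using \eqref{eqn_add_boundary} shows that this conversion contributes an overall polynomial factor of order $|\mathbf{h}^{(j)}|^{\ell_j - 1}$ per piece.

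In the second step, I would apply the recursion \eqref{rec_biparti_genre_univ} iteratively to merge the $k$ pieces back into a single map of genus $g_n$ and face degrees $\ff^n - \mathbf{h}^{(0)}$. Each of the $k-1$ mergings, taken from the ``two-piece'' term of the recursion, absorbs some genus $g^{*}_i \geq 0$ and contributes a combinatorial factor of order
\[
\frac{\binom{|\ff^n|+1}{2}}{(1+|\mathbf{h}^{(1)}_i|)\binom{v(\mathbf{h}^{(2)}_i, g^{(2)}_i)}{2g^{*}_i + 2}}.
\]
Under the assumption $v(\ff^n, g_n) > \kappa |\ff^n|$ given by \eqref{eq_petites_faces}, and after a preliminary split of the sum to restrict to the regime where each $|\mathbf{h}^{(j)}|$ is of order $|\ff^n|$ (the complementary regime being easily handled by a crude direct count of maps with bounded size), this factor is of order $|\ff^n|^{-(2g^{*}_i + 1)}$. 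The constraint on the total genus forces $\sum g^{*}_i = g_n - \sum g^{(j)} = 1 + \sum (\ell_j - 1)$, so the total polynomial loss over the $k-1$ mergings is of order $|\ff^n|^{-(k-1) - 2(1+\sum(\ell_j-1))}$.

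Combining the two steps, and applying Lemma~\ref{lem_BRL} once more to compare $\beta_{g_n}(\ff^n - \mathbf{h}^{(0)})$ with $\beta_{g_n}(\ff^n)$, the overall ratio between the sum in the statement and $\beta_{g_n}(\ff^n)$ is bounded by a constant times $|\ff^n|^{\sum(\ell_j - 1) - (k-1) - 2(1 + \sum(\ell_j - 1))} = |\ff^n|^{-(\sum_j \ell_j + 1)}$, which is $o(1)$ since $\sum_j \ell_j \geq k \geq 2$. The main difficulty will be the careful polynomial bookkeeping and the preliminary separation of the partition sum into its ``typical'' and ``degenerate'' regimes. The crucial structural input is the ``$-1$'' in the genus constraint $g_n - 1 - \sum(\ell_j - 1)$, which forces $\sum g^{*}_i \geq 1$ and thereby provides the decisive extra $|\ff^n|^{-2}$ factor beyond what a purely separating (``planar'') assembly of the $k$ pieces would yield; without this $-1$, the same bookkeeping would only give $O(1) \cdot \beta_{g_n}(\ff^n)$.
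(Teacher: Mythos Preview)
Your overall architecture — strip boundaries, reassemble via the recursion, track powers of $|\ff^n|$ — matches the paper's, and you have correctly singled out the extra $-1$ in the genus constraint as the source of the $o(1)$. There is, however, a genuine gap in Step~1, and your Step~2 is organized differently from the paper in a way that creates the same difficulty a second time.

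\textbf{You cannot apply the Bounded Ratio Lemma to individual summands.} You want to invoke Lemma~\ref{lem_BRL} on each pair $(\mathbf{h}^{(j)}+\sum_i\mathbf{1}_{p_i^j},\,g^{(j)})$ to remove the absorbed boundary faces and land on $\beta_{g^{(j)}}(\mathbf{h}^{(j)})$. But the hypotheses of Lemma~\ref{lem_BRL} — in particular $v(\cdot,\cdot)>\kappa|\cdot|$ — hold for $(\ff^n,g_n)$, not for an arbitrary piece in the sum: a summand can have $|\mathbf{h}^{(j)}|$ of order $|\ff^n|$ while carrying almost all the genus, so that $v(\mathbf{h}^{(j)},g^{(j)})=O(1)$. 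Your split according to $|\mathbf{h}^{(j)}|$ does not help, since it is the vertex count that controls the BRL, not the edge count. The paper sidesteps this: it tessellates each boundary by faces of a single fixed degree $2i^*$ with $\alpha_{i^*}>0$, obtaining $\beta^{(p_1^j,\ldots)}_{g^{(j)}}(\mathbf{h}^{(j)})\le|\mathbf{h}^{(j)}|^{\ell_j-1}\beta_{g^{(j)}}(\widehat{\mathbf{h}}^{(j)})$, \emph{keeps} those extra $i^*$-faces through the merging, and only applies the BRL once at the end, on $(\ff^n+p\mathbf{1}_{i^*},g_n)$, where the hypotheses are inherited from $(\ff^n,g_n)$.

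\textbf{The merging step is done differently and more simply.} Rather than absorbing the surplus genus during the $k-1$ mergings (which requires the lower bound $\binom{v(\mathbf{h}_i^{(2)},g_i^{(2)})}{2g_i^*+2}\ge c|\ff^n|^{2g_i^*+2}$, hence again control on vertex counts of pieces, and forces the typical/degenerate case analysis you describe), the paper isolates a short preliminary, Lemma~\ref{lem_couper_en_deux}, directly from~\eqref{rec_biparti_genre_univ}: item~2 gives $\sum\beta\beta\le C\beta$ at $g^*=0$ with \emph{no} size restriction on the pieces, and item~3 gives $\beta_{g-g^*}(\ff)\le C|\ff|^{-2g^*}\beta_g(\ff)$. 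Iterating item~2 merges all $k$ pieces at constant cost, landing at $\beta_{g_n-g^*}(\widehat{\ff}^n)$ with $g^*=1+\sum_j(\ell_j-1)$; item~3 then recovers the missing genus in one shot. The resulting exponent $|\ff^n|^{-g^*-1}$ is cruder than your claimed $|\ff^n|^{-\sum_j\ell_j-1}$, but it needs no split and no BRL on pieces.
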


\begin{corr}\label{cor_planar}
Let $(\mathbf{f}^{n})_{n \geq 1}$ and $(g_n)_{n \geq 1}$ satisfy the assumptions of Theorem~\ref{thm_main_more_general}. Then every subsequential limit $M$ of $\left( M_{\ff^n}, g_n \right)$ for $d^*_{\loc}$ is a.s. planar.
\end{corr}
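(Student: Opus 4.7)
The plan is to reduce the statement to checking, for every fixed finite non-planar map with holes $m$, that $\P(m \subset M) = 0$. Since there are only countably many such $m$, if we establish this then the event $\{M \text{ is planar}\}$, which is exactly the intersection of the complements $\{m \not\subset M\}$, has probability one. The event $\{m \subset M\}$ is cylindrical for $d^*_{\loc}$: it depends only on $B^*_R(M)$ for $R = R(m)$ large enough, since every face of $m$ lies within some bounded dual distance of the root face. Consequently the indicator $\mathbbm{1}_{m \subset M}$ is a measurable, bounded, (essentially) continuous function of $M$ for $d^*_{\loc}$, so by the portmanteau theorem applied to the subsequential convergence $M_{\ff^n, g_n} \to M$, I obtain
\[\P(m \subset M) \leq \liminf_{n \to \infty} \P \bigl( m \subset M_{\ff^n, g_n} \bigr).\]

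The second step is to rewrite this probability combinatorially and apply Lemma~\ref{lem_calcul_planar}. By the uniform distribution of $M_{\ff^n, g_n}$ over $\mathcal{B}_{g_n}(\ff^n)$, we have
\[\P \bigl( m \subset M_{\ff^n, g_n} \bigr) = \frac{N_m(\ff^n, g_n)}{\beta_{g_n}(\ff^n)},\]
where $N_m(\ff^n, g_n)$ counts maps of $\mathcal{B}_{g_n}(\ff^n)$ containing $m$. Each such map is obtained by filling the holes of $m$ with multipolygon maps: the complement splits into $k$ connected pieces, the $j$-th one being a bipartite map of the $(2p^j_1, \ldots, 2p^j_{\ell_j})$-gon of genus $g^{(j)}$ and interior faces $\mathbf{h}^{(j)}$, with the $\mathbf{h}^{(j)}$ summing to $\ff^n - \mathbf{h}^{(0)}$ (where $\mathbf{h}^{(0)}$ records the internal faces of $m$). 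The Euler formula relates the genera: the total handle count carried by $m$, the $k$ fillings, and the gluings, must equal $g_n$. When $m$ is non-planar, this constraint forces an effective loss of genus, so after accounting for the $\ell_j - 1$ handles produced by each multipolygon gluing, the remaining genus satisfies $\sum_{j=1}^k g^{(j)} \leq g_n - 1 - \sum_j (\ell_j - 1)$. Summing over all compatible data, $N_m(\ff^n, g_n)$ is bounded by a sum of the exact form of Lemma~\ref{lem_calcul_planar}, hence $N_m(\ff^n, g_n) = o \bigl( \beta_{g_n}(\ff^n) \bigr)$, completing the proof.

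The point requiring the most attention is matching the combinatorial decomposition with the hypotheses of Lemma~\ref{lem_calcul_planar}, in particular its assumption $k \geq 2$. If the complement of $m$ in $M_{\ff^n, g_n}$ consists of a single piece with a single boundary ($k = \ell_1 = 1$), the decomposition yields only one factor $\beta^{(p)}_{g_n - g(m)}(\ff^n - \mathbf{h}^{(0)})$. To reduce to the lemma, one slightly enlarges $m$ by performing a bounded number of non-filled-in peeling steps (as defined in Section~\ref{subsec_lazy_peeling}): since $m$ is non-planar, eventually such an exploration either pinches the boundary into $\ell \geq 2$ pieces or disconnects the complement into $k \geq 2$ components, placing us in the regime of the lemma. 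The other, more technical, point is justifying the portmanteau step, namely the (almost) continuity of $\mathbbm{1}_{m \subset M}$ for $d^*_{\loc}$; this is standard once one observes that the boundary of $\{m \subset M\}$ is negligible because the faces appearing on $\partial m$ have prescribed degree, so no ambiguity can arise in the limit.
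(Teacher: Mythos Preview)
Your overall strategy matches the paper's: reduce to showing $\P(m\subset M)=0$ for each fixed non-planar $m$, pass to the finite maps $M_{\ff^n,g_n}$ via local convergence, decompose the complement into multi-polygon pieces, and invoke Lemma~\ref{lem_calcul_planar}. Two points deserve correction.

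\textbf{The main gap: the hypothesis $\mathbf{h}^{(0)}\leq \ff^n$.} Lemma~\ref{lem_calcul_planar} requires that the internal face degree sequence $\mathbf{h}^{(0)}$ of $m$ satisfy $\mathbf{h}^{(0)}\leq \ff^n$ for all large $n$. You never verify this, and it can fail: if $m$ has an internal face of degree $2j$ with $\alpha_j=0$, then $f^n_j/|\ff^n|\to 0$ but $f^n_j$ need not tend to infinity, so $h^{(0)}_j\leq f^n_j$ may fail along a subsequence. The paper handles this by a separate argument: since the root face of $M$ has half-degree $j$ with probability $j\alpha_j$, it a.s.\ has $\alpha_{\cdot}>0$; by stationarity of $M^*$ under the simple random walk this extends to every face of $M$. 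Hence if $m$ contains a face of degree $2j$ with $\alpha_j=0$, then $\P(m\subset M)=0$ immediately. In the remaining case every $j$ with $h^{(0)}_j>0$ has $\alpha_j>0$, so $f^n_j\to+\infty$ and the lemma applies. This is precisely the new ingredient relative to~\cite{BL19} that the paper singles out, and it is missing from your argument.

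\textbf{The $k\geq 2$ workaround is not correct.} Your concern that Lemma~\ref{lem_calcul_planar} is stated only for $k\geq 2$ is legitimate, but the proposed fix---enlarging $m$ by a bounded number of non-filled-in peeling steps until the boundary pinches or the complement disconnects---does not work. Whether and when a peeling step splits the boundary depends on the ambient map $M_{\ff^n,g_n}$, not on $m$; there is no bound on the number of steps that is uniform in $n$, and the non-planarity of $m$ plays no role in forcing such a split. The right observation is simply that the \emph{proof} of Lemma~\ref{lem_calcul_planar} (Appendix~\ref{sec_univ_estim_lemmas}) goes through verbatim for $k=1$: the induction on $k$ via item~2 of Lemma~\ref{lem_couper_en_deux} is vacuous, and one directly obtains the bound $O\bigl(|\ff^n|^{-g^*-1}\beta_{g_n}(\ff^n)\bigr)$ with $g^*=1+\sum_j(\ell_j-1)\geq 1$. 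The paper implicitly relies on this (deferring to~\cite{BL19}); it is also cleaner to restrict at the outset to $m$ of genus exactly $1$, as the paper does, rather than carrying the inequality $\sum g^{(j)}\leq g_n-1-\sum(\ell_j-1)$.
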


\begin{proof}
The proof is basically the same as~\cite[Corollary 7]{BL19}, with the exception of the assumption $\mathbf{h}^{(0)} \leq\mathbf{f}^{n}$ in Lemma~\ref{lem_calcul_planar} (this assumption was automatically satisfied in~\cite{BL19}). More precisely, it is sufficient to prove $\P \left( m \subset M \right)=0$ for every finite map $m$ with holes and of genus $1$. We fix such a map $m$.

We also note that almost surely, for every face $f$ of $M$, we have $\alpha_{\deg(f)/2}>0$.  Indeed, this is true for the root face since the root face of $M$ has degree $j$ with probability $j \alpha_j$ for all $j \geq 1$, and this can be extended to all faces using stationarity with respect to the simple random walk on the dual of $M$. Therefore, if $m$ has a face of degree $2j$ with $\alpha_j=0$, then $\P \left( m \subset M \right)=0$.

If not, let $\mathbf{h}^{(0)}$ be the internal face degree sequence of $m$. Then $f^n_j \to +\infty$ as $n \to +\infty$ for every $j$ such that $h^{(0)}_j>0$, so $\mathbf{h}^{(0)} \leq\mathbf{f}^{n}$ for $n$ large enough. In particular, we are in position to use Lemma~\ref{lem_calcul_planar}. The proof is now exactly the same as in~\cite{BL19}: we use the fact that $\P \left( m \subset M \right)$ can be expressed using the number of ways to fill the holes of $m$ with maps of multipolygons, which is given by the left-hand side of Lemma~\ref{lem_calcul_planar}.
\end{proof}

We now move on to one-endedness. The proof is quite similar, and relies on the following estimate.

\begin{lem}\label{lem_calcul_OE}
We fix $(\mathbf{f}^{n})_{n \geq 1}$ and $(g_n)_{n \geq 1}$ which satisfy the assumptions of Theorem~\ref{thm_main_more_general}. We also fix $\mathbf{h}^{(0)}$ a face degree sequence such that $\mathbf{h}^{(0)} \leq\mathbf{f}^{n}$ for $n$ large enough.
\begin{itemize}
\item
Let $k\geq 1$, numbers $\ell_1, \ell_2,\ldots,\ell_k$ \textbf{not all equal to $1$} and perimeters $p_i^j$ for $1\leq j\leq k$ and $1\leq i\leq \ell_j$. Then
\[\sum_{\substack{\mathbf{h}^{(1)}+\mathbf{h}^{(2)}+\ldots+\mathbf{h}^{(k)}=\ff^n-\mathbf{h}^{(0)} \\ g^{(1)}+g^{(2)}+\ldots+g^{(k)}=g_n-\sum_j(\ell_j-1)}}\prod_{j=1}^k \beta_{g^{(j)}}^{(p^j_1,p^j_2,\ldots,p^j_{\ell_j})}(\mathbf{h}^{(j)})=o\left(\beta_{g_n}(\mathbf{f}^n)\right)\]
as $n\rightarrow \infty$.
\item 
Let $k\geq 1$ and perimeters $p_1,\ldots,p_k$. There is a constant $C$ (that may depend on everything above) such that for every $a$ and $n$ large enough we have
\[\sum_{\substack{\mathbf{h}^{(1)}+\mathbf{h}^{(2)}+\ldots+\mathbf{h}^{(k)}=\ff^n-\mathbf{h}^{(0)} \\ g^{(1)}+g^{(2)}+\ldots+g^{(k)}=g_n\\  |\mathbf{h}^{(1)}|, |\mathbf{h}^{(2)}|>a}} \, \prod_{j=1}^k \beta_{g_j}^{(p_j)}(\mathbf{h}^{(j)})\leq \frac{C}{a} \beta_{g_n}(\mathbf{f}^n).\]
\end{itemize}
\end{lem}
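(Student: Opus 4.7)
The plan is to adapt the argument used for the analogous estimates in~\cite{BL19} (the triangular case), substituting the recursion~\eqref{rec_biparti_genre_univ} from~\cite{Lo19} for the Goulden--Jackson formula. The overall strategy is to interpret the sums in the statement as terms arising from iterated applications of~\eqref{rec_biparti_genre_univ}, and then compare them to $\beta_{g_n}(\ff^n)$ via the Bounded Ratio Lemma (Corollary~\ref{lem_BRL_boundaries}).

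The first preparatory step is to convert counts with boundaries into counts without boundaries. Iterating the identity~\eqref{eqn_add_boundary}, each $\beta_{g^{(j)}}^{(p_1^j,\ldots,p_{\ell_j}^j)}(\mathbf{h}^{(j)})$ can be rewritten as an explicit combinatorial factor (polynomial in $|\mathbf{h}^{(j)}|$ and in $f_{p_i^j}^{(j)}+O(1)$) times $\beta_{g^{(j)}}\!\bigl(\mathbf{h}^{(j)} + \sum_i \mathbf{1}_{p_i^j}\bigr)$. The problem then reduces to bounding weighted sums of products $\prod_j \beta_{g^{(j)}}(\widetilde{\mathbf{h}}^{(j)})$, where the $\widetilde{\mathbf{h}}^{(j)}$ partition $\ff^n$ up to a bounded correction, and where the total genus is constrained.

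The heart of the proof is then to interpret these sums through~\eqref{rec_biparti_genre_univ}. Applying the recursion once to $\beta_{g_n}(\ff^n)$ produces exactly the kind of two-piece splittings appearing on the right, with a prefactor $\binom{|\ff^n|+1}{2}^{-1}$ on the left and a binomial factor $\binom{v(\mathbf{h}^{(2)},g^{(2)})}{2g^*+2}$ weighting each term. Iterating this $k-1$ times, I can identify the sums in the lemma (up to a bounded number of bookkeeping factors that are controlled by Corollary~\ref{lem_BRL_boundaries}) with contributions to $\beta_{g_n}(\ff^n)$ having a prescribed topological ``defect'': in part~(i) the defect is $\sum_j(\ell_j-1)\geq 1$ extra handles absorbed in the genus count, and in part~(ii) it is the constraint that two of the pieces produced at the first splitting are simultaneously of volume at least $a$. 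Each unit of extra handle costs a factor of order $|\ff^n|^{-1}$, because the binomial $\binom{v}{2g^*+2}$ in the recursion grows polynomially in $v\sim C|\ff^n|$ (where $v(\ff^n,g_n)/|\ff^n|\to \frac{1}{2}\sum(j-1)\alpha_j-\theta>0$ by the hypothesis on $\theta$), whereas the opportunity cost $\binom{|\ff^n|+1}{2}$ on the left of the recursion is quadratic; this yields the $o(\beta_{g_n}(\ff^n))$ bound of part~(i). For part~(ii), the factor $(1+|\mathbf{h}^{(1)}|)$ in the first term of~\eqref{rec_biparti_genre_univ} means that typical splittings are highly asymmetric, and the subset of splittings with both pieces of size at least $a$ carries total weight at most $O(|\ff^n|/a)$ times the weight of an unconstrained split; after dividing by $\binom{|\ff^n|+1}{2}\sim |\ff^n|^2/2$, this gives the $\tfrac{C}{a}\beta_{g_n}(\ff^n)$ bound.

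The main obstacle is the bookkeeping in the iteration: each multi-boundary piece must be reduced one boundary at a time, and one must uniformly control the resulting combinatorial prefactors (the binomials in $v$, the polynomial corrections from~\eqref{eqn_add_boundary}, and the ratios between the implicit face-degree sequences at each step). The hypothesis $\tfrac{f^n_j}{|\ff^n|}\to\alpha_j$ and the tail control from Lemma~\ref{lem_easy_dual_convergence}'s assumption ensure that the Bounded Ratio Lemma applies at each step with constants independent of $n$, while the strict inequality $\theta<\frac{1}{2}\sum(j-1)\alpha_j$ guarantees $v(\ff^n,g_n)$ is comparable to $|\ff^n|$, which is exactly what is needed to match the binomial factors produced by the recursion against the $|\ff^n|^2$ on its left-hand side.
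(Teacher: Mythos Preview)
Your strategy is essentially the same as the paper's: remove the boundaries, then use the recursion~\eqref{rec_biparti_genre_univ} to relate multi-piece sums back to $\beta_{g_n}(\ff^n)$, with the Bounded Ratio Lemma cleaning up the small face-degree discrepancies. The heuristic you give for why each unit of genus defect costs a factor $|\ff^n|^{-1}$ and why the two-large-pieces constraint costs $1/a$ is correct.

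There are two places where your execution differs from the paper and where your sketch would need work. First, for boundary removal you invoke~\eqref{eqn_add_boundary}, which produces extra factors like $p(f_p+1)$; these depend on the face counts of the individual pieces $\mathbf{h}^{(j)}$ and are not obviously bounded uniformly. The paper instead bounds each $\beta_{g^{(j)}}^{(p_1^j,\ldots,p_{\ell_j}^j)}(\mathbf{h}^{(j)})$ crudely from above by tessellating each boundary into faces of a fixed degree $2i^*$ with $\alpha_{i^*}>0$, yielding $\beta_{g^{(j)}}^{(p_1^j,\ldots,p_{\ell_j}^j)}(\mathbf{h}^{(j)}) \le |\mathbf{h}^{(j)}|^{\ell_j-1}\,\beta_{g^{(j)}}(\mathbf{h}^{(j)}+(\sum_i p_i^j)\mathbf{1}_{i^*})$. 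This one-sided bound avoids the bookkeeping of face counts entirely, and the $\mathbf{1}_{i^*}$ correction is precisely what the Bounded Ratio Lemma handles.

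Second, rather than iterating~\eqref{rec_biparti_genre_univ} directly and trying to match terms, the paper first extracts three standalone inequalities (Lemma~\ref{lem_couper_en_deux}): a bound on $\sum |\mathbf{h}^{(1)}||\mathbf{h}^{(2)}|\beta_{g^{(1)}}(\mathbf{h}^{(1)})\beta_{g^{(2)}}(\mathbf{h}^{(2)})$ by $O(|\ff|)\beta_g(\ff)$, the same without the $|\mathbf{h}|$ weights, and $\beta_{g-g^*}(\ff)\le C|\ff|^{-2g^*}\beta_g(\ff)$. With these, part~(i) becomes a one-line computation giving $O(|\ff^n|^{-g^*}\beta_{g_n}(\ff^n))$ with $g^*=\sum_j(\ell_j-1)\ge 1$; and for part~(ii), the first bound (with the $|\mathbf{h}^{(j)}|$ weights) is exactly the mechanism that delivers the $1/a$: under the constraint $|\mathbf{h}^{(1)}|,|\mathbf{h}^{(2)}|>a$, the product $\prod_j |\widehat{\mathbf{h}}^{(j)}|$ is at least $\frac{a\,|\ff^n|}{2k}$, which divides out against the $O(|\ff^n|)$ upper bound. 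Your proposal gestures at this but does not isolate the weighted splitting bound, which is the crux.
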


\begin{corr}\label{cor_OE}
Let $(\mathbf{f}^{n})_{n \geq 1}$ and $(g_n)_{n \geq 1}$ satisfy the assumptions of Theorem~\ref{thm_main_more_general}. Every subsequential limit $M$ of $\left( M_{\ff^n}, g_n \right)$ for $d^*_{\loc}$ is a.s. one-ended in the sense that, for every finite map $m$ with holes such that $m \subset M$, only one hole of $m$ is filled with infinitely many faces\footnote{This is a "weak" definition of one-endedness. For example, it does not prevent $m$ to be the dual of a tree. However, once we will have proved that $M$ has finite vertex degrees, this will be equivalent to the usual definition.}.
\end{corr}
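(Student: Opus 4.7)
}
The plan is to mimic the proof of Corollary~\ref{cor_planar}, with Lemma~\ref{lem_calcul_OE} replacing Lemma~\ref{lem_calcul_planar}. As in the planar case, the stationarity of $M$ under the simple random walk on the dual forces every face of $M$ to have half-degree $j$ with $\alpha_j>0$ almost surely; therefore only countably many finite maps $m$ with holes can satisfy $m\subset M$ with positive probability, and for each such $m$ the internal face sequence $\mathbf{h}^{(0)}$ satisfies $\mathbf{h}^{(0)}\leq \ff^n$ for $n$ large. It thus suffices to show, for every fixed $m$ of this type, that
\[ \P\bigl(m\subset M \text{ and at least two holes of } m \text{ are filled with infinitely many faces}\bigr)=0. \]
By monotone convergence in $a$, this probability equals $\lim_{a\to+\infty}\P\bigl(m\subset M \text{ and at least two holes have fillings of size}\geq a\bigr)$. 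The event appearing on the right is open for $d^*_{\loc}$, since a witness lives in a dual ball of large enough radius, so Portmanteau along the convergent subsequence bounds it by $\liminf_n$ of the same probability for $M_{\ff^n,g_n}$.

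For fixed $n$, I decompose $M_{\ff^n,g_n}\setminus m$ into its connected components, each being a map of a multi-polygon. If $m$ has $k$ holes, if the complement has $k'\leq k$ components, and if the $j$-th component is adjacent to $\ell_j\geq 1$ holes of $m$ (so $\sum_j\ell_j=k$), Euler's formula imposes $\sum_j g^{(j)}=g_n-\sum_j(\ell_j-1)$. Splitting the count of maps in $\mathcal{B}_{g_n}(\ff^n)$ containing $m$ according to the pair $\bigl(k',(\ell_j)\bigr)$ and to the assignment of the perimeters of $m$ to the boundary cycles, I distinguish two cases. Configurations with some $\ell_j\geq 2$ contribute $o(\beta_{g_n}(\ff^n))$ by the first part of Lemma~\ref{lem_calcul_OE} (applied with $\mathbf{h}^{(0)}$ equal to the internal face sequence of $m$, and the $p^j_i$'s given by the hole perimeters of $m$). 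Configurations with all $\ell_j=1$ and at least two components of size $\geq a$ contribute at most $\frac{C}{a}\beta_{g_n}(\ff^n)$ by the second part, for some constant $C$ depending on $m$ but not on $a$ or $n$. Dividing by $\beta_{g_n}(\ff^n)$ yields the bound $C/a+o(1)$, and taking $\liminf_n$ then $a\to+\infty$ gives zero.

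The main obstacle is really absorbed in Lemma~\ref{lem_calcul_OE}, whose proof refines the recursion~\eqref{rec_biparti_genre_univ} in the spirit of the analogous triangular estimate from~\cite{BL19}. Once that lemma is granted, the only work left is the combinatorial bookkeeping of the ``lost'' genus $\sum_j(\ell_j-1)$ coming from components adjacent to several holes, and the observation that the two cases ``some $\ell_j\geq 2$'' and ``all $\ell_j=1$ with two large components'' are precisely the two situations that must be ruled out to deduce one-endedness. This is the direct bipartite transcription of the argument used in~\cite{BL19} for triangulations.
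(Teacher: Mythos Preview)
Your proposal is correct and follows essentially the same approach as the paper, which simply refers to \cite[Corollary~9]{BL19} for the argument and notes that the only new point is the hypothesis $\mathbf{h}^{(0)}\leq \ff^n$, handled exactly as you do via stationarity of the dual. Your write-up is in fact more explicit than the paper's own proof: the splitting into the cases ``some $\ell_j\geq 2$'' versus ``all $\ell_j=1$ with two large pieces'', the genus bookkeeping $\sum_j g^{(j)}=g_n-\sum_j(\ell_j-1)$, and the use of the two parts of Lemma~\ref{lem_calcul_OE} are precisely the BL19 scheme transposed to the bipartite setting.
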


\begin{proof}
The proof given Lemma~\ref{lem_calcul_OE} is exactly the same as~\cite[Corollary 9]{BL19}, except for the additional assumption in Lemma~\ref{lem_calcul_OE} that $\mathbf{h}^{(0)} \leq\mathbf{f}^{n}$ for $n$ large enough. We take care of it in the same way as in the proof of Corollary~\ref{cor_planar}.
\end{proof}

\subsection{Finiteness of the root degree}
\label{subsec_finite_degrees}

We now finish the proof of tightness for $d_{\loc}$ (Proposition~\ref{prop_tightness_dloc_univ}). Let $M$ be a subsequential limit of $\left( M_{\ff^n, g_n} \right)$ for $d_{\loc}^*$. By Lemma~\ref{lem_dual_convergence_univ}, to get tightness for $d_{\loc}$, we need to show that almost surely, all the vertices of $M$ have finite degree. Our argument is now very similar to \cite{BL19} and inspired by \cite{AS03}: we will first study the degree of the root vertex by using the Bounded ratio Lemma, and then extend finiteness by using invariance under the simple random walk.

\begin{lem}\label{lem_root_degree_is_finite_univ}
The root vertex of $M$ has a.s. finite degree.
\end{lem}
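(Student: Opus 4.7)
The plan is to mirror the approach of Lemma~18 in~\cite{BL19}: show that for every $\varepsilon>0$ there exists $K$ such that $\P(\deg_{M_{\mathbf{f}^n,g_n}}(\rho)\geq K)<\varepsilon$ for all $n$ large enough, by running a peeling exploration concentrated around $\rho$ and using the Bounded ratio Lemma to bound the probability of each trajectory.

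First I would run a peeling exploration of $M_{\mathbf{f}^n,g_n}$ (using the convention from Section~\ref{subsec_lazy_peeling} for finite maps) with an algorithm $\mathcal{A}^*$ that always peels an edge of the boundary incident to $\rho$ whenever possible. Let $\tau$ be the first time no such edge lies on the boundary of the hole; at that instant all edges of $M_{\mathbf{f}^n,g_n}$ incident to $\rho$ have been discovered, so $\deg(\rho)$ is controlled by the number and degrees of the faces revealed and of the regions filled in around $\rho$ up to time $\tau$.

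The key estimate is an upper bound on the probability of any specific explored map $m$ with $k$ internal faces and hole perimeter $2p$, namely
\[
\P(m\subset M_{\mathbf{f}^n,g_n}) \;=\; \frac{\beta_{g_n}^{(p)}(\mathbf{f}^n-\mathbf{f}(m))}{\beta_{g_n}(\mathbf{f}^n)} \;\leq\; C^k \cdot \frac{\beta_{g_n}^{(p)}(\mathbf{f}^n)}{\beta_{g_n}(\mathbf{f}^n)},
\]
obtained by iterating Corollary~\ref{lem_BRL_boundaries} (item~1) once per internal face. The remaining ratio is bounded in $n$ by using the standard identity relating $\beta^{(p)}_g$ to $\beta_g$ (in the spirit of~\eqref{eqn_add_boundary}), one further application of the Bounded ratio Lemma, and the hypothesis $f^n_j/|\mathbf{f}^n|\to\alpha_j$. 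Note that Corollary~\ref{lem_BRL_boundaries} requires the removed face half-degrees $j$ to satisfy $jf^n_j>\delta|\mathbf{f}^n|$, i.e.\ $\alpha_j>0$; this is no loss, because by stationarity of the dual of $M_{\mathbf{f}^n,g_n}$ under the simple random walk one checks that almost surely every face of the subsequential limit $M$ has half-degree in $\{j:\alpha_j>0\}$.

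Summing these bounds over all trajectories that produce $\deg(\rho)\geq K$ then gives a total probability that vanishes as $K\to+\infty$: the hypothesis $\sum_j j\alpha_j=1$ ensures that the distribution of the degree of the next face discovered stays tight, so each peeling step adds only tightly-many new edges incident to $\rho$ before $\tau$. The main obstacle I expect is handling the \emph{identification} steps of the lazy peeling, which can fill in large unexplored regions in a single step and thus complicate the combinatorial bookkeeping that was straightforward in the triangular setting of~\cite{BL19}; as there, this is absorbed by Corollary~\ref{lem_BRL_boundaries}, which bounds the number of ways such fillings can realize a given explored map.
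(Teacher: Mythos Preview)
Your approach has a genuine gap at two linked places.

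First, the direction of your key inequality is wrong. Corollary~\ref{lem_BRL_boundaries} states that
\[
\frac{\beta_{g_n}^{(p)}(\mathbf{f})}{\beta_{g_n}^{(p)}(\mathbf{f}-\mathbf{1}_j)}<C,
\]
which upon iteration yields the \emph{lower} bound $\beta_{g_n}^{(p)}(\mathbf{f}^n-\mathbf{f}(m))\geq C^{-k}\beta_{g_n}^{(p)}(\mathbf{f}^n)$, not the upper bound $\leq C^{k}\beta_{g_n}^{(p)}(\mathbf{f}^n)$ that you claim. The Bounded ratio Lemma is precisely an estimate on how much the count can \emph{drop} when a face is removed; it says nothing about how much it can \emph{grow}.

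Second, and more fundamentally, even an inequality of the form $\P(m\subset M_{\mathbf{f}^n,g_n})\leq C^{k}\cdot(\text{bounded})$ would be useless for your purpose: $C>1$, and the number of explored maps $m$ with $k$ internal faces and $\rho$ still on the boundary grows at least exponentially in $k$. Summing growing upper bounds over exponentially many trajectories cannot give a quantity that vanishes as $K\to+\infty$. The vague appeal to tightness of face degrees does not help here, because the issue is the number of peeling steps before $\rho$ is swallowed, not the degree of any single discovered face.

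The paper uses the Bounded ratio Lemma in the opposite direction and for the opposite purpose. Working directly on the planar, one-ended subsequential limit $M$, it fixes one index $j^*\geq 2$ with $\alpha_{j^*}>0$ and, for every explored map $m$ with $\rho\in\partial m$, constructs a specific extension $m^+$ (glue a single $2j^*$-gon and then perform $j^*-1$ identifications) which has the same perimeter as $m$, one extra internal face, and $\rho\notin\partial m^+$. Then
\[
\P\bigl(m^+\subset M\,\big|\,m\subset M\bigr)
=\lim_{n\to\infty}\frac{\beta_{g_n}^{(p)}(\mathbf{f}^n-\mathbf{h}-\mathbf{1}_{j^*})}{\beta_{g_n}^{(p)}(\mathbf{f}^n-\mathbf{h})}
\;\geq\;\frac{1}{C}
\]
by a \emph{single} application of~\eqref{eqn_BRL_one_boundary}, the lower bound being uniform over $m$. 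Hence at every step of the exploration the root is swallowed within the next $j^*-1$ steps with probability at least $1/C$, so the stopping time $\tau$ has geometric tails and is a.s.\ finite. This is the argument you should be aiming for; it is the same mechanism as in~\cite{AS03} and in Lemma~18 of~\cite{BL19}.
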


\begin{proof}
Following the approach of \cite{AS03}, we perform a filled-in lazy peeling exploration of $M$. Note that we already know by Corollary~\ref{cor_planar} that the explored part will always be planar, so no peeling step will merge two different existing holes. Moreover, by Corollary~\ref{cor_OE}, if a peeling step separates the boundary into two holes, then one of them is finite and will be filled with a finite map. Therefore, at each step, the explored part will have only one hole.

The peeling algorithm $\mathcal{A}$ that we use is the following: if the root vertex $\rho$ belongs to $\partial m$, then $\mathcal{A}(m)$ is the edge on $\partial m$ on the left of $\rho$. If $\rho \notin \partial m$, then the exploration is stopped. Let $\tau$ be the time at which the exploration is stopped. Since only finitely many edges incident to $\rho$ are added at each step, it is enough to prove $\tau<+\infty$ a.s.. We recall that $\expl_t^{\mathcal{A}}(M)$ is the explored part at time $t$.

We will prove that at each step, conditionally on $\expl_t^{\mathcal{A}}(M)$, the probability to swallow the root and finish the exploration in a bounded amount of time is bounded from below by a positive constant. We fix $j^* \geq 2$ with $\alpha_{j^*}>0$. Note that such a $j^*$ exists because of the assumption $\theta<\frac{1}{2} \sum_{j \geq 1} (j-1) \alpha_j$. For every map $m$ with one hole such that $\rho \in \partial m$, we denote by $m^+$ the map constructed from $m$ as follows (see Figure~\ref{fig_swallowing_root_univ}):
\begin{itemize}
\item
we first glue a "face" of degree $2j^*$ to $m$ along the edge of $\partial m$ on the left of $\rho$;
\item
we then glue together the two edges of the boundary incident to $\rho$ together;
\item
during the next $j^*-2$ steps, at each step, we pick two consecutive edges of the boundary according to some fixed convention and glue them together.
\end{itemize}

\begin{figure}
\centering
\includegraphics[scale=0.7]{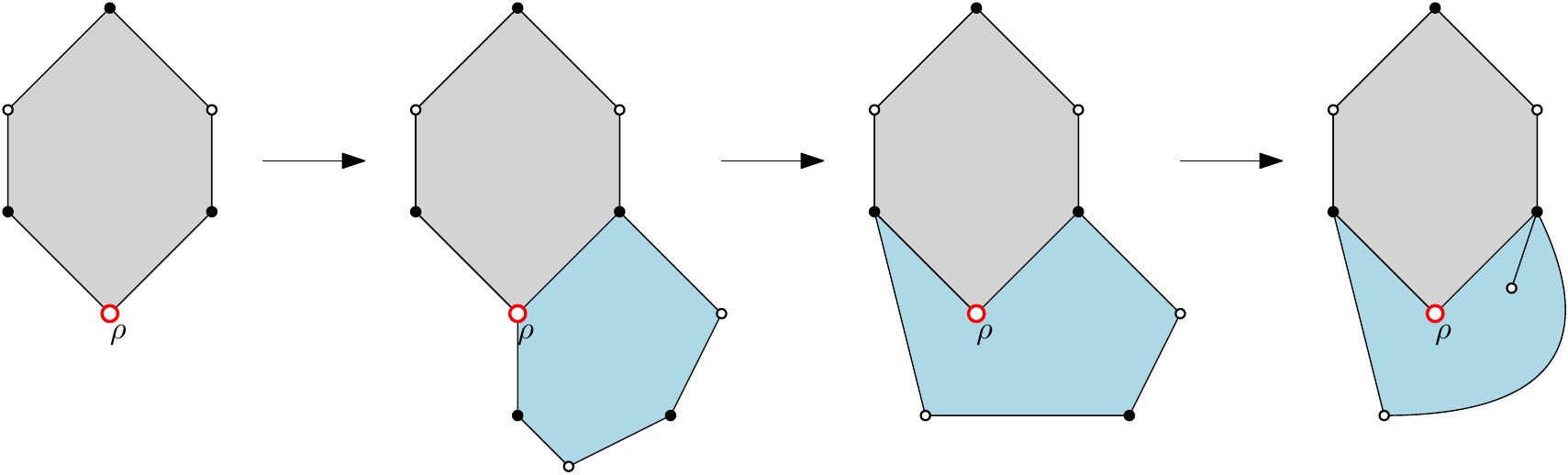}
\caption{The construction of $m^+$ from $m$. In gray, the map $m$. In red, the root vertex. In blue, the new face. Here, $|\partial m|=j^*=3$.}
\label{fig_swallowing_root_univ}
\end{figure}

Note that $m^+$ is a planar map with the same perimeter as $m$ but one more face (of degree $2j^*$). By the choice of our peeling algorithm, if we have $\tau \geq t$ and $\expl_t^{\mathcal{A}}(M)^+ \subset M$, then we have $\tau \leq t+2$. Hence it is enough to prove that the quantity
\[ \P \left( m^+ \subset M | m \subset M \right)\]
is bounded from below over finite, planar maps $m$ with one hole such that $\rho \in \partial m$.

We fix such an $m$, with half-perimeter $p$ and internal face degrees given by $\mathbf{h}$. Along some subsequence, we have $M_{\ff^{n},g_{n}} \to M$ in distribution (for $d_{\loc}^*$). Along the same subsequence, it holds that
\[ \P \left( m^+ \subset M | m \subset M \right)\hspace{-0.1cm}=\hspace{-0.1cm}\lim_{n \to +\infty}\hspace{-0.1cm} \frac{\P \left( m^+ \in M_{\ff^{n},g_{n}} \right)}{\P \left( m\in M_{\ff^{n},g_{n}} \right)} \hspace{-0.1cm} = \hspace{-0.1cm}\lim_{n \to +\infty}\hspace{-0.1cm}  \frac{\beta^{(p)}_{g_n} \left( \mathbf{f}^n-\mathbf{h}-\mathbf{1}_{j^*} \right)}{\beta^{(p)}_{g_n} \left( \mathbf{f}^n-\mathbf{h} \right)}. \]
By our choice of $j^*$, we have $f^n_{j^*} \geq \frac{\alpha_{j^*}}{2} |\ff^n|$ for $n$ large enough, so we can apply the Bounded ratio Lemma, which concludes the proof.
\end{proof}

\begin{proof}[Proof of Proposition~\ref{prop_tightness_dloc_univ}]
Let $M$ be a subsequential limit of $\left( M_{\ff^{n},g_{n}} \right)$. We recall that for all $n$, the map $\left( M_{\ff^{n},g_{n}} \right)$ is stationary for the simple random walk on its vertices. Therefore, by Lemma~\ref{lem_root_degree_is_finite_univ} and the same argument as in \cite{AS03} (see also the proof of Lemma~\ref{lem_easy_dual_convergence} above), almost surely all the vertices of $M$ have finite degree. By Lemma~\ref{lem_dual_convergence_univ}, this guarantees that $\left( M_{\ff^{n},g_{n}} \right)$ is tight for $d_{\loc}$.

The a.s. planarity of $M$ is proved in~Corollary \ref{cor_planar}. Finally, it easy to check that for maps with finite vertex degrees, the weak version of one-endedness proved in Corollary~\ref{cor_OE} implies the usual one. Indeed, if $V$ is a finite set of vertices of $M$, one can consider a finite, connected submap of $M$ containing all the faces and edges incident to vertices of $V$. Then Corollary~\ref{cor_planar} ensures that this submap does not separate $M$ into two infinite maps.
\end{proof}

\section{Weakly Markovian bipartite maps}\label{sec_univ_markov}

Our goal in this Section is to prove Theorem~\ref{thm_weak_Markov_general}.

\paragraph{Weakly Markovian bipartite maps.}
For a finite, bipartite map $m$ with one hole, we denote by $|\partial m|$ the half-perimeter of the hole of $m$. For all $j \geq 1$, we also denote by $v_j(m)$ the number of internal faces of $m$ with degree $2j$.

\begin{defn}\label{defn_weak_Markov}
Let $M$ be a random infinite, one-ended, bipartite planar map. We say that $M$ is \emph{weakly Markovian} if for every finite map $m$ with one hole, the probability $\P \left( m \subset M \right)$ only depends on $|\partial m|$ and $\left( v_j(m) \right)_{j \geq 1}$.
\end{defn}

Let $\VV$ be the set of sequences $\vv=(v_j)_{j \geq 1}$ such that $v_j=0$ for $j$ large enough. If $M$ is weakly Markovian and $\vv \in \VV$, we will denote by $a^p_{\vv}$ the probability $\P \left( m \subset M \right)$ for a map $m$ with $|\partial m|=p$ and $v_j(m)=v_j$ for all $j$. Note that this only makes sense if there is such a map $m$, which is equivalent to
\begin{equation}\label{eqn_good_pvv}
p \leq 1+\sum_{j \geq 1} (j-1)v_j.
\end{equation}
Therefore, if $p \geq 1$, we will denote by $\VV_p \subset \VV$ the set of those $\vv$ satisfying \eqref{eqn_good_pvv}. Note that $\VV_1=\VV$.
In particular, by definition, for $\q \in \mathcal{Q}_h$, the $\q$-IBPM is weakly Markovian, and the corresponding constants $a^p_{\vv}$ are:
\[a^p_{\vv}(\q):= C_p(\q) \q^{\vv},\]
where $\q^{\vv} := \prod_{j \geq 1} q_j^{v_j}$. Therefore, if $M$ is of the form $\MM_{\QQ}$ for some random weight sequence $\QQ$, we have $a^p_{\vv} = \E[ C_p(\QQ) \QQ^{\vv} ]$.

\paragraph{Sketch of the proof of Theorem~\ref{thm_weak_Markov_general}.}
We first note that the second point of Theorem~\ref{thm_weak_Markov_general} is immediate once the first point is proved. Indeed, let us write $\mathrm{Rootface}(m)$ for the degree of the root face of a map $m$. If $\mathrm{Rootface}(M)$ has finite expectation, then
\[ \E \left[ \E \left[ \mathrm{Rootface}(\MM_{\QQ}) | \QQ \right] \right] = \E \left[ \mathrm{Rootface}(\MM_{\QQ}) \right] <+\infty, \]
so $\E \left[ \mathrm{Rootface}(\MM_{\QQ}) | \QQ \right]<+\infty$ a.s., so $\QQ \in \cQ_f$ a.s..

The first point of Theorem~\ref{thm_weak_Markov_general} is the natural analogue of Theorem~2 of \cite{BL19}, where triangulations are replaced by more general maps. The proof will rely on similar ideas: we fix a weakly Markovian map $M$ with associated constants $a^p_{\vv}$, and we would like to find a random $\QQ$ such that $a^p_{\vv} = \E[ C_p(\QQ) \QQ^{\vv} ]$ for all $p$ and $\vv$. We will use peeling equations to establish inequalities between the $a^p_{\vv}$, and the existence of $\QQ$ will follow from the Hausdorff moment problem. However, compared to~\cite{BL19}, two new difficulties arise:
\begin{itemize}
\item[$\bullet$]
the random weights $\q$ form a family of real numbers instead of just one real number;
\item[$\bullet$]
in the triangular case, with the notations of Definition~\ref{defn_weak_Markov}, it was immediate that all the numbers $a^p_{\vv}$ are determined by the numbers $a^1_{\vv}$. This is not true anymore.
\end{itemize}
The first issue can be handled by using the multi-dimensional version of the Hausdorff moment problem. The second one, on the other hand, will make the proof a bit longer than in \cite{BL19}. More precisely, the Hausdorff moment problem will now provide us, for every $p \geq 1$, a $\sigma$-finite measure $\mu_p$ on the set of weight sequences, which describes $a^p_{\vv}$ for all $\vv$. We will then use the peeling equations to prove that all the $\mu_p$ are actually determined by $\mu_1$.

Because of the condition \eqref{eqn_good_pvv}, we will need to find a measure with suitable $\vv$-th moments for all $\vv \in \VV_p$, which is slightly different than the usual Hausdorff moment problem where $\vv \in \VV$. Therefore, we first need to state a suitable version of the moment problem, which will follow from the usual one. This is done in the next subsection.

\subsection{The incomplete Hausdorff moment problem}

To state our version of the moment problem (Proposition~\ref{prop_moment_Hausdorff} below), we will need to consider the space of sequences $\left( u_{\vv} \right)_{\vv \in \VV_p}$. For $j \geq 1$, we denote by $\Delta_j$ the discrete derivation operator on the $j$-th coordinate on this space. That is, if $u=(u_{\vv})$, we write
\[ \left( \Delta_j u \right)_{\vv} = u_{\vv}-u_{\vv+\mathbf{1}_j}.\]
It is easy to check that the operators $\Delta_j$ commute with each other. For all $\kk=(k_j)_{j \geq 1}$ such that $k_j=0$ for $j$ large enough (say for $j \geq j_0$), we define the operator $\Delta^{\kk}$ by
\[
\Delta^{\kk} u=  \Delta_1^{k_1} \Delta_2^{k_2} \dots \Delta_{j_0}^{k_{j_0}} u.
\]
In other words, we have
\[\Delta^{\kk} u= \sum_{\mathbf{i}} \left( \prod_{j \geq 1} (-1)^{i_j} \binom{k_j}{i_j} \right) u_{\vv+\mathbf{i}}, \]
where the sum is over families $\mathbf{i}=(i_j)_{j \geq 1}$, and the terms with a nonzero contribution are those for which $0 \leq i_j \leq k_j$ for every $j \geq 1$. The "usual" Hausdorff moment problem is then the following.

\begin{thm}\label{thm_hausdorff_usual}
Let $(u_{\vv})_{\vv \in \VV}$ be such that, for any $\vv \in \VV$ and any $\kk \geq \mathbf{0}$, we have
\[ \Delta^{\kk} u_{\vv} \geq 0.\]
Then there is a unique measure $\mu$ on $\mathcal{Q}=[0,1]^{\N^*}$ (equipped with the product $\sigma$-algebra) such that, for all $\vv \in \VV$, we have
\[ u_{\vv}=\int \q^{\vv} \mu(\dq). \]
In particular $\mu$ is finite, with total mass $u_{\mathbf{0}}$.
\end{thm}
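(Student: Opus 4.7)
The plan is to reduce this infinite-dimensional moment problem to the classical finite-dimensional Hausdorff moment problem, and then glue the resulting finite-dimensional measures together by Kolmogorov's extension theorem.

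First, for each $N \geq 1$, I would consider the restriction of the family $(u_{\vv})$ to those $\vv$ supported on $\{1,\dots,N\}$. Viewing such a $\vv$ as an element of $\N^N$, the assumption that $\Delta^{\kk} u_{\vv} \geq 0$ for all $\kk\geq \mathbf{0}$ is precisely the complete monotonicity condition on $\N^N$. By the classical multidimensional Hausdorff moment problem on $[0,1]^N$, which follows by iterating the one-dimensional Hausdorff theorem coordinate by coordinate (or, concretely, by taking weak limits of Bernstein-type polynomial approximants), there exists a unique finite Borel measure $\mu_N$ on $[0,1]^N$ satisfying
\[
u_{\vv} = \int_{[0,1]^N} q_1^{v_1}\cdots q_N^{v_N}\,\mu_N(\dq)
\]
for every such $\vv$. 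Taking $\vv=\mathbf{0}$ shows that $\mu_N$ has total mass $u_{\mathbf{0}}$.

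Next, I would check consistency of the family $(\mu_N)_{N\geq 1}$. For $N'>N$, let $\pi_{N',N}\colon [0,1]^{N'}\to[0,1]^N$ be the projection on the first $N$ coordinates. The push-forward $(\pi_{N',N})_*\mu_{N'}$ is a finite Borel measure on $[0,1]^N$ whose moments along $\vv$ supported in $\{1,\dots,N\}$ coincide with those of $\mu_N$, simply because $q_{N+1}^0\cdots q_{N'}^0=1$. By the uniqueness part of the finite-dimensional statement, $(\pi_{N',N})_*\mu_{N'}=\mu_N$. Since $[0,1]$ is a compact Polish space, Kolmogorov's extension theorem (applied after normalizing to a probability measure if $u_{\mathbf{0}}>0$, and trivially otherwise) yields a unique finite Borel measure $\mu$ on $\cQ=[0,1]^{\N^*}$ equipped with the product $\sigma$-algebra whose projection onto $[0,1]^N$ is $\mu_N$ for every $N$.

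Finally, for any $\vv \in \VV$, pick $N$ so that $\supp(\vv)\subset\{1,\dots,N\}$; then $\int \q^{\vv}\,\mu(\dq)=\int q_1^{v_1}\cdots q_N^{v_N}\,\mu_N(\dq)=u_{\vv}$, proving existence. For uniqueness, any measure $\mu'$ on $\cQ$ with the prescribed moments has projections on each $[0,1]^N$ whose moments agree with those of $\mu_N$; by the finite-dimensional uniqueness these projections are exactly $\mu_N$, and by the uniqueness in Kolmogorov's extension theorem $\mu'=\mu$. The main thing to get right is the finite-dimensional statement, but this is standard and requires no input specific to the map setting; the rest is a routine application of Kolmogorov extension.
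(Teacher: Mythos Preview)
Your proposal is correct and follows exactly the approach the paper indicates: the paper simply remarks that this infinite-dimensional Hausdorff moment problem ``can be deduced immediately from the finite-dimensional one by the Kolmogorov extension theorem,'' without giving further details. Your write-up fills in precisely those details (finite-dimensional existence/uniqueness, consistency of the projections, Kolmogorov extension, and uniqueness via projections), so there is nothing to add.
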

More precisely, this is the infinite-dimensional Hausdorff moment problem, which can be deduced immediately from the finite-dimensional one by the Kolmogorov extension theorem.

For $p \geq 1$, we recall that $\VV_p \subset \VV$ is the set of $\vv \in \VV$ that satisfy $\sum_{j \geq 1} (j-1) v_j \geq p-1$. We also denote by $\VV_p^*$ the set of $\vv \in \VV_p$ for which there is $j \geq 2$ such that $v_j>0$ and $\vv-\mathbf{1}_j \in \VV_p$. In other words $\VV_p^*$ can be thought of as the "interior" of $\VV_p$. Finally, we recall that
\[ \cQ^*=\{ \q \in [0,1]^{\N^*} | \exists j \geq 2, q_j>0\}.\]

\begin{prop}\label{prop_moment_Hausdorff}
Fix $p \geq 1$, and let $\left( u_{\vv} \right)_{\vv \in \VV_p}$ be a family of real numbers. We assume that for all $\vv \in \VV_p$ and all $\kk \geq \mathbf{0}$, we have
\[ \Delta^{\kk} u_{\vv} \geq 0.\]
Then there is a $\sigma$-finite measure $\mu$ on $\cQ^*$ such that, for all $\vv \in \VV_p^*$, we have
\[ u_{\vv}=\int \q^{\vv} \mu(\dq). \]
Moreover, if $p=1$, then $\mu$ is finite and $\mu(\cQ^*) \leq u_\mathbf{0}$.
\end{prop}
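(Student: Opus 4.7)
The plan is to reduce the incomplete moment problem on $\VV_p$ to the multidimensional Hausdorff moment theorem (Theorem~\ref{thm_hausdorff_usual}) via a translation trick, and then glue the resulting pieces into a single $\sigma$-finite measure on $\cQ^*$.

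For $p=1$ the argument is immediate because $\VV_1=\VV$: Theorem~\ref{thm_hausdorff_usual} yields a finite measure $\mu$ on $\cQ$ with $\int\q^{\vv}\,d\mu = u_{\vv}$ and $\mu(\cQ)=u_{\mathbf{0}}$. For $\vv\in\VV_1^*$, there is some $j\geq 2$ with $v_j>0$, so the monomial $\q^{\vv}$ contains the factor $q_j^{v_j}$ and thus vanishes identically on $\cQ\setminus\cQ^* = \{q_k=0 \text{ for all } k\geq 2\}$; the restriction $\mu|_{\cQ^*}$ therefore still represents $u_{\vv}$ on $\VV_1^*$ and satisfies $\mu(\cQ^*)\leq u_{\mathbf{0}}$.

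For $p\geq 2$, the idea is to shift the index by an arbitrary $\mathbf{e}\in\VV_p$. For every such $\mathbf{e}$, define $\tilde u^{(\mathbf{e})}_{\vv}:=u_{\vv+\mathbf{e}}$ for $\vv\in\VV$; adding nonnegative entries preserves membership in $\VV_p$, and each $\Delta^{\kk}$ commutes with translation, so the full Hausdorff sign conditions hold on $\VV$ and Theorem~\ref{thm_hausdorff_usual} yields a finite measure $\tilde\mu^{(\mathbf{e})}$ on $\cQ$ with $\int\q^{\vv}\,d\tilde\mu^{(\mathbf{e})}=u_{\vv+\mathbf{e}}$ for all $\vv\in\VV$. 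The uniqueness part of Theorem~\ref{thm_hausdorff_usual}, applied to the measures $\q^{\mathbf{e}_2-\mathbf{e}_1}\,d\tilde\mu^{(\mathbf{e}_1)}$ and $d\tilde\mu^{(\mathbf{e}_2)}$ which have the same moments, yields the consistency relation $d\tilde\mu^{(\mathbf{e}_2)}=\q^{\mathbf{e}_2-\mathbf{e}_1}\,d\tilde\mu^{(\mathbf{e}_1)}$ whenever $\mathbf{e}_1\leq\mathbf{e}_2$ in $\VV_p$. Consequently the local density $d\tilde\mu^{(\mathbf{e})}/\q^{\mathbf{e}}$ is independent of $\mathbf{e}$ on its natural domain $\{\q^{\mathbf{e}}>0\}$, and since $\cQ^*=\bigcup_{j\geq 2}\{q_j>0\}$ can be covered by such sets (take $\mathbf{e}=(p-1)\mathbf{1}_j$), these pieces glue into a single measure $\mu$ on $\cQ^*$ characterized by $d\mu=\q^{-\mathbf{e}}\,d\tilde\mu^{(\mathbf{e})}$ on each $\{\q^{\mathbf{e}}>0\}$. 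This $\mu$ is $\sigma$-finite because its restriction to $\{q_j\geq\varepsilon\}$ is bounded by $\varepsilon^{-(p-1)}\tilde\mu^{((p-1)\mathbf{1}_j)}(\cQ)<+\infty$.

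The last step, which is the main technical point, is to check that $\int\q^{\vv}\,d\mu=u_{\vv}$ for every $\vv\in\VV_p^*$. Given such a $\vv$, I pick a witness $j_0\geq 2$ with $v_{j_0}>0$ and $\mathbf{e}:=\vv-\mathbf{1}_{j_0}\in\VV_p$; since $\mathbf{e}\leq\vv$, the integrand $\q^{\vv}$ is supported in $\{q_{j_0}>0\}\subseteq\{\q^{\mathbf{e}}>0\}$, so
\[ \int\q^{\vv}\,d\mu = \int_{\{\q^{\mathbf{e}}>0\}} \q^{\vv-\mathbf{e}}\,d\tilde\mu^{(\mathbf{e})} = \int_{\{\q^{\mathbf{e}}>0\}} q_{j_0}\,d\tilde\mu^{(\mathbf{e})}. \]
The full integral $\int q_{j_0}\,d\tilde\mu^{(\mathbf{e})}$ equals $u_{\mathbf{e}+\mathbf{1}_{j_0}}=u_{\vv}$ by the very definition of $\tilde\mu^{(\mathbf{e})}$, so it remains to show that the boundary contribution $\int_{\{\q^{\mathbf{e}}=0\}} q_{j_0}\,d\tilde\mu^{(\mathbf{e})}$ vanishes. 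Using the consistency relation with $\mathbf{e}+\mathbf{1}_{j_0}=\vv$, this boundary term rewrites as $\tilde\mu^{(\vv)}(\{\q^{\mathbf{e}}=0\})=\tilde\mu^{(\vv)}\bigl(\bigcup_{j:\,e_j>0}\{q_j=0\}\bigr)$, and each piece $\tilde\mu^{(\vv)}(\{q_j=0\})$ vanishes thanks to the iterated identity $d\tilde\mu^{(\vv')}=q_j\,d\tilde\mu^{(\vv'-\mathbf{1}_j)}$ applied along a chain of shifts staying inside $\VV_p$. The main obstacle lies precisely in controlling this vanishing for all $j$ with $e_j>0$ simultaneously, which forces a careful induction on $\sum_j e_j$ and an adequate choice of witnesses in $\VV_p$.
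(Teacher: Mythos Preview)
Your overall architecture is the same as the paper's: translate by an element of $\VV_p$ to reduce to the full Hausdorff moment problem, check consistency of the resulting measures, and glue them into a $\sigma$-finite $\mu$ on $\cQ^*$. That part is fine (modulo the minor slip that $\{q_{j_0}>0\}\subseteq\{\q^{\mathbf{e}}>0\}$ is false; the correct reason the integral localizes to $\{\q^{\mathbf e}>0\}$ is simply that $\q^{\vv}=q_{j_0}\q^{\mathbf{e}}$ vanishes on $\{\q^{\mathbf{e}}=0\}$).

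The genuine gap is in your last paragraph. You reduce the identity $\int\q^{\vv}d\mu=u_{\vv}$ to showing $\tilde\mu^{(\vv)}(\{q_j=0\})=0$ for every $j$ with $e_j>0$, and then only gesture at ``a careful induction on $\sum_j e_j$''. This is not completed, and it is not clear it can be: the one-step relation $d\tilde\mu^{(\vv)}=q_j\,d\tilde\mu^{(\vv-\mathbf{1}_j)}$ requires $\vv-\mathbf{1}_j\in\VV_p$, which you only know for $j=j_0$. For other $j$ with $e_j>0$ (and $j\geq 2$) this can fail, e.g.\ $p=3$, $\vv=\mathbf{1}_2+\mathbf{1}_3$, $j_0=2$, $j=3$. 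So the boundary set $\{\q^{\mathbf{e}}=0\}$ is genuinely hard to kill with your base point $\mathbf{e}=\vv-\mathbf{1}_{j_0}$.

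The paper avoids this entirely by a different choice of base point. First upgrade your one-sided consistency to the symmetric form $\q^{\vv'}d\tilde\mu^{(\vv)}=\q^{\vv}d\tilde\mu^{(\vv')}$ for all $\vv,\vv'\in\VV_p$ (compare both with $\vv+\vv'$). Then take $\vv'=p\,\mathbf{1}_{j_0}$: on $\{q_{j_0}>0\}$ this gives $d\tilde\mu^{(\vv)}=\q^{\vv}d\mu$ directly, so the only remaining set is $\{q_{j_0}=0\}$. There a \emph{single} application of $d\tilde\mu^{(\vv)}=q_{j_0}\,d\tilde\mu^{(\vv-\mathbf{1}_{j_0})}$ (valid precisely because $j_0$ is the $\VV_p^*$-witness) kills the mass. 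Since $\q^{\vv}$ also vanishes on $\{q_{j_0}=0\}$, one concludes $u_{\vv}=\tilde\mu^{(\vv)}(\cQ)=\int\q^{\vv}d\mu$ with no induction needed.
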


Note that this version is "weaker" than Theorem~\ref{thm_hausdorff_usual} in the sense that it is not always possible to have $u_{\vv}=\int \q^{\vv} \mu(\dq)$ for $\vv \in \VV_p \backslash \VV^*_p$. A simple example of this phenomenon in dimension one is that the sequence $\left( \mathbbm{1}_{i=1} \right)_{i \geq 1}$ has all its discrete derivatives nonnegative. However, there is no measure on $[0,1]$ with first moment $1$ and all higher moments $0$. On the other hand, we can assume an additional property of our measure $\mu$, namely it is supported by $\cQ^*$ instead of $\cQ$ in Theorem~\ref{thm_hausdorff_usual}.

\begin{proof}
We start with the case $p=1$. Then $\VV_1=\VV$, so by Theorem~\ref{thm_hausdorff_usual}, there is a measure $\widetilde{\mu}$ on $\cQ$ such that, for all $\vv \in \VV_1$, we have
\[u_{\vv} = \int_{\cQ} \q^{\vv} \widetilde{\mu}(\dq). \]
Let $\mu$ be the restriction of $\widetilde{\mu}$ to $\cQ^*$. If $\vv \in \VV_1^*$ and $\q \in \cQ \backslash \cQ^*$, then there is $j \geq 2$ such that $\vv_j >0$ but $q_j=0$, so $\q^{\vv}=0$. It follows that, for all $\vv \in \VV_1^*$, we have
\[ \int_{\cQ^*} \q^{\vv} \mu(\dq) = \int_{\cQ} \q^{\vv} \widetilde{\mu}(\dq) = u_{\vv}.\]
Moreover, the total mass of $\mu$ is not larger than the total mass of $\widetilde{\mu}$, so it is at most $u_{\mathbf{0}}$.

We now assume $p \geq 2$. Let $\vv \in \VV_p$. Then $\vv+\ww \in \VV_p$ for all $\ww \in \VV$, so the sequence $\left( u_{\vv+\ww} \right)_{\ww \in \VV}$ satisfies the assumptions of Theorem~\ref{thm_hausdorff_usual}. Therefore, there is a finite measure $\mu_{\vv}$ on $\cQ$ such that
\[ u_{\vv+\ww} = \int \q^{\ww} \mu_{\vv}(\dq)\]
for all $\ww \in \VV$. Now let $\vv, \vv' \in \VV_p$. For all $\ww$, we have
\[ \int \q^{\vv'} \q^{\ww} \mu_{\vv}(\dq)= u_{\vv+\vv'+\ww}=\int \q^{\vv} \q^{\ww} \mu_{\vv'}(\dq).\]
In other words, the measures $\q^{\vv'} \mu_{\vv}(\dq)$ and $\q^{\vv} \mu_{\vv'}(\dq)$ have the same moments, so by uniqueness in Theorem~\ref{thm_hausdorff_usual}
\begin{equation}\label{eqn_consistence_muv}
\q^{\vv'} \mu_{\vv}(\dq)=\q^{\vv} \mu_{\vv'}(\dq).
\end{equation}
In particular, for all $\vv \in \VV_p$, we can consider the $\sigma$-finite measure \[\widetilde{\mu}_{\vv}(\dq)= \frac{\mu_{\vv}(\dq)}{\q^{\vv}}\] defined on $\{ \q^{\vv}>0\}$. Then \eqref{eqn_consistence_muv} implies that, for any $\vv, \vv' \in \VV_p$, the measures $\widetilde{\mu}_{\vv}$ and $\widetilde{\mu}_{\vv'}$ coincide on $\{ \q^{\vv}>0 \} \cap \{ \q^{\vv'}>0\}$. Therefore, there is a measure $\mu$ on $\bigcup_{\vv \in \VV_p} \{\q^{\vv}>0\} = \cQ^*$ such that, for all $\vv \in \VV_p$, we have
\begin{equation}\label{eqn_mu_and_muv_weak}
\mu_{\vv}(\dq) = \q^{\vv} \mu(\dq) \quad \mbox{ on } \quad \{\q^{\vv}>0\}.
\end{equation}
Since $\mu$ is finite on $\{q_j>\eps\}$ for all $\eps>0$ and $j \geq 2$, the measure $\mu$ is $\sigma$-finite. We would now like to extend the equality~\eqref{eqn_mu_and_muv_weak} to all $\cQ^*$ under the condition $\vv \in \VV_p^*$.

For this, let $\vv \in \VV_p^*$, and let $j \geq 2$ be such that $v_j>0$ and $\vv-\mathbf{1}_j \in \VV_p$. We have $p \mathbf{1}_j \in \VV_p$, so we can apply \eqref{eqn_consistence_muv} to $\vv$ and $p \mathbf{1}_j$. We obtain, on $\{ q_j>0 \}$:
\[ \mu_{\vv}(\dq) = \q^{\vv} \frac{\mu_{p \mathbf{1}_j}(\dq)}{q_j^p} = \q^{\vv} \mu(\dq), \]
using also \eqref{eqn_mu_and_muv_weak} for $p \mathbf{1}_j$. In other words, \eqref{eqn_mu_and_muv_weak} holds on $\{ q_j>0 \}$.

On the other hand, for $\vv$ and $\vv-\mathbf{1}_p$, we can obtain a stronger version of \eqref{eqn_consistence_muv}. More precisely, for all $\ww$, we have
\[  \int q_j \q^{\ww} \mu_{\vv-\mathbf{1}_j}(\dq) = u_{\vv+\ww} = \int \q^{\ww} \mu_{\vv}(\dq),\]
so the measures $q_j \mu_{\vv-\mathbf{1}_j}(\dq)$ and $\mu_{\vv}(\dq)$ have the same moments, so they coincide. But the first one is $0$ on $\{q_j=0\}$, so it is also the case for the second. Therefore, \eqref{eqn_mu_and_muv_weak} holds on $\{ q_j=0 \}$, with both sides equal to $0$.

Therefore, we have proved that \eqref{eqn_mu_and_muv_weak} holds on $\cQ$. By integrating over $\cQ^*$ and using that the total mass of $\mu_{\vv}$ is $u_{\vv}$ and is supported by $\cQ^*$, we get the result.
\end{proof}

\subsection{Proof of Theorem~\ref{thm_weak_Markov_general}}

As in \cite{BL19}, we start by writing down the peeling equations, which are linear equations between the numbers $a^p_{\vv}$ together. For every $p \geq 1$ and $\vv \in \VV_p$, we have
\begin{equation}\label{eqn_peeling_equation_bipartite}
a^p_{\vv}= \sum_{j \geq 1} a^{p+j-1}_{\vv+\mathbf{1}_j}+2\sum_{i=1}^{p-1} \sum_{\ww \in \VV} \beta_0^{(i-1)}(\ww) a^{p-i}_{\vv+\ww},
\end{equation}
where we recall that $\beta_0^{(i-1)}(\ww)$ is the number of planar, bipartite maps of the $2(i-1)$-gon with exactly $w_j$ internal faces of degree $2j$ for all $j \geq 1$. These equations, together with the facts that $a^1_\mathbf{0}=1$ and $a^p_{\vv} \geq 0$, characterize the families $(a^p_{\vv})$ of numbers that may arise from a weakly Markovian map. In order to be able to use the Hausdorff moment problem, we now need to check that the discrete derivatives of $(a^p_{\vv})$ are nonnegative.

\begin{lem}\label{lem_abs_monotone}
Let $M$ be a weakly Markovian bipartite map, and let $\left( a^p_{\vv} \right)$ be the associated constants. For every $\kk \geq \mathbf{0}$, $p \geq 1$ and $\vv \in \VV_p$, we have
\[ \left( \Delta^{\kk} a^p \right)_{\vv} \geq 0.\]
\end{lem}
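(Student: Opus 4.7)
By the weak Markov property, for any $\mathbf{i}$ with $\mathbf{0}\leq \mathbf{i}\leq \kk$, the number $a^p_{\vv+\mathbf{i}}$ is equal to $\P(m_\mathbf{i}\subset M)$ for \emph{any} finite map $m_\mathbf{i}$ with one hole of perimeter $2p$ and internal face degree sequence $\vv+\mathbf{i}$. The idea is therefore to choose the maps $(m_\mathbf{i})$ coherently so that they form a nested family indexed by $\mathbf{i}\leq\kk$, then derive the non-negativity of $\Delta^\kk a^p_\vv$ from an inclusion--exclusion argument on the associated events.

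\paragraph{The case $p$ large: disjoint slots.}
Assume first that $2p\geq \sum_{j\geq 1} jk_j$. Fix a reference map $m=m_\mathbf{0}$ with hole of perimeter $2p$ and internal face degrees $\vv$, and reserve on the boundary of its hole a family of $|\kk|=\sum_j k_j$ \emph{disjoint} consecutive arcs, one of length $j$ for each ``slot'' of type $j$, of which we want $k_j$ in total. For each such slot of type $j$, we attach a \emph{lid}: a new internal face of degree $2j$ whose boundary consists of the $j$ edges of the arc together with $j$ new edges that replace the arc in the new boundary. After this surgery the hole is still a simple cycle of perimeter exactly $2p$, with one additional internal face of degree $2j$; bipartiteness is preserved since the parity of the number of new edges matches that of the endpoints of the arc. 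Given $\mathbf{i}\leq \kk$, define $m_\mathbf{i}$ by performing these lid surgeries exactly at the first $i_j$ slots of type $j$, for each $j$; since the arcs are disjoint, these $|\mathbf{i}|$ surgeries can be carried out independently of each other, and $m_\mathbf{i}$ has hole of perimeter $2p$ and internal face degrees $\vv+\mathbf{i}$.

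\paragraph{Inclusion--exclusion.}
Set $B_0=\{m\subset M\}$ and, for each individual slot $s$ (of any type $j$), $B_s=\{m_{\mathbf{1}_j\text{ at slot }s}\subset M\}\subset B_0$, i.e.\ the event that the lid at slot $s$ is present in $M$. Disjointness of the slots implies that
\[
\{m_\mathbf{i}\subset M\}=B_0\cap \bigcap_{s\in S_\mathbf{i}}B_s,
\]
where $S_\mathbf{i}$ is any set of $|\mathbf{i}|$ slots made of $i_j$ slots of type $j$ for each $j$. By weak Markovianity, $\P(B_0\cap\bigcap_{s\in S}B_s)$ depends only on the type composition of $S$, so expanding the binomials we can rewrite
\[
\Delta^\kk a^p_\vv=\sum_{\mathbf{i}\leq\kk}(-1)^{|\mathbf{i}|}\binom{\kk}{\mathbf{i}}\P(m_\mathbf{i}\subset M)=\sum_{S}(-1)^{|S|}\P\Bigl(B_0\cap\bigcap_{s\in S}B_s\Bigr),
\]
where $S$ ranges over all subsets of the $|\kk|$ slots. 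The standard inclusion--exclusion identity, applied inside the probability space restricted to $B_0$, then gives
\[
\Delta^\kk a^p_\vv=\P\Bigl(B_0\setminus\bigcup_{s}B_s\Bigr)\geq 0.
\]

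\paragraph{The case $p$ small, and the main obstacle.}
When $2p<\sum_j jk_j$, the boundary of $m$ is too short to accommodate $|\kk|$ disjoint slots, and the lattice construction above breaks down. The plan for this case is to iterate the peeling equation \eqref{eqn_peeling_equation_bipartite}: applying $\Delta^\kk$ (which acts only on the $\vv$-index) to both sides of that equation expresses $\Delta^\kk a^p_\vv$ as a non-negative linear combination of $\Delta^\kk a^{p'}_{\vv'}$'s with $\vv'\geq \vv$, in which the face-discovery terms strictly increase $p$. Iterating and using that $M$ is infinite, one-ended and planar to rule out pathological peeling behaviour, one eventually reaches only states $(p',\vv')$ with $2p'\geq \sum_j jk_j$, where the large-$p$ argument applies term by term. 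The main difficulty is to make this reduction quantitative: the identification terms of the peeling equation can decrease $p$, so the iteration is not monotone in $p$, and controlling it requires a stopping-time argument based on the properties of the peeling Markov chain on $(P_t,\vv_t)$ established in Section~\ref{sec_univ_tight}. This is the technically most delicate step of the proof.
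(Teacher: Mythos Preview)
Your large-$p$ argument hinges on the set equality
\[
\{m_{\mathbf{i}}\subset M\}=B_0\cap\bigcap_{s\in S_{\mathbf{i}}}B_s,
\]
but the inclusion $\supset$ is false in general. The point is that in the lazy-peeling notion of submap used here, two boundary edges of the hole of $m$ can be identified in $M$, so a single face of $M$ may sit across two \emph{disjoint} arcs on $\partial m$. Concretely, take two type-$1$ slots $s,s'$ at non-adjacent boundary edges $E_s,E_{s'}$. The event $B_s$ simply says that the face $L_s$ of $M$ across $E_s$ is a digon, and similarly for $B_{s'}$. If in $M$ there is a \emph{single} digon whose two edges are $E_s$ and $E_{s'}$, then both $B_s$ and $B_{s'}$ hold, yet $m_{\{s,s'\}}$ (which has two \emph{distinct} lid faces) cannot be a submap of $M$, since the face map from any $M_{\mathfrak{e}}$ to $M$ is injective. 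So $B_s\cap B_{s'}$ strictly contains $\{m_{\{s,s'\}}\subset M\}$. Consequently, weak Markovianity does \emph{not} tell you that $\P\bigl(\bigcap_{s\in S}B_s\bigr)$ depends only on the type composition of $S$ (that would follow only via the failed equality), and the passage from $\sum_{\mathbf{i}}(-1)^{|\mathbf{i}|}\binom{\kk}{\mathbf{i}}a^p_{\vv+\mathbf{i}}$ to $\sum_S(-1)^{|S|}\P\bigl(\bigcap_{s\in S}B_s\bigr)$ breaks down. Even in the two-slot case one only gets $\Delta^{\kk}a^p_{\vv}\leq \P\bigl(B_0\setminus(B_s\cup B_{s'})\bigr)$, which gives no lower bound.

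The paper's proof avoids this geometric difficulty entirely: it proceeds by induction on $|\kk|$, applying $\Delta^{\kk}$ to the peeling equation~\eqref{eqn_peeling_equation_bipartite} (and a ``double'' peeling equation obtained by iterating it once) to write $(\Delta^{\kk}a^p)_{\vv}$ as a nonnegative combination of quantities $(\Delta^{\kk}a^{p'})_{\vv'}$, one of which is $(\Delta^{\kk}a^{p})_{\vv+\mathbf{1}_j}$. Isolating that term yields $(\Delta^{\kk+\mathbf{1}_j}a^p)_{\vv}\geq 0$ directly, for \emph{all} $p$ at once. Your small-$p$ sketch is in the same spirit, but the paper's induction makes the large-$p$/small-$p$ dichotomy unnecessary and sidesteps the interference between lids.
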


\begin{proof}
The proof is similar to the proof of Lemma 16 in \cite{BL19}, with the following modification: in \cite{BL19}, it was useful that in the same peeling equation, we had $a^p_v$ appearing on the left and $a^p_{v+1}$ on the right. However, in \eqref{eqn_peeling_equation_bipartite} $a^p_{\vv+\mathbf{1}_j}$ does not appear in the right-hand side (this is because we are using the lazy peeling process of \cite{Bud15} instead of the simple peeling of \cite{Ang03}). Therefore, instead of using directly the peeling equation, we will need to use the \emph{double peeling equation}, which corresponds to performing two peeling steps, instead of one in~\eqref{eqn_peeling_equation_bipartite}.

More precisely, the peeling equation \eqref{eqn_peeling_equation_bipartite} gives an expansion of $a^p_{\vv}$. The \emph{double peeling equation} is obtained from \eqref{eqn_peeling_equation_bipartite} by replacing all the terms in the right-hand side by their expansion given by \eqref{eqn_peeling_equation_bipartite}. Note that this indeed makes sense because if $\vv \in \VV_p$, then $\vv+\mathbf{1}_j \in \VV_{p+j-1}$ for all $j \geq 1$, and $\vv+\ww \in \VV_{p-i}$ for all $i \geq 1$ and $\ww \in \VV$.

The equation we obtain is of the form
\begin{equation}\label{eqn_double_peeling_equation}
a^p_{\vv}=\sum_{i \in \Z, \, \ww \in \VV} c^{p,i}_{\vv,\ww} a^{p+i}_{\vv+\ww},
\end{equation}
where the coefficients $c^{p,i}_{\vv,\ww}$ are nonnegative integers. An explicit formula for these could be computed in terms of the $\beta_0^{(i)}(\ww)$, but this will not be needed. Here are the facts that will be useful:
\begin{enumerate}
\item
the coefficients $c^{p,i}_{\vv,\ww}$ actually do not depend on $\vv$, so we can write them $c^{p,i}_{\ww}$,
\item\label{item_coeff_equal_one}
we have $c^{p,2j-2}_{2\cdot\mathbf{1}_j}=1$ for every $j \geq 1$,
\item\label{item_coeff_positive}
we have $c^{p,0}_{\mathbf{1}_j} \geq 1$ for every $j \geq 2$.
\end{enumerate}
The first item follows from the fact that at each time, the available next peeling steps do not depend on the internal face degrees of the explored region.
The second item expresses the fact that, for a given peeling algorithm, there is a unique way to obtain a map with half-perimeter $p+2j-2$ with internal faces $\vv+2\cdot\mathbf{1}_j$ in two peeling steps. This way is to discover a unique face of degree $2j$ at both steps. The third item means that it is possible (not necessarily in a unique way) to obtain in two peeling steps a map with the same perimeter but one more face of degree $2j$. This is achieved by discovering a new face of degree $2j$ at the first step, and gluing all but two sides of this face two by two at the second step (see Figure~\ref{figure_pconstant_vincrease}). 

\begin{figure}
\begin{center}
\includegraphics[scale=0.7]{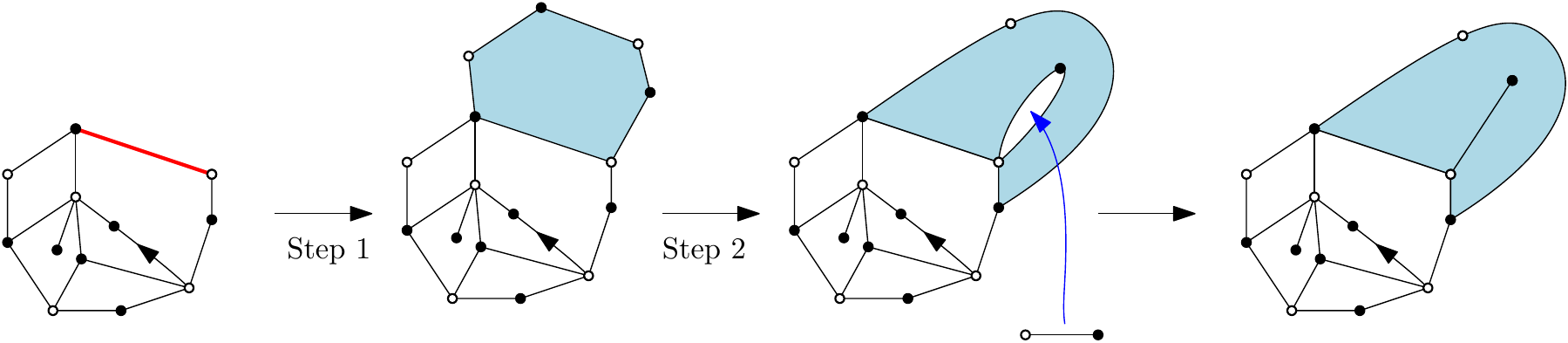}
\caption{In two peeling steps, the perimeter stays constant and one face with degree $2j$ is added (here $j=3$).}\label{figure_pconstant_vincrease}
\end{center}
\end{figure}

We now prove the lemma by induction on $|\kk|=\sum_{j \geq 1} k_j$. First, the case $|\kk|=0$ just means that $a^p_{\vv} \geq 0$ for all $p \geq 1$ and $\vv \in \VV_p$, which is immediate. Let us now assume that the lemma is true for $\kk$ and prove it for $\kk+\mathbf{1}_j$, where $j \geq 1$. We will first treat the case where $j \geq 2$.
Using the double peeling equation \eqref{eqn_double_peeling_equation} for $(p, \vv+\mathbf{i})$ for different values of $\mathbf{i}$, we have
\[ \left( \Delta^{\kk} a^p \right)_{\vv} = \sum_{i \in \Z, \, \ww \in \VV} c^{p,i}_{\ww} \left(  \Delta^{\kk} a^{p+i}\right)_{\vv+\ww}. \]
Therefore, using the induction hypothesis and Item~\ref{item_coeff_equal_one} above, we can write 
\begin{align*}
0 & \leq \left( \Delta^{\kk} a^{p+2j-2} \right)_{\vv+2\cdot\mathbf{1}_j}\\
&= c^{p,2j-2}_{2\cdot\mathbf{1}_j} \left( \Delta^{\kk} a^{p+2j-2} \right)_{\vv+2\cdot\mathbf{1}_j}\\
&=  \left( \Delta^{\kk} a^{p} \right)_{\vv} - \sum_{(i,\ww) \ne (2j-2, 2\cdot\mathbf{1}_j)} c^{p,i}_{\ww} \left( \Delta^{\kk} a^{p+i} \right)_{\vv+\ww}.
\end{align*}
Using the induction hypothesis again, we can remove all the terms in the last sum except the one where $(i,\ww)=(0,\mathbf{1}_j)$. Moreover, by Item~\ref{item_coeff_positive} above, we can replace the coefficient $c^{p,0}_{\mathbf{1}_j}$ by $1$. We obtain
\[0 \leq \left( \Delta^{\kk} a^{p} \right)_{\vv} - \left( \Delta^{\kk} a^{p} \right)_{\vv+\mathbf{1}_j}= \left( \Delta^{\kk+\mathbf{1}_j} a^p \right)_{\vv}, \]
which proves the induction step for $j \geq 2$. If $j=1$, Item~\ref{item_coeff_positive} is not true anymore (it is not possible to add only one face of degree $2$ in $2$ steps without changing the perimeter). Therefore, instead of \eqref{eqn_double_peeling_equation}, we use the simple peeling equation \eqref{eqn_peeling_equation_bipartite} like in \cite{BL19}. More precisely, in the induction step, we fix $j' \geq 2$ and write, using \eqref{eqn_peeling_equation_bipartite}:
\begin{align*}
0 &\leq \left( \Delta^{\kk} a^{p+j'-1} \right)_{\vv+\mathbf{1}_{j'}}\\
&= \left( \Delta^{\kk} a^{p} \right)_{\vv} - \sum_{j'' \ne j'} \left( \Delta^{\kk} a^{p+j''-1} \right)_{\vv+\mathbf{1}_{j''}} -2 \sum_{i=0}^{p-1} \sum_{\ww} \beta_0^{(i-1)}(\ww) \left( \Delta^{\kk} a^{p-i} \right)_{\vv+\ww}.
\end{align*}
Each term in the two sums is nonnegative by the induction hypothesis, so we can remove the second sum and keep only the term $j''=1$ in the first one to obtain
\[0  \leq \left( \Delta^{\kk} a^{p} \right)_{\vv} - \left( \Delta^{\kk} a^{p} \right)_{\vv+\mathbf{1}_1} = \left( \Delta^{\kk+\mathbf{1}_1} a^{p} \right)_{\vv}.\]
This concludes the proof of the lemma.
\end{proof}

By Lemma~\ref{lem_abs_monotone} and Proposition~\ref{prop_moment_Hausdorff}, for all $p \geq 1$, there is a $\sigma$-finite measure $\mu_p$ on $\cQ^*$ such that, for all $\vv \in \VV^*_p$,
\begin{equation}\label{eqn_apv_as_moment}
a^p_{\vv}=\int_{\cQ^*} \q^{\vv} \mu_p(\dq)
\end{equation}
and furthermore $\mu_1(\cQ^*) \leq a^1_{\mathbf{0}}=1$. We now replace $a_{\vv}^p$ by this expression in the peeling equation \eqref{eqn_peeling_equation_bipartite}. We get
\begin{align*}
\int \q^{\vv} \, \mu_p(\mathrm{d} \q) \hspace{-0.1cm}&= \hspace{-0.1cm}\sum_{j \geq 1} \int \q^{\vv+\mathbf{1}_j} \, \mu_{p+j-1}(\dq) + 2\sum_{i=1}^{p-1} \sum_{\ww \in \VV} \beta_0^{(i-1)}(\ww) \int \q^{\vv+\ww} \, \mu_{p-i}(\dq)\\
&=\hspace{-0.1cm} \int \q^{\vv} \left( \sum_{j \geq 1} q_j \, \mu_{p+j-1}(\dq) + 2 \sum_{i=1}^{p-1} W_{i-1}(\q) \, \mu_{p-i}(\dq) \right),
\end{align*}
where we recall that $W_{i-1}(\q)$ is the partition function of Boltzmann bipartite maps of the $2(i-1)$-gon with Boltzmann weights $\q$.
In particular, the right-hand side for $i=2$ must be finite, which means that $\mu_p$ is supported by the set $\cQ_a$ of admissible weight sequences. Moreover, the last display means that the two measures
\[ \mu_p(\dq) \mbox{ and } \nu_p(\dq) = \sum_{j \geq 1} q_j \, \mu_{p+j-1}(\dq) + 2 \sum_{i=1}^{p-1} W_{i-1}(\q) \, \mu_{p-i}(\dq) \]
have the same $\vv$-th moment for all $\vv \in \VV^*_p$. In particular, if we fix $j \geq 2$, this is true as soon as $v_j \geq p$, so the measures $q_j^p \mu_p(\dq)$ and $q_j^p \nu_p(\dq)$ have the same moments so they are equal, so $\mu_p$ and $\nu_p$ coincide on $\{q_j>0\}$. Since this is true for all $j \geq 2$ and $\mu_p, \nu_p$ are defined on $\cQ^*=\bigcup_{j \geq 2} \{q_j>0\}$, the measures $\mu_p$ and $\nu_p$ are the same, that is,
\begin{equation}\label{peeling_equation_mu}
\mu_p(\dq) = \sum_{j \geq 1} q_j \, \mu_{p+j-1}(\dq) + 2 \sum_{i=1}^{p-1} W_{i-1}(\q) \, \mu_{p-i}(\dq).
\end{equation}
We now note that this equation is very similar to the one satisfied by the constants $C_p(\q)$ used to define the $\q$-IBPM. More precisely, we fix a finite measure $\mu$ such that all the $\mu_p$ are absolutely continuous with respect to $\mu$ (take e.g. $\mu (\dq)=\sum_{p \geq 1} \frac{g_p(\q) \mu_p(\dq)}{2^p}$, where $g_p(\q)>0$ is such that the total mass of $g_p(\q) \mu_p(\dq)$ is at most $1$). We denote by $f_p(\q)$ the density of $\mu_p$ with respect to $\mu$. Then \eqref{peeling_equation_mu} becomes
\[ f_p(\q) = \sum_{j \geq 1} q_j f_{p+j-1}(\q) + 2 \sum_{i=1}^{p-1} W_{i-1}(\q) f_{p-i}(\q) \]
for $\mu$-almost every $\q \in \cQ^*$. In other words, $\left( f_p(\q) \right)_{p \geq 1}$ satisfies the exact same equation as $\left( C_p(\q) \right)_{p \geq 1}$ in \cite[Appendix C]{B18these}. These equations have a nonzero solution if and only if $\q \in \cQ_h$, so the measures $\mu_p$ are actually supported by $\cQ_h$. Moreover, by uniqueness of the solution (up to a multiplicative constant), we have
\[ f_p(\q)=\frac{C_p(\q)}{C_1(\q)} f_1(\q) = C_p(\q) f_1(\q)\]
for $\mu$-almost every $\q$, so $\mu_p(\dq)=C_p(\q) \mu_1(\dq)$. Now let $\alpha \leq 1$ be the total mass of the measure $\mu_1$, and let $\QQ$ be a random variable with distribution $\alpha^{-1} \mu$. We then have, for all $p \geq 1$ and $\vv \in \VV^*_p$, if $m$ is a map with half-perimeter $p$ and face degrees $\vv$:
\begin{equation}\label{eqn_apv_expectation}
\P \left( m \subset M \right) = a^p_{\vv} = \int \q^{\vv} \mu_p(\dq) = \alpha \E \left[ C_p(\QQ) \QQ^{\vv} \right] = \alpha \P \left( m \subset \MM_{\QQ} \right).
\end{equation}
Note that $\QQ$ is not well-defined if $\alpha=0$, but in this case $\mu_p=0$ for all $p$ so \eqref{eqn_apv_expectation} remains true for any choice of $\QQ$. To conclude that $M$ has the law of $\MM_{\QQ}$, all we have left to prove is that $\alpha=1$ and that \eqref{eqn_apv_expectation} can be extended to any $\vv \in \VV_p$. For this, we will show that, when we explore $M$ via a peeling exploration, the perimeter and volumes of the explored region at time $t$ satisfy $\vv \in \VV_p^*$ for $t$ large enough.

More precisely, if $\mathcal{A}$ is a peeling algorithm, we recall that $\expl^{\mathcal{A}}_t(M)$ is the explored part of $M$ after $t$ steps of a filled-in peeling exploration according to $\mathcal{A}$. We denote by $P_t$ the half-perimeter of the hole of $\expl^{\mathcal{A}}_t(M)$ and by $\mathbf{V}_t$ the sequence of degrees of its internal faces (that is, $V_{t,j}$ is the number of internal faces of $\expl^{\mathcal{A}}_t(M)$ with degree $2j$). Since $M$ is weakly Markovian, the process $\left( P_t, \mathbf{V}_t \right)_{t \geq 0}$ is a Markov chain whose law does not depend on the peeling algorithm $\mathcal{A}$.

\begin{lem}\label{lem_volumes_star}
We have
\[ \P \left( \mathbf{V}_{t} \in \VV^*_{P_t} \right) \xrightarrow[t \to +\infty]{} 1.\]
\end{lem}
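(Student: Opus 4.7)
The plan rests on the fact that the law of $(P_t, \mathbf{V}_t)$ does not depend on the peeling algorithm, so we are free to choose any convenient $\mathcal{A}$. We will use one that eventually explores every vertex of $M$ and then verify the target condition via Euler's formula.

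First I would reformulate the condition geometrically. The explored region $\expl_t^{\mathcal{A}}(M)$ is a bipartite planar map with a single hole bounded by a simple cycle of length $2P_t$. Counting edges by face-incidences gives $E_t = P_t + \sum_j j V_{t,j}$ and $F_t = 1 + \sum_j V_{t,j}$, so Euler's formula yields that its total number of vertices equals $1 + P_t + \sum_j (j-1) V_{t,j}$. Subtracting the $2P_t$ distinct boundary vertices gives the number of \emph{internal} (non-boundary) vertices
\[
V_t^{\mathrm{int}} \;=\; 1 - P_t + \sum_j (j-1) V_{t,j}.
\]
Unpacking the definition of $\VV^*_{P_t}$, the condition $\mathbf{V}_t \in \VV^*_{P_t}$ is equivalent to the existence of some $j \geq 2$ with $V_{t,j} > 0$ and $V_t^{\mathrm{int}} \geq j - 1$.

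Next I would select a peeling algorithm $\mathcal{A}$ for which $\bigcup_{t \geq 0} \expl_t^{\mathcal{A}}(M) = M$ almost surely, a standard construction in the one-ended infinite planar setting (peeling layer-by-layer, always on the boundary vertex closest in the explored map to the root, until a prescribed vertex has become internal, then moving to the next). Because every vertex of $M$ has finite degree, any fixed vertex becomes internal once all its neighbours have been revealed, whence $V_t^{\mathrm{int}} \to +\infty$ almost surely; likewise every face of $M$ is eventually internal to the exploration. On the other hand, since $M$ is infinite, planar, one-ended, bipartite, with finite vertex and face degrees, it cannot consist solely of $2$-gons: otherwise its underlying graph would be a tree with doubled edges, which in the plane forces an infinite outer face, contradicting finiteness of face degrees. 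Thus almost surely there is a (random) integer $j^* = j^*(M) \geq 2$ together with a face of $M$ of degree $2j^*$, and for $t$ large enough this face is internal, giving $V_{t,j^*} \geq 1$.

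Combining these observations, for $t$ beyond an almost surely finite random time both $V_{t,j^*} \geq 1$ and $V_t^{\mathrm{int}} \geq j^* - 1$ hold, so $\mathbf{V}_t \in \VV^*_{P_t}$ by the first step. This almost sure ``eventually'' statement immediately yields $\P(\mathbf{V}_t \in \VV^*_{P_t}) \to 1$. The main subtlety is justifying the existence of a covering algorithm and the divergence $V_t^{\mathrm{int}} \to +\infty$; everything else is routine Euler bookkeeping.
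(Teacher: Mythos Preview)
Your argument is correct and shares the paper's core strategy: use that the law of $(P_t,\mathbf V_t)$ is algorithm-independent, choose a convenient $\mathcal A$, and reduce $\mathbf V_t\in\VV^*_{P_t}$ to the Euler identity $V_t^{\mathrm{int}}=1-P_t+\sum_j(j-1)V_{t,j}$. The only real difference is in the algorithm chosen. The paper does not invoke a general covering algorithm; it uses an $\mathcal A$ that peels an edge incident to a root-face vertex whenever one lies on the boundary. Finite vertex degrees then force all root-face vertices to become internal at some finite time, giving $V_t^{\mathrm{int}}\ge 2J$ directly (with $2J$ the root-face degree), and the root face itself is taken as the witness $j$. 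This is slightly more self-contained, as it avoids constructing a covering $\mathcal A$ and arguing separately that $M$ has a non-$2$-gon face; your route is a bit more robust (you get $V_t^{\mathrm{int}}\to\infty$, more than is needed) at the cost of those two extra justifications.

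One point to tighten: your reason for ruling out an all-$2$-gon $M$ (``underlying graph a tree with doubled edges'') is not a valid deduction. A clean argument is that the dual $M^*$ would then be connected and $2$-regular, hence a finite cycle (forcing $M$ to have finitely many, hence finitely many edges, contradicting infiniteness) or an infinite path. In the path case, consecutive dual vertices $f_{i}$ are $2$-gons bounded by edges $e_{i-1},e_i$ of $M$, so $e_{i-1}$ and $e_i$ share both endpoints; by induction all edges of $M$ have the same two endpoints, giving vertices of infinite degree. Either way the hypotheses on $M$ are violated.
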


\begin{proof}
Since the probability in the lemma does not depend on $\mathcal{A}$, it is sufficient to prove the result for a particular peeling algorithm. Therefore, we can assume that $\mathcal{A}$ has the following property: if the root face of $m$ and its hole have a common vertex $m$, then the peeled edge $\mathcal{A}(m)$ is incident to such a vertex. We will prove that for this algorithm, we have a.s. $\mathbf{V}_{t} \in \VV^*_{P_t}$ for $t$ large enough.

More precisely, since the vertex degrees of $M$ are a.s. finite and by definition of $\mathcal{A}$, all the vertices incident to the root face will eventually disappear from the boundary of the explored part. Therefore, for $t$ large enough, no vertex incident to the root face is on $\partial \expl^{\mathcal{A}}_t(M)$. We now fix $t$ with this property. If we denote by $\mathrm{Inn}(m)$ the number of internal vertices of a map $m$ with a hole and by $2J$ the degree of the root face of $M$, this implies $\mathrm{Inn} \left( \expl^{\mathcal{A}}_t(M) \right) \geq 2J$ for $t$ large enough.

On the other hand, the total number of edges of $\expl^{\mathcal{A}}_t(M)$ is $p+\sum_{j \geq 1} j V_{t,j}$, so by the Euler formula
\begin{align*}
\mathrm{Inn} \left( \expl^{\mathcal{A}}_t(M) \right) &= 2 + \left( P_t+ \sum_{j \geq 1} j V_{t,j} \right) - \left( 1+\sum_{j \geq 1} V_{t,j} \right) -2P_t \\&= 1-P_t+\sum_{j \geq 1} (j-1) V_{t,j}. 
\end{align*}
Taking $t$ large enough to have $\mathrm{Inn} \left( \expl^{\mathcal{A}}_t(M) \right) \geq 2J$, we obtain
\[ \left( \sum_{j \geq 1} (j-1) V_{t,j} \right) -(J-1) \geq \left( 2J +P_t -1 \right) -(J-1) = P_t+J > P_t-1,\]
so $V_{t,J}>0$ and $\mathbf{V}-\mathbf{1}_{J} \in \VV_{P_t}$. This proves $\mathbf{V}_{t} \in \VV^*_{P_t}$ for $t$ large enough.
\end{proof}

We now conclude the proof of Theorem~\ref{thm_weak_Markov_general} from \eqref{eqn_apv_expectation}. We consider a finite map $m_0$ with a hole and a peeling algorithm $\mathcal{A}$ that is consistent with $m_0$ in the sense that $m_0$ is a possible value of $\expl_{t_0}^{\mathcal{A}}$ for some $t_0 \geq 0$. We note that $\expl^{\mathcal{A}}_{t_0}(M)=m_0$ if and only if $m_0 \subset M$. Indeed, the direct implication is immediate. The indirect one comes from the fact that, if $m_0 \subset M$, then all the peeling steps until time $t_0$ must be consistent with $m_0$, so $m_0 \subset M$ determines the first $t_0$ peeling steps. We now take $t \geq t_0$. We sum \eqref{eqn_apv_expectation} over all possible values $m$ of $\expl^{\mathcal{A}}_t(M)$ such that $m_0 \subset m$ and the half-perimeter $p$ and internal face degrees $\vv$ of $m$ satisfy $\vv \in \VV_p^*$. We get
\[ \P \left( m_0 \subset M \mbox{ and } \mathbf{V}_t \in \VV^*_{P_t} \right) = \alpha \P \left( m_0 \subset \MM_{\QQ} \mbox{ and } \mathbf{V}^{\QQ}_t \in \VV^*_{P_t^{\QQ}} \right), \]
where $P_t^{\QQ}$ and $\mathbf{V}_t^{\QQ}$ are the analogues of $P_t$ and $\mathbf{V}_t$ for $\MM_{\QQ}$ instead of $M$. Since $\MM_{\QQ}$ is weakly Markovian, we can apply Lemma~\ref{lem_volumes_star} to both $M$ and $\MM_{\QQ}$. Therefore, letting $t \to +\infty$ in the last display, we get \[\P \left( m_0 \subset M  \right) = \alpha \P \left( m_0 \subset \MM_{\QQ} \right)\] for all $m_0$. In particular, if $m_0$ is the trivial map consisting only of the root edge, we get $\alpha=1$, so $M$ and $\MM_{\QQ}$ have the same law. This proves Theorem~\ref{thm_weak_Markov_general}.

\begin{proof}[Proof of Theorem~\ref{thm_main_more_general}]
By Proposition~\ref{prop_tightness_dloc_univ}, any subsequential limit $M$ of $\left( M_{\ff^n, g_n} \right)$ is planar and one-ended. Moreover, let $m$ be a map with one hole of half-perimeter $p$ and $v_{j}$ faces of degree $2j$ for all $j \geq 1$. Then
\[\P \left( m \subset M \right) = \lim_{n \to +\infty} \P \left( m \subset M_{\ff^n, g_n} \right) = \lim_{n \to +\infty} \frac{\beta_{g_n}^{(p)}(\ff^n-\vv)}{\beta_{g_n}(\ff^n)},\]
where the limits are along some subsequence. In particular, the dependence in $m$ is only in $p$ and $\vv$, so $M$ is weakly Markovian and the result follows by Theorem~\ref{thm_weak_Markov_general}.
\end{proof}

\section{The parameters are deterministic}\label{sec_univ_end}

\subsection{Outline}\label{sec_arg_deux_trous}

Our goal is now to prove Theorem~\ref{univ_main_thm}. We fix face degree sequences $\ff^n$ and genuses $g_n$ for $n \geq 0$ satisfying the assumptions of Theorem~\ref{univ_main_thm} (in particular, we now assume $\sum_{j} j^2 \alpha_j <+\infty$ until the end of the paper). By Theorem~\ref{thm_main_more_general}, up to extracting a subsequence, we can assume $M_{\ff^n, g_n}$ converges to $\MM_{\QQ}$, where $\QQ$ is a random variable with values in $\cQ_h$. Moreover, the law of the degree of the root face in $M_{\ff^n, g_n}$ converges in distribution to $\left( j \alpha_j \right)_{j \geq 1}$, which has finite expectation. By the last point of Theorem~\ref{thm_weak_Markov_general}, we have $\QQ \in \cQ_f$ a.s.. To prove Theorem~\ref{univ_main_thm}, it is enough to prove that $\QQ$ is deterministic, and only depends on $\left( \alpha_j \right)_{j \geq 1}$ and $\theta$.

\paragraph{Sketch of the end of the proof.}
Since we will follow similar ideas, let us first recall the strategy of \cite{BL19}. If $e_n$ is the root edge of $M_{\ff^n, g_n}$, the parameters $\QQ$ can be observed on a large neighbourhood of $e_n$ in $M_{\ff^n, g_n}$ for $n$ very large.
The first step of the proof (Proposition~\ref{prop_two_holes_argument}) roughly consists of showing that once $M_{\ff^n, g_n}$ is picked, the weights $\QQ$ do not depend on the choice of $e_n$. This is proved by the \emph{two holes argument}: if $e^1_n$ and $e_n^2$ are two roots chosen uniformly on $M_{\ff^n, g_n}$, we swap two large neighbourhoods of $e^1_n$ and $e^2_n$ in $M_{\ff^n, g_n}$. We then remark that if the weights observed around the two roots are too different, then the map obtained after swapping does not look like a map of the form $\MM_{\QQ}$.
The second step consists of noticing that the average value over all choices of the root of some functions of $\QQ$ is fixed by $\left( \alpha_j \right)_{j \geq 1}$ and $\theta$ (Corollary~\ref{corr_main_minus_monotonicity}).
Finally, in the third step we prove that these functions are sufficient to characterize $\QQ$ completely (Proposition~\ref{prop_monotonicity_deg}).

However, two important difficulties appear here compared to~\cite{BL19}:
\begin{itemize}
\item
in the first step, we need to find two large pieces around $e_n^1$ and $e_2^n$ with the exact same perimeter, in order to be able to swap them. This was easy for triangulations since the perimeter process associated to a peeling exploration takes all values. This is not true anymore in our general setting. This part will make crucial use of the assumption $\sum_j j^2 \alpha_j < +\infty$ (this is actually the only place in the paper where we will use it). A consequence of this difficulty is that instead of performing the swapping operation \emph{with high probability}, we will perform it \emph{with positive probability}.
\item
In the third step, one of the parameters that we control is the average vertex degree. For triangulations, it followed from an explicit formula that the average degree characterizes $\QQ$. We do not have such a formula here, so our argument will be more involved, and rely on the partial results obtained so far in the present paper.
\end{itemize}

\paragraph{Intermediate results.}
Let $\left( M_n, e_n^1, e_n^2 \right)$ be a uniform, bi-rooted map with face degrees $\ff^n$ and genus $g_n$ (i.e. $e_n^1$ and $e_n^2$ are picked uniformly and independently among the edges of $M_n$). We highlight that we will write $M_n$ instead of $M_{\ff^n, g_n}$ in this section to make notations lighter.
Up to extracting a subsequence, we can assume the joint convergence
\[ \left( (\M_n, e_n^1), (\M_n, e_n^2) \right) \xrightarrow[n \to +\infty]{(d)} \left( \MM^1_{\QQ^1}, \MM^2_{\QQ^2} \right) \]
for the local topology, where $\QQ^1$ and $\QQ^2$ have the same distribution as $\QQ$. Moreover, by the Skorokhod representation theorem, we can assume this joint convergence is almost sure. \emph{We will stay in this setting in Section~\ref{subsec_same_perimeter} and~\ref{subsec_two_holes}.} The first step of the proof will consist of proving the following.

\begin{prop}\label{prop_two_holes_argument}
We have $\QQ^1=\QQ^2$ almost surely.
\end{prop}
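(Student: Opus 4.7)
The plan is to implement the \emph{two holes argument} sketched by the authors: find, inside $M_n$, two finite submaps around $e_n^1$ and $e_n^2$ with boundaries of equal length, exchange them, and exploit the uniformity of the law of $M_n$ together with the local convergence to force the weight sequences read from the two roots to coincide.

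I would fix a peeling algorithm $\A$ depending only on the boundary of the explored region (not on its interior), and launch two filled-in lazy peelings of $M_n$, one from each root, with explored regions $m^i_t$ and perimeter processes $P^i_t$. I set $\tau_1 = T$ for a large deterministic $T$, $\tau_2 = \inf\{s \leq 2T : P^2_s = P^1_T\}$, and let $A^n_T$ be the event $\{\tau_2 < \infty\} \cap \{m^1_{\tau_1} \text{ and } m^2_{\tau_2} \text{ are disjoint in } M_n\}$. This is the step where the assumption $\sum_j j^2 \alpha_j < +\infty$ is crucial: it forces the limiting perimeter jump law $\widetilde\nu_{\QQ^i}$ to have finite variance on the positive side (the positive part being controlled by the law of the root face of $\MM_{\QQ^i}$). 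Transferring a local CLT for the independent difference walk $P^1_T - P^2_s$ back to $M_n$ via the local convergence then yields $\liminf_{T} \liminf_{n} \P(A^n_T \mid \QQ^1, \QQ^2) > 0$ almost surely. This matching step is the main obstacle: without the moment assumption, the positive jumps of $\widetilde\nu_{\QQ}$ are so heavy-tailed that the perimeter walk misses most integer values and one cannot enforce $P^1_{\tau_1} = P^2_{\tau_2}$ with positive probability.

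On $A^n_T$, the two holes share the half-perimeter $p := P^1_T$. I fix a canonical matching of their $2p$-long boundary cycles (using, for instance, the last edges peeled in each exploration as reference points) and define the swapped map $\tilde M_n$ by exchanging $m^1_{\tau_1}$ and $m^2_{\tau_2}$, with new roots $\tilde e_n^1, \tilde e_n^2$ transplanted from $e_n^2, e_n^1$. The operation is a deterministic involution on the set of configurations meeting $A^n_T$ and preserves the counting measure, so the conditional law of $(\tilde M_n, \tilde e_n^1, \tilde e_n^2)$ given $A^n_T$ equals that of $(M_n, e_n^1, e_n^2)$. In particular, along the Skorokhod subsequence, $(\tilde M_n, \tilde e_n^1)$ converges locally to some $\MM_{\tilde\QQ^1}$ with $\tilde\QQ^1$ distributed as $\QQ^1$.

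It remains to identify $\tilde\QQ^1$ in two independent ways on $A^n_T$. First, for every fixed $r$, once $n$ is large enough (along a suitable growth of $T = T_n$), $B_r(\tilde M_n, \tilde e_n^1)$ sits inside the transplanted copy of $m^2_{\tau_2}$ and coincides with $B_r(M_n, e_n^2)$; passing to the local limit gives $\MM_{\tilde\QQ^1} = \MM_{\QQ^2}$ on $A^n_T$, and Proposition~\ref{prop_q_as_limit} yields $\tilde\QQ^1 = \QQ^2$. Second, because $\A$ depends only on the current boundary, the continuation of the peeling of $(M_n, e_n^1)$ past time $\tau_1$ and that of $(\tilde M_n, \tilde e_n^1)$ past time $\tau_2$ perform the \emph{same} sequence of peeling steps inside the common complement $m^0 := M_n \setminus (m^1_{\tau_1} \cup m^2_{\tau_2})$, starting from the same $2p$-gon hole. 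As the peeling time $T' \to \infty$ with $T$ fixed, the first $O(T)$ steps become negligible, and the algorithm-independent convergence $\frac{1}{T'} \sum_{s<T'} \mathbbm{1}_{\{P_{s+1}-P_s = j-1\}} \to r_j(\q)$ from Proposition~\ref{prop_q_as_limit}, applied to both peelings, forces $r_j(\QQ^1) = r_j(\tilde\QQ^1)$ for every $j$, hence $\tilde\QQ^1 = \QQ^1$ on $A^n_T$. Combining the two identifications gives $\QQ^1 = \QQ^2$ on $A^n_T$, and the a.s.\ positivity of $\P(A^n_T \mid \QQ^1, \QQ^2)$ upgrades this to $\QQ^1 = \QQ^2$ almost surely.
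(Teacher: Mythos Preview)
Your proposal follows the right outline but has a genuine gap at the heart of the argument: the perimeter-matching step.

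You claim that $\sum_j j^2\alpha_j<\infty$ gives $\widetilde\nu_{\QQ^i}$ finite variance on the positive side, and then invoke a ``local CLT for the independent difference walk $P^1_T-P^2_s$''. Neither ingredient is available. The hypothesis $\sum_j j^2\alpha_j<\infty$ is exactly the statement that the root face has finite \emph{expected} degree, i.e.\ $\QQ\in\cQ_f$; by the discussion after~\eqref{eqn_walk_to_rootface} this is equivalent to $\widetilde\nu_{\q}$ having finite \emph{first} moment, not second (indeed $\widetilde\nu_{\q}(j-1)\asymp j\alpha_j$, so a second moment would need $\sum_j j^3\alpha_j<\infty$). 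More importantly, $P^1$ and $P^2$ are peelings of the \emph{same} finite map $M_n$, and there is no reason for $\MM^1_{\QQ^1}$ and $\MM^2_{\QQ^2}$ to be conditionally independent given $(\QQ^1,\QQ^2)$; the paper flags this explicitly. The paper's workaround is substantial and cannot be skipped: one first shows (Lemma~\ref{lem_RW_hits_interval}) that each perimeter process hits the \emph{interval} $[p,p+C]$ with probability $1-O(\eps)$, so that by a union bound both do; then, conditionally on the pair of explored regions, one applies the two-boundary Bounded Ratio Lemma (Corollary~\ref{lem_BRL_boundaries}, item~2) to the complementary map to force both perimeters down to exactly $p$ in at most $C$ ``nice'' steps, with probability at least some $\eta(\eps)>0$. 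This is the content of Proposition~\ref{prop_finding_regions_to_cut} and is what makes the swap possible with positive (not high) probability.

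Your final identification is also not sound as written. You want both $\tilde\QQ^1=\QQ^2$ (because the transplanted piece $m^2_{\tau_2}$ eventually contains $B_r$) and $\tilde\QQ^1=\QQ^1$ (because the complement $m^0$ is shared). The first requires sending $T\to\infty$ so that the transplant engulfs arbitrarily large balls; the second requires $T$ fixed so that the first $O(T)$ peeling steps are negligible compared to $T'\to\infty$. Since $\tilde\QQ^1$ depends on $T$ and you simultaneously write ``along a suitable growth of $T=T_n$'', the two limits are in conflict. The paper avoids this by arguing by contradiction: assuming $\P(|r_j(\QQ^1)-r_j(\QQ^2)|>\eps)>\eps$, the swapped limit $\widehat M$ (for a \emph{fixed} large $p$) is still a local limit of uniform maps, hence a mixture of IBPMs by Theorem~\ref{thm_main_more_general}; but on an event of probability $\geq\delta$ its peeling statistics $\widetilde r_j$ at scale $p$ differ from those at a much larger scale $q$, contradicting the almost sure convergence in Proposition~\ref{prop_q_as_limit}.
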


From here, the second step will be to deduce the next result. For $\q \in \mathcal{Q}_h$, we recall that $j a_j(\q)$ is the probability that the root face of $\MM_{\q}$ has degree $2j$, and that $d(\q)=\E \left[ \frac{1}{\deg_{\MM_{\q}} (\rho)} \right]$. 

\begin{corr}\label{corr_main_minus_monotonicity}
Under the assumptions of Theorem~\ref{univ_main_thm}, let $\MM_{\mathbf{Q}}$ be a subsequential limit. Then almost surely, we have
\[ d(\mathbf{Q})=\frac{1}{2} \left( 1-2\theta-\sum_i \alpha_i \right) \mbox{ and, for all $j \geq 1$, } a_j(\mathbf{Q})=\alpha_j.\] 
\end{corr}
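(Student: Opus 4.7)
My plan is to combine Proposition~\ref{prop_two_holes_argument} (which gives $\QQ^1=\QQ^2=:\QQ$ almost surely) with first- and second-moment computations on the bi-rooted model $(M_n,e^1_n,e^2_n)$, upgrading the expectation identities to almost-sure statements via a variance argument.

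I begin with the first moments. Since $M_n$ is uniform on $\B_{g_n}(\ff^n)$, the probability that the root face of $M_n$ has degree $2j$ equals $jf^n_j/|\ff^n|\to j\alpha_j$, and local convergence together with the definition of $a_j$ gives $\E[a_j(\QQ)]=\alpha_j$. For the vertex degree, summing $1/\deg$ over canonically oriented edges and regrouping by white endpoint yields the identity $\E[1/\deg(\rho_n)\mid M_n]=W_n/|\ff^n|$, where $W_n$ denotes the number of white vertices of $M_n$. The color-swap-plus-root-reversal involution is measure preserving on $\B_{g_n}(\ff^n)$ and exchanges $W_n$ with $v(\ff^n,g_n)-W_n$, so $\E[W_n]=v(\ff^n,g_n)/2$. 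Plugging in the Euler formula and using the bound $1/\deg\leq 1$ for uniform integrability, I obtain $\E[d(\QQ)]=\lim\E[1/\deg(\rho_n)]=c:=\tfrac12\bigl(1-2\theta-\sum_i\alpha_i\bigr)$.

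To upgrade these to almost sure statements I use that, by the joint convergence of $(M_n,e^1_n,e^2_n)$ combined with $\QQ^1=\QQ^2=\QQ$ and the conditional independence of $(\MM^1,e^1)$ and $(\MM^2,e^2)$ given $\QQ$ (inherited from the finite-$n$ iid choice of $e^1_n,e^2_n$ given $M_n$), any pair of bounded local functions $F,G$ satisfies
\[\E\bigl[F(\MM^1,e^1)G(\MM^2,e^2)\bigr]=\E\bigl[\E[F\mid\QQ]\,\E[G\mid\QQ]\bigr].\]
Applied to $F=G=\mathbbm 1\{\text{root face has degree }2j\}$ and compared with the elementary identity $\E[F(M_n,e^1_n)G(M_n,e^2_n)]=(jf^n_j/|\ff^n|)^2\to(j\alpha_j)^2$, this yields $\E[(ja_j(\QQ))^2]=(j\alpha_j)^2=(\E[ja_j(\QQ)])^2$, hence $a_j(\QQ)=\alpha_j$ almost surely. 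Applied to $F=G=1/\deg(\text{white endpoint of the root edge})$, the same machinery gives $\E[d(\QQ)^2]=\lim_n\E[(W_n/|\ff^n|)^2]$, so the a.s.\ statement for $d$ reduces to the concentration $\Var(W_n)=o(|\ff^n|^2)$.

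The main obstacle is precisely this concentration: the color-swap symmetry gives $\E[W_n]=v(\ff^n,g_n)/2$ but not the variance. I expect to handle it either by a surgery/Bounded-ratio-type refinement of the count $\beta_{g_n}(\ff^n)$ tracking the white-vertex partition, or, more efficiently, by exploiting that after the preceding step $\QQ$ lies in the one-parameter family $\{\q^{(\omega)}\}_{\omega\geq 1}$ of Proposition~\ref{prop_third_parametrization}, so that $\omega_{\QQ}$ can be read off the peeling trajectory via $r_\infty(\QQ)$ (Proposition~\ref{prop_q_as_limit}), combined with the uniform convergence of Lemma~\ref{lem_unif_volume}. A secondary technical point is the rigorous justification of the conditional independence given $\QQ$ in the limit, which relies on $\QQ$ being a measurable function of the peeling trajectory.
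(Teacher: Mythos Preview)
Your overall strategy coincides with the paper's: compute the first moments via invariance under rerooting and the Euler formula, then use Proposition~\ref{prop_two_holes_argument} together with a second-moment computation on the bi-rooted model to upgrade to almost-sure statements. The paper's own proof is only a sketch deferring to \cite{BL19}, so the skeletons match. However, your identification of the main obstacle is off.

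The $W_n$-concentration issue is spurious. In the bi-rooted model one should take $e^1_n,e^2_n$ to be i.i.d.\ uniform \emph{oriented} edges of the unrooted map $M_n^{\mathrm{unr}}$ (each inducing its own bicolouring), so that the root vertex $\rho^i_n$ is a degree-biased uniform vertex among \emph{all} vertices. Then
\[
\E\!\left[\frac{1}{\deg(\rho^i_n)}\,\Big|\,M_n^{\mathrm{unr}}\right]=\frac{v(\ff^n,g_n)}{2|\ff^n|}
\]
is \emph{deterministic}, exactly like $\E[\mathbbm 1_{\{\text{root face degree }=2j\}}\mid M_n^{\mathrm{unr}}]=jf^n_j/|\ff^n|$. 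Your formula $W_n/|\ff^n|$ comes from restricting $e^2_n$ to be white-to-black for the colouring fixed by $e^1_n$, which needlessly breaks the symmetry. With the correct conditioning the $d(\QQ)$ case is identical to the $a_j(\QQ)$ case, and your proposed detours (a refined Bounded-ratio lemma, or Lemma~\ref{lem_unif_volume}) are not needed.

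The step you flag as ``secondary'' --- conditional independence of $\MM^1,\MM^2$ given $\QQ$ --- is in fact the only substantive point, and it is \emph{not} inherited automatically from the finite-$n$ i.i.d.\ choice. One clean way to close it, using precisely $\QQ^1=\QQ^2$, is to write $\phi(\QQ):=\E[X^1\mid\QQ]$ as a measurable function of $\MM^2$ (since $\QQ=\QQ^2$ is) and approximate it by local functions $H_t(\MM^2)$. Because $X^1_n$ and $H_t(M_n,e^2_n)$ are conditionally independent given $M_n^{\mathrm{unr}}$ and $\E[X^1_n\mid M_n^{\mathrm{unr}}]=c_n$ is deterministic, one has $\E[X^1_n\,H_t(M_n,e^2_n)]=c_n\,\E[H_t(M_n,e^2_n)]$; letting $n\to\infty$ then $t\to\infty$ gives $\E[\phi(\QQ)^2]=c\,\E[\phi(\QQ)]=c^2$, hence $\Var(\phi(\QQ))=0$. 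Alternatively, pass to infinitely many independent roots and invoke de~Finetti, noting that $\QQ$ lies in the exchangeable $\sigma$-algebra.
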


\paragraph{Structure of the section.}
In Section~\ref{subsec_same_perimeter}, we will address the issue of finding two large neighbourhoods of the two roots with exactly the same perimeters. In Section~\ref{subsec_two_holes}, we use this to prove Proposition~\ref{prop_two_holes_argument} and Corollary~\ref{corr_main_minus_monotonicity}. Finally, Section~\ref{subsec_last_step} is devoted to the end of the proof of Theorem~\ref{univ_main_thm}, and consists mostly of showing that $d(\q)$ and $\left( a_j(\q) \right)_{j \geq 1}$ are sufficient to characterize $\q$.

\subsection{Finding two pieces with the same perimeter}
\label{subsec_same_perimeter}

As explained above, given the uniform bi-rooted map $\left( M_n, e^1_n, e^2_n \right)$, we want to find two neighbourhoods of $e^1_n$ and $e^2_n$ with the same large perimeter $2p$. For this, we will perform a peeling exploration around the two roots and stop it when the perimeter of the explored region is exactly $2p$. However, since the perimeter process has a positive drift, it can make large positive jumps, we cannot guarantee that both perimeters will hit the value $p$ with high probability. We will therefore show a weaker result: roughly speaking, the probability that the perimeter processes around $e_n^1$ and $e_n^2$ both hit $p$ is bounded from below, even if we condition on $\QQ^1$ and $\QQ^2$. 

More precisely, we fix a deterministic peeling algorithm $\mathcal{A}$, and let $p,v_0 \geq 1$. We recall from the end of Section~\ref{subsec_lazy_peeling} that we can make sense of a filled-in peeling exploration on the finite map $M_n$ around $e^1_n$ or $e_n^2$. We perform the following exploration:
\begin{itemize}
\item
we explore the map $M_n$ around $e^1_n$ according to the algorithm $\mathcal{A}$ until the number of edges in the explored region is larger than $v_0$, or the perimeter of the explored region is exactly $2p$, and denote by $\tau^1_n$ the time at which we stop;
\item
we do the same thing around $e^2_n$ and denote by $\tau^2_n$ the stopping time.
\end{itemize}
We write $\mathcal{S}_{n,p,v_0}$ for the event where both $\tau^1_n$ and $\tau^2_n$ occur because the perimeter hits $2p$, and where the two regions explored around $e_n^1$ and $e_n^2$ are face-disjoint (the dependence of $\mathcal{S}$ in $\mathcal{A}$ will stay implicit). We note right now that $(M_n, e_n^1)$ has a planar, one-ended local limit. Hence, with probability $1-o(1)$ as $n \to +\infty$, the exploration is not stopped before $\tau_n^1$ or $\tau_n^2$ for the reason stated in the end of Section~\ref{subsec_lazy_peeling}.

The goal of this subsection is to prove the next result. We recall that the functions $r_j(\q)$ for $j \in \N^* \cup \{ \infty \}$ and $\q \in \cQ_h$ are defined in Proposition~\ref{prop_q_as_limit}.

\begin{prop}\label{prop_finding_regions_to_cut}
Let $(M_n, e^1_n, e_2^n)$ and $\QQ^1, \QQ^2$ be as in Section~\ref{sec_arg_deux_trous}.
We fix $j \in \N^* \cup \{ \infty \}$, and $\eps>0$. Then there is $\delta>0$ with the following property. For every $p \geq 1$ large enough, there is $v_0$ such that, for $n$ large enough:
\[ \mbox{if } \P \left( | r_j(\QQ^1)-r_j(\QQ^2) |>\eps \right) \geq \eps,\]
\[ \mbox{then }\P \left( | r_j(\QQ^1)-r_j(\QQ^2)|>\frac{\eps}{2} \mbox{ and } (\M_n, e_n^1, e_n^2) \in \mathcal{S}_{n,p,v_0} \right) \geq \delta. \]
\end{prop}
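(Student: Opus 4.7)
The plan is to pull the statement back from $(M_n,e_n^1,e_n^2)$ to the two infinite limit maps $\MM^1_{\QQ^1}$ and $\MM^2_{\QQ^2}$ via the almost sure local convergence already at our disposal, and then to analyze the peeling exploration of these infinite models using renewal theory. The crucial input will be that $\QQ^i\in\cQ_f$ almost surely (by Theorem~\ref{thm_weak_Markov_general} combined with $\sum_j j^2\alpha_j<+\infty$), so that the step distribution $\widetilde{\nu}_{\QQ^i}$ has finite positive mean and the perimeter process cannot overshoot~$p$ with overwhelming probability.

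First, I would use Proposition~\ref{prop_q_as_limit}: the almost sure convergence $\frac{1}{t}\sum_{k<t}\mathbbm{1}_{P_{k+1}-P_k=j-1}\to r_j(\QQ^i)$ (and $\frac{V_t-2P_t}{t}\to r_\infty(\QQ^i)$ in the case $j=\infty$) lets me pick $t_0=t_0(\eps)$ such that, with probability at least $1-\eps/16$, the empirical observable computed from the first $t_0$ peeling steps of $\MM^i_{\QQ^i}$ is within $\eps/4$ of $r_j(\QQ^i)$. Then on an event $B$ of probability at least $\eps/2$ we simultaneously have $|r_j(\QQ^1)-r_j(\QQ^2)|>\eps$ and the observed frequencies differ by more than $\eps/2$.

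Next, conditionally on $\QQ^i=q\in\cQ_f$, the perimeter process of $\MM^i_q$ is the Doob transform of a $\widetilde{\nu}_q$-random walk with finite positive mean by the harmonic function $(h_p(\omega_q))_{p\geq 1}$. Since $\widetilde{\nu}_q(-1)>0$ and there exists $i\geq 0$ with $\widetilde{\nu}_q(i)>0$, the walk is aperiodic; renewal theory then gives that the hitting probability $\pi_q(p):=\P_q(\exists t,\,P_t=p)$ converges to a positive limit $c(q)>0$ as $p\to\infty$. Moreover Proposition~\ref{prop_q_as_limit} also yields $V^i_t/t\to 2+r_\infty(\QQ^i)$, so $V^i_{\tau_p^i}=O(p)$ almost surely. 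Choosing $v_0=C(\eps)\,p$ with $C$ large, the event $V^i_{\tau_p^i}\leq v_0$ holds with probability at least $1-\eps/16$. By bounded convergence over the joint law of $(\QQ^1,\QQ^2)$, and using the conditional independence of the two exploration processes given $(\QQ^1,\QQ^2)$ (inherited from the independent uniform sampling of the two roots in $M_n$ and the asymptotic face-disjointness of the two explorations), I would deduce that the joint event of interest on $(\MM^1_{\QQ^1},\MM^2_{\QQ^2})$ has probability at least $2\delta>0$ for some $\delta>0$ and for all $p$ large.

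Finally, the transfer back to $M_n$ is then routine: since $(M_n,e_n^i)\to\MM^i_{\QQ^i}$ almost surely for $d_{\loc}$ and the peeling algorithm $\mathcal{A}$ is deterministic, for $n$ large the first $v_0$ peeling steps in $M_n$ around $e_n^i$ match those of $\MM^i_{\QQ^i}$; face-disjointness in $M_n$ holds with probability $1-o(1)$ because each explored region has at most $v_0$ faces while $|\ff^n|\to+\infty$. The hardest point will be the uniform-in-$q$ lower bound on $\pi_q(p)$ for the random $q=\QQ^i$: pointwise renewal theory gives $\pi_q(p)\to c(q)>0$ for each $q$ individually, but one needs a measurability and dominated-convergence argument to average this over the law of $(\QQ^1,\QQ^2)$, and this is precisely the place where the assumption $\sum_j j^2\alpha_j<+\infty$ is indispensable---without finite positive mean for $\widetilde{\nu}_q$, the perimeter would systematically skip the target value $p$ and $\pi_q(p)$ would decay to zero.
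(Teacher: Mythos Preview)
Your proposal has a genuine gap at the step where you invoke ``the conditional independence of the two exploration processes given $(\QQ^1,\QQ^2)$''. This conditional independence is not available, and the paper says so explicitly: ``there is no reason a priori why $\MM^1_{\QQ^1}$ and $\MM^2_{\QQ^2}$ should be independent conditionally on $\QQ^1$ and $\QQ^2$''. The two roots $e_n^1,e_n^2$ are drawn independently, but they live on the \emph{same} random map $M_n$; even when the two explored regions are face-disjoint, their joint law is coupled through the complementary map and is not a product. So knowing only that each perimeter process hits $p$ with probability $c(\QQ^i)>0$ gives no lower bound on the probability that \emph{both} hit $p$. The paper's proof is designed precisely to avoid this issue: it first shows that each perimeter hits the \emph{interval} $[p,p+C]$ with probability $1-\eps/20$ (Lemma~\ref{lem_RW_hits_interval}), so that a union bound handles the joint event without any independence; then, conditionally on the two explored regions, it applies the Bounded ratio Lemma (Corollary~\ref{lem_BRL_boundaries}, item~2) on the finite map $M_n$ to show that the next $I_1+I_2\leq 2C$ peeling steps are all ``nice'' (gluing two boundary edges) with probability at least $\eta(\eps)>0$, bringing both perimeters down to exactly $p$. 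This conditional argument on $M_n$ is what replaces the independence you are assuming.

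A second issue: your renewal argument assumes the $\widetilde{\nu}_q$-walk has finite \emph{positive} mean, but $\cQ_f$ contains critical weight sequences ($\omega_q=1$), for which the drift is zero and classical renewal theory does not apply. The paper handles the critical case separately in Lemma~\ref{lem_RW_hits_interval} via a local limit theorem for the $3/2$-stable regime, showing that in fact the conditioned walk hits $p$ exactly with probability tending to $1$. Your sketch does not address this case.
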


We recall that we have used the Skorokhod theorem to couple the finite and infinite maps together, so the last event makes sense.

Here is why Proposition~\ref{prop_finding_regions_to_cut} seems reasonable: we know that conditionally on $(\QQ^1, \QQ^2)$, the perimeters of the explored region along a peeling exploration of $\MM^1_{\QQ^1}$ and $\MM^2_{\QQ^2}$ are random walks conditioned to stay positive. Moreover, since $\QQ^1, \QQ^2 \in \mathcal{Q}_f$, these random walks do not have a too heavy tail, so each of them have a reasonable chance of hitting exactly $p$. However, there is no reason a priori why $\MM^1_{\QQ^1}$ and $\MM^2_{\QQ^2}$ should be independent conditionally on $\QQ^1$ and $\QQ^2$, so it might be unlikely that \emph{both} processes hit $p$. Therefore, the sketch of the proof will be the following:
\begin{itemize}
\item
we fix a large constant $C>0$ ($C$ will be much smaller than $p$),
\item
we prove that both walks have a large probability to hit the interval $[p,p+C]$ before the explored volume exceeds $v_0(p)$ (Lemma~\ref{lem_RW_hits_interval}),
\item
once both perimeter processes around $e_n^1$ and $e_n^2$ in $M_n$ have hit $[p,p+C]$, we use the Bounded ratio Lemma (Lemma~\ref{lem_BRL}, item 2) to show that, with probability bounded from below by roughly $e^{-C}$, both perimeters fall to exactly $p$ in at most $C$ steps. This will prove the proposition with $\delta \approx \eps e^{-C}$ and $v_0=v_0(p)$.
\end{itemize}
The point of replacing $p$ by $[p,p+C]$ is to deal with events of large probability, so that we don't need any independence to make sure that two events simultaneously happen.

For this, consider the peeling exploration of $\MM^1_{\QQ^1}$ according to $\mathcal{A}$. We denote by $\sigma^{1,\infty}_{[p,p+C]}$ the first time at which the half-perimeter is in $[p,p+C]$ (this stopping time might be infinite). We define $\sigma^{1,n}_{[p,p+C]}$ (resp. $\sigma^{2, \infty}_{[p,p+C]}$, $\sigma^{2,n}_{[p,p+C]}$) as the analogue quantity for the exploration in $\M_n$ around $e_n^1$ (resp. in $\MM^2_{\QQ^2}$, in $\left( M_n, e_n^2 \right)$).

\begin{lem}\label{lem_RW_hits_interval}
We have
\[ \lim_{C \to +\infty} \liminf_{p \to +\infty} \P \left( \sigma_{[p,p+C]}^{1,\infty} <+\infty \right) =1.\]
\end{lem}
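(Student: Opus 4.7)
The plan is to condition on $\QQ^1$ and reduce the claim to a deterministic statement for each fixed $\q \in \cQ_f$. Recall that $\QQ^1 \in \cQ_f$ almost surely: this follows from Theorem~\ref{thm_weak_Markov_general} together with the assumption $\sum_j j^2\alpha_j < +\infty$, which guarantees that the degree of the root face of the local limit has finite expectation. Since
\[\P\bigl(\sigma^{1,\infty}_{[p,p+C]}<+\infty\bigr) \;=\; \E\bigl[\P\bigl(\sigma^{1,\infty}_{[p,p+C]}<+\infty \,\big|\, \QQ^1\bigr)\bigr],\]
Fatou's lemma applied in $p$ together with dominated convergence in $C$ (the conditional probability is nondecreasing in $C$ and bounded by $1$) reduces everything to showing that, for every deterministic $\q \in \cQ_f$,
\[\lim_{C\to+\infty}\liminf_{p\to+\infty}\P_{\q}\bigl(\sigma_{[p,p+C]}<+\infty\bigr)=1,\]
where $\P_{\q}$ denotes the law of a peeling exploration of $\MM_{\q}$.

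\textbf{Reduction to an overshoot estimate.} Under $\P_{\q}$, the half-perimeter process $(P_t)$ is the Doob transform with transitions~\eqref{peeling_transitions} and satisfies $P_t \to +\infty$ almost surely by Proposition 10.12 and Lemma 10.9 of~\cite{C-StFlour}. Define $T_p := \inf\{t: P_t\geq p\}$, which is therefore almost surely finite, and the overshoot $O_p := P_{T_p}-p$. Since $\{O_p \leq C\} \subseteq \{\sigma_{[p,p+C]}<+\infty\}$, it is enough to prove that the family $(O_p)_{p \geq 1}$ is tight under $\P_{\q}$ for each fixed $\q \in \cQ_f$.

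\textbf{Tightness of the overshoot.} The overshoot is bounded by the size of the last positive jump $P_{T_p}-P_{T_p-1}$. Combining~\eqref{eqn_walk_to_rootface} with the asymptotics of $h_p(\omega)$ recalled after~\eqref{eqn_defn_homega}, the assumption $\q \in \cQ_f$ translates into finiteness of the first moment of the positive part of $\widetilde{\nu}_{\q}$ when $\omega_{\q}>1$ (subcritical), and into finiteness of its $3/2$-moment when $\omega_{\q}=1$ (critical generic); in both cases $\widetilde{\nu}_{\q}([C,+\infty))\to 0$ as $C\to\infty$. In the subcritical regime, $h_p(\omega_{\q})$ has a positive finite limit, so the Doob transform becomes asymptotically trivial and $(P_t)$ is, for large $p$, close to an unconditioned $\widetilde{\nu}_{\q}$-walk with positive drift; classical renewal theory then yields a stationary overshoot distribution whose tail is comparable to $\sum_{k\geq C}\widetilde{\nu}_{\q}([k,+\infty))$ and hence vanishes as $C\to\infty$ thanks to the finite-mean hypothesis. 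In the critical regime, Theorem 10.1 of~\cite{C-StFlour} gives a scaling limit of $(P_t)$ to a $3/2$-stable Lévy process with \emph{no positive jumps} conditioned to stay positive, which forces the macroscopic overshoot to vanish; a direct one-step tail estimate using the transitions~\eqref{peeling_transitions}, together with the bound $h_{p+k}(1)/h_p(1) \leq \sqrt{1+k/p}$ and the $3/2$-moment of $\widetilde{\nu}_{\q}$, controls the microscopic contribution.

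\textbf{Main obstacle.} The delicate case is the critical one, where the perimeter walk has no positive drift and the renewal-theoretic overshoot arguments do not apply directly. The scaling limit of~\cite{C-StFlour} is essential: the absence of positive jumps in the limiting Lévy process is what guarantees that large overshoots are negligible on the macroscopic scale, while the $3/2$-moment bound on $\widetilde{\nu}_{\q}$ (which is the analytic content of $\q \in \cQ_f$ in the critical setting) ensures that individual jumps also remain tight.
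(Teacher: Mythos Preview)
Your reduction (conditioning on $\QQ^1$, Fatou/dominated convergence, splitting into the subcritical and critical cases) is exactly what the paper does, and your subcritical argument via renewal theory on the unconditioned $\widetilde{\nu}_{\q}$-walk matches the paper's proof essentially verbatim.

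The critical case, however, is where you diverge from the paper, and your argument there has a genuine gap. You claim tightness of the overshoot $O_p = P_{T_p}-p$, but the two ingredients you invoke do not combine to give this. The scaling-limit argument only yields $O_p = o(p)$ in probability, not $O_p = O(1)$. Your ``one-step tail estimate'' would control $\P(O_p > C \mid P_{T_p-1}=p-k)$ for each fixed $k$, but to conclude tightness of $O_p$ you would also need to control the \emph{undershoot} $p - P_{T_p-1}$, i.e.\ to show that the process is typically close to $p$ just before crossing it. Nothing you wrote addresses this, and it is not a triviality: in the critical regime the conditioning is degenerate, the process is not a random walk, and the ladder-height increments of $P$ are not i.i.d., so renewal-theoretic arguments do not apply directly. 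Moreover, for a jump distribution with only a $3/2$-moment, the conditional overshoot given a jump exceeds $k$ typically grows with $k$, so without undershoot control the argument breaks down.

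The paper sidesteps this issue entirely: rather than proving overshoot tightness, it proves the stronger statement that $\P(P \text{ hits } p \text{ exactly}) \to 1$. The route is: (i) the scaling limit gives $P_{\chi_p} - p \leq \eps f(p)$ for some $f(p)=o(p)$; (ii) over the next $f(p)^{3/2}$ steps, the Radon--Nikodym derivative~\eqref{critical_Radon_Nikodym} of $P$ with respect to the unconditioned walk $X$ tends to $1$, yielding a coupling; (iii) a local limit theorem for $X$ (in the domain of attraction of a $3/2$-stable law) shows that $X$ started from $[0,\eps f(p)]$ hits $0$ before time $f(p)^{3/2}$ with probability $\geq 1-\delta(\eps)$. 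This argument is more involved than a one-line tail bound, and it is genuinely needed: the critical case is singled out in the paper precisely because the overshoot approach does not go through cleanly.
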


\begin{proof}
We know that $\QQ^1 \in \cQ_f$ a.s.. Hence, it is enough to prove that, for any $\q \in \cQ_f$, we have
\begin{equation}\label{eqn_RW_hits_interval_infinite}
\lim_{C \to +\infty} \liminf_{p \to +\infty} \P \left( \sigma^{1,\infty}_{[p,p+C]}<+\infty \big| \mathbf{\QQ}=\mathbf{q} \right) =1.
\end{equation}
The lemma then follows by taking the expectation and using Fatou's lemma. Conditionally on $\QQ^1=\mathbf{q}$, the law of $\MM^1_{\QQ^1}$ is the law of $\MM_\q$. In particular, the process $P$ describing the half-perimeter of the explored region has the law of $X$ conditioned to stay positive, where $X$ is a random walk with step distribution $\widetilde{\nu}_{\q}$.

To prove \eqref{eqn_RW_hits_interval_infinite}, we distinguish two cases: the case where $\mathbf{q}$ is critical, and the case where it is not. We start with the second one. Then by the results of Section~\ref{subsec_IBPM}, the walk $X$ satisfies $\E \left[ |X_1| \right]<+\infty$ and $\E \left[ X_1 \right]>0$, so the conditioning to stay positive is non degenerate. Therefore, it is enough to prove
\begin{equation}\label{eqn_RW_hits_interval_conditioned}
\lim_{C \to +\infty} \liminf_{p \to +\infty} \P \left( \mbox{$X$ hits $[p,p+C]$} \right) =1.
\end{equation}
This follows from standard renewal arguments: if we denote by $(H_i)_{i \geq 0}$ the ascending ladder heights of $P$, then $(H_i)$ is a renewal set with density $\E[H_1]= \frac{1}{\E[X_1]}>0$. Let $I_p$ be such that $H_{I_p} < p \leq H_{I_{p+1}}$. Then the law of $H_{I_{p+1}}-H_{I_p}$ converges as $p \to +\infty$ to the law of $H_1$ biased by its size, so
\begin{align*}
\P \left( \mbox{$P$ does not hit $[p,p+C]$} \right) &\leq \P \left( H_{I_{p+1}} \notin [p,p+C] \right)\\& \leq \P \left( H_{I_{p+1}}-H_{I_p} > C \right) \xrightarrow[p \to +\infty]{} \frac{\E \left[ H_1 \mathbbm{1}_{H_1>C} \right]}{\E[H_1]},
\end{align*} 
and this last quantity goes to $0$ as $C \to +\infty$.

We now tackle the case where $\mathbf{q}$ is critical, which by the results in the end of~\ref{subsec_IBPM} implies $\sum_{i \geq 1} i^{3/2} \widetilde{\nu}_{\q}(i)<+\infty$. This case is more complicated since renewal arguments are not available anymore, and the conditioning is now degenerate, so absolute continuity arguments between $P$ and $X$ become more elaborate. On the other hand, the growth is now slower and the nonconditionned walk $X$ with step distribution $\widetilde{\nu}_{\q}$ is now recurrent, so it seems more difficult to jump over a large interval. And indeed, we will prove
\[ \lim_{p \to +\infty} \P \left( \mbox{$P$ hits $p$} \right) =1,\]
which is a much stronger version of \eqref{eqn_RW_hits_interval_conditioned}.

For this, our strategy will be the following: let $\chi_p$ be the first time at which $P$ is at least $p$.
\begin{itemize}
\item
The scaling limit of $P$ is a process with no positive jump, so ${P_{\chi_p}=p+o(p)}$ in probability as $p \to +\infty$.
\item
Between time $\chi_p$ and $\tau_p+o(p^{2/3})$, the process $P$ looks a lot like a nonconditioned random walk $X$ started from $P_{\tau_p}$.
\item
If $X$ is started from $p+o(p)$, the time it takes to first hit $P$ is $o(p^{2/3})$. This is a stronger version of the recurrence of $X$, and will follow from a local limit theorem for random walks.
\end{itemize}
Let us now be more precise. By Theorem 3 of \cite{Bud15} (see also \cite[Chapter 10]{C-StFlour}), we have the convergence
\[ \left( \frac{P_{nt}}{n^{2/3}} \right)_{t \geq 0} \xrightarrow[n \to +\infty]{(d)} \left( b_{\mathbf{q}} S^+_t \right)_{t \geq 0} \]
for the Skorokhod topology, where $S^+$ is a $3/2$-stable Lévy process with no positive jump conditioned to stay positive, and $b_{\mathbf{q}}>0$ (the precise value will not matter here). Since this limiting process has no positive jump, we have $P_{\tau_p}-p=o(p)$ in probability. Hence, there is a deterministic function $f(p)$ with $\frac{f(p)}{p} \to 0$ when $p \to +\infty$ such that, for any $\eps>0$,
\[ \P \left( P_{\tau_p}-p \geq \eps f(p) \right) \xrightarrow[p \to +\infty]{} 0. \]
We now fix $\eps>0$, and condition on $P_{\tau_p}=p'$ for some $p \leq p' \leq p+\eps f(p)$. We claim that then $\left( P_{\tau_p+i}-p' \right)_{0 \leq i \leq f(p)^{3/2}}$ can be coupled with $(X_i)_{0 \leq i \leq f(p)^{3/2}}$ in such a way that both processes are the same with probability $1-o(1)$. For this, recall from~\eqref{peeling_transitions} that $P$ can be described as a Doob $h$-transform of $X$, where $h$ is given by \eqref{eqn_defn_homega}. Hence, the Radon--Nikodym derivative of the first process with respect to the second is
\begin{equation}\label{critical_Radon_Nikodym}
\frac{h_{p'+X_{f(p)^{3/2}}}(1)}{h_{p'}(1)}.
\end{equation}
Since $\frac{X_{f(p)^{3/2}}}{f(p)}$ converges in distribution, we have $\frac{X_{f(p)^{3/2}}}{p} \to 0$ in probability. By using the fact that $\frac{p'}{p} \to 0$ uniformly in $p'$ and that $h_1(x)\sim c\sqrt{x}$ for some $c>0$ (see Section~\ref{subsec_IBPM}), we conclude that \eqref{critical_Radon_Nikodym} goes to $1$ as $p \to +\infty$, uniformly in $p' \in \left[ p,p+\eps f(p) \right]$. This proves our coupling claim. Note that under this coupling, the time where $P$ hits exactly $p$ is $\tau_p$ plus the time where $X$ hits $p-p'$.

We will now show that, if $p$ is large enough, for any $k \in [-\eps f(p),0]$, we have
\begin{equation}\label{eqn_critical_quantitative_recurrence}
\P \left( \mbox{$X$ hits $k$ before time $f(p)^{3/2}$} \right) \geq 1-\delta(\eps),
\end{equation}
where $\delta(\eps) \to 0$ as $\eps \to 0$. Together with our coupling result, this will imply that the probability for $P$ to hit $p$ before time $\tau_p+f(p)^{3/2}$ is at least $1-\delta (\eps)-o(1)$ as $p \to +\infty$. Since this is true for any $\eps>0$, this will conclude the proof of Lemma~\ref{lem_RW_hits_interval} in the critical case.

The proof of \eqref{eqn_critical_quantitative_recurrence} relies on the Local Limit Theorem (this is e.g. Theorem 4.2.1 of \cite{IL71}). This theorem (in the case $\alpha=3/2$) states that
\[ \sup_{k \in \Z} \left| n^{2/3} \P (X_n=k)-g \left( \frac{k}{n^{2/3}} \right) \right| \xrightarrow[n \to +\infty]{} 0, \]
where $g$ is a continuous function (the density of a $3/2$-stable variable).
On the other hand, let us denote $t=f(p)^{3/2}$. By the strong Markov property, for all $k \in \Z$, we have
\begin{align*}
\E_0 \left[ \sum_{i=0}^{t} \mathbbm{1}_{X_i=k} \right] &\leq \P_0 \left( \mbox{$X$ hits $k$ before time $t$} \right) \E_k \left[  \sum_{i=0}^{t} \mathbbm{1}_{X_i=k} \right]\\
&= \P_0 \left( \mbox{$X$ hits $k$ before time $t$} \right) \E_0 \left[  \sum_{i=0}^{t} \mathbbm{1}_{X_i=0} \right].
\end{align*}
Therefore, using the local limit theorem, we can write, for $-\eps f(p) \leq k \leq 0$ and $p$ large (the $o$ terms are all uniform in $k$):
\begin{align*}
\P_0 \left( \mbox{$X$ hits $k$ before $t$} \right) &\geq \frac{\sum_{i=0}^{t} \P_0 \left( X_i=k \right)}{\sum_{i=0}^{t} \P_0 \left( X_i=0 \right)}\\
&= \frac{\sum_{i=1}^{t} \left( \frac{1}{i^{2/3}} g \left( \frac{k}{i^{2/3}} \right) + o \left( \frac{1}{i^{2/3}} \right) \right)}{1+\sum_{i=1}^{t} \left( \frac{1}{i^{2/3}} g (0) + o \left( \frac{1}{i^{2/3}} \right) \right)}\\
&\geq \frac{-\eps t^{1/3} + \sum_{i=\eps t}^{t} \left( \frac{1}{i^{2/3}} \min_{[-\eps^{1/3},0]} g +o \left( \frac{1}{i^{2/3}} \right)\right)}{3t^{1/3} g(0) + \eps t^{1/3}}\\
&\geq \frac{-2\eps t^{1/3}+ \left( 3t^{1/3}-3\eps^{1/3} t^{1/3}\right) \min_{[-\eps^{1/3},0]} g}{\left( 3g(0)+\eps \right)t^{1/3}}\\
&= \frac{-2\eps +3(1-\eps^{1/3}) \min_{[-\eps^{1/3},0]} g}{3g(0)+\eps},
\end{align*}
where the third line uses that, for any index $i \geq \eps t$, we have
\[0 \geq \frac{k}{i^{2/3}} \geq -\frac{\eps f(p)}{(\eps t)^{2/3}}=-\eps^{1/3}. \]
We obtain a lower bound that goes to $1$ as $\eps \to 0$, so this proves \eqref{eqn_critical_quantitative_recurrence}, and Lemma~\ref{lem_RW_hits_interval}.
\end{proof}

\begin{proof}[Proof of Proposition~\ref{prop_finding_regions_to_cut}]
The subtlety in the proof is that we would like to say something about the finite maps $M_n$ conditionally on the values of $\QQ^1$ and $\QQ^2$, but $\QQ^1$ and $\QQ^2$ are defined in terms of the infinite limits. However, we can condition on the maps explored at the time when the perimeters of the explored parts hit $[p,p+C]$ for the first time. Then Proposition~\ref{prop_q_as_limit} guarantees that from these explored parts, we can get good approximations of $\QQ^1$ and $\QQ^2$ if $p$ is large enough.

In this proof, we will use a shortened notation for our peeling explorations. For $i \in \{1,2\}$ and $t \geq 0$, we will write $\expl^{n,i}_t=\expl_t^{\mathcal{A}} \left( M_n, e_n^i \right)$ and $\expl^{\infty,i}_t=\expl_t^{\mathcal{A}} \left( \MM^i_{\QQ^i} \right)$.

We fix $\eps>0$. By Lemma~\ref{lem_RW_hits_interval}, let $C$ be a constant (depending only on $\eps$) such that
\[ \liminf_{p \to +\infty} \P \left( \sigma_{[p,p+C]}^{1,\infty} <+\infty \right) > 1-\frac{\eps}{20}.\]
For $p$ large enough (where "large enough" may depend on $\eps$), there is $v_0=v_0(p)$ such that
\begin{equation}\label{eqn_sigma_reasonable}
\P \left( \sigma^{\infty,1}_{[p,p+C]} \leq v_0 \mbox{ and } \left| \expl^{\infty,1}_{\sigma^{\infty,1}_{[p,p+C]}} \right| \leq v_0 \right)>1-\frac{\eps}{20},
\end{equation}
where $|m|$ is the number of edges of a map $m$.
On the other hand, let us fix $j \in \N^* \cup \{\infty\}$. Proposition~\ref{prop_q_as_limit} provides a function $\widetilde{r}_j$ on the set of finite maps with a hole such that $\widetilde{r}_j(\expl^{\infty, 1}_t) \to r_j(\QQ^1)$ almost surely as $t \to +\infty$. Let $\eta<1$ be a small constant, which will be fixed later and will only depend on $\eps$. For $p$ large enough, we have
\begin{equation}\label{eqn_fq_approximation}
\P \left( \sigma^{\infty,1}_{[p,p+C]} \leq v_0(p) \mbox{ but } \left| \widetilde{r}_j \left( \expl^{\infty, 1}_{\sigma^{\infty,1}_{[p,p+C]}} \right) - r_j(\QQ^1) \right| \geq \frac{\eps}{8} \right) < \eta \frac{\eps}{20}.
\end{equation}
From now on, we take $p$ large enough so that both \eqref{eqn_sigma_reasonable} and \eqref{eqn_fq_approximation} hold. By almost sure local convergence and \eqref{eqn_sigma_reasonable}, for $n$ large enough (where "large enough" may depend on $\eps$ and $p$), we have
\[ \P \left( \sigma^{n,1}_{[p,p+C]}, \sigma^{n,2}_{[p,p+C]} \leq v_0 \mbox{ and } \left| \expl^{n,1}_{\sigma^{n,1}_{[p,p+C]}} \right|, \left| \expl^{n,2}_{\sigma^{n,2}_{[p,p+C]}} \right| \leq v_0 \right) > 1-\frac{\eps}{10}.\]
By the assumption that $|r_j(\QQ^1)-r_j(\QQ^2)|>\eps$ with probability at least $\eps$ and by \eqref{eqn_fq_approximation}, we deduce that
\[ \P \left(\begin{array}{c} \sigma^{n,1}_{[p,p+C]}, \sigma^{n,2}_{[p,p+C]} \leq v_0 \mbox{ and } \left| \expl^{n,1}_{\sigma^{n,1}_{[p,p+C]}} \right|, \left| \expl^{n,2}_{\sigma^{n,2}_{[p,p+C]}} \right| \leq v_0\\ \mbox{and } \left| \widetilde{r}_j \left( \expl^{\infty, 1}_{\sigma^{\infty,1}_{[p,p+C]}} \right) - \widetilde{r}_j \left( \expl^{\infty, 2}_{\sigma^{\infty,2}_{[p,p+C]}} \right) \right| \geq \frac{3}{4}\eps\end{array} \right) > \frac{4}{5} \eps.\]

Note that if this last event occurs but the two regions $\expl^{n,1}_{\sigma^{n,1}_{[p,p+C]}}$ and $\expl^{n,2}_{\sigma^{n,2}_{[p,p+C]}}$ have a common face, then the dual graph distance between the two roots is bounded by $2v_0$. However, by Proposition~\ref{prop_tightness_dloc_univ}, the volume of the ball of radius $2v_0$ around $e^1_n$ is tight as $n \to +\infty$, so the probability that this happens goes to $0$ as $n \to +\infty$. Hence, for $n$ large enough:
\begin{equation}\label{eqn_two_distinct_regions}
\P \left( \begin{array}{c}\expl^{n,1}_{\sigma^{n,1}_{[p,p+C]}}, \expl^{n,2}_{\sigma^{n,2}_{[p,p+C]}} \mbox{ are well-defined, face-disjoint, have at}\\\mbox{most $v_0$ edges and } \left| \widetilde{r}_j \left( \expl^{\infty, 1}_{\sigma^{\infty,1}_{[p,p+C]}} \right) - \widetilde{r}_j \left( \expl^{\infty, 2}_{\sigma^{\infty,2}_{[p,p+C]}} \right) \right| \geq \frac{3}{4}\eps \end{array} \right) > \frac{4}{5} \eps.
\end{equation}

Now assume that this last event occurs and condition on the $\sigma$-algebra $\mathcal{F}_{\sigma}$ generated by the pair $\left( \expl^{n,1}_{\sigma^{n,1}_{[p,p+C]}}, \expl^{n,2}_{\sigma^{n,2}_{[p,p+C]}} \right)$ of explored regions. Then, let $I_1, I_2 \in [0,C]$ be such that the perimeters of the two explored regions are $2p+2I_1$ and $2p+2I_2$. Then the complementary map is a uniform map of the $\left( 2p+2I_1, 2p+2I_2 \right)$-gon with genus $g_n$ and face degrees given by $\mathbf{\widetilde{F}}^n$ as follows. If $F_j^n$ is the number of internal faces of degree $2j$ in $\expl^{n,1}_{\sigma^{n,1}_{[p,p+C]}} \cup \expl^{n,2}_{\sigma^{n,2}_{[p,p+C]}}$, then $\widetilde{F}_j^n=f_j^n-F_j^n$. We now perform $I_1$ peeling steps according to $\mathcal{A}$ around $\expl^{n,1}_{\sigma^{n,1}_{[p,p+C]}}$, followed by $I_2$ peeling steps according to $\mathcal{A}$ around $\expl^{n,2}_{\sigma^{n,2}_{[p,p+C]}}$. We call a peeling step \emph{nice} if it consists of gluing together two boundary edges, which decreases the perimeter by $2$. The number of possible values of the map $\M_n \backslash \left( \expl^{n,1}_{\sigma^{n,1}_{[p,p+C]}} \cup \expl^{n,2}_{\sigma^{n,2}_{[p,p+C]}} \right)$ is
\[\beta_{g}^{(p+I_1,p+I_2)}(\mathbf{\widetilde{F}}^n).\]
On the other hand, if the $I_1+I_2$ additional peeling steps are all good and the regions around $e_n^1$ and $e_n^2$ are still disjoint after these steps, the number of possible complementary maps is
\[\beta_{g}^{(p,p)} (\mathbf{\widetilde{F}}^n). \]
It follows that
\[ \P \left( \mbox{the $I_1+I_2$ peeling steps are all nice} | \mathcal{F}_{\sigma} \right) = \frac{\beta_{g}^{(p,p)} (\mathbf{\widetilde{F}}^n)}{\beta_{g}^{(p+I_1,p+I_2)}(\mathbf{\widetilde{F}}^n)}. \]
Since $|\mathbf{F}^n|$ is bounded by $v_0(p)$, for $n$ large enough (where "large enough" may depend on $p$), the Bounded ratio Lemma applies to $\mathbf{\widetilde{F}}^n$. Therefore, by the Bounded ratio Lemma (more precisely, by Corollary~\ref{lem_BRL_boundaries}, item 2), the last ratio is always larger than a constant $\eta$ depending on $\eps$. More precisely $\eta$ may depend on $I_1$ and $I_2$, but $0 \leq I_1, I_2 \leq C(\eps)$, so $(I_1, I_2)$ can take finitely many values given $\eps$, so $\eta$ only depends on $\eps$ (and not on $p$). This is the value of $\eta$ that we choose for \eqref{eqn_fq_approximation}. For $i \in \{1,2\}$, we write $\tau_p^{n,i}=\sigma_{[p,p+C]}^{n,i}+I_i$. If the last $I_1+I_2$ peeling steps are nice, then after they are performed, both explored regions have perimeter $2p$. Therefore, it follows from the last computation and from \eqref{eqn_two_distinct_regions} that, for $n$ large enough, we have
\[ \P \left( \begin{array}{c}\expl^{n,1}_{\tau_p^{n,1}} \mbox{ and } \expl^{n,2}_{\tau_p^{n,2}} \mbox{ are both face-disjoint, have perimeter $2p$}\\\mbox{and volume $\leq v_0$, and } \left| \widetilde{r}_j \left( \expl^{\infty, 1}_{\sigma^{\infty,1}_{[p,p+C]}} \right) - \widetilde{r}_j \left( \expl^{\infty, 2}_{\sigma^{\infty,2}_{[p,p+C]}} \right) \right| \geq \frac{3}{4}\eps\end{array} \right) \geq \frac{4}{5} \eps \eta.\]
Finally, we can use \eqref{eqn_fq_approximation} to replace back the approximations $\widetilde{r}_j \left( \expl^{\infty, i}_{\sigma^{\infty,i}_{[p,p+C]}} \right)$ by $r_j(\QQ^i)$. We obtain
\[ \P \left( \begin{array}{c}
\expl^{1,n}_{\tau_p^{n,i}} \mbox{ and } \expl^{2,n}_{\tau_p^{n,2}} \mbox{ are both face-disjoint, have perimeter $2p$ }\\ \mbox{and $\leq v_0$ edges, and } \left| r_j(\QQ^1)-r_j(\QQ^2) \right| \geq \frac{1}{2}\eps
\end{array} \right) \geq \frac{3}{5} \eps \eta.\]
On this event, we have $(\M_n, e_n^1, e_n^2) \in \mathscr{S}_{n,p,v_0}$. Therefore, this concludes the proof of the proposition, with $\delta=\frac{3}{5} \eta \eps$.
\end{proof}

\subsection{The two holes argument: proof of Proposition~\ref{prop_two_holes_argument} and Corollary~\ref{corr_main_minus_monotonicity}}
\label{subsec_two_holes}

Now that we have Proposition~\ref{prop_finding_regions_to_cut}, the proof of Proposition~\ref{prop_two_holes_argument} is basically the same as two holes argument in~\cite{BL19} (i.e. the proof of Proposition 18). Therefore, we will not write the argument in full details, but only sketch it. We first stress two differences:
\begin{itemize}
\item
The first one is that the involution obtained by (possibly) swapping the two explored parts is now non-identity on a relatively small set of maps (but still on a positive proportion). The only consequence is that in the end, instead of contradicting the almost sure convergence of Proposition~\ref{prop_q_as_limit} on an event of probability $\eps$, we will contradict it on an event of probability $\delta<\eps$, where $\delta$ is given by Proposition~\ref{prop_finding_regions_to_cut}.
\item
The other difference is that in \cite{BL19}, the only observable we were using to approximate the Boltzmann weights was the ratio between perimeter and volume, which corresponds to our function $r_{\infty}$. Here we also need to deal with the functions $r_j$ for $j \in \N^*$. For this, we simply need the observation that, if $q$ is much larger than $p$, the proportion of peeling steps before $\tau_q$ where we discover a new face of perimeter $2j$ depends almost only on the part of the exploration between $\tau_p$ and $\tau_q$.
\end{itemize}

\begin{proof}[Sketch of proof of Proposition~\ref{prop_two_holes_argument}]
Fix $j \in \N^* \cup \{ \infty\}$. Let $\eps>0$, and assume
\begin{equation}\label{eqn_q1_ne_q2}
\P \left( |r_j(\QQ^1)-r_j(\QQ^2)|>\eps \right)>\eps.
\end{equation}
Let $\delta>0$ be given by Proposition~\ref{prop_finding_regions_to_cut}. Consider $p$ large (depending on $\eps$) and let $v_0$ be given by Proposition~\ref{prop_finding_regions_to_cut}. We assume that the peeling algorithm $\mathcal{A}$ that we work with has the property that the edge peeled at time $t$ for $t \geq \tau_p$ only depends on $\expl_t^{\mathcal{A}}(m) \backslash \expl_{\tau_p}^{\mathcal{A}}(m)$ (see~\cite{BL19} for a more careful description of this property). This is not a problem since Proposition~\ref{prop_finding_regions_to_cut} is independent of the choice of $\mathcal{A}$.

We define an involution $\Phi_n$ on the set of bi-rooted maps with genus $g_n$ and face degrees $\ff^n$ as follows: if $m \in \mathscr{S}_{n,p,v_0}$, then $\Phi_n(m)$ is obtained from $m$ by swapping the two regions $\expl^{1,n}_{\tau_p^{1,n}}$ and $\expl^{2,n}_{\tau_p^{2,n}}$. If $m \notin \mathscr{S}_{n,p,v_0}$, then $\Phi_n(m)=m$. Note that $\Phi_n(\M_n,e^1_n,e^2_n)$ is still uniform on bi-rooted maps with prescribed genus and face degrees. This map rooted at $e_1^n$ converges to a map $\widehat{\M}$, with either
\[\widehat{M}=\M^1_{\QQ^1}\]
or
\[\expl^{\mathcal{A}}_{\tau^1_p}(\widehat{M}) = \expl^1_{\tau^1_p} \quad \mbox{ and } \quad \widehat{M} \backslash \expl^{\mathcal{A}}_{\tau^1_p}(\widehat{M}) = M^2_{\QQ^2} \backslash \expl^2_{\tau^2_p}. \]

Moreover, by Proposition~\ref{prop_finding_regions_to_cut}, if $p$ has been chosen large enough, then with probability at least $\delta$, we are in the second case and furthermore $|r_j(\QQ^1)-r_j(\QQ^2)|>\frac{\eps}{2}$. Now assume that this last event occurs and that $q \gg p \gg 1$. Then we have
\begin{equation}\label{eqn_rtilde_and_r_badcase}
\widetilde{r}_j \left( \expl^{\mathcal{A}}_{\tau^1_p}(\widehat{M}) \right) \approx r_j(\QQ^1) \quad \mbox{ and } \quad \widetilde{r}_j \left( \expl^{\mathcal{A}}_{\widehat{\tau}_q}(\widehat{M}) \right) \approx r_j(\QQ^2),
\end{equation}
where $\widehat{\tau}_q$ is the first step where the perimeter of the explored part of $\widehat{\M}$ is at least $q$. The approximations of \eqref{eqn_rtilde_and_r_badcase} can be made arbitrarily precise if $p$ and $q$ were chosen large enough, so for $p$ large enough and $q$ large enough (depending on $p$), we have
\begin{equation}\label{eqn_rj_different}
\P \left( \left| \widetilde{r}_j \left( \expl^{\mathcal{A}}_{\tau^1_p}(\widehat{M})  \right) - \widetilde{r}_j \left( \expl^{\mathcal{A}}_{\widehat{\tau}_q}(\widehat{M}) \right) \right| > \frac{\eps}{4} \right) \geq \delta.
\end{equation}
On the other hand $\widehat{M}$ is a local limit of finite uniform maps, so by Theorem~\ref{thm_main_more_general} it has to be a mixture of Boltzmann infinite planar maps. But then~\eqref{eqn_rj_different} contradicts the almost sure convergence of Proposition~\ref{prop_q_as_limit}, so~\eqref{eqn_q1_ne_q2} cannot be true. Therefore, we must have $r_j(\QQ^1)=r_j(\QQ^2)$ a.s.. Since this is true for all $j \in \N^* \cup \{\infty\}$, by the last point of Proposition~\ref{prop_q_as_limit}, we have $\QQ^1=\QQ^2$, which concludes the proof.
\end{proof}

The passage from Proposition~\ref{prop_two_holes_argument} to Corollary~\ref{corr_main_minus_monotonicity} does also not require any new idea compared to~\cite{BL19}, so we do not write it down completely.

\begin{proof}[Sketch of the proof of Corollary~\ref{corr_main_minus_monotonicity}]
The proof is basically the same as the end of the proof of the main theorem in \cite{BL19}. The only difference is that we could not prove directly that $d(\q)$ and $\left( a_j(\q) \right)_{j \geq 1}$ are sufficient to characterize the weight sequence $\q$, so the result that we obtain is only Corollary~\ref{corr_main_minus_monotonicity} and not Theorem~\ref{univ_main_thm}.

More precisely, by the Euler formula, any map with genus $g_n$ and face degrees $\ff^n$ has exactly $|\ff^n|$ edges and $|\ff^n|-\sum_{j \geq 1} f_j^n +2-2g_n$ vertices, so, by invariance of $M_n$ under uniform rerooting, we have
\[ \E \left[ \frac{1}{\deg_{M_n}(\rho)} \right] = \frac{|\ff^n|-\sum_{j \geq 1} f_j^n +2-2g_n}{2|\ff^n|} \xrightarrow[n \to +\infty]{} \frac{1}{2} \left( 1-2\theta - \sum_j \alpha_j \right). \]
By the exact same argument as in \cite{BL19}, we deduce from Proposition~\ref{prop_two_holes_argument} that if $M_n \to \MM_{\QQ}$, then $d(\QQ)=\frac{1}{2} \left( 1-2\theta - \sum_j \alpha_j \right)$ a.s.. Similarly, by invariance under rerooting, for all $j \geq 1$, we have
\[ \P \left( \mbox{the root face of $M_n$ has degree $2j$} \right) = \frac{2j f_j^n}{2n} \xrightarrow[n \to +\infty]{} j \alpha_j. \]
By the same argument as for the mean vertex degree, we obtain $a_j(\mathbf{Q})=\alpha_j$ a.s..
\end{proof}

\subsection{Monotonicity of the mean inverse degree}
\label{subsec_last_step}

To conclude the proof of the main theorem, given Corollary~\ref{corr_main_minus_monotonicity}, it is enough to show that if $\sum_{j \geq 1} j^2 a_j(\q) <+\infty$, the weight sequence $\q$ is completely determined by $\left( a_j(\q) \right)_{j \geq 1}$ and $d(\q)$. For all this subsection, we fix a sequence $(\alpha_j)_{j \geq 1}$ such that $\sum_j j \alpha_j =1$ and $\sum_j j^2 \alpha_j <+\infty$ and $\alpha_1<1$. We recall from Proposition~\ref{prop_third_parametrization} that the weight sequences $\q$ such that $a_j(\q)=\alpha_j$ for all $j \geq 1$ form a one-parameter family $\left( \q^{(\omega)} \right)_{\omega \geq 1}$ given by
\[ q_j^{(\omega)}=\frac{j \alpha_j}{\omega^{j-1}h_j(\omega)} c_{\q^{(\omega)}}^{-(j-1)}, \quad \mbox{where} \quad c_{\q^{(\omega)}} = \frac{4}{1-\sum_{i \geq 1} \frac{1}{4^{i-1}} \binom{2i-1}{i-1} \frac{i \alpha_i}{\omega^{i-1} h_i(\omega)}}. \]
To prove Theorem~\ref{univ_main_thm}, it is sufficient to prove the following.

\begin{prop}\label{prop_monotonicity_deg}
Under the assumption $\sum_{j \geq 1} j^2 \alpha_j<+\infty$, the function $\omega \to d(\q^{(\omega)})$ is strictly decreasing.
\end{prop}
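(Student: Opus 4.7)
\emph{Plan.} My plan is to prove strict monotonicity of $D(\omega) := d(\q^{(\omega)})$ in three stages: continuity, a stochastic comparison of peeling processes yielding weak monotonicity, and an upgrade to strict monotonicity.

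First I would check that $D$ is continuous on $[1,+\infty)$. By Proposition~\ref{prop_third_parametrization}, $\q^{(\omega)}$ is given by an explicit formula continuous in $\omega$, and the assumption $\sum_j j^2 \alpha_j < +\infty$ ensures that all the relevant series (for $c_{\q^{(\omega)}}$, for $\nu_{\q^{(\omega)}}$ and for the partition functions $W_p(\q^{(\omega)})$) converge uniformly on compact subsets of $[1,+\infty)$. Hence the peeling transition kernels of \eqref{peeling_transitions} depend continuously on $\omega$, and combined with Lemma~\ref{lem_unif_volume} to control the volume-perimeter growth uniformly in $\omega$, one obtains continuity in $\omega$ of the distribution of $\MM_{\q^{(\omega)}}$ for the local topology. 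Since $1/\deg(\rho) \in [0,1]$ is bounded continuous, bounded convergence then yields continuity of $D$. The boundary behaviour $D(1)>0$ (the critical IBPM has finite vertex degrees) and $D(\omega)\to 0$ as $\omega\to+\infty$ has already been established in the proof of Theorem~\ref{thm_prametrization_rootface}.

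The core of the proof is then a monotone coupling of the peeling processes around $\rho$ at two parameters $\omega_1<\omega_2$. The crucial observation is that from Proposition~\ref{prop_third_parametrization} one computes
\[
\widetilde{\nu}_{\q^{(\omega)}}(j-1) \;=\; q_j^{(\omega)}(c_{\q^{(\omega)}}\omega)^{j-1} \;=\; \frac{j\alpha_j}{h_j(\omega)} \qquad (j\ge 1),
\]
and $h_j(\omega)=\sum_{i=0}^{j-1}(4\omega)^{-i}\binom{2i}{i}$ is strictly decreasing in $\omega$ for $j\geq 2$. Consequently the probability of each positive jump in the perimeter process is strictly increasing in $\omega$, and the total mass on $\{i\leq -1\}$ is strictly decreasing. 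Using the algorithm of Lemma~\ref{lem_root_degree_is_finite_univ} (always peel the edge to the left of $\rho$), I would construct a coupling of the filled-in peeling explorations of $\MM_{\q^{(\omega_1)}}$ and $\MM_{\q^{(\omega_2)}}$ in which the $\omega_2$-exploration reveals, at each step, at least as many new edges incident to $\rho$ as the $\omega_1$-exploration and delays strictly the time at which $\rho$ is swallowed. This yields $\deg_{\MM_{\q^{(\omega_1)}}}(\rho) \preceq_{\mathrm{st}} \deg_{\MM_{\q^{(\omega_2)}}}(\rho)$, hence $D(\omega_1)\ge D(\omega_2)$; the strict inequality then follows because already at the first peeling step, for any $j\geq 2$ with $\alpha_j>0$, the probability of discovering a face of half-degree $j$ is strictly larger for $\omega_2$ (compatibly with Lemma~\ref{lem_qj_is_monotone}), giving a positive-probability event on which the two degrees strictly differ.

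The hard part will be constructing the coupling in a way that respects both the Doob $h$-transform structure of the perimeter process (whose harmonic functions $h_p(\omega_1), h_p(\omega_2)$ are different) and the filled-in steps (whose Boltzmann fill-in distributions also depend on $\omega$). My strategy would be to first couple the \emph{unconditioned} $\widetilde{\nu}_{\q^{(\omega)}}$-random walks monotonically via the standard quantile coupling, which requires verifying the stochastic order $\widetilde{\nu}_{\q^{(\omega_1)}}\preceq_{\mathrm{st}} \widetilde{\nu}_{\q^{(\omega_2)}}$; the non-negative side is automatic from the displayed identity, while on the negative side one uses the explicit formula $\widetilde{\nu}_{\q^{(\omega)}}(-k)=2W_{k-1}(\q^{(\omega)})(c_{\q^{(\omega)}}\omega)^{-k}$ together with the overall probability conservation $\sum_i\widetilde{\nu}=1$ and the monotonicity of the positive tail. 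The coupling is then transferred to the $h$-transformed chains by an absolute-continuity argument, with a parallel coupling of the filled-in regions provided by a Boltzmann-map comparison (the $\q^{(\omega_2)}$-Boltzmann fill-in can be embedded into the $\q^{(\omega_1)}$-one). If this coupling proves intractable in full generality, an alternative route is to expand $D(\omega)=\E[1/\deg(\rho)]$ as an absolutely convergent sum over peeling histories of the root-peeling algorithm and verify monotonicity term by term using the formula for $\widetilde{\nu}_{\q^{(\omega)}}(j-1)$ displayed above.
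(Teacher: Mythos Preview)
Your proposal attempts a direct coupling argument, whereas the paper explicitly states (right after the statement of the proposition) that \emph{``Since we were not able to establish this result by a direct argument, we will prove it using Corollary~\ref{corr_main_minus_monotonicity}.''} The authors then take a completely indirect route: they build random face degree sequences $\FF^k$, use the already-proved local limit results (Theorem~\ref{thm_main_more_general} and Corollary~\ref{corr_main_minus_monotonicity}) on finite maps $M_{\FF^k,g}$, set up the threshold times $k_0^t(g),k_1^t(g)$ defined by~\eqref{eqn_defn_k0tg}, and push through the cycle of inequalities~\eqref{eqn_cyclic_ineq_omega} together with the Radon--Nikodym relation~\eqref{eqn_radon_nikodym_omega} to deduce that the relevant limits are deterministic and that $\theta_0\le\theta_1$; monotonicity of $D$ then comes from~\eqref{eqn_fin_omega0_theta0}--\eqref{eqn_fin_omega1_theta1}, and strict monotonicity from the analyticity in Lemma~\ref{lem_degreefunction_basic}.

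Your coupling plan has several concrete gaps that mirror exactly why the authors abandoned a direct approach. First, the identity $\widetilde\nu_{\q^{(\omega)}}(j-1)=j\alpha_j/h_j(\omega)$ is correct and gives monotonicity of each \emph{nonnegative} jump probability, but you have not established stochastic ordering on the negative side: knowing that the total mass $\widetilde\nu_{\q^{(\omega)}}((-\infty,-1])$ decreases with $\omega$ says nothing about the individual left tails $\widetilde\nu_{\q^{(\omega)}}((-\infty,-k])$ for $k\ge 2$, and these involve the partition functions $W_{k-1}(\q^{(\omega)})$ in a non-transparent way. Second, even granting stochastic ordering of the unconditioned walks, the perimeter processes are Doob $h$-transforms for \emph{different} harmonic functions $h_p(\omega_1)$ and $h_p(\omega_2)$; transferring a quantile coupling through two distinct $h$-transforms is not automatic and you give no mechanism for it. Third, the root degree is not a monotone functional of the perimeter path alone: it also depends on which side of the peeled edge is glued and on the geometry of the filled-in Boltzmann pieces (with weights $\q^{(\omega)}$), so the sentence ``the $\omega_2$-exploration reveals at least as many new edges incident to $\rho$'' is an assertion, not a consequence of the coupling you have described. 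Your fallback --- expanding $\E[1/\deg(\rho)]$ over peeling histories and checking monotonicity term by term --- runs into the same obstacle, since each history involves products of $\widetilde\nu_{\q^{(\omega)}}$ at negative arguments and Boltzmann fill-in weights whose $\omega$-dependence is not sign-definite.
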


Since we were not able to establish this result by a direct argument, we will prove it using Corollary~\ref{corr_main_minus_monotonicity}.

\subsubsection{Basic properties of the mean inverse degree function}

Before moving on to the core of the argument, we start with some basic properties of the function $\omega \to d(\q^{(\omega)})$.

\begin{lem}\label{lem_degreefunction_basic}
\begin{itemize}
\item[$\bullet$]
The function $\omega \to d(\q^{(\omega)})$ is continuous on $[1,+\infty)$ and analytic on $(1,+\infty)$.
\item[$\bullet$]
We have $d(\q^{(\omega)})>0$ for all $\omega$ and $\lim_{\omega \to +\infty} d(\q^{(\omega)}) = 0$.
\item[$\bullet$]
We have $d(\q^{(\omega)}) \leq 1-\sum_{j \geq 1} \alpha_j$ for all $\omega \geq 1$, with equality if and only if $\omega=1$.
\end{itemize}
\end{lem}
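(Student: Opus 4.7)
The plan is to treat the three items separately; the first two follow more or less directly from results already in the paper, while the third is the main obstacle.

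For the continuity and analyticity statement, I would use the peeling exploration around the root vertex $\rho$ constructed in the proof of Lemma~\ref{lem_root_degree_is_finite_univ}: this terminates almost surely in a finite time, and $\P(\deg(\rho)=k)$ can be expressed as a sum over the finitely many peeling histories that swallow $\rho$ with exactly $k$ incident edges revealed. Each summand is a finite product of transition probabilities from \eqref{peeling_transitions}. By the explicit formulas \eqref{eqn_qjomega_univ}--\eqref{eqn_c_of_alpha_omega}, both $c_{\q^{(\omega)}}$ and every $q_j^{(\omega)}$ are continuous in $\omega \in [1,+\infty)$ and analytic in $(1,+\infty)$; hence so are $\widetilde{\nu}_{\q^{(\omega)}}$, $h_p(\omega)$ and finally each $\P(\deg(\rho)=k)$. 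A uniform-in-$\omega$ tail estimate $\sup_{\omega \in K} \P(\deg(\rho)>K') \to 0$ as $K' \to \infty$ on compact sets $K \subset [1,+\infty)$, obtained by iterating the swallowing construction from Lemma~\ref{lem_root_degree_is_finite_univ}, then legitimates the exchange of summation with limits and derivatives in $d(\q^{(\omega)})=\sum_{k\geq 1} k^{-1}\,\P(\deg(\rho)=k)$.

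For the second item, strict positivity is immediate because $\deg(\rho)<+\infty$ almost surely. The limit $d(\q^{(\omega)}) \to 0$ as $\omega \to +\infty$ is a direct consequence of Theorem~\ref{thm_prametrization_rootface}: vertex degrees in $\MM_{\q^{(\omega)}}$ tend to infinity, so $1/\deg(\rho) \to 0$ in probability, and dominated convergence (using $1/\deg(\rho)\leq 1$) concludes.

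The third item is the main obstacle. My plan is to interpret $d(\q^{(\omega)})$ as an asymptotic vertex-to-edge density inside a peeling exploration of $\MM_{\q^{(\omega)}}$ from the root edge, exploiting the stationarity of $\MM_{\q^{(\omega)}}$ under the simple random walk to view $\rho$ as a size-biased vertex. The Euler formula applied to the explored region (planar with one hole) writes its number of internal vertices as $V_t - F_t + 1$ with $F_t=\sum_j N_j(t)$, while Proposition~\ref{prop_q_as_limit} provides the almost sure rates $N_j(t)/t \to r_j(\q^{(\omega)})$ and $(V_t-2P_t)/t \to r_\infty(\q^{(\omega)})$. Combining these leads, after $t \to +\infty$, to an explicit expression for $d(\q^{(\omega)})$ in terms of $r_\infty(\q^{(\omega)})$ and the $r_j(\q^{(\omega)})$. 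The inequality and the equality case should then follow by observing that this expression is maximized precisely when $r_\infty(\q^{(\omega)})=+\infty$, i.e.\ at $\omega=1$ (the critical case, where the boundary $P_t$ is negligible compared to the volume $V_t$ in the limit). The delicate point I expect to be the hardest is controlling the contribution of the boundary vertices of $\expl_t^{\mathcal{A}}(\MM_{\q^{(\omega)}})$, whose degrees in $\MM_{\q^{(\omega)}}$ are not fully revealed at time $t$; for the equality case $\omega=1$, an alternative route is to use that $\MM_{\q^{(1)}}$ is the local limit of planar finite $\q^{(1)}$-Boltzmann maps (\cite{Bud15}), so that the Euler formula applied in the finite model passes directly to the limit.
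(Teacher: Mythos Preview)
Your treatment of the second item matches the paper's and is correct.

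For the first item, the skeleton of your argument is the same as the paper's (carried out in Appendix~\ref{subsec_analyticity}): express $d(\q^{(\omega)})$ as a countable sum of explicit functions of $\omega$ and show the sum converges uniformly on complex neighbourhoods. However, your assertion that there are only finitely many peeling histories with $\deg(\rho)=k$ is false: at any step one may discover a face of arbitrary degree $2j$, and gluing steps fill in finite regions that can be arbitrary Boltzmann maps of the appropriate polygon, so both the length of the history and the branching at each step are unbounded. The paper instead sums over possible values $m\in\mathcal{H}$ of the hull $B_1^{\bullet}(\MM_{\q^{(\omega)}})$, extends $q_j^{(\omega)}$ to a complex neighbourhood of each real $\omega_0>1$, dominates $|q_j^{(\omega)}|$ there by a fixed admissible real sequence, and proves the crucial uniform bound by showing that $\mathrm{Inn}(B_1^{\bullet})$ has an exponential moment uniformly in $\omega$ near $\omega_0$ (Lemma~\ref{lem_unif_exp_tail}). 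This last step is the heart of the analyticity proof and takes real work; your ``iterating the swallowing construction'' does not replace it. For continuity at $\omega=1$ the paper bypasses tail estimates entirely: it shows $q_j^{(\omega)}\to q_j^{(1)}$ for each $j$, deduces local convergence $\MM_{\q^{(\omega)}}\to\MM_{\q^{(1)}}$, and uses that $1/\deg(\rho)$ is bounded and continuous for $d_{\loc}$.

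For the third item, your route is genuinely different from the paper's and has a real gap. The identification of $d(\q^{(\omega)})$ with the asymptotic inner-vertex-to-inner-edge ratio in a peeling exploration is exactly what is not available: stationarity under simple random walk converts between edge-rooting and degree-biased vertex-rooting, but it does not tell you that the inner vertices of a growing peeled region are typical. A clean way to get such an identification is via a finite-map approximation; your alternative route for $\omega=1$ through planar Boltzmann maps is valid and gives the equality case. But for $\omega>1$ the natural finite approximations are the high-genus maps of Theorem~\ref{univ_main_thm}, and invoking them here would be circular since this lemma is used in its proof. The paper avoids the issue entirely by importing external results: it rewrites the inequality as a sign condition on the expected combinatorial curvature $\kappa$ of the unimodular version $\MM_{\q}^{\mathrm{uni}}$, which is a general theorem of Angel--Hutchcroft--Nachmias--Ray~\cite{AHNR16}; the strict inequality for $\omega>1$ then uses the equivalence, also from~\cite{AHNR16}, between vanishing expected curvature and $p_c=p_u$ for bond percolation, combined with the fact (from~\cite[Theorem 12.9]{C-StFlour}) that $p_c<p_u$ on noncritical IBPMs.
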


\begin{proof}
The proof of the analyticity and continuity on $(1,+\infty)$ is a bit long and delayed to Appendix~\ref{subsec_analyticity}. The third item will follow from results of Angel, Hutchcroft, Nachmias and Ray~\cite{AHNR16}. The other properties are quite easy.

We start with the continuity statement in the first item. The analyticity proved in Appendix~\ref{subsec_analyticity} implies continuity on $(1,+\infty)$ so it is sufficient to prove the continuity at $\omega=1$. By the monotone convergence theorem, the function $\omega \to c_{\q^{(\omega)}}$ is continuous at $\omega=1$, so $q_j^{(\omega)}$ is continuous at $\omega=1$ for all $j$. Therefore, for every finite map $m$ with one hole, we have
\[ \P \left( m \subset \MM_{\q^{(\omega)}} \right) \xrightarrow[\omega \to 1]{} \P \left( m \subset \MM_{\q^{(1)}} \right), \]
so $\MM_{\q^{(\omega)}} \to \MM_{\q^{(1)}}$ in distribution for the local topology. Since the inverse degree of the root vertex is bounded and continuous for the local topology, the function $\omega \to d(\q^{(\omega)})$ is continuous at $1$.

We now prove the second item: $d(\q^{(\omega)})>0$ is immediate by finiteness of vertex degrees and $d(\q^{(\omega)}) \to 0$ is equivalent to proving $\deg_{\MM_{\q^{(\omega)}}}(\rho) \to +\infty$ in probability when $\omega \to +\infty$. For this, we notice (see~\eqref{eqn_omega_infinite} above) that when $\omega \to +\infty$, we have $h_{i}(\omega) \to 1$ for all $i \geq 1$ and
\[\widetilde{\nu}_{\q^{(\omega)}}(i) \xrightarrow[\omega \to +\infty]{} \begin{cases}
0 \mbox{ if $i \leq -1$,}\\
(i+1) \alpha_{i+1} \mbox{ if $i \geq 0$}.
\end{cases}
\]
In other words, the probability of any peeling step swallowing at least one boundary vertex goes to $0$ when $\omega \to +\infty$. Therefore, if we perform a peeling exploration where we peel the edge on the right of $\rho$ whenever it is possible, the probability to complete the exploration of the root in less than $k$ steps goes to $0$ for all $k$. It follows that the root degree goes to $+\infty$ in probability.

Finally, we move on to the third item. Since $\MM_{\q}$ is stationary, if we denote by $\MM_{\q}^{\mathrm{uni}}$ a map with the law of $\MM_{\q}$ biased by the inverse of the root vertex degree, then $\MM_{\q}$ is unimodular. A simple computation shows that $d(\q) \leq 1-\sum_{j \geq 1} \alpha_j$ is equivalent to $\E \left[ \kappa_{\MM_{\q}^{\mathrm{uni}}}(\rho) \right] \geq 0$ where, if $v$ is a vertex of a map $m$:
\[ \kappa_m(v)=2\pi-\sum_{f} \frac{\deg(f)-2}{\deg(f)}\pi,\]
and the sum is over all faces that are incident to $v$ in $m$, counted with multiplicity.
Moreover, we have equality if and only if $\E[\kappa_{\MM_{\q}^{\mathrm{uni}}}(\rho)] = 0$. The fact that $\E[\kappa_{\MM_{\q}^{\mathrm{uni}}}(\rho)] \geq 0$ is then a consequence of \cite[Theorem 1]{AHNR16}. Moreover, \cite{AHNR16} shows the equivalence between 17 different definitions of hyperbolicity. In particular, we have $\E[\kappa_{\MM_{\q}^{\mathrm{uni}}}(\rho)] > 0$ (Definition 1 in \cite{AHNR16}) if and only if $p_c<p_u$ with positive probability for bond percolation on $\MM_{\q}^{\mathrm{uni}}$. This is equivalent to $\P \left( p_c<p_u \right)>0$ for bond percolation on $\MM_{\q}$, which is equivalent to $\q$ being critical (i.e. $\omega=1$) by \cite[Theorem 12.9]{C-StFlour}\footnote{More precisely \cite[Theorem 12.9]{C-StFlour} is about half-plane supercritical maps. Here is a way to extend it to full-plane maps: there is a percolation regime on the half-plane version of $\MM_{\q}$ such that with positive probability, there are infinitely many infinite clusters. For topological reasons, at most two of them intersect the boundary infinitely many times. Hence, by changing the colour of finitely many edges, with positive probability there are two infinite clusters that do not touch the boundary. Since there is a coupling in which the half-plane version of $\MM_{\q}$ is included in the full-plane version, we have with positive probability two disjoint infinite clusters in $\MM_{\q}$ in a certain bond percolation regime, so $p_c<p_u$ with positive probability.}.
\end{proof}

\subsubsection{Proof of Proposition~\ref{prop_monotonicity_deg}}\label{subsubsec_final_argument}

\paragraph{Sketch of the argument.}
Roughly speaking, the idea behind the proof of Proposition~\ref{prop_monotonicity_deg} is the following observation. We fix $j \geq 2$ and recall that $M_{\mathbf{f}, g}$ stands for a uniform map in $\B_g(\mathbf{f})$. If $\mathbf{f}$ is a face degree sequence with $f_j \geq 1$, we write $\mathbf{f}^-=\mathbf{f}-\mathbf{1}_j$. We can describe the law of $M_{\mathbf{f}^-, g}$ in terms of the law of $M_{\mathbf{f}, g}$. Indeed, let $m^0_j$ be the map of Figure~\ref{fig_m_0_j} with a hole of perimeter $2$ and only one internal face which has degree $2j$. Then $M_{\mathbf{f}^-, g}$ has the law of $M_{\mathbf{f}, g} \backslash m^0_j$ conditioned on $m^0_j \subset M_{\mathbf{f}, g}$, where the two boundary edges have been glued together.

Therefore, if we know for some suitable face degree sequences $\mathbf{f}^k$ that $M_{\mathbf{f}^k, g_k}$ converges to an infinite map of the form $\MM_{\QQ}$, we can deduce that $M_{\mathbf{f}^{k,-}, g_k}$ converges to a map $\MM_{\QQ^-}$ and express $\QQ^-$ in terms of $\QQ$. A simple computation (Equation~\ref{eqn_radon_nikodym_omega} below) shows that $\QQ^-$ has the law of $\QQ$ biased by $Q_j$. By Lemma~\ref{lem_qj_is_monotone}, this means that if $\QQ$ is not deterministic, then $\QQ^-$ is "strictly less hyperbolic" than $\QQ$ (in the sense that $\omega_{\QQ^-}<\omega_{\QQ}$). On the other hand, the maps $M_{\mathbf{f}^k, g_k}$ and $M_{\mathbf{f}^{k,-}, g_k}$ have the same genus but the second one is smaller, so it would be natural to expect it to be "more hyperbolic" than the first. We will derive a contradiction from this paradox by considering, for a fixed genus $g$, the smallest value $k_0(g)$ of $k$ such that the expected $\omega$ corresponding to $M_{\mathbf{f}^k, g_k}$ is smaller than a certain threshold.

\begin{figure}
\begin{center}
\includegraphics[scale=1]{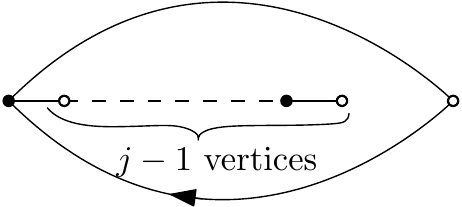}
\end{center}
\caption{The map $m_j^0$.}\label{fig_m_0_j}
\end{figure}

To make this sketch precise, we need to define precisely several objects. We will first build the face degree sequences $\mathbf{f}^k$, then explain how to approximate $\omega_{\QQ}$ on finite maps, and finally give the precise definition of $k_0(g)$ (which will actually depend on an additional parameter $t$).

\paragraph{The face degree sequences $\mathbf{F}^k$.}
Since we want our face degree sequences to respect the proportions $(\alpha_j)$, it makes sense to build the face degree sequences $\mathbf{f}^k$ randomly, by adding at each step a face of degree $2j$ with probability proportional to $\alpha_j$. This roughly means that we take $j$ random in the sketch above.
More precisely, we build our family $(\FF^k)_{k \geq 0}$ of face degree sequences as follows: let $(J_k)_{k \geq 1}$ be i.i.d. random variables on $\N^*$ with
\[ \P \left( J_k=j \right)=\frac{\alpha_j}{\sum_{i \geq 1} \alpha_i}. \] 
We write $F^k_j=\sum_{i=1}^k \mathbbm{1}_{J_i=j}$, so that $\FF^k$ is a random face degree sequence with $k$ faces in total.

\paragraph{Approximating $\omega$ on finite maps.}
In order to define $k_0(g)$, we would like to build variables $\Omega_{k,g}$ such that if $M_{\FF^k,g}$ is close to $\MM_{\q^{(\Omega)}}$ for the local topology, then $\Omega_{k,g}$ is close to $\Omega$. For this, we will rely on Lemma~\ref{lem_unif_volume}. We fix an arbitrary peeling algorithm $\mathcal{A}$ throughout this section. We recall that Lemma~\ref{lem_unif_volume} gives the convergence
\[ \frac{V_t^{(\omega)} - 2 P_t^{(\omega)}}{t} \xrightarrow[t \to +\infty]{(P)} r(\omega),\]
where $V_t^{(\omega)}=\left| \expl^{\mathcal{A}}_t \left( \MM_{\q^{(\omega)}} \right) \right|$ and $P_t^{(\omega)}=\left| \partial \expl^{\mathcal{A}}_t \left( \MM_{\q^{(\omega)}} \right) \right|$ are respectively the half-perimeter and the total number of edges of $\expl^{\mathcal{A}}_t \left( \MM_{\q^{(\omega)}} \right)$, and $r(\omega)=\frac{ \left(\sqrt{\omega}-\sqrt{\omega-1} \right)^2}{2 \sqrt{\omega(\omega-1)}}$ is a homeomorphism from $(1,+\infty]$ to $[0,+\infty)$. Therefore, for $k,g \geq 0$ and $t \geq 1$, we define
\[ \Omega_{k,g}^t = r^{-1} \left( \frac{ \left| \expl_t^{\mathcal{A}} \left( M_{\FF^k,g} \right) \right| - 2 \left| \partial \expl_t^{\mathcal{A}} \left( M_{\FF^k,g} \right) \right|}{t} \right),\]
with the convention $\Omega_{k,g}^t=+\infty$ if $M_{\FF^k,g}$ does not exist or if the peeling exploration of $M_{\FF^k,g}$ using $\mathcal{A}$ is stopped before time $t$. Note that with this definition we always have $\Omega^t_{k,g} \in [1,+\infty]$.

\paragraph{Defining $k_0^t(g)$.}
We now fix $1<\omega_0<\omega_1<+\infty$. As explained above, for $g \geq 0$ we want to consider the smallest $k$ for which the average value of $\Omega^t_{k,g}$ is smaller than $\omega_0$. Because we will need the uniform convergence of Lemma~\ref{lem_unif_volume} which holds only for $\omega$ bounded away from $1$ and $+\infty$, we first exclude artificially the values of $k$ that are too small or too large. More precisely, let $\eps>0$ be small enough to satisfy
\begin{equation}\label{eqn_choice_eps}
\eps < \min_{\omega \geq \min(\omega_0, \omega_1)} \left( d(\q^{(1)}) - d(\q^{(\omega)}) \right) \quad \mbox{and} \quad \eps < \min_{1 \leq \omega \leq \max(\omega_0, \omega_1)} d(\q^{(\omega)}).
\end{equation}
Note that the existence of such an $\eps$ is guaranteed by the second and third items of Lemma~\ref{lem_degreefunction_basic}. For $g \geq 0$, we write
\[ k_{\min}(g) = \frac{2 \sum_{j \geq 1} \alpha_j}{1-\eps-\sum_{j \geq 1} \alpha_j} g \quad \mbox{and} \quad k_{\max}(g)=\frac{1}{\eps} \left( \sum_{j \geq 1} \alpha_j \right) g.\]
Since $\eps$ will be fixed until the end, we omit the dependence in $\eps$ in the notation.
These values were chosen so that a map with face degrees $\FF^{k_{\min}(g)}$ and genus $g$ has average degree of order $\frac{1}{\eps}$ (in particular such a map exists), whereas in a map with face degrees $\FF^{k_{\max}(g)}$ and genus $g$ the genus is about $\eps$ times the size.

We now set, for $t \geq 1$ and $g \geq 0$:
\begin{align}\label{eqn_defn_k0tg}
k_0^t(g) &= \min \left\{ k \in \left[ k_{\min}(g), k_{\max}(g) \right] \big| \E \left[ \left( \Omega^t_{k,g} \right)^{-1} \right] \geq \omega_0^{-1} \right\},\\
k_1^t(g) &= \min \left\{ k \in \left[ k_{\min}(g), k_{\max}(g) \right] \big| \E \left[ \left( \Omega^t_{k,g} \right)^{-1} \right] \geq \omega_1^{-1} \right\}.\nonumber
\end{align}
The only reason why we use $\omega_0^{-1}$ instead of $\omega_0$ in the definition is to have a bounded quantity in the expectation, and pass easily from convergences in distribution to convergences of the expectation later. We first prove that this definition indeed makes sense and that the map $M_{\FF^{k_0^t(g)}, g}$ is well-defined with high probability.

\begin{lem}\label{lem_k0_tg_reasonable}
There is $t_0$ depending only on $\omega_0, \omega_1$ such that for all $t \geq t_0$:
\begin{enumerate}
\item
for $g$ large enough the number $k_0^t(g)$ is well defined;
\item
for $g$ large enough we have $k_0^t(g)>k_{\min}(g)$;
\item
with probability $1-o(1)$ as $g \to +\infty$, we have $\Ver \left( \FF^{k_0^t(g)}, g \right) \geq \frac{\eps}{2} \left| \FF^{k_0^t(g)}\right|$, and the same is true with $k_0^t(g)-1$ instead of $k_0^t(g)$.
\end{enumerate}
Moreover, the same is true if we replace $k_0^t(g)$ with $k_1^t(g)$.
\end{lem}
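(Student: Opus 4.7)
The plan is to control the estimator $\Omega^t_{k,g}$ at the two endpoints $k_{\min}(g)$ and $k_{\max}(g)$ of the allowed range, by combining local convergence with the uniform convergence of Lemma~\ref{lem_unif_volume}. The choice of $\eps$ in \eqref{eqn_choice_eps} is what confines the limiting hyperbolicity parameter to a compact subset of $(1,+\infty)$ independent of the subsequence, which in turn is what makes a single $t_0$ depending only on $\omega_0, \omega_1$ work uniformly in $g$.

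First, by the strong law of large numbers applied to the i.i.d.\ sequence $(J_i)_{i \geq 1}$, one has $|\FF^k|/k \to 1/\sum_j \alpha_j$ and $F^k_j/|\FF^k| \to \alpha_j$ almost surely, so that $g/|\FF^{k_{\max}(g)}| \to \eps$ and $g/|\FF^{k_{\min}(g)}| \to (1-\eps-\sum_j \alpha_j)/2$. Conditional on $\FF^k$, the map $M_{\FF^k,g}$ falls under Theorem~\ref{thm_main_more_general}, and any subsequential local limit is of the form $\MM_{\QQ}$ for a random $\QQ$. Corollary~\ref{corr_main_minus_monotonicity} together with Proposition~\ref{prop_third_parametrization} force $\QQ = \q^{(\Omega)}$ for some random $\Omega \geq 1$, with $d(\q^{(\Omega_{\max})}) = d(\q^{(1)}) - \eps$ at $k=k_{\max}(g)$ and $d(\q^{(\Omega_{\min})}) = \eps/2$ at $k=k_{\min}(g)$. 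The first condition in \eqref{eqn_choice_eps} gives $d(\q^{(\omega)}) < d(\q^{(1)}) - \eps$ for $\omega \geq \min(\omega_0,\omega_1)$, while continuity of $d$ at $\omega = 1$ from Lemma~\ref{lem_degreefunction_basic} bars $\Omega_{\max}$ from a neighbourhood of $1$; hence $\Omega_{\max}$ takes values in a compact set $K_{\max} \subset (1, \min(\omega_0,\omega_1))$ that does not depend on the subsequence. The second condition in \eqref{eqn_choice_eps}, combined with $d(\q^{(\omega)}) \to 0$ as $\omega \to +\infty$ (Lemma~\ref{lem_degreefunction_basic}), analogously confines $\Omega_{\min}$ to a compact set $K_{\min} \subset (\max(\omega_0,\omega_1), +\infty)$.

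For fixed $t$, the estimator $\Omega^t_{k_{\max}(g), g}$ depends only on the first $t$ peeling steps of $M_{\FF^{k_{\max}(g)}, g}$, hence converges in distribution along any subsequence to $\Omega^t_{\infty} := r^{-1}((V^{(\Omega_{\max})}_t - 2P^{(\Omega_{\max})}_t)/t)$, where $(V^{(\omega)}, P^{(\omega)})$ is the peeling process of $\MM_{\q^{(\omega)}}$; and since $(\Omega^t_{k,g})^{-1} \in [0,1]$, expectations converge. Lemma~\ref{lem_unif_volume} applied to $K_{\max}$ now yields $t_0 = t_0(K_{\max})$, depending only on $\omega_0, \omega_1$ and $(\alpha_j)$, such that for $t \geq t_0$ the estimator $\Omega^t_{\infty}$ lies within any prescribed $\delta$ of $\Omega_{\max}$ with probability $>1-\delta$, uniformly in $\Omega_{\max} \in K_{\max}$. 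Choosing $\delta$ small enough compared to the positive deterministic gap $(\sup K_{\max})^{-1} - \omega_0^{-1}$, we obtain $\E[(\Omega^t_{\infty})^{-1}] > \omega_0^{-1}$ for every convergent subsequence, and hence $\E[(\Omega^t_{k_{\max}(g), g})^{-1}] \geq \omega_0^{-1}$ for $g$ large; this is item 1. The symmetric argument at $k_{\min}(g)$ using $K_{\min}$ gives $\E[(\Omega^t_{k_{\min}(g), g})^{-1}] < \omega_0^{-1}$ for $g$ large, which is item 2. Replacing $\omega_0$ by $\omega_1$ throughout the argument handles $k_1^t(g)$ with the same $t_0$.

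For item 3, item 2 implies $k_0^t(g) - 1 \geq k_{\min}(g)$ for $g$ large, so it suffices to prove the bound uniformly in $k \in [k_{\min}(g), k_{\max}(g)]$. Writing $v(\FF^k, g) = 2 - 2g + |\FF^k| - k$ gives $v(\FF^k, g)/|\FF^k| = 1 - k/|\FF^k| - 2g/|\FF^k|$; by LLN, $k/|\FF^k| \to \sum_j \alpha_j$, with an $O(k^{-1/2})$ fluctuation that is uniform in $k$ over the relevant range (via a standard maximal inequality, and crucially using $\sum_j j^2 \alpha_j < +\infty$ to bound the variance of the $J_i$'s). Combined with $g/|\FF^k| \leq g/|\FF^{k_{\min}(g)}| \to (1-\eps-\sum_j \alpha_j)/2$, this yields $v(\FF^k, g)/|\FF^k| \geq \eps/2$ uniformly in $k \in [k_{\min}(g), k_{\max}(g)]$ with probability $1-o(1)$, giving item 3 for both $k_0^t(g)$ and $k_0^t(g) - 1$. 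The main obstacle throughout is orchestrating the double limit $g \to +\infty$, $t \to +\infty$ so that $t_0$ depends only on $\omega_0, \omega_1$: this is precisely what the a priori compact confinement of $\Omega_{\max}$ and $\Omega_{\min}$ in paragraph~two achieves, by bringing Lemma~\ref{lem_unif_volume}'s uniform convergence into play before the local convergence in $g$ is taken.
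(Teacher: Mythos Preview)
Your proof is correct and follows essentially the same route as the paper's: confine the subsequential limit $\Omega$ at the two endpoints $k_{\min}(g), k_{\max}(g)$ to compact subsets of $(1,+\infty)$ via Corollary~\ref{corr_main_minus_monotonicity} and the choice \eqref{eqn_choice_eps} of $\eps$, then invoke Lemma~\ref{lem_unif_volume} on those compacts to produce a $t_0$ that works uniformly in subsequences.

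One small point where you take a needlessly heavier path: for item~3 you appeal to a maximal inequality with an $O(k^{-1/2})$ fluctuation bound, and flag $\sum_j j^2\alpha_j < +\infty$ as ``crucially'' needed to control $\operatorname{Var}(J_i)$. The paper does this with the strong law of large numbers alone: since $|\FF^k|/k \to 1/\sum_j \alpha_j$ almost surely (requiring only $\E[J_1]<+\infty$), for any $\delta>0$ one has $|k/|\FF^k| - \sum_j\alpha_j|<\delta$ for all $k$ sufficiently large, hence for all $k \geq k_{\min}(g)$ once $g$ is large. Combined with $g/|\FF^k| \leq g/|\FF^{k_{\min}(g)}| \to (1-\eps-\sum_j\alpha_j)/2$, this gives $v(\FF^k,g)/|\FF^k| \geq \eps - 2\delta$ uniformly in $k \geq k_{\min}(g)$. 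So the second moment assumption plays no role in this lemma; your invocation of it here is harmless but misleading. The paper also organizes the argument slightly differently, proving the uniform vertex bound first and then using it both to justify Theorem~\ref{thm_main_more_general} at the endpoints and to deduce item~3, whereas you treat item~3 last; both orderings are sound since your items~1--2 only need the vertex bound at the two specific endpoints, which you get directly from the convergence of $g/|\FF^k|$ there.
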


\begin{proof}
We first recall that
\[\Ver \left( \FF^k, g \right) = 2-2g+\sum_{j \geq 1} (j-1) F^k_j.\]
Moreover, by the law of large numbers and the definition of $\FF^k$, we have
\[\frac{1}{k} \sum_{j \geq 1} (j-1) F^k_j \xrightarrow[k \to +\infty]{a.s.} \frac{1}{\sum_{i \geq 1} \alpha_i} \sum_{j \geq 1} (j-1) \alpha_j \quad \mbox{and} \quad \frac{1}{k} \left| \FF^k \right| \xrightarrow[k \to +\infty]{a.s.} \frac{1}{\sum_{i \geq 1} \alpha_i}.  \]
From here, by the choice of $k_{\min}(g)$, it follows easily that almost surely, for $g$ large enough, we have $\Ver \left( \FF^k, g \right) \geq \frac{\eps}{2} \left| \FF^k \right|$ for all $k \geq k_{\min}(g)$. In particular, the third item of the Lemma will follow from the first and the second.

We now prove the first item. We need to prove that the minimum in the definition of $k_0^t(g)$ is over a nonempty set, so it is enough to prove that, if $t$ is larger than some $t_0$, we have
\begin{equation}\label{eqn_omega_kmax}
\E \left[ \left( \Omega^t_{k_{\max}(g), g} \right)^{-1} \right]> \omega_0^{-1}
\end{equation}
for $g$ large enough. We know that $\Ver \left( \FF^k, g \right) \geq \frac{\eps}{2} \left| \FF^k \right|$ for $g$ large enough, so by Theorem~\ref{thm_main_more_general} and Corollary~\ref{corr_main_minus_monotonicity}, up to extracting a subsequence in $g$, we have the local convergence
\begin{equation}\label{eqn_subseqlimit_kmax}
M_{\FF^{k_{\max}(g)}, g} \xrightarrow[g \to +\infty]{(d)} \MM_{q^{(\Omega_{\max})}},
\end{equation}
where $\Omega_{\max}$ is a random variable on $(1,+\infty)$. On the other hand, by the law of large numbers and the definition of $k_{\max}(g)$, we have
\[ \frac{1}{g} \Ver \left( \FF^{k_{\max}(g)}, g \right) \xrightarrow[g \to +\infty]{a.s.} -2+ \frac{1}{\eps} \sum_{j \geq 1} (j-1)\alpha_j \quad \mbox{and} \quad \frac{1}{g}\left| \FF^{k_{\max}(g)}\right| \xrightarrow[g \to +\infty]{a.s.} \frac{1}{\eps}.\]
Therefore, the average degree in $M_{\FF^{k_{\max}(g)}, g}$ tends to $-\eps+\frac{1}{2} \left( 1-\sum_{j} \alpha_j \right) = d \left( \q^{(1)}\right)-\eps$. By Corollary~\ref{corr_main_minus_monotonicity}, we must have
\begin{equation}\label{eqn_degree_omegamax}
d \left( \q^{(\Omega_{\max})}\right) = d \left( \q^{(1)}\right)-\epsilon
\end{equation}
almost surely. By the first inequality in our choice~\eqref{eqn_choice_eps} of $\eps$, this implies $\Omega_{\max}<\omega_0$ a.s., so $\E \left[ \Omega_{\max}^{-1} \right] > \omega_0^{-1}$.

On the other hand, let $t \geq 1$. Since the explored map at time $t$ of a peeling exploration is a local function, the convergence~\eqref{eqn_subseqlimit_kmax} implies
\begin{equation}\label{eqn_convergence_omegamax_t}
\Omega^t_{k_{\max}(g), g} \xrightarrow[g \to +\infty]{(d)} r^{-1} \left( \frac{V_t^{(\Omega_{\max})} - 2 P_t^{(\Omega_{\max})}}{t} \right).
\end{equation}
Moreover, by the third item and the continuity in Lemma~\ref{lem_degreefunction_basic}, we also get that $\Omega_{\max}$ is supported on a compact subset of $(1,+\infty)$ depending only on $\eps$ (and not on the subsequence in $g$ that we are working with). Therefore, by the uniform convergence result of Lemma~\ref{lem_unif_volume}, the right-hand side of~\eqref{eqn_convergence_omegamax_t} converges in probability to $\Omega_{\max}$ as $t \to +\infty$, at a speed independent from the subsequence. Hence, remembering $\E \left[ \Omega_{\max}^{-1} \right] > \omega_0^{-1}$, there is $t_0$ depending only on $\eps$ such that for $t \geq t_0$, we have
\[ \E \left[ r^{-1} \left( \frac{V_t^{(\Omega_{\max})} - 2 P_t^{(\Omega_{\max})}}{t} \right)^{-1} \right] > \omega_0^{-1}. \]
Combined with~\eqref{eqn_convergence_omegamax_t}, this implies $\E \left[ \left( \Omega^t_{k_{\max}(g), g} \right)^{-1} \right] > \omega_0^{-1}$ for $g$ large enough. We have proved that for $t \geq t_0$, every subsequence in $g$ contains a subsubsequence along which~\eqref{eqn_omega_kmax} holds for $g$ large enough, so~\eqref{eqn_omega_kmax} holds for $g$ large enough. This proves the first item of the lemma.

To prove the second item, we need to show that there is $t'_0$ such that if $t \geq t'_0$, then
\[ \E \left[ \left( \Omega^t_{k_{\min}(g), g} \right)^{-1} \right] < \omega_0^{-1} \]
for $g$ large enough.
The proof is very similar to the proof of~\eqref{eqn_omega_kmax}, so we do not write it in full details. The only difference is the computation of the average degree: this time, if $\Omega_{\min}$ plays the role of $\Omega_{\max}$ for the first item, using the definition of $k_{\min}(g)$ we get $d \left( q^{(\Omega_{\min})} \right)=\frac{\eps}{2}$ a.s.. Hence, by the second inequality of~\eqref{eqn_choice_eps}, this implies $\Omega_{\min} > \omega_0$, and the end of the proof is the same.
\end{proof}

\begin{proof}[Proof of Proposition~\ref{prop_monotonicity_deg}]
Let $t$ be larger than the $t_0$ from Lemma~\ref{lem_k0_tg_reasonable}. By the third item of Lemma~\ref{lem_k0_tg_reasonable} and Theorem~\ref{thm_main_more_general}, we know that $\left( M_{\FF^{k_0^t(g)},g} \right)_{g \geq 0}$ is tight and any subsequential limit is of the form $\MM_{\QQ}$. Moreover by the law of large numbers, for all $j \geq 1$ we have $\frac{F^k_j}{\left| \mathbf{F}^k_j \right|} \to \alpha_j$ a.s. when $k \to +\infty$. Therefore, by  Corollary~\ref{corr_main_minus_monotonicity}, any subsequential limit $\MM_{\QQ}$ must satisfy $a_j(\q)=\alpha_j$ for all $j \geq 1$, so $\QQ$ is of the form $\q^{(\Omega)}$ for some random $\Omega$. Moreover, the same holds if $k_0^t(g)$ is replaced by $k_0^t(g)-1$, or $k_1^t(g)$, or $k_1^t(g)-1$. Therefore, for all $t \geq t_0$, we can fix a subsequence $S^t$ (depending on $t$) such that when $g \to +\infty$ along $S^t$ the following convergences hold jointly:
\begin{align*}
M_{\FF^{k_0^t(g)},g} & \xrightarrow[g \to +\infty]{(d)} \MM_{\q^{(\Omega_0^t)}}, & M_{\FF^{k_1^t(g)},g} & \xrightarrow[g \to +\infty]{(d)} \MM_{\q^{(\Omega_1^t)}},\\
M_{\FF^{k_0^t(g)-1},g} & \xrightarrow[g \to +\infty]{(d)} \MM_{\q^{(\Omega_0^{t,-})}}, & M_{\FF^{k_1^t(g)-1},g} & \xrightarrow[g \to +\infty]{(d)} \MM_{\q^{(\Omega_1^{t,-})}},\\
\Omega^t_{k_0^t(g),g} & \xrightarrow[g \to +\infty]{(d)} \widetilde{\Omega}^t_0, & \Omega^t_{k_1^t(g),g} & \xrightarrow[g \to +\infty]{(d)} \widetilde{\Omega}^t_1,\\
\Omega^t_{k_0^t(g)-1} & \xrightarrow[g \to +\infty]{(d)} \widetilde{\Omega}^{t,-}_0, & \Omega^t_{k_1^t(g)-1} & \xrightarrow[g \to +\infty]{(d)} \widetilde{\Omega}^{t,-}_1,\\
\frac{g}{\left| \FF^{k_0^t(g)} \right|} & \xrightarrow[g \to +\infty]{(d)} \theta_0^t, & \frac{g}{\left| \FF^{k_1^t(g)} \right|} & \xrightarrow[g \to +\infty]{(d)} \theta_1^t,
\end{align*}
where the first four convergences are for the local topology. We highlight that for our purpose it is sufficient to consider \emph{one} subsequence $S^t$, and that we will not try to understand \emph{all} subsequential limits. In what follows, we will always consider $g$ in the subsequence $S^t$ and omit to precise it. Note that we have $\Omega_0^t, \Omega_0^{t,-} \in [1,+\infty)$ and $\widetilde{\Omega}_0^t, \widetilde{\Omega}_0^{t,-} \in [1,+\infty]$, since we have no bound a priori on $\Omega^t_{k_0^t(g),g}$.
We also have $\theta_0^t \in \left[ \eps, \frac{1}{2} \left( 1-\eps-\sum_j \alpha_j \right) \right]$ by the inequality $k_{\min}(g) \leq k_0^t(g) \leq k_{\max}(g)$ and the law of large numbers to estimate $\FF^{k_{\min}(g)}$ and $\FF^{k_{\max}(g)}$. By Corollary~\ref{corr_main_minus_monotonicity}, we also know that almost surely
\begin{equation}\label{eqn_omega0_in_compact}
d \left( \q^{(\Omega_0^t)} \right) = d \left( \q^{(\Omega_0^{t,-})} \right) = \frac{1}{2} \left( 1-2\theta_0^t-\sum_{j \geq 1} \alpha_j \right).
\end{equation}
In particular $d \left( \q^{(\Omega_0^t)} \right)$ and $d \left( \q^{(\Omega_0^{t,-})} \right)$ are bounded away from $0$ and $d(\q^{(1)})$. By Lemma~\ref{lem_degreefunction_basic}, this implies that $\Omega_0^t$ and $\Omega_0^{t,-}$ take their values in a compact subset of $(1,+\infty)$ that depends only on $\eps$. Therefore, there is a subsequence $S$ such that, when $t \to +\infty$ along $S$ the following convergences hold jointly: 
\begin{align*}
\Omega_0^t & \xrightarrow[t \to +\infty]{(d)}  \Omega_0, &  \Omega_1^t & \xrightarrow[t \to +\infty]{(d)}  \Omega_1,\\
\Omega_0^{t,-} & \xrightarrow[t \to +\infty]{(d)}  \Omega_0^{-} , &  \Omega_1^{t,-} & \xrightarrow[t \to +\infty]{(d)}  \Omega_1^{-},\\
\widetilde{\Omega}_0^t & \xrightarrow[t \to +\infty]{(d)}  \widetilde{\Omega}_0, &  \widetilde{\Omega}_1^t & \xrightarrow[t \to +\infty]{(d)}  \widetilde{\Omega}_1,\\
\widetilde{\Omega}_0^{t,-} & \xrightarrow[t \to +\infty]{(d)}  \widetilde{\Omega}_0^{-} , &  \widetilde{\Omega}_1^{t,-} & \xrightarrow[t \to +\infty]{(d)}  \widetilde{\Omega}_1^{-},\\
\theta_0^t & \xrightarrow[t \to +\infty]{} \theta_0, & \theta_1^t & \xrightarrow[t \to +\infty]{} \theta_1,
\end{align*}
where $\Omega_0, \Omega_0^- \in (1,+\infty)$ and $\widetilde{\Omega}_0, \widetilde{\Omega}_0^- \in [1,+\infty]$ and $\theta_0 \in \left[ \eps, \frac{1}{2} \left( 1-\eps-\sum_j \alpha_j \right) \right]$. From now on, we will always consider $t \geq t_0$ with $t$ in this subsequence $S$, and we will omit to precise it. 

The rough sketch of the argument is to show that
\begin{equation}\label{eqn_cyclic_ineq_omega}
\E \left[ \left( \widetilde{\Omega}^{-}_0 \right)^{-1} \right] \leq \omega_0^{-1} \leq \E \left[ \left( \widetilde{\Omega}_0 \right)^{-1} \right] = \E \left[ \left( \Omega_0 \right)^{-1} \right] \leq \E \left[ \left( \Omega_0^{-} \right)^{-1} \right] = \E \left[ \left( \widetilde{\Omega}^{-}_0 \right)^{-1} \right],
\end{equation}
and deduce from the equality in the third inequality that $\Omega_0$ is deterministic and therefore equal to $\omega_0$.

More precisely, by definition of $k_0^t(g)$ and since $k_0^t(g)>k_{\min}(g)$, we have
\[ \E \left[ \left( \Omega^t_{k_0^t(g)-1} \right)^{-1} \right] < \omega_0^{-1} \leq  \E \left[ \left( \Omega^t_{k_0^t(g)} \right)^{-1} \right].\]
By letting $g \to +\infty$ (along $S^t$) and then $t \to +\infty$ (along $S$), this implies
\begin{equation}\label{eqn_ineq_omegatilde}
\E \left[ \left( \widetilde{\Omega}^{-}_0 \right)^{-1} \right] \leq \omega_0^{-1} \leq  \E \left[ \left( \widetilde{\Omega}_0 \right)^{-1} \right],
\end{equation}
which are the first and second inequalities in~\eqref{eqn_cyclic_ineq_omega}.

On the other hand, the third inequality will be obtained by the argument sketched in the beginning of this Subsection~\ref{subsubsec_final_argument}. Let us first look at the relation between $M_{\FF^{k}, g}$ and $M_{\FF^{k-1}, g}$.
We recall that $\FF^{k}=\FF^{k-1}+\mathbf{1}_{J_k}$, where $\P (J_k=j)=\frac{\alpha_j}{\sum_{i \geq 1} \alpha_i}$ for all $j \geq 1$. If we condition on $\FF^{k-1}$ and $\FF^{k}$, then the law of $M_{\FF^{k-1}, g}$ is the law of $M_{\FF^{k}, g} \backslash m_{J_k}^0$, conditioned on $m_{J_k}^0 \subset M_{\FF^{k}, g}$, where $m^0_{J_k}$ is the map with perimeter $2$ of Figure~\ref{fig_m_0_j}. Therefore, for any map $m$ with one hole, we have
\[ \P \left( m \subset M_{\FF^{k-1}, g} | J_k=j \right) = \P \left( m+m_j^0 \subset M_{\FF^{k}, g} | J_k=j, \, m_j^0 \subset  M_{\FF^{k}, g} \right), \]
where $m+m_{j}^0$ is the map obtained from $m$ by replacing the root edge of $m$ by a copy of $m_j^0$. By summing over $j$, we obtain
\[
\P \left( m \subset M_{\FF^{k-1}, g} \right) = \frac{1}{\sum_{i \geq 1} \alpha_i} \sum_{j \geq 1} \alpha_j \frac{\P \left( m+m_j^0 \subset M_{\FF^{k}, g} \right)}{\P \left( m_j^0 \subset M_{\FF^{k}, g} \right)}.
\]
We now take $k=k_0^t(g)$ and let $g \to +\infty$ (along $S^t$) to replace $M_{\FF^{k-1}, g}$ and $M_{\FF^{k}, g}$ by respectively $\MM_{\q^{(\Omega_0^{t,-})}}$ and $\MM_{\q^{(\Omega_0^{t})}}$. We note that $m_j^0+m$ has the same perimeter as $m$ but one more internal face of degree $2j$. We obtain, for every finite map $m$ with one hole,
\[ \E \left[ C_{|\partial m|} \left( \q^{(\Omega_0^{t,-})} \right) \prod_{f \in m} q^{(\Omega_0^{t,-})}_{\deg(f)/2} \right] = \frac{1}{\sum_{i \geq 1} \alpha_i} \sum_{j \geq 1} \alpha_j \frac{\E \left[ C_{\partial m} \left( \q^{(\Omega_0^{t})} \right) \times \prod_{f \in m} q^{(\Omega_0^{t})}_{\deg(f)/2} \times q^{(\Omega_0^{t})}_j \right]}{ \E \left[ C_1 \left( \q^{(\Omega_0^{t})} \right) q_j^{(\Omega_0^{t})} \right]}. \]
This can be interpreted as a Radon--Nikodym derivative, i.e. the map $\MM_{\q^{(\Omega_0^{t,-})}}$ has the law of $\MM_{\q^{(\Omega_0^{t})}}$ biased by
\begin{equation}\label{eqn_radon_nikodym_omega}
\frac{1}{\sum_{i \geq 1} \alpha_i} \sum_{j \geq 1} \alpha_j \frac{q_j^{(\Omega_0^{t})}}{\E \left[ q_j^{(\Omega_0^{t})} \right]},
\end{equation}
using the fact that $C_1(\q)=1$. Since $\Omega$ is a measurable function of the map $\MM_{\q^{(\Omega)}}$ by Proposition~\ref{prop_q_as_limit}, it follows that $\Omega_0^{t,-}$ has the law of $\Omega_0^t$ biased by \eqref{eqn_radon_nikodym_omega}. In particular, we have
\[
\E \left[ \left( \Omega_0^{t,-} \right)^{-1} \right] = \frac{1}{\sum_{i \geq 1} \alpha_i} \sum_{j \geq 1} \alpha_j \frac{\E \left[ q_j^{(\Omega_0^t)} \, \left( \Omega_0^t \right)^{-1} \right]}{\E \left[ q_j^{(\Omega_0^t)}\right]}.
\]
We can now let $t \to +\infty$ (along $S$) to get
\begin{equation}\label{eqn_expectation_omega_inverse}
\E \left[ \left( \Omega_0^{-} \right)^{-1} \right] = \frac{1}{\sum_{i \geq 1} \alpha_i} \sum_{j \geq 1} \alpha_j \frac{\E \left[ q_j^{(\Omega_0)} \, \left( \Omega_0 \right)^{-1} \right]}{\E \left[ q_j^{(\Omega_0)}\right]}.
\end{equation}
But by Lemma~\ref{lem_qj_is_monotone}, we have $\E \left[ q_j^{(\Omega_0)} \, \left( \Omega_0 \right)^{-1} \right] \geq \E \left[ q_j^{(\Omega_0)} \right] \E \left[ \left( \Omega_0 \right)^{-1} \right]$ for all $j \geq 1$, so the last display implies $\E \left[ \left( \Omega_0^{-} \right)^{-1} \right] \geq \E \left[ \left( \Omega_0 \right)^{-1} \right]$, which is the third inequality of the sketch~\eqref{eqn_cyclic_ineq_omega}.

We now move on to the two equalities of~\eqref{eqn_cyclic_ineq_omega}. For this, we need to argue that $\widetilde{\Omega}_0^t$ is a good approximation of $\Omega_0^t$ for $t$ large. By definition of $\widetilde{\Omega}_0^t$ and by local convergence, we have (the limits in $g$ are along $S^t$)
\begin{align}\label{eqn_omegatilde_t_as_limit}
\E \left[ \left( \widetilde{\Omega}_0^t \right)^{-1} \right] &= \lim_{g \to +\infty} \E \left[ \left( \Omega_{k_0^t(g),g}^t \right)^{-1} \right] \nonumber \\
&= \lim_{g \to +\infty} \E \left[ \left( r^{-1} \left( \frac{ \left| \expl_t^{\mathcal{A}} \left( M_{\FF^{k_0^t(g)},g} \right) \right| - 2 \left| \partial \expl_t^{\mathcal{A}} \left( M_{\FF^{k_0^t(g)},g} \right) \right|}{t} \right) \right)^{-1} \right] \nonumber
\\
&= \E \left[ \left( r^{-1} \left( \frac{V_t^{(\Omega_0^t)}-2 P_t^{(\Omega_0^t)}}{t} \right) \right)^{-1} \right].
\end{align}
When $t \to +\infty$ (along $S$), the left-hand side of~\eqref{eqn_omegatilde_t_as_limit} goes to $\E \left[ \left( \widetilde{\Omega}_0 \right)^{-1} \right]$. On the other hand, we recall that~\eqref{eqn_omega0_in_compact} implies that $\Omega_0^t$ lies in a compact subset of $(1,+\infty)$ depending only on $\eps$. Since $\Omega_0^t \to \Omega_0$ along $S$, by Lemma~\ref{lem_unif_volume} we have the convergence (along $S$)
\[ \frac{V_t^{(\Omega_0^t)} -2P_t^{(\Omega_0^t)}}{t} \xrightarrow[t \to +\infty]{(P)} r(\Omega_0). \]
Therefore, when $t \to +\infty$ (along $S$), the right-hand side of~\eqref{eqn_omegatilde_t_as_limit} goes to $\E \left[ \left( \Omega_0 \right)^{-1} \right]$. This proves the first equality of~\eqref{eqn_cyclic_ineq_omega}. The second one is proved in the exact same way, using $\Omega^{t,-}_0$, $\widetilde{\Omega}^{t,-}_0$ instead of $\Omega^{t}_0$, $\widetilde{\Omega}^{t}_0$.

We have therefore proved all of~\eqref{eqn_cyclic_ineq_omega}, so all the inequalities must be equalities. In particular, \eqref{eqn_expectation_omega_inverse} becomes
\[ \E \left[ \left( \Omega_0 \right)^{-1} \right] = \frac{1}{\sum_{i \geq 1} \alpha_i} \sum_{j \geq 1} \alpha_j \frac{\E \left[ q_j^{(\Omega_0)} \, \left( \Omega_0 \right)^{-1} \right]}{\E \left[ q_j^{(\Omega_0)}\right]}. \]
However, we also know by Lemma~\ref{lem_qj_is_monotone} that $\E \left[ q_j^{(\Omega_0)} \, \left( \Omega_0 \right)^{-1} \right] \geq \E \left[ q_j^{(\Omega_0)} \right] \E \left[ \left( \Omega_0 \right)^{-1} \right]$ for all $j$, so for all $j \geq 1$ we must have the equality
\[ \alpha_j \E \left[ q_j^{(\Omega_0)} \left( \Omega_0 \right)^{-1} \right] = \alpha_j \E \left[ q_j^{(\Omega_0)} \right] \E \left[ \left( \Omega_0 \right)^{-1} \right].\]
In particular, we fix $j \geq 2$ such that $\alpha_j>0$ (such a $j$ exists because $\alpha_1<1$). Then $\E \left[ q_j^{(\Omega_0)} \left( \Omega_0 \right)^{-1} \right] = \E \left[ q_j^{(\Omega_0)} \right] \E \left[ \left( \Omega_0 \right)^{-1} \right]$. Since $\omega \to \omega^{-1}$ and $\omega \to q_j^{(\omega)}$ are two decreasing functions (by Lemma~\ref{lem_qj_is_monotone}), this is only possible if $\Omega_0$ is deterministic. But then~\eqref{eqn_cyclic_ineq_omega} yields $\E \left[ \left( \Omega_0 \right)^{-1} \right] = \omega_0^{-1}$, so $\Omega_0=\omega_0$ a.s..

We can now finish the proof. By the exact same argument as for $\Omega_0$, we also have $\Omega_1=\omega_1$ a.s.. We recall that $\omega_0<\omega_1$. By letting $t \to +\infty$ (along $S$) in~\eqref{eqn_omega0_in_compact} and using the continuity result of Lemma~\ref{lem_degreefunction_basic}, we get
\begin{equation}\label{eqn_fin_omega0_theta0}
d \left( \q^{(\omega_0)} \right) = \frac{1}{2} \left( 1-2\theta_0-\sum_{j \geq 1} \alpha_j \right)
\end{equation}
and similarly
\begin{equation}\label{eqn_fin_omega1_theta1}
d \left( \q^{(\omega_1)} \right) = \frac{1}{2} \left( 1-2\theta_1-\sum_{j \geq 1} \alpha_j \right).
\end{equation}
On the other hand, by the definition~\eqref{eqn_defn_k0tg} of $k_0^t(g)$ and $k_1^t(g)$, since $\omega_0<\omega_1$, we have
\[k_0^t(g) \geq k_1^t(g)\]
for all $t$ and $g$. Therefore, we have $\left| \FF^{k_0^t(g)} \right| \geq \left| \FF^{k_1^t(g)} \right|$. By letting $g \to +\infty$ (along $S^t$) we deduce $\theta_0^t \leq \theta_1^t$. Letting $t \to +\infty$ (along $S$) we get $\theta_0 \leq \theta_1$. Combining this with~\eqref{eqn_fin_omega0_theta0} and~\eqref{eqn_fin_omega1_theta1}, this proves $d \left( \q^{(\omega_0)} \right) \geq d \left( \q^{(\omega_1)} \right)$, so the function $\omega \to d \left( \q^{(\omega)} \right)$ is nonincreasing on $(1,+\infty)$. Since it is nonconstant (for example by the second item of Lemma~\ref{lem_degreefunction_basic}) and analytic (first item of Lemma~\ref{lem_degreefunction_basic}), it is decreasing on $(1,+\infty)$. Finally, we extend the result to $[1,+\infty)$ by continuity at $1$ (first item of Lemma~\ref{lem_degreefunction_basic}).
\end{proof}

\section{Asymptotic enumeration: convergence of the ratio}\label{sec_univ_asymp}

\begin{proof}[Proof of Corollary \ref{prop_cv_ratio}]
Fix $j \geq 1$, and let $m_j^0$ be the map of Figure~\ref{fig_m_0_j} with a hole of perimeter $2$. On the one hand, we have
\[\P \left( m_j^0 \subset \mathbb{M}_{\q} \right)=C_2(\q)q_j.\]
On the other hand, 
\[\P \left( m_j^0 \subset M_{\mathbf{f}^{n},g_n} \right)=\frac{\beta_{g_n}(\mathbf{f}^{n}-\mathbf{1}_j)}{\beta_{g_n}(\mathbf{f}^{n})}.\]
The last equality is proved by contracting $m_j^0$ in $\M_{\mathbf{f}^{n},g_n}$ into the root edge of a map with face degrees given by $\mathbf{f}^{n}-\mathbf{1}_j$. The corollary follows by letting $n \to +\infty$.
\end{proof}

\appendix

\section{Appendices: proofs of technical lemmas}\label{sec_univ_annex}

\subsection{Monotonicity of $q_j^{(\omega)}$ in $\omega$}
\label{appendix_qj_monotone}

The goal of this appendix is to prove Lemma~\ref{lem_qj_is_monotone}. Let us recall the context: we fix a sequence $\left( \alpha_j \right)_{j \geq 1}$ such that $\sum_{j \geq 1} j \alpha_j = 1$. Following~\eqref{eqn_qjomega_univ}, for $\omega \geq 1$, we have
\begin{equation*}
q_j^{(\omega)}=\frac{j \alpha_j}{\omega^{j-1}h_j(\omega)} \left( \frac{1-\sum_{i \geq 1} \frac{1}{4^{i-1}} \binom{2i-1}{i-1} \frac{i \alpha_i}{\omega^{i-1} h_i(\omega)} }{4} \right)^{j-1},
\end{equation*}
where $h_i(\omega)=\sum_{s=0}^{i-1} \frac{1}{(4\omega)^s} \binom{2s}{s}$. We want to prove that $q_j^{(\omega)}$ is nonincreasing in $\omega$, and even decreasing for $j \geq 2$ provided $\alpha_j>0$.

\begin{proof}[Proof of Lemma~\ref{lem_qj_is_monotone}]
We first note that $q_1^{(\omega)}=\alpha_1$ is constant, so the result is immediate for $j=1$. We now assume $j \geq 2$.
Let us write $g_i(\omega)=\frac{\binom{2i-1}{i-1}}{(4 \omega)^{i-1} \sum_{p=0}^{i-1} \frac{1}{(4\omega)^p} \binom{2p}{p}}$. This is a decreasing function of $\omega \geq 1$, since the denominator is a polynomial with nonnegative coefficients. Moreover, we have
\begin{equation}\label{eqn_derivative_qj}
\frac{\mathrm{d} q_j^{(\omega)}}{\mathrm{d} \omega} = \frac{4^{j-2}}{\binom{2j-1}{j-1}} j \alpha_j \left( \frac{1-\sum_{i \geq 1} i \alpha_i g_i(\omega)}{4} \right)^{j-2} \sum_{i \geq 1} i \alpha_i \left( g'_j(\omega) - g'_j(\omega) g_i(\omega) - (j-1) g_j(\omega) g'_i(\omega)\right).
\end{equation}
Therefore, we want to prove that for all $i,j \geq 1$ and $\omega \geq 1$:
\begin{equation}\label{eqn_reduced_derivative}
g'_j(\omega)-g'_j(\omega)g_i(\omega )-(j-1)g_j(\omega) g'_i(\omega) \leq 0.
\end{equation}

For $i \geq 1$, we denote by $P_i(\omega)$ the polynomial in the denominator in the definition of $g_i(\omega)$, so that $g_i(\omega)=\frac{\binom{2i-1}{i-1}}{P_i(\omega)}$. We also set the convention $P_0=0$. We then have the recursion
\[P_{i+1}(\omega)=4\omega P_i (\omega)+\binom{2i}{i}\]
for all $i \geq 0$. We can then rewrite~\eqref{eqn_reduced_derivative} as $F_{i,j}(\omega) \geq 0$, where for $i,j \geq 0$, we have defined
\[F_{i,j}(\omega):=P_i^2 (\omega) P_j'(\omega)- \binom{2i-1}{i-1}\left(P_j'(\omega) P_i(\omega) +(j-1)P_i'(\omega) P_j(\omega) \right).\]
We only need $F_{i,j}(\omega) \geq 0$ for $i,j \geq 1$, but we will prove it for $i \geq 1$ and $j \geq 0$.
For this, we introduce
\begin{align*}
\Delta F_{i,j}(\omega) &:=F_{i,j+1}(\omega)-4\omega F_{i,j}(\omega)\\&=4P_j(\omega) P_i^2(\omega)- \binom{2i-1}{i-1}\left(4P_j(\omega) P_i(\omega) +j\binom{2j}{j}P_i'(\omega) +4\omega P_i'(\omega) P_j(\omega) \right)
\end{align*}
by the recursion. We also set
\begin{align}
\Delta^2 F_{i,j}(\omega) &:=\Delta F_{i,j+1}(\omega)-4\omega \Delta F_{i,j}(\omega) \\&=4\binom{2j}{j}P_i^2(\omega)-\binom{2i-1}{i-1} \times 4\binom{2j}{j}P_i(\omega) \nonumber \\
\\&\hspace{0.5cm}-\binom{2i-1}{i-1}\left( (j+1)\binom{2(j+1)}{j+1}-4\omega (j-1)\binom{2j}{j} \right) P_i'(\omega) \nonumber \\
&\geq \binom{2j}{j}\left(4P_i^2(\omega)-\binom{2i-1}{i-1} \left(4P_i(\omega)+6P_i'(\omega)\right)\right), \label{eqn_defn_delta2F}
\end{align}
where the last inequality just follows from $\omega \geq 1$.

Now let $G_i(\omega):=P_i^2(\omega)-\binom{2i-1}{i-1} \left(P_i(\omega)+\frac	{3}{2}P_i'(\omega)\right)$. We will prove that for $\omega\geq 1$ and $i \geq 1$, we have $G_i(\omega)\geq 0$. On the one hand, since $\deg(P_i)=i-1$ and $P_i$ has positive coefficients, it is clear that for all $k\geq i$:
\[[\omega^k]G_i\geq 0.\]
On the other hand, let $k\leq i-2$. Then
\begin{align*}
[\omega^k]G_i&\leq [\omega^k] P_i^2-\binom{2i-1}{i-1} [\omega^k] P_i'\\&=\sum_{a+b=k} 4^k \binom{2(i-1-a)}{i-1-a} \binom{2(i-1-b)}{i-1-b}\\&\hspace{0.5cm}-4^{k+1}(k+1)\binom{2i-1}{i-1}\binom{2(i-2-k)}{i-2-k}.
\end{align*}
It is easily checked that the quantity $\binom{2(i-1-a)}{i-1-a} \binom{2(i-1-b)}{i-1-b}$ is maximal when $a=k$ or $b=k$.
Finally, since we have
\[4\binom{2i-1}{i-1}\binom{2(i-2-k)}{i-2-k}\geq \binom{2(i-1)}{i-1} \binom{2(i-1-k)}{i-1-k}, \]
we can conclude that
\[[\omega^k]G_i\leq 0.\]
Therefore, we have proved that, starting from the highest order coefficients, the coefficients of the polynomial $G_i$ are all positive, then all negative. For $i \geq 2$, since $G_i$ is nonconstant, this implies that $G_i$ has a unique nonnegative real root $\omega^*$, and $G_i(\omega)$ is positive for all $\omega>\omega^*$. We now claim that $\omega^*=1$. Indeed, it is easily checked that 
\[P_i(1)=\frac{i}{2}\binom{2i}{i} \quad \mbox{and} \quad P'_i(1)=\frac{i(i-1)}{3}\binom{2i}{i},\]
so $G_i(1)=0$ and $\omega^*=1$.
Therefore, if $i \geq 2$, we have $G_i(\omega) \geq 0$ for $\omega \geq 1$. This is also true for $i=1$, since then $G_i$ is constant and equal to $1$.

Therefore, by~\eqref{eqn_defn_delta2F}, for all $i,j \geq 1$ and $\omega \geq 1$, we have 
\[\Delta^2 F_{i,j}(\omega) \geq 0.\]
But since $\Delta F_{i,0}=0$, we have
$\Delta F_{i,j}(\omega) \geq 0$,
and since $F_{i,0}=0$, we have
\[F_{i,j}(\omega) \geq 0.\]
This proves~\eqref{eqn_reduced_derivative}, and the nonincreasing statement in the lemma.

It remains to check that $q_j^{(\omega)}$ is decreasing if $j \geq 2$ and $\alpha_j>0$. For this, note that if $i \geq 2$, then $G_i$ is nonconstant so $G_i(\omega)>0$ for $\omega>1$. From here, we deduce $\Delta^2 F_{i,j}(\omega)>0$ for $i \geq 2$, $j \geq 0$ and $\omega>1$, so $\Delta F_{i,j}(\omega) >0$ for $i \geq 2$, $j \geq 1$ and $\omega>1$, so $F_{i,j}(\omega)>0$ for $i,j \geq 2$ and $\omega>1$. Therefore~\eqref{eqn_reduced_derivative} is strict for $i,j \geq 2$ and $\omega>1$. By~\eqref{eqn_derivative_qj} and the fact that $\alpha_j>0$, this implies $\frac{\mathrm{d} q_j^{(\omega)}}{\mathrm{d} \omega}<0$ for $j \geq 2$ and $\omega>1$, so $q_j^{(\omega)}$ is decreasing in $\omega$ for $j \geq 2$.
\end{proof}

\subsection{Estimation lemmas for planarity and one-endedness}\label{sec_univ_estim_lemmas}

Our goal here is to prove Lemmas~\ref{lem_calcul_planar} and~\ref{lem_calcul_OE}. We first restate the recursion \eqref{rec_biparti_genre_univ} of~\cite{Lo19}, which will be our main tool:

\begin{equation}\label{rec_biparti_genre_univ_bis}
\binom{|\mathbf{f}|+1}{2}\beta_g(\mathbf{f})=\hspace{-0.5cm}\sum_{\substack{\mathbf{h}^{(1)}+\mathbf{h}^{(2)}=\mathbf{f}\\g^{(1)}+g^{(2)}+g^*=g}}\hspace{-0.5cm}(1+|\mathbf{h}^{(1)}|)\binom{v \left( \mathbf{h}^{(2)},g^{(2)} \right) }{2g^*+2}\beta_{g^{(1)}}(\mathbf{h}^{(1)}) \beta_{g^{(2)}}(\mathbf{h}^{(2)})+\sum_{g^*\geq 0}\binom{v \left( \mathbf{f},g \right) +2g^*}{2g^*+2}\beta_{g-g*}(\mathbf{f}).
\end{equation}

We start with a few easy but useful consequences of \eqref{rec_biparti_genre_univ_bis}.

\begin{lem}\label{lem_couper_en_deux}
Fix $\kappa>0$ and assume that $v(\ff,g) \geq \kappa |\ff|$.
\begin{enumerate}
\item
We have
\[\sum_{\substack{\mathbf{h}^{(1)}+\mathbf{h}^{(2)}=\mathbf{f}\\g^{(1)}+g^{(2)}=g}} | \mathbf{h}^{(1)} | \beta_{g^{(1)}}(\mathbf{h}^{(1)})  \times | \mathbf{h}^{(2)}| \beta_{g^{(2)}}(\mathbf{h}^{(2)}) \leq \frac{16}{\kappa^2} |\ff| \beta_g(\mathbf{f}). \]
\item
We have
\[\sum_{\substack{\mathbf{h}^{(1)}+\mathbf{h}^{(2)}=\mathbf{f}\\ g^{(1)}+g^{(2)}=g}} \beta_{g^{(1)}}(\mathbf{h}^{(1)}) \beta_{g^{(2)}}(\mathbf{h}^{(2)}) \leq \frac{32}{\kappa^2} \beta_g(\mathbf{f}).\]
\item
For every $g^* \geq 0$, there is a constant $C>0$ such that, if $|\ff|$ is large enough:
\[\beta_{g-g^*}(\mathbf{f}) \leq C |\mathbf{f}|^{-2g^*} \beta_g(\mathbf{f}).\]
\end{enumerate}
\end{lem}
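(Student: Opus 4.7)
The plan is to deduce all three points by extracting a single nonnegative term from the recursion~\eqref{rec_biparti_genre_univ_bis} and then using the assumption $v(\ff,g) \geq \kappa |\ff|$ to bound that term from below. The elementary fact driving the first two points is the following: for any split $(\mathbf{h}^{(1)}, g^{(1)}) + (\mathbf{h}^{(2)}, g^{(2)}) = (\ff, g)$ with $g^* = 0$ (as appearing in the first sum of the recursion), the Euler formula yields $v(\mathbf{h}^{(1)}, g^{(1)}) + v(\mathbf{h}^{(2)}, g^{(2)}) = v(\ff, g) + 2 \geq \kappa |\ff|$, so at least one of the two summands satisfies $v(\mathbf{h}^{(i)}, g^{(i)}) \geq \kappa |\ff|/2$.

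For point 2, I would drop the second sum in~\eqref{rec_biparti_genre_univ_bis} entirely, restrict the first sum to splits with $g^* = 0$ and $v(\mathbf{h}^{(2)}, g^{(2)}) \geq \kappa|\ff|/2$, and use the crude bounds $(1+|\mathbf{h}^{(1)}|) \geq 1$ and $\binom{v(\mathbf{h}^{(2)}, g^{(2)})}{2} \geq \kappa^2 |\ff|^2 /16$. Dividing through by these factors gives $\sum \beta_{g^{(1)}}(\mathbf{h}^{(1)}) \beta_{g^{(2)}}(\mathbf{h}^{(2)}) \lesssim \kappa^{-2} \beta_g(\ff)$ on the restricted range. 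The symmetric restriction $v(\mathbf{h}^{(1)}, g^{(1)}) \geq \kappa|\ff|/2$ is obtained from the previous one by the swap $(1) \leftrightarrow (2)$, which is a bijection of splits, and by the vertex-splitting fact these two restrictions together cover every split in the full unrestricted sum, yielding the stated bound.

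Point 1 follows the same template, but with the factor $(1+|\mathbf{h}^{(1)}|) \geq |\mathbf{h}^{(1)}|$ retained in the recursion. This immediately produces a factor $|\mathbf{h}^{(1)}|$ in the output bound; the additional factor $|\mathbf{h}^{(2)}|$ appearing on the left-hand side of the statement is then inserted by the trivial estimate $|\mathbf{h}^{(2)}| \leq |\ff|$, which is what accounts for the extra $|\ff|$ on the right-hand side of the claim.

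For point 3, I would instead discard the first sum of~\eqref{rec_biparti_genre_univ_bis} and keep only the single term indexed by the prescribed $g^*$ in the second sum. Since $v(\ff, g) + 2g^* \geq v(\ff, g) \geq \kappa|\ff|$, for $|\ff|$ large enough that term is of order $|\ff|^{2g^*+2} \beta_{g-g^*}(\ff)$, while the left-hand side of the recursion is of order $|\ff|^2 \beta_g(\ff)$. Rearranging then yields $\beta_{g-g^*}(\ff) \leq C |\ff|^{-2g^*} \beta_g(\ff)$ with $C$ depending on $g^*$ and $\kappa$. The whole argument is essentially careful bookkeeping on the recursion; the only genuinely content-bearing step is the vertex-splitting identity above, without which splits where both sides have very few vertices would contribute uncontrollably to the right-hand side.
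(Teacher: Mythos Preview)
Your argument is correct and follows essentially the same approach as the paper: extract the $g^*=0$ terms (resp.\ the single $g^*$-term) from the recursion, use the vertex-splitting identity $v(\mathbf{h}^{(1)},g^{(1)})+v(\mathbf{h}^{(2)},g^{(2)})=v(\ff,g)+2$, and symmetrize. The only notable difference is the order: the paper proves Point~1 first by extracting the factor $|\mathbf{h}^{(2)}|$ directly from the binomial via $\binom{v^{(2)}}{2}\geq \frac{\kappa^2}{8}|\ff|\cdot|\mathbf{h}^{(2)}|$ (using both $v^{(2)}-1\geq\frac{\kappa}{2}|\ff|$ and $v^{(2)}-1\geq\frac{\kappa}{2}|\mathbf{h}^{(2)}|$), and then deduces Point~2 from Point~1 via $|\mathbf{h}^{(1)}|\cdot|\mathbf{h}^{(2)}|\geq|\ff|/2$; your route through $|\mathbf{h}^{(2)}|\leq|\ff|$ loses a harmless factor~$2$ in the constant for Point~1.
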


\begin{proof}
We start with the first item. 
By only keeping the terms with $g^*=0$ and $v \left( \mathbf{h}^{(2)},g^{(2)} \right) \geq v \left( \mathbf{h}^{(1)},g^{(1)} \right)$ in the first sum of \eqref{rec_biparti_genre_univ_bis}, we get:
\begin{equation}\label{ineq_partielle}
\binom{|\mathbf{f}|+1}{2}\beta_g(\mathbf{f}) \geq \hspace{-0.5cm}\sum_{\substack{\mathbf{h}^{(1)}+\mathbf{h}^{(2)}=\mathbf{f} \\ g^{(1)}+g^{(2)}=g \\ v \left( \mathbf{h}^{(2)},g^{(2)} \right) \geq v \left( \mathbf{h}^{(1)},g^{(1)} \right)}}\hspace{-0.5cm}(1+|\mathbf{h}^{(1)}|)\binom{v \left( \mathbf{h}^{(2)},g^{(2)} \right) }{2}\beta_{g^{(1)}}(\mathbf{h}^{(1)}) \beta_{g^{(2)}}(\mathbf{h}^{(2)}).
\end{equation}
Since $v \left( \mathbf{h}^{(2)},g^{(2)} \right) + v \left( \mathbf{h}^{(1)},g^{(1)} \right)=v \left( \ff, g\right) +2$, in every term of \eqref{ineq_partielle}, we have*
\[ v \left( \mathbf{h}^{(2)},g^{(2)} \right)-1 \geq \frac{1}{2} v \left( \mathbf{\ff},g \right)\geq \frac{\kappa}{2} |\ff| \geq \frac{\kappa}{2} |\mathbf{h}^{(2)}|.\]
This implies $\binom{v \left( \mathbf{h}^{(2)},g^{(2)} \right)}{2}\geq \frac{\kappa^2}{8} |\ff| \times |\mathbf{h}^{(2)}|$. Using the crude bounds $\binom{|\ff|+1}{2} \leq |\ff|^2$ and $1+|\mathbf{h}^{(1)}| \geq |\mathbf{h}^{(1)}|$, we obtain
\begin{equation}\label{eqn_lem_couper_en_deux}
|\ff|^2 \beta_g(\mathbf{f}) \geq \frac{\kappa^2}{8} \hspace{-0.5cm}\sum_{\substack{\mathbf{h}^{(1)}+\mathbf{h}^{(2)}=\mathbf{f} \\ g^{(1)}+g^{(2)}=g \\ v \left( \mathbf{h}^{(2)},g^{(2)} \right) \geq v \left( \mathbf{h}^{(1)},g^{(1)} \right)}} \hspace{-0.5cm} |\ff| \times |\mathbf{h}^{(1)}| \times |\mathbf{h}^{(2)}| \times \beta_{g^{(1)}}(\mathbf{h}^{(1)}) \beta_{g^{(2)}}(\mathbf{h}^{(2)}).
\end{equation}
The equation is now symmetric, so up to losing an extra factor two, we can remove the assumption $v \left( \mathbf{h}^{(2)},g^{(2)} \right) \geq v \left( \mathbf{h}^{(1)},g^{(1)} \right)$ and we get the first item. The second item follows straight from the first one and the inequality $|\mathbf{h}^{(1)}| \times |\mathbf{h}^{(2)}| \geq \frac{|\ff|}{2}$, since $|\mathbf{h}^{(1)}|+|\mathbf{h}^{(2)}|=\frac{|\ff|}{2}$ (except if $\mathbf{h}^{(1)}=\mathbf{0}$ or $\mathbf{h}^{(2)}=\mathbf{0}$, but then the terms with $g_1,g_2>0$ do not contribute and the terms $g_1=0$ or $g_2=0$ contribute exactly $\beta_g(\mathbf{f})$ each).

Finally, for the third item, we simply need to consider the term corresponding to $g^*$ in the second sum of \eqref{rec_biparti_genre_univ_bis} and use $v(\ff,g) \geq \kappa |\ff|$.
\end{proof}

\begin{proof}[Proof of Lemma~\ref{lem_calcul_planar}]
Let $i^*>1$ be such that $\alpha_{i^*}>0$. Considering a map $m$ of $\mathcal{B}_{g^{(j)}}^{(p^j_1,p^j_2,\ldots,p^j_{\ell_j})}$, we can tesselate each boundary face of $m$ into faces of degree $2i^*$ as in the proof of Lemma~\ref{lem_petites_faces} to obtain a map $m'$. We need at most $p_i^j$ faces for a boundary of length $2p_i^j$. We then root $m'$ on the first boundary of $m$, and add a marked edge on each of the other boundaries. This operation is injective, so we obtain 
\begin{equation}\label{eqn_boundary_tesselation}
\beta_{g^{(j)}}^{(p^j_1,p^j_2,\ldots,p^j_{\ell_j})}(\mathbf{h}^{(j)})\leq |\mathbf{h}^{(j)}|^{\ell_j-1}\beta_{g^{(j)}}(\mathbf{\widehat{h}}^{(j)}),
\end{equation}
where $\mathbf{\widehat{h}}^{(j)}=\mathbf{h}^{(j)}+(\sum_{i=1}^{\ell_j} p_i^j)\mathbf{1}_{i^*}$. Using the crude bound $|\mathbf{h}^{(j)}| \leq |\ff^n|$, we can bound each term of the left-hand side of Lemma~\ref{lem_calcul_planar} as follows:
\begin{equation}\label{eqn_calcul_planar_1}
\prod_{j=1}^k \beta_{g^{(j)}}^{(p^j_1,p^j_2,\ldots,p^j_{\ell_j})}(\mathbf{h}^{(j)})\leq |\ff^n|^{\sum_{i=1}^k (\ell_j-1)} \prod_{j=1}^k \beta_{g^{(j)}}(\mathbf{\widehat{h}}^{(j)}).
\end{equation}
On the other hand, by the second item of Lemma~\ref{lem_couper_en_deux} and an easy induction on $k$, we have
\begin{equation}\label{eqn_calcul_planar_2}
\sum_{\substack{\mathbf{h}^{(1)}+\mathbf{h}^{(2)}+\ldots+\mathbf{h}^{(k)}=\ff^n-\mathbf{h}^{(0)} \\ g^{(1)}+g^{(2)}+\ldots+g^{(k)}=g_n-1-\sum_j(\ell_j-1)}}\prod_{j=1}^k \beta_{g^{(j)}}(\mathbf{\widehat{h}}^{(j)})\leq C' \beta_{g_n-g^*}(\mathbf{\widehat{f}}^n),
\end{equation}
where $\mathbf{\widehat{f}}^n =\mathbf{\widehat{h}}^{(1)}+\mathbf{\widehat{h}}^{(2)}+\ldots+\mathbf{\widehat{h}}^{(k)}$, the constant $C'$ does not depend on $n$, and $g^*=1+\sum_j(\ell_j-1)$. Note that $\mathbf{\widehat{f}}^n=\ff^n-\mathbf{h}^{(0)}+p\mathbf{1}_{i^*}$, with $p=\sum_{j=1}^k\sum_{i=1}^{\ell_j} p_i^j$. Therefore, we can write
\begin{equation}\label{eqn_calcul_planar_3}
\beta_{g_n-g^*}(\mathbf{\widehat{f}}^n) \leq \beta_{g_n-g^*}(\mathbf{f}^n+p\mathbf{1}_{i^*}) = O \left( \beta_{g_n-g^*}(\mathbf{f}^n) \right),
\end{equation}
where the last inequality comes from the Bounded ratio Lemma (Lemma~\ref{lem_BRL}), which applies since by our choice of $i^*$ we have $i^* \ff^n_{i^*} > \frac{\alpha_{i^*}}{2} |\ff^n|$ for $n$ large enough.

By combining \eqref{eqn_calcul_planar_1}, \eqref{eqn_calcul_planar_2} and \eqref{eqn_calcul_planar_3}, we have bounded the left-hand side of Lemma~\ref{lem_calcul_planar} by $O \left( |\ff^n|^{g^*-1} \beta_{g_n-g^*}(\mathbf{f}^n) \right)$. By the third item of Lemma~\ref{lem_couper_en_deux}, this is $O \left( |\ff^n|^{-g^*-1} \beta_{g_n}(\mathbf{f}^n) \right)$. Since $g^* \geq 0$, this concludes the proof.
\end{proof}

\begin{proof}[Proof of Lemma~\ref{lem_calcul_OE}]
The proof of the first part works in the exact same way as the proof of Lemma~\ref{lem_calcul_planar}, with the exception that now $g^*=\sum_j(\ell_j-1)$ (the $+1$ term was coming from $m$ having genus $1$ and is not there anymore), so the last bound becomes $O \left( |\ff^n|^{-g^*} \beta_{g_n}(\mathbf{f}^n) \right)$. Nevertheless, since the $\ell_j$'s are not all equal to $1$, we have $g^* \geq 1$ and the conclusion remains the same.

Now we prove the second point. As in the proof of Lemma~\ref{lem_calcul_planar}, let $i^*>1$ be such that $\alpha_{i^*}>1$. First, for the same reason as in \eqref{eqn_boundary_tesselation}, we can get rid of the boundaries by writing
\begin{equation}\label{eqn_boundary_tesselation_bis}
\beta_{g^{(j)}}^{p_j}(\mathbf{h}^{(j)})\leq \beta_{g^{(j)}}(\mathbf{\widehat{h}}^{(j)}),
\end{equation}
where $\mathbf{\widehat{h}}^{(j)}=\mathbf{h}^{(j)}+p_j\mathbf{1}_{i^*}$. 
Combining this with the first item of Lemma~\ref{lem_couper_en_deux} and an easy induction on $k$, we get
\[\sum_{\substack{\mathbf{h}^{(1)}+\mathbf{h}^{(2)}+\ldots+\mathbf{h}^{(k)}=\ff^n-\mathbf{h}^{(0)} \\ g^{(1)}+g^{(2)}+\ldots+g^{(k)}=g_n}}\, \prod_{j=1}^k |\mathbf{\widehat{h}}^{(j)}| \beta^{p_j}_{g^{(j)}}(\mathbf{\widehat{h}}^{(j)}) = O \left( |\mathbf{\widehat{f}}^n| \beta_{g_n}(\mathbf{\widehat{f}}^n) \right), \]
where $\mathbf{\widehat{f}}^n =\mathbf{\widehat{h}}^{(1)}+\mathbf{\widehat{h}}^{(2)}+\ldots+\mathbf{\widehat{h}}^{(k)}$. Now consider a term of the last summand such that both $|\mathbf{h}^{(1)}|$ and $|\mathbf{h}^{(2)}|$ are larger than $a$. Then the product $\prod_{j=1}^k |\mathbf{\widehat{h}}^{(j)}|$ has at least one factor larger than $\frac{|\ff^n|-|\mathbf{h}^{(0)}|}{k}$, and another one larger than $a$. Hence, it can be bounded from below by $\frac{\left( |\ff^n|-|\mathbf{h}^{(0)}| \right) a}{k}>\frac{|\ff^n| a}{2k}$ for $n$ large enough. Therefore, from the last display we get
\begin{equation}\label{eqn_calcul_OE_fin}
\sum_{\substack{\mathbf{h}^{(1)}+\mathbf{h}^{(2)}+\ldots+\mathbf{h}^{(k)}=\ff^n-\mathbf{h}^{(0)} \\ g^{(1)}+g^{(2)}+\ldots+g^{(k)}=g_n\\ |\mathbf{h}^{(1)}|, |\mathbf{h}^{(2)}| >a}} \, \prod_{j=1}^k \beta_{g}(\mathbf{\widehat{h}}^{(j)}) = O \left( \frac{1}{a} \beta_{g_n}(\mathbf{\widehat{f}}^n) \right), 
\end{equation}
where the $O$ is uniform in $a$. Finally, we have $\mathbf{\widehat{f}}^n=\mathbf{f}^n-\mathbf{h}^{(0)}+p\mathbf{1}_{i^*}$, where $p=\sum_{j=1}^k p_j$. Hence, using the Bounded ratio Lemma just like in the end of the proof of Lemma~\ref{lem_calcul_planar}, we have
\[\beta_{g_n}(\mathbf{\widehat{f}}^n)\leq \beta_{g_n}(\mathbf{f}^n+p\mathbf{1}_{i^*}) = O \left( \beta_{g_n} (\mathbf{f}^n) \right). \]
Combined with \eqref{eqn_calcul_OE_fin}, this proves the lemma.
\end{proof}

\subsection{Proof of the analyticity in Lemma~\ref{lem_degreefunction_basic}}
\label{subsec_analyticity}

We fix $(\alpha_j)_{j \geq 1}$ such that $\sum_{j \geq 1} j \alpha_j=1$ and recall that, for all $\omega \in [1,+\infty)$, the weight sequence $\q^{(\omega)}$ is given by \eqref{eqn_qjomega_univ}. More precisely, we have
\begin{equation}\label{eqn_def_q_with_g}
q_j^{(\omega)}=\frac{j \alpha_j}{\omega^{j-1} c(\omega)^{j-1} h_{j}(\omega)},
\end{equation}
where $h_{j}(\omega)=\sum_{s=0}^{j-1} \binom{2s}{s} (4 \omega)^{-s}$ and
\begin{equation}\label{eqn_defn_comega_complex}
c(\omega)=\frac{4}{1-\sum_{i \geq 1} \frac{1}{4^{i-1}} \binom{2i-1}{i-1} \frac{i \alpha_i}{\omega^{i-1} h_{i}(\omega)}}.
\end{equation}

If we denote by $\rho$ the root vertex of an infinite map $\MM_{\q}$, our goal is to prove that the function
\[ \omega \to \E \left[ \frac{1}{\deg_{\MM_{\q^{(\omega)}}}(\rho)} \right] \]
is analytic (and therefore continuous) on $(1,+\infty)$. We will show this by expressing the expectation as a sum over possible neighbourhoods of the root. The first step will be to prove that the probability to observe a fixed neighbourhood around the root is analytic. Our analyticity arguments will rely on extending certain functions to complex values of $\omega$, which we start doing in the next lemma. We note that the definition of $c(\omega)$ from \eqref{eqn_defn_comega_complex} still makes sense for $\omega$ complex provided the series in the denominator converges.

\begin{lem}\label{lem_comega_complex_neighbourhood}
There is a complex neighbourhood of $(1,+\infty)$ on which the function $\omega \to c(\omega)$ is well defined, analytic and on which $c(\omega) \ne 0$.
\end{lem}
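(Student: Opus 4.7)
The plan is to show that for any $\omega_0 \in (1,+\infty)$, there is a complex neighborhood of $\omega_0$ on which the sum in the denominator of~\eqref{eqn_defn_comega_complex} defines an analytic function $S(\omega)$ taking values in $\C \setminus \{1\}$. The crux is a uniform lower bound on $|h_j(\omega)|$, valid simultaneously for all $j \geq 1$ and all $\omega$ in a small complex neighborhood of $\omega_0$.

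First, I would extend $h_j$ to the complex plane: for $\omega \in \C$ with $|\omega| > 1$, $h_j(\omega) = \sum_{s=0}^{j-1}\binom{2s}{s}(4\omega)^{-s}$ is the $j$-th partial sum of the absolutely convergent series whose sum is $h_\infty(\omega) := (1-1/\omega)^{-1/2}$ (principal branch), which is analytic and nonvanishing on a neighborhood of $(1,+\infty)$. The convergence $h_j \to h_\infty$ is uniform on every compact subset of $\{|\omega| > 1\}$. In particular, on a small compact neighborhood $K$ of $\omega_0$, we have $|h_\infty| \geq 2\delta$ for some $\delta > 0$, hence $|h_j(\omega)| \geq \delta$ on $K$ for all $j \geq j_0$ for some $j_0$. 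For $1 \leq j < j_0$, the function $h_j$ is a polynomial in $1/\omega$ with positive coefficients, so $h_j(\omega_0) > 0$, and by continuity $|h_j|$ is bounded below on a neighborhood of $\omega_0$. Intersecting these finitely many neighborhoods yields a compact neighborhood $K$ of $\omega_0$ on which $\inf_{j \geq 1,\, \omega \in K} |h_j(\omega)| > 0$.

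Next I would establish uniform convergence of the series. After possibly shrinking $K$ further so that $|\omega| \geq \omega_0' > 1$ on $K$, and using $\binom{2i-1}{i-1}/4^{i-1} = O(1/\sqrt{i})$ together with the bound $i\alpha_i \leq \sum_j j\alpha_j = 1$, each term of the series in~\eqref{eqn_defn_comega_complex} is dominated on $K$ by a constant multiple of $(\omega_0')^{-(i-1)}/\sqrt{i}$. Hence the series converges absolutely and uniformly, so $S(\omega)$ is analytic on a complex neighborhood of $\omega_0$. For real $\omega \in K \cap (1,+\infty)$, the proof of Proposition~\ref{prop_third_parametrization} shows $S(\omega) \in (0,1)$, so by continuity $1 - S$ stays bounded away from $0$ on a smaller complex neighborhood $U$ of $\omega_0$. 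On $U$, the function $c(\omega) = 4/(1 - S(\omega))$ is analytic and, being the reciprocal of a bounded analytic function, nonvanishing. Taking the union of such neighborhoods over $\omega_0 \in (1,+\infty)$ produces the desired complex neighborhood.

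The main obstacle is obtaining a lower bound on $|h_j(\omega)|$ that is uniform in $j$: convergence to $h_\infty$ handles large $j$ once $K$ is fixed, but the finitely many polynomials $h_1,\dots,h_{j_0-1}$ require independently shrinking the neighborhood so as to avoid their (complex) zeros. Without this uniform control one cannot dominate the general term of the series and thus cannot conclude that $S$ is analytic.
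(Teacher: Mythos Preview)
Your proof is correct and follows essentially the same strategy as the paper's: both establish a uniform-in-$j$ lower bound on $|h_j(\omega)|$ in a complex neighborhood of each $\omega_0>1$, then use this to dominate the series term by term and conclude uniform convergence. The only cosmetic difference is that you obtain the lower bound for large $j$ by invoking the uniform convergence $h_j \to h_\infty = (1-1/\omega)^{-1/2}$ on compacta of $\{|\omega|>1\}$, whereas the paper splits $h_i$ at a fixed index $s_0$ and estimates the tail by hand; these are two phrasings of the same idea.
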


\begin{proof}
To show that $c(\omega)$ is well defined and analytic, we will prove that for each $\omega_0 \in (1,+\infty)$, the infinite sum in the denominator of \eqref{eqn_defn_comega_complex} converges uniformly on a complex neighbourhood of $\omega_0$. Since each term of the sum is analytic (as an inverse of a polynomial), this will be enough to conclude.

We fix $\omega_0>1$ and write $\delta=\frac{1}{2}(\omega_0-1)$. We also fix an integer $s_0$ which will be specified in a few lines. For $\omega$ complex with $|\omega-\omega_0|<\delta$, we have $|\omega| \geq 1+\frac{\delta}{2}$. Hence, for $i \geq s_0$, we can write
\[ \left| \sum_{s=s_0+1}^{i-1} \binom{2s}{s} (4\omega)^{-s} \right| \leq \sum_{s>s_0} |\omega|^{-s} \leq \frac{\left( 1+\frac{\delta}{2} \right)^{-s_0}}{\delta/2}.\]
Therefore, we have, for $i \geq s_0$, we have:
\begin{equation}\label{eqn_minoration_homega}
\left| h_{i}(\omega) \right| \geq \left| \sum_{s=0}^{s_0} \binom{2s}{s} (4\omega)^{-s} \right| - \frac{2}{\delta} \left( 1+\frac{\delta}{2} \right)^{-s_0}.
\end{equation} 
We now fix the value of $s_0$: we choose $s_0>1$ large enough to have \[\frac{2}{\delta} \left( 1+\frac{\delta}{2} \right)^{-s_0} < \frac{1}{8\omega_0}.\] We now know that the function $\omega \to \sum_{s=0}^{\min(i,s_0)} \binom{2s}{s} (4\omega)^{-s}$ is continuous and is at least $1+\frac{1}{2\omega_0}$ for $\omega=\omega_0$ (it is a sum of nonnegative terms where the term $s=0$ is $1$ and the term $s=1$ is $\frac{1}{2\omega_0}$). Therefore, there is $0<\eps<\delta$ such that, for $\omega$ complex with $|\omega-\omega_0|<\eps$, we have
\[ \left| \sum_{s=0}^{s_0} \binom{2s}{s} (4\omega)^{-s} \right| > 1+\frac{1}{4 \omega_0}.\]
From now on, we consider $\omega$ in this complex ball. By \eqref{eqn_minoration_homega}, we have
\[ \left| h_{i}(\omega) \right| \geq 1+\frac{1}{8 \omega_0}, \]
for all $i>s_0$, so
\[\sup_{\omega \in B(\omega_0, \eps)} \left| \frac{1}{4^{i-1}} \binom{2i-1}{i-1} \frac{i \alpha_i}{\omega^{i-1} h_{i}(\omega)} \right| \leq \frac{1}{4^{i-1}} \binom{2i-1}{i-1} \frac{i \alpha_i}{\left( 1+\frac{\delta}{2} \right)^{i-1}\left( 1+\frac{1}{8 \omega_0} \right)}, \]
which is summable over $i$. Therefore, the infinite sum in the denominator of \eqref{eqn_defn_comega_complex} converges uniformly on $B(\omega_0, \eps)$, so $c(\omega)$ is well defined and analytic on a complex neighbourhood of $\omega_0$.

Moreover, we know that $c(\omega_0)>0$, so $c(\omega) \ne 0$ on a complex neighbourhood of $\omega_0$. Since this is true for all $\omega_0 \in (1,+\infty)$, this proves the lemma.
\end{proof}

Note that in the complex neighbourhood given by Lemma~\ref{lem_comega_complex_neighbourhood}, the "weights" $q_j^{(\omega)}$ given by~\eqref{eqn_def_q_with_g} also make sense for all $j \geq 1$ and are analytic in $\omega$.
To control the terms of the sum giving the expected inverse degree, we will need to make sure that the $q_j^{(\omega)}$ are not too large, i.e. that they are "admissible" even for $\omega$ complex.

\begin{lem}
Let $\omega_0>\omega_1>1$ be real. There is a complex neighbourhood of $\omega_0$ on which for all $j \geq 1$ we have $\left| q_j^{(\omega)} \right| \leq q_j^{(\omega_1)}$. In particular, on a complex neighbourhood of the line $\{ \omega>1 \}$, the sequence $\left( \left| q_j^{(\omega)} \right| \right)_{j \geq 1}$ is admissible and subcritical.
\end{lem}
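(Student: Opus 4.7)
My plan is to control each factor in $q_j^{(\omega)}=\frac{j\alpha_j}{\omega^{j-1} c(\omega)^{j-1} h_j(\omega)}$ on a small complex ball around $\omega_0$, so as to obtain a bound of the shape $|q_j^{(\omega)}|/q_j^{(\omega_1)}\leq (H/c')\, r^{j-1}$ with a uniform $r<1$, $H<+\infty$, $c'>0$. This dominates by $1$ for all but finitely many $j$, and the remaining indices I will handle case by case using Lemma~\ref{lem_qj_is_monotone} plus continuity. The main obstacle, and the thing I would pin down first, is producing the exponential gain $r<1$, which reduces to strict monotonicity of $\omega\mapsto \omega c(\omega)$ on $(1,+\infty)$.

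To establish strict monotonicity of $\omega\mapsto \omega c(\omega)$, I would argue as follows. Assuming $\alpha_1<1$ (if $\alpha_1=1$ then $q_j^{(\omega)}\equiv 0$ for $j\geq 2$ and the claim is trivial), pick $j_*\geq 2$ with $\alpha_{j_*}>0$. Lemma~\ref{lem_qj_is_monotone} yields $q_{j_*}^{(\omega_0)}<q_{j_*}^{(\omega_1)}$ strictly; combined with the strict inequality $h_{j_*}(\omega_0)<h_{j_*}(\omega_1)$ (valid since $j_*\geq 2$ makes $h_{j_*}$ a strictly decreasing polynomial in $1/\omega$), the explicit formula forces $\omega_0 c(\omega_0)>\omega_1 c(\omega_1)$. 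Choosing an intermediate $\omega_2\in(\omega_1,\omega_0)$ and setting $r:=\omega_1 c(\omega_1)/(\omega_2 c(\omega_2))<1$, the continuity of $c$ given by Lemma~\ref{lem_comega_complex_neighbourhood} yields a complex ball $U_1$ around $\omega_0$ on which $|\omega c(\omega)|>\omega_2 c(\omega_2)$, so $\omega_1 c(\omega_1)/|\omega c(\omega)|<r$. Next, the proof of Lemma~\ref{lem_comega_complex_neighbourhood} already produces an integer $s_0$ and a neighborhood on which $|h_j(\omega)|\geq 1+1/(8\omega_0)$ for every $j>s_0$; the finitely many $j\leq s_0$ are controlled by continuity since each $h_j$ is a Laurent polynomial with $h_j(\omega_0)\geq 1$, so on a suitably shrunk ball $U_2$ we get $|h_j(\omega)|\geq c'>0$ uniformly in $j\geq 1$. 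Since $\omega_1>1$, the numbers $h_j(\omega_1)$ are uniformly bounded by $H:=1/\sqrt{1-1/\omega_1}<+\infty$, and combining everything
\[\frac{|q_j^{(\omega)}|}{q_j^{(\omega_1)}}=\left(\frac{\omega_1 c(\omega_1)}{|\omega c(\omega)|}\right)^{j-1}\frac{h_j(\omega_1)}{|h_j(\omega)|}\leq \frac{H}{c'}\,r^{j-1},\]
which is at most $1$ once $j$ exceeds some threshold $j_0$. For $j<j_0$, the cases $j=1$ (equality since $q_1^{(\omega)}=\alpha_1$) and $\alpha_j=0$ (trivial) are immediate; for $j\geq 2$ with $\alpha_j>0$, the strict real inequality $q_j^{(\omega_0)}<q_j^{(\omega_1)}$ from Lemma~\ref{lem_qj_is_monotone} persists on a complex neighborhood of $\omega_0$ by continuity of $q_j^{(\omega)}$ in $\omega$, and intersecting finitely many such balls with $U_2$ delivers the desired neighborhood.

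For the ``in particular'' statement, the domination $|q_j^{(\omega)}|\leq q_j^{(\omega_1)}$ gives $f_{|\q^{(\omega)}|}(x)\leq f_{\q^{(\omega_1)}}(x)$ for all $x\geq 0$; evaluating at $x=c(\omega_1)/4$, the right-hand side equals $1-4/c(\omega_1)<1$, while $f_{|\q^{(\omega)}|}(x)-(1-1/x)\to +\infty$ as $x\to 0^+$, so the intermediate value theorem yields a solution $Z\in(0,c(\omega_1)/4]$ of $f_{|\q^{(\omega)}|}(Z)=1-1/Z$, proving $|\q^{(\omega)}|\in\cQ_a$ with $c_{|\q^{(\omega)}|}\leq c(\omega_1)$. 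Subcriticality then follows by contradiction: if $|\q^{(\omega)}|$ were critical at some $Z$, convexity of $f_{|\q^{(\omega)}|}$ and of $1/x$ would force $f_{|\q^{(\omega)}|}(x)\geq 1-1/x$ for all $x>0$; but after possibly shrinking the neighborhood to ensure the strict inequality $|q_{j_*}^{(\omega)}|<q_{j_*}^{(\omega_1)}$, we would have $f_{|\q^{(\omega)}|}(c(\omega_1)/4)<1-4/c(\omega_1)$, a contradiction. Taking the union of such neighborhoods over $\omega_0\in(1,+\infty)$ yields the complex neighborhood of the line $\{\omega>1\}$ claimed.
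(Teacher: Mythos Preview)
Your proof is correct and follows essentially the same approach as the paper: you establish strict monotonicity of $\omega\mapsto\omega c(\omega)$ via Lemma~\ref{lem_qj_is_monotone}, extract an exponential gain $r^{j-1}$ to handle all large $j$ uniformly, and treat finitely many small $j$ by continuity. The only cosmetic differences are that the paper uses a $(1-\delta)$ factor in place of your intermediate point $\omega_2$, and bounds $h_j(\omega_1)/h_j(\omega_0)\leq C\sqrt{j}$ rather than your uniform constants $H$ and $c'$; your version is arguably cleaner, and you also spell out the admissibility and subcriticality of $|\q^{(\omega)}|$ in more detail than the paper, which simply asserts that termwise domination by a subcritical sequence suffices.
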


\begin{proof}
First, the last part of the lemma is an immediate consequence of the first one, since a weight sequence dominated term by term by an admissible and subcritical weight sequence is also admissible and subcritical. We now prove the first part.

For $j=1$, we have $q_1^{(\omega)}=\alpha_1$ for all $\omega$, so $q_1^{(\omega)}=q_1^{(\omega_1)}$ in particular. We now find a complex neighbourhood of $\omega_0$ on which $\left| q_j^{(\omega)} \right| \leq q_j^{(\omega_1)}$ for all $j \geq 2$. Let $\delta>0$ (to be specified later). By the same argument as in the proof of Lemma~\ref{lem_comega_complex_neighbourhood}, there is a complex neighbourhood of $\omega_0$ on which
\[ \left| h_{j}(\omega) \right| \geq (1-\delta) \left| h_{j}(\omega_0) \right|\]
for all $j \geq 1$. Moreover, we have $\omega_0 c(\omega_0)>0$, so by continuity there is a complex neighbourhood of $\omega_0$ on which
\[ \left| \omega c(\omega) \right| \geq (1-\delta) \left| \omega_0 c(\omega_0) \right|.\]
By combining the last two equations and \eqref{eqn_def_q_with_g}, we get, for $\omega$ in a complex neighbourhood of $\omega_0$,
\begin{equation}\label{eqn_bounding_q_complex}
\left| q_j^{(\omega)} \right| \leq \frac{1}{(1-\delta)^j} q_j^{(\omega_0)} = \frac{1}{1-\delta} \left( \frac{\omega_1 c(\omega_1)}{(1-\delta) \omega_0 c(\omega_0)} \right)^{j-1} \frac{h_{j}(\omega_1)}{h_{j}(\omega_0)} \times q_j^{(\omega_1)}
\end{equation}
for all $j \geq 2$.

We now claim that $\omega_0 c(\omega_0)>\omega_1 c(\omega_1)$. Indeed, let $j \geq 2$ be such that $\alpha_j>0$. We know from Lemma \ref{lem_qj_is_monotone} that $q_j^{(\omega_0)}<q_j^{(\omega_1)}$. On the other hand $h_{j}(\omega_0)<h_{j}(\omega_1)$ since $h_j(\omega)$ is a polynomial with positive coefficients in $\frac{1}{\omega}$. Hence, the claim that $\omega_0 c(\omega_0)>\omega_1 c(\omega_1)$ follows from~\eqref{eqn_def_q_with_g}.

Therefore, if $\delta$ is chosen small enough, we have $\frac{\omega_1 c(\omega_1)}{(1-\delta) \omega_0 c(\omega_0)}<1$. Since $\frac{h_{j}(\omega_1)}{h_{j}(\omega_0)} \leq \frac{h_j(1)}{1} \leq C \sqrt{j}$ for some absolute constant $C$, we conclude by \eqref{eqn_bounding_q_complex} that, on a complex neighbourhood of $\omega_0$, we have
\begin{equation}\label{eqn_bounding_q_complex_2}
\left| q_j^{(\omega)} \right| < q_j^{(\omega_1)}
\end{equation}
for all $j \geq j_0$, where $j_0$ may depend on $\omega_0$ and $\omega_1$ but not on $\omega$. Moreover, for $2 \leq j \leq j_0$, we know by Lemma \ref{lem_qj_is_monotone} that $q_j^{(\omega_0)}<q_j^{(\omega_1)}$ and that $q_j^{(\omega)}$ is continuous in $\omega$, so up to shrinking our neighbourhood of $\omega_0$, \eqref{eqn_bounding_q_complex_2} holds for all $j \geq 2$. This proves the lemma.
\end{proof}
 
We now move on to the proof of analyticity. For this, we will write the expected inverse degree of the root as an infinite sum over all possible values of the "ball" of radius $1$. We first precise the notion of ball that we will use\footnote{This notion is actually closer to the \emph{hull} of a map. Since it is not useful here, we will not compare it to other notions of hull introduced in the literature.}. We consider a peeling algorithm $\mathcal{A}_{1}$ such that, if the root vertex $\rho$ is on $\partial m$, then $\mathcal{A}_{1}(m)$ is the edge on $\partial m$ on the right of $\rho$. If $M$ is a map, we perform a filled-in peeling exploration of $M$ using the algorithm $\mathcal{A}_{1}$, and denote it by $\left( \expl_t^{\mathcal{A}_1}(M) \right)_{t \geq 0}$. We stop the exploration at the first time $\tau$ where $\rho \notin \partial \expl_t^{\mathcal{A}_1}(M)$, which is finite almost surely if $\rho$ has finite degree. We denote by $B_1^{\bullet}(M)$ the explored map $\expl_{\tau}^{\mathcal{A}_1}(M)$. Finally, we denote by $\mathcal{H}$ the set of possible values of $B_1^{\bullet}(M)$, where $M$ is an infinite, one-ended planar map with finite vertex degrees. Note that $\mathcal{H}$ is an infinite set of finite planar maps with one hole.

We can now write, for $\omega>1$:
\begin{equation}\label{eqn_meandegree_to_sum}
\E  \left[ \frac{1}{\deg_{\MM_{\q^{(\omega)}}}(\rho)} \right] = \sum_{m \in \mathcal{H}} \frac{1}{\deg_{m}(\rho)} p_m^{(\omega)},
\end{equation}
where $p_m^{(\omega)} = \P \left( B_1^{\bullet} \left( \MM_{\q^{(\omega)}} \right)=m \right)$. We have
\[ p_m^{(\omega)} = \P \left( m \subset \MM_{\q^{(\omega)}} \right) = \left( \omega c(\omega) \right)^{|\partial m|-1} h_{|\partial m|}(\omega) \prod_{f \in m} q^{(\omega)}_{|f|},\]
where $|\partial m|$ is the half-perimeter of the hole of $m$ and $|f|=\frac{\deg (f)}{2}$, and the product is over internal faces $f$ of $m$. Since $q_j^{(\omega)}$ is well-defined and analytic in $\omega$ on a complex neighbourhood of $(1,+\infty)$, the right-hand side still makes sense for $\omega$ complex, so there is a complex neighbourhood of $(1,+\infty)$ on which $p_m^{(\omega)}$ makes sense for all $m$.

We now fix $\omega_0>1$. To prove analyticity in a neighbourhood of $\omega_0$, it is enough to prove that the sum in \eqref{eqn_meandegree_to_sum} converges uniformly in $\omega$ on a neighbourhood of $\omega_0$. Therefore, it is sufficient to find a complex neighbourhood $\mathcal{N}$ of $\omega_0$ on which
\begin{equation}\label{eqn_normal_convergence}
\sum_{m \in \mathcal{H}} \sup_{\omega \in \mathcal{N}} \left| p_m^{(\omega)} \right| <+\infty.
\end{equation}

For this, in the computation of $p_m^{(\omega)}$, we first replace $q_j^{(\omega)}$ using \eqref{eqn_def_q_with_g} to obtain
\[  \left| p_m^{(\omega)} \right| = \left| \omega c(\omega) \right|^{|\partial m|-1-\sum_{f \in m} (|f|-1)} \times \left| h_{|\partial m|}(\omega) \right| \times \prod_{f \in m} \frac{|f| \alpha_{|f|}}{\left| h_{|f|}(\omega) \right|}.\]
The Euler formula shows that $|\partial m|-1-\sum_{f \in m} (|f|-1)=-\mathrm{Inn}(m)$, where $\mathrm{Inn}(m) \geq 0$ is the number of internal vertices of $m$. We now fix $\omega_1, \omega_2$ real with $1<\omega_1<\omega_0<\omega_2$. By using on the one hand the fact that $\omega \to \omega c(\omega)$ is increasing on $(1,+\infty)$ (see the discussion right after \eqref{eqn_bounding_q_complex}), and on the other hand the same argument as in the proof of Lemma~\ref{lem_comega_complex_neighbourhood}, there is a complex neighbourhood $\mathcal{N}_0$ of $\omega_0$ on which
\[ \left| \omega c(\omega) \right| \geq \omega_1 c(\omega_1) \quad \hspace{-0.2cm}\mbox{ and } \hspace{-0.2cm}\quad \forall j \geq 1, \, h_j(\omega_2) \leq \left| h_j(\omega) \right| \leq h_j(\omega_1) \leq \frac{1}{\sqrt{1-\omega_1^{-1}}}\]
where the rightmost inequality comes from the Taylor expansion of $\frac{1}{\sqrt{1-\omega_1^{-1}}}$.
For $\omega \in \mathcal{N}_0$, we have
\[ \left| p_m^{(\omega)} \right| \leq \frac{1}{\sqrt{1-\omega_1^{-1}}} \left( \omega_1 c(\omega_1) \right)^{-\mathrm{Inn}(m)} \prod_{f \in m} \frac{|f| \alpha_{|f|}}{h_{|f|}(\omega_2)}. \]
On the other hand, for $m \in \mathcal{H}$, we have
\[ \P \left( B_1^{\bullet} \left( \MM_{\q^{(\omega_2)}} \right)=m \right) = p_m^{(\omega_2)} = h_{|\partial m|}(\omega_2) \left( \omega_2 c(\omega_2) \right)^{-\mathrm{Inn}(m)} \prod_{f \in m} \frac{|f| \alpha_{|f|}}{h_{|f|}(\omega_2)},  \]
with $h_{|\partial m|}(\omega_2) \geq 1$. It follows that
\[ \sup_{\omega \in \mathcal{N}_0} |p_m^{(\omega)}| \leq \frac{1}{\sqrt{1-\omega_1^{-1}}} \left( \frac{\omega_2 c(\omega_2)}{\omega_1 c(\omega_1)} \right)^{\mathrm{Inn}(m)} \P \left( m \subset \MM_{\q^{(\omega_2)}} \right), \]
so
\begin{equation}\label{eqn_bound_normal_convergence}
\sum_{m \in \mathcal{H}} \sup_{\omega \in \mathcal{N}_0} |p_m^{(\omega)}| \leq \frac{1}{\sqrt{1-\omega_1^{-1}}}  \E \left[ \left( \frac{\omega_2 c(\omega_2)}{\omega_1 c(\omega_1)} \right)^{\mathrm{Inn}\left( B_1^{\bullet} \left( \MM_{\q^{(\omega_2)}} \right) \right)}\right].
\end{equation}
Therefore, to prove \eqref{eqn_normal_convergence} and therefore analyticity, it is sufficient to prove that for any $\omega_0>1$, we can find $1<\omega_1<\omega_0<\omega_2<+\infty$ such that the last expectation is finite. This will follow from the next lemma.

\begin{lem}\label{lem_unif_exp_tail}
Let $\omega_0>1$. We can find $1<\omega_3<\omega_0<\omega_4<+\infty$ and a constant $y_0>1$ such that, for all $\omega \in [\omega_3, \omega_4]$, we have
\[ \E \left[ y_0^{\mathrm{Inn} \left( B_1^{\bullet} \left( \MM_{\q^{(\omega)}}\right) \right)} \right]<+\infty. \]
\end{lem}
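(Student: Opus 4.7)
The plan is to decompose $\mathrm{Inn}(B_1^\bullet(\MM_{\q^{(\omega)}}))$ via the peeling exploration introduced just before the lemma, combining the weight monotonicity from Lemma~\ref{lem_qj_is_monotone} with uniform subcritical estimates. Fix $1 < \omega_3 < \omega_0 < \omega_4 < +\infty$ to be chosen later. Under algorithm $\mathcal{A}_1$, at each step one either reveals a new face incident to $\rho$ (Case 1, contributing $0$ internal vertices) or glues the peeled edge to another boundary edge and fills a small hole with an independent $\q^{(\omega)}$-Boltzmann map $\mathcal{B}_k$ of perimeter $2k$ (Case 2, contributing at most $(2k-1)+V_{\mathrm{int}}(\mathcal{B}_k)$ internal vertices, where $V_{\mathrm{int}}$ denotes the internal-vertex count of the filling). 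Writing $\tau$ for the first time $\rho$ leaves the boundary,
\[
\mathrm{Inn}(B_1^\bullet(\MM_{\q^{(\omega)}})) \leq 1 + \sum_{\substack{t<\tau\\ t\text{ is Case 2}}}\bigl[(2k_t-1)+V_{\mathrm{int}}(\mathcal{B}_{k_t})\bigr].
\]

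First, I would establish uniform exponential control on the per-step contributions. By Lemma~\ref{lem_qj_is_monotone}, $q_j^{(\omega)} \leq q_j^{(\omega_3)}$ on $[\omega_3,\omega_4]$, and $\q^{(\omega_3)}$ is strictly subcritical since $\omega_3>1$. Consequently the tilted weights $\tilde{q}_j := z^{j-1}q_j^{(\omega_3)}$ remain admissible for some $z>1$, and the pointed partition formula $W_k^\bullet(\q)=c_\q^k\binom{2k}{k}/4^k$ then yields a uniform bound $\E[z^{V(\mathcal{B}_k)}] \leq \tilde{\mu}^k$ for some $\tilde{\mu}<\infty$, valid for all $\omega \in [\omega_3,\omega_4]$ and $k \geq 1$. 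Moreover $\widetilde{\nu}_{\q^{(\omega)}}$ has uniformly positive drift on $[\omega_3,\omega_4]$, so through the Doob transform~\eqref{peeling_transitions} together with the uniform convergence $h_p(\omega)\to(1-\omega^{-1})^{-1/2}$, the filling parameters $k_t$ encountered along the exploration have uniformly bounded exponential moments.

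Second, I would bound the stopping time $\tau$. The naive approach---controlling, at each Case 2 step, the probability of swallowing $\rho$---is delicate, as this probability depends intricately on the current boundary configuration around $\rho$. A cleaner route is to observe that $\tau$ is dominated by the number of edges incident to $\rho$ in $B_1^\bullet$, hence by $\deg_\rho(\MM_{\q^{(\omega)}})$, together with the standard fact that $\deg_\rho$ has uniform exponential tails in subcritical IBPMs (see~\cite[Chapter~8]{C-StFlour}); the exponent is continuous in $\omega$ via Lemma~\ref{lem_comega_complex_neighbourhood} and Proposition~\ref{prop_third_parametrization}. Combining everything through the strong Markov property of the peeling exploration and a Wald-type inequality then yields $\E[y_0^{\mathrm{Inn}(B_1^\bullet(\MM_{\q^{(\omega)}}))}]<+\infty$ for $y_0>1$ chosen close enough to $1$, uniformly for $\omega \in [\omega_3,\omega_4]$.

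The main obstacle is precisely the uniform exponential bound on $\deg_\rho$ (equivalently on $\tau$): although expected from subcriticality, its verification requires tracking the peeling dynamics around $\rho$ in a way that is uniform in $\omega$. The monotonicity $q_j^{(\omega)}\leq q_j^{(\omega_3)}$ from Lemma~\ref{lem_qj_is_monotone} is essential here, reducing uniform-in-$\omega$ statements on $[\omega_3,\omega_4]$ to single-$\omega$ arguments at $\omega=\omega_3$; once this is secured, the remaining pieces (strong Markov property, Wald inequality, and exponential control of Boltzmann filling volumes via the pointed partition function) follow from standard estimates.
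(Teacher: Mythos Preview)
Your outline captures the right ingredients---decomposing $\mathrm{Inn}(B_1^\bullet)$ into per-step contributions and bounding each via tilted weights and the pointed partition function, much as the paper does---but it misses the observation that makes the combination step work. The paper's key point is that, under algorithm $\mathcal{A}_1$, at every step where the perimeter decreases the swallowed region lies on the left of the peeled edge (and hence contains $\rho$) with probability exactly $\tfrac{1}{2}$, independently of the perimeter and volume processes. Thus $\tau$ is the time of the $G$-th perimeter decrease, with $G$ geometric of parameter $\tfrac{1}{2}$ and \emph{independent} of $(P^{(\omega)},I^{(\omega)})$; after passing to the unconditioned half-plane walk (the conditioning event has probability $\sqrt{(\omega-1)/\omega}$, bounded away from $0$ on compacts of $(1,+\infty)$), the quantity $I_\tau$ becomes a sum of $G$ i.i.d.\ copies of $\widehat{I}^{(\omega)}$ and the Wald-type computation is literal.

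Your alternative route through exponential tails of $\deg_\rho$ runs into two difficulties. First, the standard proofs of those tails in subcritical IBPMs rely precisely on the $\tfrac12$-geometric mechanism above, so invoking them is essentially circular; you acknowledge this obstacle but do not resolve it. Second, and more seriously, your ``Wald-type inequality'' does not apply as stated: the stopping time $\tau$ depends on which steps are Case~2 and on the left/right direction of each gluing, hence is correlated with the per-step increments $(2k_t-1)+V_{\mathrm{int}}(\mathcal{B}_{k_t})$. A bound of the form $\E[y_0^{\sum_{t<\tau}X_t}]\leq \E[M^{\tau}]$ requires either independence of $\tau$ from the $X_t$, or a supermartingale argument built on a uniform lower bound for $\P(\tau=t+1\mid\mathcal{F}_t,\tau>t)$---and establishing the latter again comes down to the geometric observation you have bypassed.
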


First, let us explain how Lemma \ref{lem_unif_exp_tail} proves the finiteness of the right-hand side of \eqref{eqn_bound_normal_convergence}. We fix $\omega_3, \omega_4$ as given by Lemma~\ref{lem_unif_exp_tail}. Since $\omega \to \omega g(\omega)$ is continuous at $\omega_0$, there are $\omega_1 \in (\omega_3, \omega_0)$, $\omega_2 \in (\omega_0, \omega_4)$ and $y_0>1$ such that
\[ \frac{\omega_2 g(\omega_2)}{\omega_1 g(\omega_1)} < y_0. \]
Such $\omega_1$ and $\omega_2$ suit our needs, so all we have left to do is to prove Lemma~\ref{lem_unif_exp_tail}.

\begin{proof}[Proof of Lemma \ref{lem_unif_exp_tail}]
We want to track down the number of inner vertices created at each step of a peeling exploration. The sketch of the proof is the following:
\begin{itemize}
\item
we first argue that by an absolute continuity argument, the number of internal vertices created by a peeling exploration can be replaced by a random walk with known step distribution;
\item
we then bound the number of internal vertices created by one step of this random walk;
\item
we finally estimate the number of peeling steps needed to complete the exploration of $B_1^{\bullet}$.
\end{itemize}

From now on, the values of $\omega$ that we will consider will be real and larger than $1$.
For $t \geq 0$, we denote respectively by $P_t^{(\omega)}$ and $I_t^{(\omega)}$ the half-perimeter and the number of inner vertices of $\expl_t^{\mathcal{A}_1} \left( \MM_{\q^{(\omega)}}\right)$. We also write $\tau^{(\omega)}$ for the first peeling step at which the root vertex of $\MM_{\q^{(\omega)}}$ disappears from the boundary, and note that $\mathrm{Inn} \left( B_1^{\bullet} \left( \MM_{\q^{(\omega)}} \right) \right) = I^{(\omega)}_{\tau^{(\omega)}}$.

We recall that $P^{(\omega)}$ has the law of a random walk $\widetilde{P}^{(\omega)}$ with step distribution $\widetilde{\nu}_{\q^{(\omega)}}$, started from $1$ and conditioned on the positive probability event $\{ \forall t \geq 0, \widetilde{P}^{(\omega)}_t \geq 1\}$. Moreover, conditionally on $P^{(\omega)}$, the law of $I^{(\omega)}$ can be described as follows:
\begin{itemize}
\item
the increments $I^{(\omega)}_{t+1}-I_t^{(\omega)}$ are independent;
\item
if $P_{t+1}^{(\omega)}-P_t^{(\omega)} \geq 0$, then $I^{(\omega)}_{t+1}-I_t^{(\omega)}=0$;
\item
if $P_{t+1}^{(\omega)}-P_t^{(\omega)} \leq -1$, then $I^{(\omega)}_{t+1}-I_t^{(\omega)}=0$ has the law of the \emph{total} number of vertices of a $\q^{(\omega)}$-Boltzmann map of the $2 \left( -1-P_{t+1}^{(\omega)}+P_t^{(\omega)} \right)$-gon (with the convention that the unique map of the $0$-gon has a single vertex and no internal face).
\end{itemize}
Let $\widetilde{I}^{(\omega)}$ be the process obtained from $\widetilde{P}^{(\omega)}$ in the exact same way as $I^{(\omega)}$ is obtained from $P^{(\omega)}$. Then $\left( P^{(\omega)}, I^{(\omega)} \right)$ has the law of $\left( \widetilde{P}^{(\omega)}, \widetilde{I}^{(\omega)} \right)$ conditioned on $\{ \forall t \geq 0, \widetilde{P}^{(\omega)}_t \geq 1\}$. Moreover, by the choice of our peeling algorithm $\mathcal{A}_1$, we know that $\tau^{(\omega)}$ is the first time at which the perimeter decreases and the swallowed part is on the left of the peeled edge. Therefore, it is the $G$-th time where the perimeter decreases, where $G$ is a geometric variable (starting at $1$) with parameter $\frac{1}{2}$, independent from $P^{(\omega)}$ and $I^{(\omega)}$. Similarly, let $\widetilde{\tau}^{(\omega)}$ be the $G$-the time where $\widetilde{P}^{(\omega)}$ decreases. Then $\left( P^{(\omega)}, I^{(\omega)}, \tau^{\omega} \right)$ has the law of $\left( \widetilde{P}^{(\omega)},  \widetilde{I}^{(\omega)},  \widetilde{\tau}^{\omega} \right)$ conditioned\footnote{The triplet $\left( \widetilde{P}^{(\omega)},  \widetilde{I}^{(\omega)},  \widetilde{\tau}^{\omega} \right)$ can be interpreted in terms of a peeling exploration of the half-plane analog of $\MM_{\q^{(\omega)}}$, but this will not be necessary for us.} on $\{ \forall t \geq 0, \widetilde{P}^{(\omega)}_t \geq 1\}$. Therefore, for all $y\geq 0$, we have
\[ \E \left[ y^{\mathrm{Inn} \left( B_1^{\bullet} \left( \MM_{\q^{(\omega)}}\right) \right)} \right] \leq \frac{1}{\P \left( \forall t \geq 0, \widetilde{P}^{(\omega)}_t \geq 1 \right)} \E \left[ y^{\widetilde{I}^{(\omega)}_{\widetilde{\tau}^{(\omega)}}} \right].\]
In particular, since the conditioning is nondegenerate, it is enough to find $y_0>1$ such that, for $\omega$ in a real neighbourhood of $\omega_0$, we have 
\begin{equation}\label{eqn_inner_exp_tail_goal}
\E \left[ y_0^{\widetilde{I}^{(\omega)}_{\widetilde{\tau}^{(\omega)}}} \right]<+\infty.
\end{equation}

Let $\widehat{I}^{(\omega)}$ have the distribution of $\widetilde{I}^{(\omega)}_1$, conditionned on $\{ \widetilde{I}^{(\omega)}_1 \geq 1 \}$ (i.e. $\widehat{I}$ is the law of the number of internal vertices created by a peeling step, conditioned to be nonzero). We recall that $\widetilde{\tau}{(\omega)}$ is the $G$-th time where the walk $\widetilde{I}^{(\omega)}$ increases, where $G$ is independent from $\widetilde{I}^{(\omega)}$ and $\P \left( G=i \right)=\frac{1}{2^{i+1}}$ for all $i \geq 1$. It follows that $\widetilde{I}^{(\omega)}_{\widetilde{\tau}^{(\omega)}}$ has the law of the sum of $G$ i.i.d. copies of $\widehat{I}^{(\omega)}$. Therefore, we need to get bounds on the variable $\widehat{I}^{(\omega)}$. For that, we will first get bounds on the law of $\widetilde{I}^{(\omega)}_1$.

More precisely, by summing all possible peeling cases, for all $y \geq 1$ and $\omega \in (1,+\infty)$,  we have
\begin{equation}\label{eqn_x_power_v}
\E \left[ y^{\widetilde{I}_1^{(\omega)}}-1 \right] = 2 \sum_{i \geq 0} \left( \frac{1}{\omega c(\omega)} \right)^{i+1} \sum_{|\partial m|=i} \left( y^{\#\mathrm{Vertices}(m)}-1 \right) \prod_{f \in m} q^{(\omega)}_{|f|},
\end{equation}
where the sum is over all finite maps of the $2i$-gon, the product over inner faces and $|f|=\deg(f)/2$. We have computed the expectation of $y^{\widetilde{I}_1^{(\omega)}}-1$ instead of $y^{\widetilde{I}_1^{(\omega)}}$ because $y^{\widetilde{I}_1^{(\omega)}}-1$ vanishes in the "peeling cases" where $\widetilde{P}_1^{(\omega)} \geq 0$ (i.e. when a peeling case discovers a new face, no internal vertex is created). We first fix $\omega' \in (1,\omega_0)$ and prove that there is $y$ (that may depend on $\omega'$) such that $\E \left[ y^{\widetilde{I}^{(\omega')}} \right]<+\infty$. Indeed, using the Euler formula, if $m$ is a finite map of the $2i$-gon, the total number of vertices of $m$ is equal to $i+1+\sum_f \left( |f|-1 \right)$, where the sum is over internal faces. We recall from Section~\ref{subsec_prelim_combi} that $W_i(\q)$ is the partition function of $\q$-Boltzmann maps of the $2i$-gon and $W_i^{\bullet}(\q)$ is the corresponding pointed partition function (i.e. biased by the number of vertices). Bounding $y^{\#\mathrm{Vertices}(m)}-1$ by $y^{\#\mathrm{Vertices}(m)}$, we can rewrite \eqref{eqn_x_power_v} as
\begin{equation}\label{eqn_q_omega_x}
\begin{split}
\E \left[ y^{\widetilde{I}_1^{(\omega')}} \right] &\leq 1+2 \sum_{i \geq 0}  \left( \frac{y}{\omega' c(\omega')} \right)^{i+1} W_i \left( \q^{(\omega_5, y)} \right)\\& \leq 1+2 \sum_{i \geq 0}  \left( \frac{y}{\omega' c(\omega')} \right)^{i+1} W_i^{\bullet} \left( \q^{(\omega_5, y)} \right),
\end{split}
\end{equation}
where $\q^{(\omega',y)}$ is the weight sequence given by $q_j^{(\omega',y)}=q_j^{(\omega')} y^{j-1}$. We first claim that if $y>1$ is small enough, then $\q^{(\omega',y)}$ is still admissible. Indeed, using the notation of Section~\ref{subsec_prelim_combi}, we have $f_{\q^{(\omega', y)}}(x)=f_{\q^{(\omega')}}(yx)$, so $\q^{(\omega',y)}$ is admissible if and only if the equation
\begin{equation}\label{eqn_admissibility_tilted}
f_{\q^{(\omega')}}(yx)=1-\frac{1}{x}
\end{equation}
has a solution. We already know that $\q^{(\omega')}$ is admissible, and the smallest solution to~\eqref{eqn_admissibility_tilted} is $\frac{1}{4}c_{\q^{(\omega')}}=\frac{1}{4}c(\omega')$ for $y=1$. Moreover, by definition of the walk $\widetilde{\nu}_{\q^{(\omega')}}$, we have
\[ \sum_{j \geq 1} q_j^{(\omega')} \left( \omega' c(\omega') \right)^{j-1} = \sum_{i \geq 0} \widetilde{\nu}_{\q^{(\omega')}}(i) \leq 1 <+\infty,\]
which proves that the radius of convergence of $\q^{(\omega')}$ is at least $\omega' c(\omega')$, so the radius of convergence of $f_{\q^{(\omega')}}$ is at least $\frac{1}{4} \omega' c(\omega')>\frac{1}{4} c(\omega')$. Now let $x \in \left( \frac{1}{4} c(\omega') , \frac{1}{4} \omega' c(\omega') \right)$. By convexity of $f_{\q^{(\omega')}}$ and concavity of $x \to 1-\frac{1}{x}$, we have
\[ 1-\frac{1}{x} < f_{\q^{(\omega')}}(x) <+\infty. \]
It follows that for $y>1$ small enough
\[ 1-\frac{1}{x/y} < f_{\q^{(\omega')}} \left( y \times (x/y) \right) <+\infty. \]
On the other hand, the inequality is reversed if we replace $x/y$ by $1$. Hence, by the intermediate value theorem~\eqref{eqn_admissibility_tilted} has a solution, so $\q^{(\omega', y)}$ is admissible for $y>1$ small enough. Moreover $y \to c_{\q^{(\omega', y)}}$ is continuous in a neighbourhood of $y=1$.
On the other hand, using the formula~\eqref{eqn_exact_pointed_partition_function}, the right-hand side of~\eqref{eqn_q_omega_x} is finite if and only if $\frac{y}{\omega' c(\omega')} \times c_{\q^{(\omega', y)}}<1$.
This is true for $y=1$, so by continuity in $y$ this is also true for some $y>1$. We have therefore proved that for some fixed $\omega' \in (1,\omega_0)$, there is $y'>1$ such that $\E \left[ \left( y' \right)^{\widetilde{I}_1^{(\omega')}} \right]<+\infty$.

Let us now come back to~\eqref{eqn_x_power_v}. By Lemma~\ref{lem_qj_is_monotone} and the fact that $\omega \to \omega c(\omega)$ is increasing (see the discussion right after~\eqref{eqn_bounding_q_complex}), each term in the right-hand side is nonincreasing in $\omega$ and nondecreasing in $y$. By the dominated convergence theorem, it follows that~\eqref{eqn_x_power_v} is continuous in $(\omega, y)$ for $\omega \in [\omega', +\infty)$ and $y \in [1,y']$. Since the left-hand side is equal to $0$ for $y=1$, there is $y_0 \in (1,y')$ and a real neighbourhood $\mathcal{N}_1$ of $\omega_0$ such that, for $\omega \in \mathcal{N}_1$, we have
\begin{equation}\label{eqn_itilde_exp_tail}
\E \left[ y_0^{\widetilde{I}_1^{(\omega)}}-1 \right] \leq \frac{1}{2 \omega_0 c(\omega_0)}.
\end{equation}
On the other hand, we recall that $\widehat{I}^{(\omega)}$ has the law of $\widetilde{I}_1^{(\omega)}$ conditionned on the event $\left\{ \widetilde{P}^{(\omega)}_1 \leq -1 \right\}$, which has probability $\widetilde{\nu}_{\q^{(\omega)}} \left( (-\infty,-1] \right)$. Therefore, for all $\omega \in [1,+\infty)$, we can write
\[ \E \left[ y_0^{\widehat{I}^{(\omega)}}-1 \right] \leq \frac{\E \left[ y_0^{\widetilde{I}_1^{(\omega)}}-1 \right]}{\widetilde{\nu}_{\q^{(\omega)}} \left( (-\infty, -1] \right)} \leq \frac{\E \left[ y_0^{\widetilde{I}_1^{(\omega)}}-1 \right]}{\widetilde{\nu}_{\q^{(\omega)}} (-1)} = \frac{\omega c(\omega)}{2} \E \left[ y_0^{\widetilde{I}_1^{(\omega)}}-1 \right],  \]
using the definition of $\widetilde{\nu}_{\q}$. Therefore, using~\eqref{eqn_itilde_exp_tail}, for $\omega$ in the neighbourhood $\mathcal{N}_1$ of $\omega_0$, we have
\[ \E \left[ y_0^{\widehat{I}^{(\omega)}} \right] \leq 1+\frac{1}{4} \frac{\omega c(\omega)}{\omega_0 c(\omega_0)}.\] In particular, by continuity of $c(\omega)$, there is a neighbourhood $\mathcal{N}_2$ of $\omega_0$ such that, for $\omega \in \mathcal{N}_2$, we have $\E \left[ y_0^{\widehat{I}^{(\omega)}} \right] \leq \frac{3}{2}$.

Finally, let $\left( \widehat{I}^{(\omega)}_k \right)_{k \geq 1}$ be i.i.d. copies of $\widehat{I}^{(\omega)}$. We recall that the quantity that we are interested in has the law of
\[ \widehat{I}^{(\omega)}_1+\widehat{I}^{(\omega)}_2+\dots+\widehat{I}^{(\omega)}_G,\]
where $G$ is geometric with parameter $\frac{1}{2}$ and independent from the $\widehat{I}^{(\omega)}_k$. For $\omega \in \mathcal{N}_2$, we have
\[ \E \left[ y_0^{\widehat{I}^{(\omega)}_1+\dots+\widehat{I}^{(\omega)}_G} \right] = \sum_{k \geq 1} \frac{1}{2^{k+1}} \E \left[ y_0^{\widehat{I}^{(\omega)}} \right]^k \leq \sum_{k \geq 1} \frac{1}{2^{k+1}} \left( \frac{3}{2} \right)^k <+\infty.\]
This proves~\eqref{eqn_inner_exp_tail_goal}, and therefore the lemma.
\end{proof}

\subsection{Uniform convergence of the volume process}\label{subsec_unif_volume}

The goal of this appendix is to prove Lemma~\ref{lem_unif_volume}, which states that the approximation of $\omega$ given by Proposition~\ref{prop_q_as_limit} is good uniformly in $\omega$. If $(X_t^{(\omega)})_{t \geq 1, \omega \in \Omega}$ and $(X_{\infty}^{(\omega)})_{\omega \in \Omega}$ are random variables, we say that \emph{$X_t^{(\omega)}$ converges in probability to $X_{\infty}^{(\omega)}$ uniformly in $\omega$ over $\Omega$} if for all $\eps>0$, there is $t_0>0$ such that, for all $t \geq t_0$ and $\omega \in \Omega$:
\[ \P \left( \left| X_t^{(\omega)}-X_{\infty}^{(\omega)} \right|>\eps \right)<\eps. \]

To prove Lemma~\ref{lem_unif_volume}, we will adapt the proof of~\eqref{limit_volume_growth} from~\cite[Prop 10.12]{C-StFlour}. For this, we will need the uniform weak law of large numbers below. This result is probably not new but we could not locate it in the literature. We give a proof for the sake of completeness.

\begin{lem}\label{lem_unif_WLLN}
Let $\left( X^{(\omega)} \right)_{\omega \in \Omega}$ be a uniformly integrable family of random variables on $\R^+$. For each $\omega \in \Omega$, let $\left( X_t^{(\omega)} \right)_{t \geq 1}$ be i.i.d. copies of $X^{(\omega)}$. Then the convergence in probability
\[ \overline{X}^{(\omega)}_t := \frac{1}{t} \sum_{i=1}^t X_i^{(\omega)} \xrightarrow[t \to +\infty]{} \E \left[ X^{(\omega)} \right]\]
is uniform in $\omega \in \Omega$.
\end{lem}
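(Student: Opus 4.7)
\textbf{Proof proposal for Lemma~\ref{lem_unif_WLLN}.} My plan is to use the standard truncation argument for weak laws of large numbers, making crucial use of the uniform integrability assumption to get uniformity in $\omega$. The key observation is that the "cost" of replacing $X^{(\omega)}$ by a truncated version depends only on the tail bound provided by uniform integrability, not on the particular $\omega$, while the average of the truncated version can be controlled by a Chebyshev/variance bound depending only on the truncation level.

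More precisely, fix $\varepsilon \in (0,1)$. First I would use the uniform integrability assumption to choose a single $M>0$, depending only on $\varepsilon$ and not on $\omega$, such that
\[ \sup_{\omega \in \Omega} \E \left[ X^{(\omega)} \mathbbm{1}_{X^{(\omega)}>M} \right] < \frac{\varepsilon^2}{8}. \]
For each $\omega$ and $i$, I would then write the standard decomposition $X^{(\omega)}_i = Y^{(\omega)}_i + Z^{(\omega)}_i$ where $Y^{(\omega)}_i = X^{(\omega)}_i \wedge M$ and $Z^{(\omega)}_i = (X^{(\omega)}_i - M)^+$, and denote by $\overline{Y}^{(\omega)}_t$, $\overline{Z}^{(\omega)}_t$ the corresponding empirical means. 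The triangle inequality gives
\[ \left| \overline{X}^{(\omega)}_t - \E[X^{(\omega)}] \right| \leq \left| \overline{Y}^{(\omega)}_t - \E[Y^{(\omega)}] \right| + \left| \overline{Z}^{(\omega)}_t - \E[Z^{(\omega)}] \right|. \]

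For the bounded part, the variables $Y^{(\omega)}_i$ are i.i.d.\ and bounded by $M$, so $\mathrm{Var}(Y^{(\omega)}) \leq M^2$ and Chebyshev's inequality yields
\[ \P \left( \left| \overline{Y}^{(\omega)}_t - \E[Y^{(\omega)}] \right| > \frac{\varepsilon}{2} \right) \leq \frac{4 M^2}{\varepsilon^2 \, t}, \]
a bound which depends only on $M$ and $\varepsilon$, hence is uniform in $\omega$, and which can be made smaller than $\varepsilon/2$ by taking $t \geq t_0 := 8 M^2 / \varepsilon^3$. For the tail part, $\E[Z^{(\omega)}] \leq \E[X^{(\omega)} \mathbbm{1}_{X^{(\omega)}>M}] < \varepsilon^2/8 < \varepsilon/8$ uniformly in $\omega$, so $\left| \overline{Z}^{(\omega)}_t - \E[Z^{(\omega)}] \right| > \varepsilon/2$ implies $\overline{Z}^{(\omega)}_t > \varepsilon/4$, and Markov's inequality gives
\[ \P \left( \overline{Z}^{(\omega)}_t > \frac{\varepsilon}{4} \right) \leq \frac{4 \E[Z^{(\omega)}]}{\varepsilon} < \frac{\varepsilon}{2}, \]
again uniformly in $\omega$. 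Combining the two bounds gives the desired uniformity.

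The proof is essentially routine; the only thing to be careful about is that uniform integrability is used twice (to choose $M$ and to bound $\E[Z^{(\omega)}]$), but since the same $M$ works for all $\omega$, no step introduces a hidden dependence on $\omega$. There is no real obstacle here, which is consistent with the paper relegating this statement to an appendix.
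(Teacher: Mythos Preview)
Your proof is correct and follows essentially the same approach as the paper's: truncate at a level chosen via uniform integrability, control the bounded part by a Chebyshev/variance bound, and control the tail part by Markov. The only cosmetic difference is that you truncate via $X \wedge M$ and $(X-M)^+$ whereas the paper uses $X\,\mathbbm{1}_{X\le A}$ and $X\,\mathbbm{1}_{X>A}$; both decompositions lead to the same estimates.
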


\begin{proof}
We adapt one of the classical proofs of the weak law of large numbers: we truncate the variables at some cutoff $A$ and use the second moment method on the truncated variables.

More precisely, for $A>0$, we write $X^{(\omega, \leq A)}_t=X^{(\omega)}_t \mathbbm{1}_{X^{(\omega)}_t \leq A}$ and $X^{(\omega, >A)}_t=X^{(\omega)}_t \mathbbm{1}_{X^{(\omega)}_t > A}$. We also write
\[ \overline{X}^{(\omega, \leq A)}_t := \frac{1}{t} \sum_{i=1}^t X_i^{(\omega), \leq A} \quad \mbox{ and } \quad \overline{X}^{(\omega, > A)}_t := \frac{1}{t} \sum_{i=1}^t X_i^{(\omega), > A}.\]

Fix $\eps>0$. By uniform integrability, let $A$ be such that $\E \left[ X^{(\omega, >A)} \right] \leq \frac{\eps^2}{8}$ for all $\omega \in \Omega$. By a variance computation, we have
\[ \P \left( \left| \overline{X}^{(\omega, \leq A)}_t - \E \left[ X^{(\omega, \leq A)} \right] \right| \geq \frac{\eps}{4} \right) \leq \frac{4A^2}{\eps t}. \]
We also have
\[ \left| \E \left[ X^{(\omega, \leq A)} \right] - \E \left[ X^{(\omega)}\right] \right| =\E \left[ X^{(\omega, >A)} \right] \leq \frac{\eps^2}{8}<\frac{\eps}{2}.\]
Finally, by the Markov inequality
\[ \P \left( \left| \overline{X}^{(\omega, \leq A)}_t-\overline{X}^{(\omega)}_t \right| \geq \frac{\eps}{4} \right) \leq \frac{4}{\eps} \E \left[ \overline{X}_t^{(\omega, >A)} \right] = \frac{4}{\eps} \E \left[ X^{(\omega, >A)} \right] \leq \frac{\eps}{2}.\]
Combining the last three displays, we get $\P \left( \left| \overline{X}^{(\omega)}_t - \E \left[ X^{(\omega)}\right] \right| \geq \eps \right) \leq \frac{\eps}{2} + \frac{4A^2}{\eps t}$ for all $t \geq 1$ and $\omega \in \Omega$, and the lemma follows by taking $t \geq \frac{8A^2}{\eps^2}$.
\end{proof}

We can now prove Lemma~\ref{lem_unif_volume}. We recall that we have fixed $(\alpha_j)_{j \geq 1}$ with $\sum_{j \geq 1} j \alpha_j=1$ and $\alpha_1<1$ and that, for all $\omega \geq 1$, we denote by $\q^{(\omega)}$ the unique weight sequence such that $\omega_{\q}=\omega$ and $a_j(\q)=\alpha_j$ for all $j \geq 1$. We finally recall that $\left( P_t^{(\omega)} \right)_{t \geq 0}$ and $\left( V_t^{(\omega)}  \right)_{t \geq 0}$ are respectively the perimeter and volume processes associated to a peeling exploration of $\MM_{\q^{(\omega)}}$.

\begin{proof}[Proof of Lemma \ref{lem_unif_volume}]
We know that $\left( P_t^{(\omega)}, V_t^{(\omega)} \right)_{t \geq 0}$ has the law of a random walk $\left( \widetilde{P}_t^{(\omega)}, \widetilde{V}_t^{(\omega)} \right)_{t \geq 0}$ on $\Z^2$ started from $1$ and conditionned on the event that $\widetilde{P}_t^{(\omega)}\geq 1$ for all $t \geq 0$. Moreover, the step distribution of $\widetilde{P}^{(\omega)}$ is $\widetilde{\nu}_{\q^{(\omega)}}$. Since the function $\left( \mathbbm{1}_{p \geq 1} h_p(\omega) \right)_{p \geq 1}$ is harmonic for $\widetilde{P}^{(\omega)}$ on $\{1,2,\dots\}$, by a simple martingale argument we have
\[ \P \left( \forall t \geq 0, \widetilde{P}^{(\omega)}_t \geq 1 | \widetilde{P}^{(\omega)}_0=1 \right) = \frac{h_1(\omega)}{\lim_{p \to +\infty} h_p(\omega)} = \sqrt{\frac{\omega-1}{\omega}} \]
by \eqref{eqn_defn_homega}. Since $\omega$ lies in a compact subset of $(1,+\infty)$, this probability is bounded away from $0$. Therefore, it is enough to prove
\[ \frac{\widetilde{V}^{(\omega)}_t-2\widetilde{P}^{(\omega)}_t}{t} \xrightarrow[t \to +\infty]{} \frac{ \left(\sqrt{\omega}-\sqrt{\omega-1} \right)^2}{2 \sqrt{\omega(\omega-1)}} \]
in probability, uniformly in $\omega$. Note that the computation $\E \left[ \widetilde{V}^{(\omega)}_1-2\widetilde{P}^{(\omega)}_1 \right]=\frac{ \left(\sqrt{\omega}-\sqrt{\omega-1} \right)^2}{2 \sqrt{\omega(\omega-1)}}$ is given by Proposition 10.12 in~\cite{C-StFlour}, so the convergence holds for $\omega$ fixed.

To prove that it is uniform, by the uniform weak law of large number (Lemma~\ref{lem_unif_WLLN}), it is sufficient to prove that $\widetilde{V}^{(\omega)}_1-2\widetilde{P}^{(\omega)}_1$ is uniformly integrable in $\omega$ for $\omega$ in a compact subset of $(1,+\infty)$. This can be deduced from continuity both in distribution and for the expectation. More precisely, assume that this is not the case. Then we can find a sequence $(\omega_i)_{i \geq 1}$ such that $\left( \widetilde{V}^{(\omega_i)}_1-2\widetilde{P}^{(\omega_i)}_1\right)_{i \geq 1}$ is not uniformly integrable. By compactness, up to extraction we can assume $\omega_i \to \omega_{\infty} \in (1,+\infty)$. Then we claim that
\begin{equation}\label{eqn_v1_continuous_in_law}
\widetilde{V}^{(\omega_i)}_1-2\widetilde{P}^{(\omega_i)}_1 \xrightarrow[i \to +\infty]{(d)} \widetilde{V}^{(\omega_{\infty})}_1-2\widetilde{P}^{(\omega_{\infty})}_1.
\end{equation}
Indeed, $\widetilde{V}^{(\omega)}_1-2\widetilde{P}^{(\omega)}_1$ can be interpreted as the number of inner edges created by one peeling step in the half-plane version of $\MM_{\q^{(\omega)}}$. As proved in the beginning of Appendix~\ref{subsec_analyticity}, each $q_j^{(\omega)}$ is a continuous function of $\omega$, so the probability of each peeling case is a continuous function of $\omega$, so for all $k$ the probability $\P \left( \widetilde{V}^{(\omega)}_1-2\widetilde{P}^{(\omega)}_1 =k \right)$ is continuous in $\omega$, which proves~\eqref{eqn_v1_continuous_in_law}.

On the other hand, we know from the computation above that $\E \left[ \widetilde{V}^{(\omega)}_1-2\widetilde{P}^{(\omega)}_1 \right]$ is a continuous function of $\omega$, so $\E \left[ \widetilde{V}^{(\omega_i)}_1-2\widetilde{P}^{(\omega_i)}_1 \right] \to \E \left[ \widetilde{V}^{(\omega_{\infty})}_1-2\widetilde{P}^{(\omega_{\infty})}_1 \right]$ as $i \to +\infty$. By the Skorokhod embedding theorem and Scheffé's lemma, it follows that we can couple the variables $\widetilde{V}^{(\omega_i)}_1-2\widetilde{P}^{(\omega_i)}_1$ in such a way that they converge to $\widetilde{V}^{(\omega_{\infty})}_1-2\widetilde{P}^{(\omega_{\infty})}_1$ in $L^1$. In particular they are uniformly integrable, which concludes the proof.
\end{proof}

\bibliographystyle{abbrv}
\bibliography{bibli}

\end{document}